\newtheorem{thm}{Theorem}[section]
\newtheorem{lemma}[thm]{Lemma}
\newtheorem{corollary}[thm]{Corollary}
\newtheorem{prop}[thm]{Proposition}
\newtheorem{conjecture}[thm]{Conjecture}
\theoremstyle{definition}
\newtheorem{notation}[thm]{Notation}
\newtheorem{rem}[thm]{Remark}
\newtheorem{exa}[thm]{Example}
\newtheorem{defn}[thm]{Definition}
\newcommand{\isom}{\overset{\sim}{\rightarrow}}
\title{A generalization of Kato's local $\varepsilon$-conjecture for $(\varphi,\Gamma)$-modules 
over the Robba ring.}
\author{Kentaro Nakamura*}
\date{} 
\begin{document}

\maketitle
\pagestyle{plain}
\footnote{2010 Mathematical Subject Classification 11F80 (primary), 11F85, 11S25 (secondary).
Keywords: $p$-adic Hodge theory, $(\varphi,\Gamma)$-module, $B$-pair.}
\footnote{Department of Mathematics, Hokkaido University, Kita 10, Nishi 8, Kita-Ku, Sapporo, Hokkaido, 060-0810, Japan.}
\footnote{e-mail address:kentaro@math.sci.hokudai.ac.jp}
\begin{abstract}
The aim of this article is to generalize Kato's ( commutative ) $p$-adic local $\varepsilon$-conjecture for families of $(\varphi,\Gamma)$-modules over the Robba rings. In particular, we prove the generalized local $\varepsilon$-conjecture 
for rank one $(\varphi,\Gamma)$-modules, which is a generalization of the Kato's theorem for rank one $p$-adic Galois representations.
The key ingredients are the author's previous work on the Bloch-Kato exponential map for $(\varphi,\Gamma)$-modules and the recent results of Kedlaya-Pottharst-Xiao on the finiteness of cohomology of $(\varphi,\Gamma)$-modules.
\end{abstract}
\setcounter{tocdepth}{2}
\tableofcontents

\section{Introduction.}
\subsection{Introduction}
Since the works of Kisin \cite{Ki03}, Colmez \cite{Co08}, and Bella\"iche- Chenevier \cite{BelCh09} etc, the theory of $(\varphi,\Gamma)$-modules over the ( relative ) Robba ring becomes one of the main streams in the theory of $p$-adic Galois representations. In particular, the recent works of Pottharst \cite{Po13a} and Kedlaya-Pottharst-Xiao \cite{KPX14} established the fundamental theorems (comparison with the Galois cohomology, finiteness, base change property, Tate duality, Euler-Poincar\'e formula ) in the theory of the cohomology of $(\varphi,\Gamma)$-modules over the relative Robba ring over $\mathbb{Q}_p$-affinoid algebras. As is suggested and actually given in \cite{KPX14}, \cite{Po13b}, their results are expected to have many applications in number theory (e.g. eigenvarietes, non-ordinary case of Iwasawa theory). On the other hand, in \cite{Na14a}, the author of this article generalized the theory of Bloch-Kato exponential map and Perrin-Riou's exponential map in the framework of $(\varphi,\Gamma)$-modules over the Robba ring. Since these maps are very important tools in Iwasawa theory, he expects that the results of \cite{Na14a} also have many applications.
As an application of the both theories, the purpose of this article is to generalize
Kato's $p$-adic local $\varepsilon$-conjecture \cite{Ka93b} in the framework of $(\varphi,\Gamma)$-modules over
the relative Robba ring over $\mathbb{Q}_p$-affinoid algebras, which we briefly explain in this
introduction: see $\S$3 for the precise definitions. Let $G_{\mathbb{Q}_p}$ be the absolute Galois
group of $\mathbb{Q}_p$. Let $\Lambda$ be a semi-local ring such that $\Lambda/\mathfrak{m}_{\Lambda}$ is a finite ring of the order
a power of $p$, where $\mathfrak{m}_{\Lambda}$ is the Jacobson radical of $\Lambda$. Let $T$ be a $\Lambda$-representation
of $G_{\mathbb{Q}_p}$ ,i.e. a finite projective $\Lambda$-module with a continuous $\Lambda$-linear $G_{\mathbb{Q}_p}$-action.
Let $C_{\mathrm{cont}}^{\bullet}(G_{\mathbb{Q}_p}, T)$ be the complex of continuous cochains of $G_{\mathbb{Q}_p}$ with the values in $T$. By the classical theory of Galois cohomology of $G_{\mathbb{Q}_p}$, this complex is a perfect complex of $\Lambda$-module which satisfies the base change property, Tate duality,$\cdots$. This fact enables us to define the determinant $\mathrm{Det}_{\Lambda}(C_{\mathrm{cont}}^{\bullet}(G_{\mathbb{Q}_p},T))$ which is a ( graded ) invertible $\Lambda$-module. Modifying this module by multiplying a kind of $\mathrm{det}_{\Lambda}(T)$, one can canonically attach a graded invertible $\Lambda$-module
$$\Delta_{\Lambda}(T)$$
called the fundamental line of the pair $(\Lambda,T)$, which is compatible with base change and Tate duality. Our main objects are the pairs $(A,M)$ where $A$ is a $\mathbb{Q}_p$-affinoid and $M$ is a $(\varphi,\Gamma)$-module over the relative Robba ring $\mathcal{R}_A$ over $A$. By the results of  \cite{KPX14}, then we can similarly attach a graded invertible $A$-module
$$\Delta_A(M)$$
such that, for a pair $(\Lambda,T)$ and a continuous homomorphism $f:\Lambda\rightarrow A$, there exists a canonical comparison isomorphism 
$$\Delta_{\Lambda}(T)\otimes_{\Lambda}A\isom \Delta_A(\bold{D}_{\mathrm{rig}}(T\otimes_{\Lambda}A))$$
by
the result of \cite{Po13a}. The following conjecture is the Kato's conjecture if $(B,N)=(\Lambda,T)$, and our new conjecture if $(B,N)=(A,M)$ . See Conjecture \ref{3.9} for the precise formulation.

\begin{conjecture}\label{1.1}$(\mathrm{Conjecture}\, \ref{3.9})$ We can uniquely define a $B$-linear isomorphism
$$\varepsilon_{B,\zeta}(N):\bold{1}_B\isom\Delta_B(N)$$
for each pair $(B,N)$ of type $(\Lambda,T)$ or $(A, M)$ and for each $\mathbb{Z}_p$-basis $\zeta$ of $\mathbb{Z}_p(1)$, which is compatible with any base changes $B\rightarrow B'$, exact sequences 
$0\rightarrow N_1\rightarrow N_2\rightarrow N_3\rightarrow 0$, and Tate duality, and satisfies the following:
\begin{itemize}
\item[$(\mathrm{v})$] For any $f:\Lambda\rightarrow A$ as above, we have
$$\varepsilon_{\Lambda,\zeta}(T)\otimes\mathrm{id}_A=\varepsilon_{A,\zeta}(\bold{D}_{\mathrm{rig}}(T\otimes_{\Lambda}A))$$
under the canonical isomorphism $\Delta_{\Lambda}(T)\otimes_{\Lambda}A\isom \Delta_A(\bold{D}_{\mathrm{rig}}(T\otimes_{\Lambda}A))$.
\item[$(\mathrm{vi})$]
 Let $L=A$ be a finite extension of $\mathbb{Q}_p$, and let $N$ be a de Rham represen-
tation of $G_{\mathbb{Q}_p}$ or de Rham $(\varphi,\Gamma)$-module over $\mathcal{R}_L$. Then we have 
 $$\varepsilon_{L,\zeta}(N)=\varepsilon_{L,\zeta}^{\mathrm{dR}}(N),$$
where the isomorphism
$$\varepsilon_{L,\zeta}^{\mathrm{dR}}(N):\bold{1}_L\isom\Delta_L(N)$$
 which is called the de Rham $\varepsilon$-isomorphism is defined using the Bloch-Kato exponential and the dual exponential of $N$ and the local factors ($L$-factor, $\varepsilon$-constant, ``gamma factor") associated to $\bold{D}_{\mathrm{pst}}(N)$ and $\bold{D}_{\mathrm{pst}}(N^*)$.
 \end{itemize}
 \end{conjecture}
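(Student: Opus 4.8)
The plan is to split the assertion into a uniqueness part and an existence part --- the latter established only in the rank one case --- and to treat the two separately. For pairs $(\Lambda,T)$ with $T$ of rank one, the desired isomorphism together with $(\mathrm{v})$ and $(\mathrm{vi})$ is essentially Kato's theorem of \cite{Ka93b} recast in the determinant formalism, so the genuinely new input is the construction of $\varepsilon_{A,\zeta}(M)$ for a rank one $(\varphi,\Gamma)$-module $M$ over $\mathcal{R}_A$ and the verification of $(\mathrm{v})$. I would organize everything around the fact that, Zariski locally on $\mathrm{Sp}\,A$ and up to twisting by a line bundle on $\mathrm{Sp}\,A$, every such $M$ is of the form $\mathcal{R}_A(\delta)$ for a continuous character $\delta\colon\mathbb{Q}_p^\times\to A^\times$. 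Hence it suffices to construct the isomorphism and to verify all the compatibilities for the universal character over an affinoid exhaustion of the space $\mathcal{W}$ of continuous characters of $\mathbb{Q}_p^\times$, and then to propagate to an arbitrary $(A,M)$ by base change, the line bundle twist being accounted for separately.

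Uniqueness I would deduce by reducing to this universal situation and then invoking density. For an affinoid $U\subseteq\mathcal{W}$ the ring of functions is reduced and $U$ carries a Zariski dense set of de Rham points, at each of which $(\mathrm{vi})$ forces $\varepsilon$ to equal the explicit de Rham $\varepsilon$-isomorphism $\varepsilon^{\mathrm{dR}}$, which is itself rigidly determined, being manufactured from the Bloch--Kato exponential and dual exponential of $\mathbf{D}_{\mathrm{pst}}$ and from local $L$- and $\varepsilon$-factors. Two families of isomorphisms satisfying the axioms therefore agree on a dense subset of the reduced space $U$, hence coincide on $U$ and so on all of $\mathcal{W}$; by base change they then coincide for every pair $(A,M)$. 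The $(\Lambda,T)$ side is handled by the identical argument over the Iwasawa parameter spaces, as in Kato, with the finiteness and base change theorems of \cite{KPX14} supplying the input wherever $(\varphi,\Gamma)$-module cohomology intervenes.

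For existence in rank one the substance is the construction of $\varepsilon_{A,\zeta}(\mathcal{R}_A(\delta))$ over the universal base. Outside a countable set of exceptional characters --- those $\delta$ for which $H^0(\mathcal{R}_A(\delta))$ or $H^2(\mathcal{R}_A(\delta))$ is nonzero --- the cohomology of $\mathcal{R}_A(\delta)$ is, by \cite{KPX14} and the Euler--Poincar\'e formula, concentrated in degree one and locally free of rank one, so $\Delta_A(\mathcal{R}_A(\delta))$ is canonically identified, up to the $\det_A$ twist, with a determinant of $H^1(\mathcal{R}_A(\delta))$, hence a line bundle. I would produce its canonical trivialization by interpolating the Bloch--Kato exponential maps of the de Rham specializations --- that is, via the Perrin-Riou-type exponential map for $(\varphi,\Gamma)$-modules constructed in \cite{Na14a} --- with the normalization (the period and the $\varepsilon$-constant, both depending on $\zeta$) pinned down by matching $\varepsilon^{\mathrm{dR}}$ at one de Rham point and then spreading out, all the intervening units being analytic in $\delta$.

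The main obstacles I anticipate are threefold. (1) Extending the isomorphism across the exceptional characters, where $\Delta_A$ is no longer a bare determinant of $H^1$ and the local factors acquire zeros or poles; here I would pass to the limit from the generic locus and carry out explicit computations at these points, using the Kummer/Iwasawa description of $H^1$ together with the known shape of $\varepsilon^{\mathrm{dR}}$ for the characters $x\mapsto x^{-k}$ and their Tate duals. (2) Compatibility with Tate duality, which holds for $\varepsilon^{\mathrm{dR}}$ by the functional equation of local constants and which I would transport to the whole family by verifying it on the dense de Rham locus. (3) Compatibility $(\mathrm{v})$, which --- since $\mathbf{D}_{\mathrm{rig}}$ carries an \'etale rank one $T\otimes_\Lambda A$ to some $\mathcal{R}_A(\delta)$ --- reduces to comparing the universal $\varepsilon_{A,\zeta}$ with the pullback of Kato's universal \'etale isomorphism over the locus of $\mathcal{W}$ parametrizing \'etale characters, where crystalline points are dense, so that Kato's theorem and $(\mathrm{vi})$ pin down the comparison. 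Of these I expect the exceptional-character analysis to be the most delicate, as it superposes a degeneration of the cohomology onto the precise bookkeeping of local $\varepsilon$- and $\Gamma$-factors.
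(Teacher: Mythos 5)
Your top-level architecture --- reduce to the universal character space $\mathcal{W}$, prove de~Rham compatibility as the key input, use reducedness and density of de~Rham points for uniqueness and for propagating duality/comparison --- does match the paper's plan for Theorem~\ref{3.12}, and you correctly single out the exceptional characters $x^{-k}$ and $x^{k+1}|x|$ as the delicate locus. But your existence step differs from the paper in a way that buries the genuine technical content. The paper does not build $\varepsilon_{A,\zeta}(\mathcal{R}_A(\delta))$ by "interpolating Bloch--Kato exponentials" through a Perrin--Riou-type map and then normalizing at one de~Rham point: it constructs the trivialization $\theta_\zeta(\mathcal{R}_A(\delta_\lambda))$ \emph{algebraically}, from the Colmez/Amice transform exact sequence $0\to\mathcal{R}^\infty_A\to\mathcal{R}_A\xrightarrow{\mathrm{Col}}\mathrm{LA}(\mathbb{Z}_p,A)\to0$, the $\psi=0$ decomposition, and determinant manipulations over the $\psi$-complex $C^\bullet_\psi(M)$ of analytic Iwasawa cohomology (\S4.1). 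The comparison with $\varepsilon^{\mathrm{dR}}$ is then a theorem to be proved --- the explicit reciprocity laws of Propositions~\ref{4.14} and~\ref{4.15} --- not an input to the construction.

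This is not merely a difference of taste: your ``normalize at one de~Rham point and spread out, all the intervening units being analytic in $\delta$'' step is where the real work is hiding, and as stated it is circular. To know the ratio between your candidate isomorphism and $\varepsilon^{\mathrm{dR}}$ at each de~Rham specialization is given by a single analytic unit (in fact equal to $1$) \emph{is} the explicit reciprocity law; one cannot assume it to pin down the normalization. Moreover the big exponential $\mathrm{Exp}_M$ of \cite{Na14a} is not itself a trivialization of the fundamental line --- it relates $\bold{Dfm}(M)$ to $\bold{Dfm}(\bold{N}_{\mathrm{rig}}(M))$ (cf.\ Remark~\ref{4.19f}) and still requires a base case, which is precisely what the Colmez-transform construction supplies. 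So while your proposal will land on the correct object, it omits the paper's central input (the $p$-adic Fourier-transform construction together with Propositions \ref{4.13.5}, \ref{4.14}, \ref{4.15}) and should not be regarded as complete without it.
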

\begin{rem}To define the condition (vi) for de Rham  $(\varphi,\Gamma)$-modules, we need to generalize the Bloch-Kato exponential for $(\varphi,\Gamma)$-modules, which was 
 done in \cite{Na14a}.
 \end{rem}
Roughly speaking, this conjecture says that the local factor which appears in
the functional equation of the $L$-functions of motif $p$-adically interpolate to all the
families of $p$-adic Galois representations and also rigid analytically interpolate to
all the families of $(\varphi,\Gamma)$-modules in a compatible way. In fact, in \cite{Ka93a}, Kato
formulated a conjecture called the generalized Iwasawa main conjecture which
asserts the existence of a compatible family of ``zeta"-isomorphisms 
 $$z_{\Lambda}(\mathbb{Z}[1/S],T):\bold{1}_{\Lambda}\isom\Delta_{\Lambda}^{\mathrm{global}}(T)$$ for any $\Lambda$-representation $T$ of $G_{\mathbb{Q},S}$ ($S$ is a finite set of primes) 
which interpolate the special values of $L$-functions of motif, and also in \cite{Ka93b} formulated another conjecture called the global $\varepsilon$-conjecture which asserts the functional equation between $z_{\Lambda}(\mathbb{Z}[1/S],T)$ and $z_{\Lambda}(\mathbb{Z}[1/S],T^*)$ 
 whose local factor at $p$ is $\varepsilon_{\Lambda}(T|_{G_{\mathbb{Q}_p}})$.
In \cite{Ka93b} (see also \cite{Ve13}), Kato proved the local (and even the global) $\varepsilon$- conjecture for the rank one case. As a generalization of his theorem, our main theorem of this article is the following.
 
\begin{thm} $(\mathrm{Theorem} \,\,\ref{3.12})$ Conjecture $\ref{1.1}$ is true for the rank case.
 \end{thm}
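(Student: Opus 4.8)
The strategy is to reduce, via the classification of rank one objects and the base change axiom, to a single universal rank one $(\varphi,\Gamma)$-module, to construct the $\varepsilon$-isomorphism there by an explicit formula whose essential ingredient is the (generalized Perrin--Riou/Bloch--Kato) exponential map of \cite{Na14a}, and then to recover existence, the remaining compatibilities, and uniqueness for an arbitrary rank one pair by base change together with a Zariski-density argument over the de Rham locus.

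The reductions come first. Over a $\mathbb{Q}_p$-affinoid $A$, every rank one $(\varphi,\Gamma)$-module over $\mathcal{R}_A$ is isomorphic to $\mathcal{R}_A(\delta)\otimes_A\mathcal{L}$ for a continuous character $\delta:\mathbb{Q}_p^\times\rightarrow A^\times$ and an invertible $A$-module $\mathcal{L}$, and a rank one $\Lambda$-representation is of the form $\Lambda(\delta)\otimes_\Lambda\mathcal{L}$; since twisting by $\mathcal{L}$ alters $\Delta_B$ only by an invertible module carrying a canonical basis, the multiplicativity axiom reduces us to $\mathcal{L}=B$. Writing $\mathbb{Q}_p^\times=p^{\mathbb{Z}}\times\mathbb{Z}_p^\times$, the character $\delta$ is the pair $(\delta(p),\delta|_{\mathbb{Z}_p^\times})$ with $\delta(p)\in A^\times$ arbitrary and $\delta|_{\mathbb{Z}_p^\times}$ factoring through the universal character of $\mathbb{Z}_p^\times$ over the (distribution algebra attached to the) Iwasawa algebra $\mathbb{Z}_p[[\mathbb{Z}_p^\times]]$. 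Assembling these, together with a $\mathbb{G}_m$-variable for $\delta(p)$ and the chosen $\mathbb{Z}_p$-basis $\zeta$ of $\mathbb{Z}_p(1)$ as normalization, one obtains a universal base $\mathfrak{R}$ carrying a universal module $\mathcal{R}_{\mathfrak{R}}(\delta^{\mathrm{univ}})$ from which every pair $(A,\mathcal{R}_A(\delta))$ is a continuous base change. By base change compatibility it therefore suffices to construct $\varepsilon$ on $\mathcal{R}_{\mathfrak{R}}(\delta^{\mathrm{univ}})$ and to verify the axioms there; for rank one $\Lambda$-representations one may instead invoke Kato's theorem \cite{Ka93b} (see also \cite{Ve13}) and check only the compatibility (v).

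For the construction, the results of \cite{KPX14} (perfectness, base change, the Euler--Poincar\'e formula, Tate duality) show that the $(\varphi,\Gamma)$-cohomology complex of $\mathcal{R}_{\mathfrak{R}}(\delta^{\mathrm{univ}})$ is perfect and commutes with base change, that $H^0$ and $H^2$ are supported on a proper closed subset, and that $H^1_{\mathrm{Iw}}$ is generically free of rank one; off that subset $\Delta_{\mathfrak{R}}$ is canonically $\mathrm{Det}$ of $H^1$ twisted by the evident rank one pieces, so a choice of $\varepsilon$ amounts to a choice of basis of Iwasawa cohomology. Using the explicit description of $H^1_{\mathrm{Iw}}$ via the $\psi$-operator on the Robba ring, one writes down a canonical basis and corrects it by an explicit elementary factor --- a $\delta(p)$-dependent Euler factor together with a Gamma-type factor built from $\delta|_{\mathbb{Z}_p^\times}$ and $\zeta$ --- arranged so that the zeros and poles introduced by the cohomological modification cancel and the resulting map is an honest isomorphism $\mathbf{1}_{\mathfrak{R}}\isom\Delta_{\mathfrak{R}}$; over the exceptional (``trivial zero'') characters, where $H^0$ or $H^2$ jumps, the isomorphism is extended by a limiting argument, which is legitimate because $\Delta$ is a line bundle and the formula is rigid analytic away from a divisor.

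Finally one checks the axioms. Base change holds by construction, the exact-sequence axiom is essentially vacuous among objects of rank at most one, and Tate duality together with the $\mathbf{D}_{\mathrm{rig}}$-compatibility (v) follow from the functional equation of the exponential map and from their validity on the Zariski-dense de Rham locus. The de Rham axiom (vi) is the heart of the matter: for every rank one de Rham $(\varphi,\Gamma)$-module $\mathcal{R}_L(\delta)$ over a finite extension $L/\mathbb{Q}_p$ one must identify the specialization of the universal $\varepsilon$ with $\varepsilon^{\mathrm{dR}}_{L,\zeta}$, and since the latter is built from the Bloch--Kato exponential and dual exponential of \cite{Na14a} and from the $L$-factor, $\varepsilon$-constant and Gamma factor attached to $\mathbf{D}_{\mathrm{pst}}(\mathcal{R}_L(\delta))$ and its dual, this comes down to combining the explicit reciprocity law --- the interpolation property of the exponential map established in \cite{Na14a} --- with an explicit Gauss-sum computation of those local constants, and with checking that all zeros and poles cancel even at the exceptional characters. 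This matching, together with the identification of the elementary correction factor with the Deligne--Langlands $\varepsilon$-factor, is the step I expect to be the main obstacle; the rest is formal. Uniqueness is then immediate: two isomorphisms $\mathbf{1}_B\isom\Delta_B(N)$ differ by a unit of $B$ which, by (vi), equals $1$ on the Zariski-dense reduced locus of de Rham characters, hence equals $1$.
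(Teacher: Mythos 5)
Your overall strategy matches the paper's: reduce by the rank one classification and twisting to characters $\delta$, pass to a universal family via the cyclotomic deformation and an unramified $\mathbb{G}_m$-variable for $\delta(p)$, construct the $\varepsilon$-isomorphism over that family using analytic Iwasawa cohomology and the $\psi$-operator, verify the de Rham condition by an explicit reciprocity law plus an $\varepsilon$-constant (Gauss sum) computation, deduce (iv) and (v) by Zariski density of de Rham points, and obtain uniqueness by the same density. This is the skeleton of \S 4 of the paper.

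The genuine gap is in your construction at the exceptional characters. You propose to write a canonical basis of $\mathrm{H}^1_{\mathrm{Iw}}$ where it is free of rank one, correct by an elementary factor to get an isomorphism away from a divisor, and then ``extend by a limiting argument \ldots\ legitimate because $\Delta$ is a line bundle and the formula is rigid analytic away from a divisor.'' That argument is not legitimate as stated: extending a section of a line bundle across a divisor requires showing it is regular there, and extending it \emph{as an isomorphism} requires showing it is nowhere vanishing — both are exactly what fails at the trivial-zero characters unless the correction factor has been arranged with surgical precision, and that arrangement is the actual content, not an afterthought. The paper avoids this entirely: the trivialization $\theta_{\zeta}(\mathcal{R}_A(\delta_\lambda))$ is built \emph{directly over the whole distribution algebra} $\mathcal{R}^\infty_A(\Gamma)$, using the Colmez-transform exact sequence $0\to\mathcal{R}^\infty_A(\delta_\lambda)\to\mathcal{R}_A(\delta_\lambda)\to\mathrm{LA}(\mathbb{Z}_p,A)(\delta_\lambda)\to 0$ together with its filtration by the finite-dimensional polynomial pieces $Ay^k\bold{e}_{\delta_\lambda}$ and $At^k\bold{e}_{\delta_\lambda}$, so that no pointwise formula, generic freeness, or limiting step ever enters; the exceptional characters need a separate computation only for the \emph{de Rham verification} (\S 4.2.2), not for the existence of the isomorphism. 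A second, smaller imprecision: you describe the correction as an ``Euler factor together with a Gamma-type factor,'' but these constants appear in the paper inside the de Rham isomorphism $\varepsilon^{\mathrm{dR}}_{L,\zeta}$, not in the construction of $\varepsilon_{\mathcal{R}^\infty_A(\Gamma),\zeta}(\bold{Dfm}(M))$ itself; the construction is a chain of canonical determinant isomorphisms with no inserted normalizing scalar, and the matching with $\varepsilon^{\mathrm{dR}}$ (including those factors) is what the explicit reciprocity laws (Propositions \ref{4.14}, \ref{4.15}) prove.

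You are right that condition (ii) is vacuous in rank one (indeed the theorem drops it), that (iv) and (v) can be propagated from the generic de Rham locus, and that the $\partial$-compatibility (the analogue of Proposition \ref{4.13.5}) lets one reduce the Hodge--Tate weight to $k\in\{0,1\}$; but you should state and prove that $\partial$-compatibility explicitly, as it is what makes the reciprocity-law computation tractable.
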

 
\begin{rem}From this theorem, we can immediately obtain some results in the trianguline case. 
In particular, since the $(\varphi,\Gamma)$-modules associated to the twists of crystalline representations are trianguline, we can compare our results with the previous results \cite{BB08}, \cite{LVZ14} on the 
local $\varepsilon$-conjecture for the crystalline case. See Corollary \ref{3.13}, \ref{3.14} for more details.
 \end{rem}
\begin{rem}
 The non trianguline case is much more difficult, but is much more interesting since this case corresponds to the supercuspidal representations of $\mathrm{GL}_n(\mathbb{Q}_p)$ via local Langlands correspondence, whose $\varepsilon$-constants are in general  difficult to explicitly describe. In the next article \cite{Na}, we construct $\varepsilon$-isomorphisms for all rank two torsion $p$-adic representations of $\mathrm{Gal}(\overline{\mathbb{Q}}_p/\mathbb{Q}_p)$ using Colmez's theory \cite{Co10} of  $p$-adic local Langlands correspondence for $\mathrm{GL}_2(\mathbb{Q}_p)$. More precisely, we will show that (a modified version of) the pairing defined in Corollaire VI.
 6.2 of \cite{Co10} essentially gives us 
 $\varepsilon$-isomorphisms for the rank two case.
 In trianguline case, 
 we will show that the $\varepsilon$-isomorphisms constructed in \cite{Na} coincide with those constructed in this article. More interestingly, for de Rham and non-trianguline case,
  we will show that the $\varepsilon$-isomorphisms defined in \cite{Na} satisfy the suitable
  interpolation property for the critical range of Hodge-Tate weights using Emerton's theorem 
  on the compatibility of classical and $p$-adic Langlands correspondence \cite{Em}. 
  Moreover, as an application, we will prove a functional equation of Kato's Euler system 
  associated to any Hecke eigen elliptic cusp form without any condition at $p$.
   \end{rem}
 
\subsection{Structure of the paper} 
In $\S2$, we recall the results of \cite{KPX14}, \cite{Po13a}
  and \cite{Na14a}. After recalling the definition of $(\varphi,\Gamma)$-modules over the relative Robba ring, we recall the main results of \cite{KPX14},\cite{Po13a} on the cohomology of $(\varphi,\Gamma)$- modules, i.e. comparison with Galois cohomology, finiteness, base change property, Euler-Poincar\'e formula, Tate duality, and the classification of rank one objects, all of which are essential for the formulation of our conjecture. We next recall the result of \cite{Na14a}  on the theory of the Bloch-Kato exponential map of $(\varphi,\Gamma)$-modules. Since the result of \cite{Na14a} is not sufficient for our purpose, we slightly generalize the result of this paper. In particular, we show the existence of Bloch-Kato's fundamental exact sequences involving $\bold{D}_{\mathrm{cris}}(M)$ (Proposition \ref{2.21}), establishing Bloch-Kato's duality for the finite cohomology of $(\varphi,\Gamma)$-modules (Proposition \ref{2.24}). The explicit formulae of our Bloch-Kato's exponential maps (Proposition \ref{2.23}) are frequently used in later sections.
 
 In $\S$3, using the preliminaries recalled in $\S$2, we formulate our $\varepsilon$-conjecture and state our main theorem of this paper. Since the conjecture is formulated by using the notion of determinant, we first recall this notion in $\S$3.1. In $\S$3.2, using the determinant of cohomology of $(\varphi,\Gamma)$-modules, we define a graded invertible $A$- module $\Delta_A(M)$ called the fundamental line for any $(\varphi,\Gamma)$-module $M$ over $\mathcal{R}_A$. In $\S$3.3, for any de Rham $(\varphi,\Gamma)$-module $M$ , we define a trivialization (called de Rham $\varepsilon$-isomorphism) of the fundamental line using the Bloch-Kato fundamental exact sequence, Deligne-Langlands-Fontaine-Perrin-Riou's $\varepsilon$-constants and the ``gamma -factor" associated to $\bold{D}_{\mathrm{pst}}(M)$. In $\S$3.4, we formulate our conjecture and compare our conjecture with Kato's conjecture, and state our main theorem of this article, which solves the conjecture for all rank one $(\varphi,\Gamma)$-modules.
 
$\S$4 is the main part of this paper, where we prove the conjecture for the rank one case. In $\S$ 4.1, using the theory of analytic Iwasawa cohomology \cite{KPX14}, \cite{Po13b}, and using the standard technique of $p$-adic Fourier transform, we construct our $\varepsilon$-isomorphism for all rank one $(\varphi,\Gamma)$modules. In $\S$ 4.2, we show that our $\varepsilon$-isomorphism defined in $\S$ 4.1 specializes to the de Rham $\varepsilon$-isomorphism defined in $\S$ 3.2 at each de Rham point. 
In $\S$ 4.2.1, we first verify this condition (which we call the de Rham condition) for the ``generic" rank one de Rham $(\varphi,\Gamma)$- modules by establishing a kind of explicit reciprocity law (Proposition \ref{4.14}, \ref{4.15}). In the process of proving this, we prove a proposition (Proposition \ref{4.13.5}) on the compatibility of our $\varepsilon$-isomorphism with a natural differential operator. Using the result in the generic case and the density argument, we prove the compatibility of our $\varepsilon$-isomorphism with Tate duality and compare our $\varepsilon$-isomorphism  with Kato's $\varepsilon$-isomorphism. In $\S$ 4.2.2, we verify the de Rham condition via explicit calculations for the exceptional case which includes the case of $\mathcal{R}, \mathcal{R}(1)$ (the $(\varphi,\Gamma)$-modules corresponding to $\mathbb{Q}_p, \mathbb{Q}_p(1)$ respectively).

In the appendix, we explicitly calculate the cohomologies $\mathrm{H}^i_{\varphi,\gamma}(\mathcal{R}(1))$ and $\mathrm{H}^i_{\varphi,\gamma}(\mathcal{R})$, which will be used in $\S$ 4.2.2.

\subsection{Notation}
Throughout this paper, we fix a prime number $p$. 
The letter $A$ will always denote a $\mathbb{Q}_p$-affinoid algebra; we use 
$\mathrm{Max}(A)$ to denote the associated rigid analytic space. 
Fix an algebraic closure $\overline{\mathbb{Q}}_p$ of $\mathbb{Q}_p$, and we consider any finite extension $K$ 
of $\mathbb{Q}_p$ inside $\overline{\mathbb{Q}}_p$. Let  $|-|:\overline{\mathbb{Q}}_p^{\times}\rightarrow \mathbb{Q}_{>0}$ to be the absolute value such that $|p|=p^{-1}$.
For $n\geqq 0$, let denote $\mu_{p^n}$ for the set of $p^n$-th power roots of unity in $\overline{\mathbb{Q}}_p$, and put
$\mu_{p^{\infty}}:=\cup_{n\geqq 1}\mu_{p^n}$. 
For a finite extension $K$ of $\mathbb{Q}_p$, put
$K_n:=K(\mu_{p^n})$ for $\infty\geqq n\geqq 0$. 
Let denote
$\chi:\Gamma_{\mathbb{Q}_p}:=\mathrm{Gal}(\mathbb{Q}_{p,\infty}/\mathbb{Q}_p)\isom 
\mathbb{Z}_p^{\times}$ for the cyclotomic character given by 
$\gamma(\zeta)=\zeta^{\chi(\gamma)}$ for $\gamma\in \Gamma$ and 
$\zeta\in \mu_{p^{\infty}}$. 
Set $G_K:=\mathrm{Gal}(\overline{\mathbb{Q}}_p/K)$, $H_K:=\mathrm{Gal}(\overline{\mathbb{Q}}_p/
K_{\infty})$, and $\Gamma_K:=\mathrm{Gal}(K_{\infty}/K).$
We let $k$  be the residue field of $K$, with $F:=W(k)[1/p]$.
Put $\mathbb{Z}_p(1):=\varprojlim_{n\geqq 0}\mu_{p^n}$. 
For $k\in \mathbb{Z}$, put $\mathbb{Z}_p(k):=
\mathbb{Z}_p(1)^{\otimes k}$ equipped with a natural action of $\Gamma_K$. 
For a $\mathbb{Z}_p[G_K]$-module 
$N$, let denote $N(k):=N\otimes_{\mathbb{Z}_p}\mathbb{Z}_p(k)$. When we fix a generator 
$\zeta=\{\zeta_{p^n}\}_{n\geqq 0}\in \mathbb{Z}_p(1)$, we put $\bold{e}_1:=\zeta$ and 
$\bold{e}_k:=\bold{e}_1^{\otimes k}\in \mathbb{Z}$. 
For a continuous $G_K$-module $N$, let denote by $C^{\bullet}_{\mathrm{cont}}(G_K,N)$ 
the complex of continuous cochains of $G_K$ with the values in $N$. 
Denote by $\mathrm{H}^i(K, N):=\mathrm{H}^i(C^{\bullet}_{\mathrm{cont}}(G_K,N))$.
For a group $G$, denote $G_{\mathrm{tor}}$ for the subgroup of $G$ consisting of 
all torsion elements in $G$.
If $G$ is finite group, denote $|G|$ for the order of $G$.
For a commutative ring $R$, let denote by $\bold{P}_{\mathrm{fg}}(R)$  the category of 
finitely generated projective $R$-modules. For $N\in \bold{P}_{\mathrm{fg}}(R)$, denote by $\mathrm{rk}_RN$ 
the rank of $N$ and by $N^{\vee}:=\mathrm{Hom}_R(N, R)$. Let 
$[,]:N_1\times N_2\rightarrow R$ be a perfect pairing. Then we always identify $N_2$ with $N_1^{\vee}$ by 
the isomorphism $N_2\isom N_1^{\vee}: x\mapsto (y\mapsto [y,x])$. Let denote by 
$\bold{D}^{-}(R)$ the derived category of bounded below complexes of $R$-modules.
For $a_1\leqq a_2\in \mathbb{Z}$, let denote by $\bold{D}_{\mathrm{perf}}^{[a_1,a_2]}(R)$ (respectively $\bold{D}^b_{\mathrm{perf}}(R)$) 
denote the full subcategory of $\bold{D}^{-}(R)$ consisting of the complexes of $R$-modules which are quasi isomorphic 
to a complex $P^{\bullet}$ of $\bold{P}_{\mathrm{fg}}(R)$ concentrated in 
degrees in $[a_1,a_2]$ (respectively bounded degree). There exists a duality functor 
$\bold{R}\mathrm{Hom}_R(-,R):\bold{D}^{[a_1,a_2]}_{\mathrm{perf}}(R)\rightarrow 
\bold{D}^{[-a_2,-a_1]}_{\mathrm{perf}}(R)$ characterized by 
$\bold{R}\mathrm{Hom}_R(P^{\bullet},R):=\mathrm{Hom}_R(P^{-\bullet}, R)$ for any 
bounded complex $P^{\bullet}$ of $\bold{P}_{\mathrm{fg}}(R)$. 
Define the notion $\chi_R(-)$ of Euler characteristic  for any objects of  $ \bold{D}^{b}_{\mathrm{perf}}(R)$ which is characterized by 
$\chi_R(P^{\bullet}):=\sum_{i\in \mathbb{Z}}(-1)^i\mathrm{rk}_RP^i\in \mathrm{Map}
(\mathrm{Spec}(R), \mathbb{Z})$ for any bounded complex $P^{\bullet}$ of $\bold{P}_{\mathrm{fg}}(R)$.

\section{Cohomology and Bloch-Kato exponential of $(\varphi,\Gamma)$-modules}

\subsection{Cohomology of $(\varphi,\Gamma)$-modules}
In this subsection, we recall the definition of ( families of ) $(\varphi,\Gamma)$-modules 
and the definition of their cohomologies following 
\cite{KPX14}, and then recall the results of their article on the 
finiteness of the cohomology. 


 Put $\omega:=p^{-1/(p-1)}\in \mathbb{R}_{>0}$. 
 For $r\in \mathbb{Q}_{>0}$, define the $r$-Gauss norm $|-|_r$ on $\mathbb{Q}_p[T^{\pm}]$
 by the formula
 $|\sum_i a_iT^i|_r:=\mathrm{max}_{i}\{|a_i|\omega^{ir}\}$. 
 For $0<s\leqq r \in \mathbb{Q}_{>0}$, we write $A^1[s,r]$ for the rigid analytic annulus defined over 
 $\mathbb{Q}_p$ in the variable $T$ with radii 
 $|T|\in [\omega^r,\omega^s]$; its ring of analytic functions, denoted by $\mathcal{R}^{[s,r]}$, is the 
 completion of $\mathbb{Q}_p[T^{\pm}]$ with respect to the norm $|\cdot |_{[s,r]}
 :=\mathrm{max}\{|\cdot|_r, |\cdot|_s\}$. We also allow $r$ (but not $s$) to be $\infty$, in which 
 case $A^1[s,r]$ is interpreted as the rigid analytic disc in the variable $T$ with radii $|T|\leqq \omega^s$; its ring of analytic functions $\mathcal{R}^{[s,r]}=\mathcal{R}^{[s,\infty]}$ is the completion of $\mathbb{Q}_p[T]$ with respect to $|\cdot|_s$. Let $A$ be a $\mathbb{Q}_p$-affinoid algebra.
 Let $\mathcal{R}_A^{[s,r]}$ denote the ring of rigid analytic functions on the relative annulus 
 (or disc if $r=\infty$) $\mathrm{Max}(A)\times A^1[s,r]$; its ring of analytic functions is 
 $\mathcal{R}_A^{[s,r]}:=
 \mathcal{R}^{[s,r]}\hat{\otimes}_{\mathbb{Q}_p}A$. Put 
 $\mathcal{R}_A^r:=\cap_{0<s\leqq r}\mathcal{R}_A^{[s,r]}$ and 
 $\mathcal{R}_A:=\cup_{0<r}\mathcal{R}_A^r$.

Let $k'$ be the residue field of $K_{\infty}$, with $F':=W(k')[1/p]$. Put $\tilde{e}_K:=[K_{\infty}: F'_{\infty}]$. 

For $0<s\leqq r$, we set 
$\mathcal{R}^{[s,r]}(\pi_K)$ to be the formal substitution of $T$ by $\pi_K$ in the ring 
$\mathcal{R}^{[s/\tilde{e}_K,r/\tilde{e}_K]}_{F'}$; we set 
$\mathcal{R}_A^{[s,r]}(\pi_K):=\mathcal{R}^{[s,r]}(\pi_K)\hat{\otimes}_{\mathbb{Q}_p}A$. 
We define $\mathcal{R}^r_A(\pi_K), \mathcal{R}_A(\pi_K)$ similarly; the latter is referred to as 
the relative Robba ring over $A$ for $K$.

By the theory of fields of norms, there exists 
a constant $C(K)>0$ and, for any $0<r\leqq C(K)$, we can 
equip $\mathcal{R}_{A}^{r}(\pi_K)$ with a finite 
\'etale $\mathcal{R}^r_A(\pi_{\mathbb{Q}_p})$ algebra free of rank 
$[K_{\infty}:\mathbb{Q}_{p,\infty}]$ with the Galois group $H_{\mathbb{Q}_p}/H_K$.
 More generally,  for any finite extensions 
$L\supseteq K\supseteq \mathbb{Q}_p$, we can naturally 
equip $\mathcal{R}_{A}^{r}(\pi_L)$ with a structure 
of finite \'etale $\mathcal{R}_A^{r}(\pi_K)$-algebra free of rank $[L_{\infty}:K_{\infty}]$
 with the Galois group $H_K/H_L$ for any $0<r\leqq \mathrm{min}\{C(K), 
 C(L)\}$.

There are commuting $A$-linear actions of $\Gamma_K$ on $\mathcal{R}^{[s,r]}_A(\pi_K)$
 and of an operator 
$\varphi:\mathcal{R}_A^{[s,r]}(\pi_K)\rightarrow \mathcal{R}_A^{[s/p,r/p]}(\pi_K)$ 
for $0<s\leqq r\leqq C(K)$. 
The actions on the coefficients $F'$ are the natural ones, i.e. $\Gamma_K$ through its 
quotient $\mathrm{Gal}(F'/F)$ and $\varphi$ by the canonical lift of $p$-th Frobenius 
on $k'$. 
For $0<s\leqq r\leqq C(K)$, $\varphi$ makes 
$\mathcal{R}_A^{[s/p,r/p]}(\pi_K)$ into a free $\mathcal{R}_A^{[s,r]}(\pi_K)$-module 
of rank $p$, and we obtain a $\Gamma_K$-equivariant left inverse 
$\psi:\mathcal{R}_A^{[s/p,r/p]}(\pi_K)\rightarrow \mathcal{R}_A^{[s,r]}(\pi_K)$ by 
the formula $\frac{1}{p}\varphi^{-1}\circ \mathrm{Tr}_{\mathcal{R}_A^{[s/p,r/p]}(\pi_K)/
\varphi(\mathcal{R}_A^{[s,r]}(\pi_K))}$. The map $\psi$ naturally extends to the maps 
$\mathcal{R}^{r/p}_A(\pi_K)\rightarrow \mathcal{R}_A^{r}(\pi_K)$ for $0< r\leqq C(K)$ and 
$\mathcal{R}_A(\pi_K)\rightarrow \mathcal{R}_A(\pi_K)$. 

\begin{rem}\label{2.1}
In fact, these rings are constructed using Fontaine's rings of $p$-adic periods. 
We don't have any canonical choice of the parameter $\pi_K$ for general $K$, but 
the ring $\mathcal{R}_A(\pi_K)$ and the actions of $\varphi$, $\Gamma_K$ don't depend on the choice of $\pi_K$. More 
precisely, $\mathcal{R}(\pi_K)$ is defined as a subring of the ring $\widetilde{\bold{B}}^{\dagger}_{\mathrm{rig}}$ of $p$-adic periods defined in \cite{Ber02}, and this subring does not depend on the choice 
of $\pi_K$, and the actions of $\varphi$, $\Gamma_K$ is induced by the natural actions of 
$\varphi$, $G_K$ on $\widetilde{\bold{B}}^{\dagger}_{\mathrm{rig}}$.

However, for unramified $K$, once we fix a $\mathbb{Z}_p$-basis $\zeta:=\{\zeta_{p^n}\}_{n\geqq 0}$  
of $\mathbb{Z}_p(1):=\varprojlim_{n\geqq 0}\mu_{p^n}$, 
 we have a natural choice of $\pi_K$ as follows. Let $\overline{\mathbb{Z}}_p$ be the integral closure of $\overline{\mathbb{Q}}_p$, and let $\widetilde{\mathbb{E}}^+:=\varprojlim_{n\geqq 0}\overline{\mathbb{Z}}_p/p\overline{\mathbb{Z}}_p$ be the projective limit with respect to $p$-th power map, and let $[-]:\widetilde{\mathbb{E}}^+\rightarrow W(\widetilde{\mathbb{E}}^+)$ be 
the Teichm\"uller lift to the ring $W(\widetilde{\mathbb{E}}^+)$ of Witt vectors.
Under the fixed $\zeta$, 
we can choose $\pi_K=\pi_{\mathbb{Q}_p}=\pi_{\zeta}:=[(\overline{\zeta}_{p^n})_{n\geqq 0}]-1\in W(\widetilde{\mathbb{E}}^+)\subseteq \widetilde{\bold{B}}^{\dagger}_{\mathrm{rig}}$, and then $\varphi$ and $\Gamma_{\mathbb{Q}_p}$ act by 
$\varphi(\pi_{\zeta})=(1+\pi_{\zeta})^p-1$ and $\gamma(\pi_{\zeta})=(1+\pi_{\zeta})^{\chi(\gamma)}-1$ for $\gamma\in \Gamma_{\mathbb{Q}_p}$.

\end{rem}
\begin{notation}\label{2.2}
From \S 3, we will concentrate on the case $K=\mathbb{Q}_p$ and fix  $\zeta:=\{\zeta_{p^n}\}_{n\geqq 0}$ as above. 
Then, we use the notation $\Gamma:=\Gamma_{\mathbb{Q}_p}$, $\pi:=\pi_{\zeta}$ and omit $(\pi_{\mathbb{Q}_p})$ from the 
notation of Robba rings by writing, for example, $\mathcal{R}_A^{[s,r]}$ instead of $\mathcal{R}_A^{[s,r]}(\pi_{\mathbb{Q}_p})$. 
In this case, $\mathcal{R}_A^{[s/p,r/p]}=\oplus_{0\leqq i\leqq p-1}
(1+\pi)^i\varphi(\mathcal{R}_A^{[s,r]})$, so if $f=\sum_{i=0}^{p-1}(1+\pi)^i\varphi(f_i)$ then 
$\psi(f)=f_0$. 
We define the special element $t=\mathrm{log}(1+\pi)\in \mathcal{R}_A^{\infty}$. We have 
$\varphi(t)=pt$ and $\gamma(t)=\chi(\gamma)t$ for $\gamma\in \Gamma$.
\end{notation}

We first recall the definitions of $\varphi$-modules over $\mathcal{R}_A(\pi_K)$  following Definition 2.2.5 of \cite{KPX14}. 

\begin{defn}\label{2.3}
Choose $0<r_0\leqq C(K)$. A $\varphi$-module over $\mathcal{R}^{r_0}_A(\pi_K)$ is a finite 
projective $\mathcal{R}^{r_0}_A(\pi_K)$-module $M^{r_0}$ equipped with a $\mathcal{R}_A^{r_0/p}(\pi_K)$-linear isomorphism 
$\varphi^*M^{r_0}\isom M^{r_0}\otimes_{\mathcal{R}_A^{r_0}(\pi_K)}\mathcal{R}_A^{r_0/p}(\pi_K)$.
A $\varphi$-module $M$ over $\mathcal{R}_A(\pi_K)$ is a base change to $\mathcal{R}_A(\pi_K)$ of a $\varphi$-module over some $\mathcal{R}^{r_0}_A(\pi_K)$.

\end{defn}

For a $\varphi$-module $M^{r_0}$ over $\mathcal{R}^{r_0}_A(\pi_K)$ and for $0<s\leqq r\leqq r_0$, 
we set $M^{[s,r]}=M^{r_0}\otimes_{\mathcal{R}_A^{r_0}(\pi_K)}\mathcal{R}^{[s,r]}_A(\pi_K)$ 
and $M^s=M^{r_0}\otimes_{\mathcal{R}_A^{r_0}(\pi_K)}\mathcal{R}^s_A(\pi_K)$.
For $0<s\leqq r_0$, the given isomorphism $\varphi^*(M^{r_0})\isom M^{r_0/p}$ induces a
 $\varphi$-semilinear map 
$\varphi:M^s\rightarrow \varphi^*M^s\isom \varphi^*M^{r_0}\otimes_{\mathcal{R}^{r_0/p}_A(\pi_K)}
\mathcal{R}_A^{s/p}(\pi_K)\isom M^{r_0/p}\otimes_{\mathcal{R}^{r_0/p}_A(\pi_K)}\mathcal{R}^{s/p}_A(\pi_K)=M^{s/p}$, where the first map $M^s\hookrightarrow \varphi^*M^s$ is given by $x\mapsto x\otimes 1 
\in M^s\otimes_{\mathcal{R}^{s}_A(\pi_K),\varphi}\mathcal{R}^{s/p}_A(\pi_K)=:\varphi^{*}M^s$ and the second isomorphism is just the associativity of tensor products and the third isomorphism 
is the base change of the given isomorphism $\varphi^*M^{r_0}\isom M^{r_0/p}$. 
This map $\varphi$ also induces an $A$-linear homomorphism 
$\psi:M^{s/p}=\varphi(M^s)\otimes_{\varphi(\mathcal{R}_A^s(\pi_K))}\mathcal{R}_A^{s/p}(\pi_K)
\rightarrow M^s$ given by $\psi(\varphi(m)\otimes f)=m\otimes \psi(f)$ for $m\in M^s$ and $f\in 
\mathcal{R}^{s/p}_A(\pi_K)$. For a $\varphi$-module $M$ over 
$\mathcal{R}_A(\pi_K)$, the maps $\varphi:M^s\rightarrow M^{s/p}$ and $\psi:M^{s/p}\rightarrow M^{s}$ naturally extend to $\varphi:M\rightarrow M$ and $\psi:M\rightarrow M$.

We recall the definition of $(\varphi,\Gamma)$-modules over $\mathcal{R}_A(\pi_K)$ 
following Definition 2.2.12 of \cite{KPX14}. 

\begin{defn}
Choose $0<r_0\leqq C(K) $. A $(\varphi,\Gamma)$-module over 
$\mathcal{R}_A^{r_0}(\pi_K)$ is a $\varphi$-module over 
$\mathcal{R}^{r_0}_A(\pi_K)$ equipped with a commuting semilinear continuous 
action of $\Gamma_K$. A $(\varphi,\Gamma)$-modules 
over $\mathcal{R}_A(\pi_K)$ is a base change of a $(\varphi,\Gamma)$-module 
over $\mathcal{R}_A^{r_0}(\pi_K)$ for some $0<r_0\leqq C(K)$.

\end{defn}

We can generalize these notions for general rigid analytic space as in 
Definition 6.1.1 of \cite{KPX14}
\begin{defn}\label{2.5}
Let  $X$ be a rigid analytic space over $\mathbb{Q}_p$.
A $(\varphi,\Gamma)$-module over $\mathcal{R}_X(\pi_K)$ is a compatible family of 
$(\varphi,\Gamma)$-modules over $\mathcal{R}_A(\pi_K)$ for each 
affinoid $\mathrm{Max}(A)$ of $X$. \end{defn}

For $(\varphi,\Gamma)$-modules $M$, $N$ over $\mathcal{R}_X(\pi_K)$. 
We define $M\otimes N:=M\otimes_{\mathcal{R}_X(\pi_K)}N$ for the tensor product 
equipped with the diagonal action of ($\varphi,\Gamma_K$). We also define 
$M^{\vee}:=\mathrm{Hom}_{\mathcal{R}_X(\pi_K)}(M, \mathcal{R}_X(\pi_K))$ for 
the dual $(\varphi,\Gamma)$-module.

For a $(\varphi,\Gamma)$-module $M$ over $\mathcal{R}_A(\pi_K)$. We denote 
$r_M:=\mathrm{rk}_{\mathcal{R}_A(\pi_K)}M\in \mathrm{Map}(\allowbreak\mathrm{Spec}(\mathcal{R}_A(\pi_K)), 
\mathbb{Z}_{\geqq 0})$ for the rank of $M$, where $\mathrm{Map}(-,-)$ is 
the set of continuous maps and $\mathbb{Z}_{\geqq 0}$ is equipped with the discrete 
topology. We will see later (in Remark \ref{rank}) that $r_M$ is 
in fact in $\mathrm{Map}(\mathrm{Spec}(A), \mathbb{Z}_{\geqq 0})$, i.e 
we have $r_M=\mathrm{pr}\circ f_M$ for unique $f_M\in \mathrm{Map}(\mathrm{Spec}(A), 
\mathbb{Z}_{\geqq 0})$, where $\mathrm{pr}:\mathrm{Spec}(\mathcal{R}_A(\pi_K))\rightarrow 
\mathrm{Spec}(A)$ is the natural projection. We also denote by $r_M:=f_M$.

The importance of $(\varphi,\Gamma)$-module follows from the following theorem.

\begin{thm}\label{2.6}$(\mathrm{Theorem}\,3.11$ $\mathrm{of}$ \cite{KL10}$)$
Let $V$ be a vector bundle over $X$ equipped with a continuous 
$\mathcal{O}_X$-linear action of $G_K$. Then there is functorially associated to $V$ a 
$(\varphi,\Gamma)$-module $\bold{D}_{\mathrm{rig}}(V)$ over 
$\mathcal{R}_X(\pi_K)$. The rule $V\mapsto \bold{D}_{\mathrm{rig}}(V)$ is fully faithful 
and exact, and it commutes with base change in $X$. 
\end{thm}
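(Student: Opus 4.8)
The plan is to reduce to the affinoid case and construct $\bold{D}_{\mathrm{rig}}(V)$ in two stages through a tower of period rings --- a ``big'' Galois-descent stage followed by a ``decompletion'' stage --- and then to read off the formal properties from the comparison isomorphisms produced along the way; the two substantial inputs are a relative form of the Ax--Sen--Tate theorem and a relative form of the Cherbonnier--Colmez overconvergence theorem, everything else being bookkeeping. First I would reduce to $X=\mathrm{Max}(A)$ affinoid: by Definition \ref{2.5} a $(\varphi,\Gamma)$-module over $\mathcal{R}_X(\pi_K)$ is a compatible family over the affinoids of $X$, and $V$ is likewise glued from its restrictions to affinoids, so a canonical construction over affinoids will glue, and functoriality, exactness, full faithfulness and base change are all local on $X$. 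Fix then a finite projective $A$-module $V$ with continuous $A$-linear $G_K$-action, and work (for $r$ small) with the chain of period rings $\mathcal{R}_A^{r}(\pi_K)\subseteq\widetilde{\bold{B}}^{\dagger,r}_{\mathrm{rig},K,A}\subseteq\widetilde{\bold{B}}^{\dagger}_{\mathrm{rig},A}$ together with their non-overconvergent, $p$-adically complete analogues (the Witt-vector period rings with coefficients in $A$), on all of which $\varphi$ and $G_K$ act; here $\widetilde{\bold{B}}^{\dagger}_{\mathrm{rig},K,A}:=(\widetilde{\bold{B}}^{\dagger}_{\mathrm{rig},A})^{H_K}$.

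For the first stage (``big descent''), continuity of the $G_K$-action and $p$-adic completeness of the big Witt-vector ring make $V$ tensored with it into an \'etale $\varphi$-module carrying a commuting continuous semilinear $G_K$-action (Frobenius $1\otimes\varphi$; \'etaleness because $\varphi$ is bijective there). Descending to the overconvergent subring --- automatic for \'etale $\varphi$-modules over the perfect Witt-vector rings, by a Frobenius-descent argument --- inverting $p$, and extending scalars to the analytic period ring then yields an \'etale $\varphi$-module $\widetilde{M}$ over $\widetilde{\bold{B}}^{\dagger}_{\mathrm{rig},A}$ with a compatible $G_K$-action and a canonical identification $\widetilde{M}\isom V\otimes_A\widetilde{\bold{B}}^{\dagger}_{\mathrm{rig},A}$. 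The key point is that $\widetilde{M}^{H_K}$ is then finite projective over $\widetilde{\bold{B}}^{\dagger}_{\mathrm{rig},K,A}$ of rank $\mathrm{rk}_A(V)$ and that the natural map $\widetilde{M}^{H_K}\otimes_{\widetilde{\bold{B}}^{\dagger}_{\mathrm{rig},K,A}}\widetilde{\bold{B}}^{\dagger}_{\mathrm{rig},A}\isom\widetilde{M}$ is an isomorphism: this is the relative Ax--Sen--Tate input, which in the Kedlaya--Liu framework rests on the relative almost purity theorem and the (almost) vanishing of the relevant higher $H_K$-cohomology of the period rings with $A$-coefficients. In particular $\widetilde{M}^{H_K}$ inherits $\varphi$ and a residual $\Gamma_K=G_K/H_K$-action.

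The second stage (``decompletion'') is where I expect the main difficulty. One must descend $\widetilde{M}^{H_K}$ along $\mathcal{R}_A(\pi_K)\hookrightarrow\widetilde{\bold{B}}^{\dagger}_{\mathrm{rig},K,A}$ to a finite projective $\mathcal{R}_A(\pi_K)$-module $\bold{D}_{\mathrm{rig}}(V)$, stable under $\varphi$ and $\Gamma_K$, with $\bold{D}_{\mathrm{rig}}(V)\otimes_{\mathcal{R}_A(\pi_K)}\widetilde{\bold{B}}^{\dagger}_{\mathrm{rig},K,A}\isom\widetilde{M}^{H_K}$. This is the family version of the Cherbonnier--Colmez overconvergence theorem; the way to get it is to verify the Tate--Sen (decompletion) axioms --- existence of good normalized traces and of a $\Gamma_K$-equivariant approximation of $\widetilde{\bold{B}}^{\dagger}_{\mathrm{rig},K,A}$ by its imperfect subring --- uniformly over $\mathrm{Max}(A)$, using the theory of fields of norms and the finite \'etale tower $\mathcal{R}_A^{r}(\pi_L)/\mathcal{R}_A^{r}(\pi_K)$ recalled above; this is also where one checks that the rank is locally constant on $\mathrm{Spec}(A)$ (Remark \ref{rank}). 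By construction $\bold{D}_{\mathrm{rig}}(V)$ is then a $(\varphi,\Gamma)$-module over $\mathcal{R}_A(\pi_K)$, and it is functorial in $V$ because every step is.

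It remains to read off the formal properties. Base change $A\to A'$ commutes with each step, giving a canonical $\bold{D}_{\mathrm{rig}}(V)\widehat{\otimes}_AA'\isom\bold{D}_{\mathrm{rig}}(V\widehat{\otimes}_AA')$; in particular the construction is independent of the auxiliary radius and glues over $X$. An exact sequence $0\to V_1\to V_2\to V_3\to0$ of $G_K$-vector bundles is locally split over $A$, hence stays exact after $-\otimes_A\widetilde{\bold{B}}^{\dagger}_{\mathrm{rig},A}$; left exactness of $H_K$-invariants together with the comparison isomorphisms of the first two stages (which pin down the rank of the cokernel) then forces $0\to\bold{D}_{\mathrm{rig}}(V_1)\to\bold{D}_{\mathrm{rig}}(V_2)\to\bold{D}_{\mathrm{rig}}(V_3)\to0$ to be exact. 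For full faithfulness, applying $-\otimes_{\mathcal{R}_A(\pi_K)}\widetilde{\bold{B}}^{\dagger}_{\mathrm{rig},A}$ and the comparison isomorphisms identifies $\mathrm{Hom}_{(\varphi,\Gamma_K)}(\bold{D}_{\mathrm{rig}}(V_1),\bold{D}_{\mathrm{rig}}(V_2))$ with $(\mathrm{Hom}_A(V_1,V_2)\otimes_A\widetilde{\bold{B}}^{\dagger}_{\mathrm{rig},A})^{\varphi=1,G_K}$, so the claim reduces to the fundamental identity $(\widetilde{\bold{B}}^{\dagger}_{\mathrm{rig},A})^{\varphi=1}=A$ --- equivalently, that $W\to(W\otimes_A\widetilde{\bold{B}}^{\dagger}_{\mathrm{rig},A})^{\varphi=1}$ is a $G_K$-equivariant isomorphism for every $G_K$-representation $W$ over $A$ --- which is again part of the relative $p$-adic Hodge theory package of \cite{KL10}. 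Summarizing: the genuinely hard part is the second stage, the family overconvergence theorem; the first stage is the relative Ax--Sen--Tate statement, and the formal properties are consequences of the comparison isomorphisms these two stages provide.
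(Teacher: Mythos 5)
The paper does not prove this statement: it is quoted verbatim as Theorem~3.11 of Kedlaya--Liu (\cite{KL10}) and used as a black-box input, so there is no internal proof to compare your attempt against. As a reconstruction of the Kedlaya--Liu argument, your two-stage architecture is the right shape: first a ``big'' Galois descent to $(\widetilde{\bold{B}}^{\dagger}_{\mathrm{rig},A}\otimes_A V)^{H_K}$ via relative Ax--Sen--Tate/almost purity, then decompletion to $\mathcal{R}_A(\pi_K)$ via a Tate--Sen formalism with affinoid coefficients (the family Cherbonnier--Colmez overconvergence theorem), and you correctly single out the second stage --- verifying the Tate--Sen constants uniformly over $\mathrm{Max}(A)$ --- as the genuine technical core rather than bookkeeping. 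One mild caution: your reduction of full faithfulness to $(\widetilde{\bold{B}}^{\dagger}_{\mathrm{rig},A})^{\varphi=1}=A$ is itself not a formality in the relative setting; with affinoid coefficients it needs the flatness/finiteness results that \cite{KL10} build, so it belongs on the list of substantial inputs rather than among the ``fundamental identities'' assumed for free. For the purposes of this paper what matters is only that the cited statement supplies exactly the deliverables used downstream (finite projectivity of $\bold{D}_{\mathrm{rig}}(V)$, exactness, full faithfulness, and compatibility with base change in $X$), and your reading of the statement agrees with that.
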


For example, we have a canonical isomorphism $\bold{D}_{\mathrm{rig}}(A(k))=\mathcal{R}_A(\pi_K)(k)$ 
for $k\in \mathbb{Z}$.


From \S3, we will concentrate on the case where $K=\mathbb{Q}_p$ and $M$ is rank one $(\varphi,\Gamma)$-modules
over $\mathcal{R}_X$. Here, we recall the result of 
\cite{KPX14} concerning the classification of rank one $(\varphi,\Gamma)$-modules. 
Actually, they obtained a similar result for general $K$, but we don't recall it 
since we don't use it.

\begin{defn}\label{2.8}
For 
a 
continuous homomorphism 
$\delta:\mathbb{Q}_p^{\times}\rightarrow \Gamma(X,\mathcal{O}_X)^{\times}$, we define 
$\mathcal{R}_X(\delta)$ to be the rank one $(\varphi,\Gamma)$-module 
$\mathcal{R}_X\cdot \bold{e}_{\delta}$ over $\mathcal{R}_X$ 
with $\varphi(\bold{e}_{\delta})=\delta(p)\bold{e}_{\delta}$ 
and $\gamma(\bold{e}_{\delta})=\delta(\chi(\gamma))\bold{e}_{\delta}$ for $\gamma\in \Gamma$.
\end{defn}
\begin{thm}\label{2.9}$(\mathrm{Theorem }\,\, 6.1.10\,\, \mathrm{ of }$ \cite{KPX14}$)$
Let 
$M$ be a rank 1 $(\varphi,\Gamma)$-module 
over $\mathcal{R}_X$. Then, there exist a continuous homomorphism
$\delta:\mathbb{Q}_p^{\times}\rightarrow \Gamma(X,\mathcal{O}_X)^{\times}$ and an invertible sheaf 
$\mathcal{L}$ on $X$, the pair of which is unique up to isomorphism, such that 
$M\isom \mathcal{R}_X(\delta)\otimes_{\mathcal{O}_X}\mathcal{L}$.
\end{thm}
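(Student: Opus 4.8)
The plan is to reduce to an affinoid base and there build $\delta$ by normalizing the $\varphi$- and $\Gamma$-actions on a local generator of $M$, absorbing the obstruction to globalizing the generator into the sheaf $\mathcal{L}$. First I would reduce to $X=\mathrm{Max}(A)$: by Definition \ref{2.5} a rank one $(\varphi,\Gamma)$-module over $\mathcal{R}_X$ is a compatible family over the affinoid subdomains, and once uniqueness is in hand the locally produced pairs $(\delta,\mathcal{L})$ glue. So fix $A$ and $M$; since $M^{r_0}$ is finite projective of rank one over $\mathcal{R}_A^{r_0}$, after replacing $\mathrm{Max}(A)$ by the members of a finite admissible cover I may assume $M^{r_0}=\mathcal{R}_A^{r_0}\cdot e$ is free. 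Write $\varphi(e)=f\,e$ with $f\in(\mathcal{R}_A^{r_0/p})^{\times}$ — it is a unit because $\varphi^{*}M^{r_0}\isom M^{r_0}\otimes_{\mathcal{R}_A^{r_0}}\mathcal{R}_A^{r_0/p}$ — and $\gamma(e)=g_\gamma\,e$ with $g_\gamma\in(\mathcal{R}_A^{r_0})^{\times}$ satisfying the cocycle relations forced by the commutativity of $\varphi$ with $\Gamma$ and by $\Gamma$ being abelian.

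Next I would normalize the Frobenius. Using Kedlaya--Liu's relative slope theory over the Robba ring, the rank one $\varphi$-module underlying $M$ has a locally constant slope; twisting by $\mathcal{R}_A(\mathrm{unr}_\lambda)$ (the character with $\delta(p)=\lambda$, $\delta|_{\mathbb{Z}_p^{\times}}=1$) for a suitable $\lambda\in A^{\times}$ and changing the generator $e\mapsto u\,e$ with $u\in(\mathcal{R}_A^{r})^{\times}$, I may reduce to $\varphi(e)=e$; the behaviour of $\psi$ and of $\mathrm{H}^0_{\varphi}$ then pins the remaining freedom in $u$ down to $A^{\times}$. With Frobenius trivialized, $\gamma\mapsto g_\gamma$ becomes a continuous $1$-cocycle of $\Gamma$ valued in the units of $(\mathcal{R}_A^{r})^{\varphi=1}$, a module close to $A^{\times}$; using $\Gamma\cong\mathbb{Z}_p^{\times}$ together with the finiteness/vanishing statements for $(\varphi,\Gamma)$-cohomology recalled in $\S2.1$, a further change of generator brings this to $\gamma(e)=\delta_0(\chi(\gamma))\,e$ for a unique continuous character $\delta_0\colon\mathbb{Z}_p^{\times}\to A^{\times}$. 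Setting $\delta(p):=\lambda$ and $\delta|_{\mathbb{Z}_p^{\times}}:=\delta_0$ gives $\delta\colon\mathbb{Q}_p^{\times}\to A^{\times}$ with $M\isom\mathcal{R}_A(\delta)$ on each member of the cover. Patching, the transition data lie in classes of $\mathcal{R}_A^{\times}$ that descend to $\mathrm{Pic}(A)$ — one checks $\mathrm{Pic}(\mathcal{R}_A)=\mathrm{Pic}(A)$, e.g. from the Bézout property of the slices $\mathcal{R}_A^{[s,r]}$ over a point plus the slope/$\mathrm{H}^0$ analysis — producing $\mathcal{L}$ on $X$ with $M\isom\mathcal{R}_X(\delta)\otimes_{\mathcal{O}_X}\mathcal{L}$, and $\delta$ is global since it agrees on overlaps.

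For uniqueness, suppose $\mathcal{R}_X(\delta_1)\otimes\mathcal{L}_1\isom\mathcal{R}_X(\delta_2)\otimes\mathcal{L}_2$. Tensoring with the inverse reduces to $\mathcal{R}_X(\delta)\isom\mathcal{R}_X\otimes\mathcal{L}$ with $\delta=\delta_1\delta_2^{-1}$; locally where $\mathcal{L}$ is trivial this yields $u\in\mathcal{R}_A^{\times}$ with $\varphi(u)=\delta(p)u$ and $\gamma(u)=\delta(\chi(\gamma))u$, i.e. a nonvanishing, module-generating element of $\mathrm{H}^0_{\varphi,\gamma}(\mathcal{R}_A(\delta))$. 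The computation of $\mathrm{H}^0$ of rank one $(\varphi,\Gamma)$-modules forces $\delta(p)$ of slope $0$ and $\delta$ of the form $x\mapsto x^{-k}$ up to a slope-$0$ twist; the requirement that $u$ generate the module as an $\mathcal{R}_A$-module (not merely lie in the cohomology) forces $k=0$ and $\delta=1$, hence $\delta_1=\delta_2$ and $\mathcal{L}_1\cong\mathcal{L}_2$.

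The main obstacle is the Frobenius normalization over a general affinoid base: upgrading the pointwise Dieudonn\'e--Manin classification of rank one $\varphi$-modules to an $A$-family statement requires the full relative slope filtration of Kedlaya--Liu and careful bookkeeping of which units of $\mathcal{R}_A$ can be removed by a change of generator versus those that persist as $A^{\times}$; in parallel, ensuring that the extracted $\Gamma$-character $\delta_0\colon\mathbb{Z}_p^{\times}\to A^{\times}$ is genuinely continuous uniformly in the family, and identifying the residual gluing cocycle with an element of $\mathrm{Pic}(A)$, is the delicate part of the argument.
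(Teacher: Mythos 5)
This theorem is not proved in the paper at all: it is imported verbatim as Theorem 6.1.10 of \cite{KPX14}, and Nakamura gives no argument for it. So there is no ``paper's own proof'' to compare against, and your attempt should be judged against what is actually needed to establish the classification over a general $\mathbb{Q}_p$-affinoid (and then a general rigid space) $X$.

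Your outline is essentially the pointwise (field-coefficient) argument of Colmez promoted to families, and as such it is pointed in a reasonable direction, but it has a genuine gap exactly where the theorem is hard. The step ``twisting by $\mathcal{R}_A(\mathrm{unr}_\lambda)$ for a suitable $\lambda\in A^{\times}$ and changing the generator $e\mapsto ue$ with $u\in\mathcal{R}_A^{\times}$ reduces to $\varphi(e)=e$'' is, for a general affinoid $A$, essentially equivalent to the theorem for rank-one $\varphi$-modules; it is not something one can extract by merely invoking ``locally constant slope'' from Kedlaya--Liu. Concretely, you need to show that for every unit $f\in(\mathcal{R}_A^{r_0/p})^{\times}$ there exist $\lambda\in A^{\times}$ and $u\in\mathcal{R}_A^{\times}$ with $\varphi(u)=\lambda^{-1}f^{-1}u$, and you need this to be a clean surjection of $\mathcal{R}_A^{\times}/A^{\times}$ onto the relevant quotient, uniformly over $\mathrm{Max}(A)$. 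Over a field this is classical; over an affinoid base the structure of $\mathcal{R}_A^{\times}/(\varphi-1)$ is delicate (one must control its cokernel and show it is already exhausted by $A^{\times}$), and the slope filtration alone produces a $B_\lambda$-pair decomposition, not the unit $u$. The same difficulty recurs in your treatment of the $\Gamma$-cocycle: once $\varphi(e)=e$ you want the $\Gamma$-action to land in $((\mathcal{R}_A^{r})^{\varphi=1})^{\times}$, and that module is easy to analyze over a field but requires a family argument to identify with $A^{\times}$.

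The route actually taken in \cite{KPX14} is cohomological rather than an explicit normalization of a generator: one first establishes the pointwise result (Colmez) and then interpolates it, using the finiteness and base-change properties of $(\varphi,\Gamma)$-cohomology over the relative Robba ring together with the rigid-analytic structure of the space of continuous characters $\mathbb{Q}_p^{\times}\to A^{\times}$, to show that the pointwise $\delta_{x}$ glue into a single rigid-analytic character and that the residual ambiguity is a line bundle on $X$. That interpolation replaces your ``normalize Frobenius, then normalize $\Gamma$'' steps and avoids the explicit unit-hunting in $\mathcal{R}_A^{\times}$. Your uniqueness argument, on the other hand, is fine at the level of ideas (a unit in $\mathrm{H}^{0}_{\varphi,\gamma}$ forces $\delta=1$ because the only nonzero $\mathrm{H}^{0}$'s are spanned by $t^{k}$ with $k\geq 0$, and $t^{k}$ is a unit only when $k=0$), though you should be careful about which twist the $\mathrm{H}^{0}$ is taken of and about making the pointwise conclusion into a statement over a possibly nonreduced $A$. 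In short: the plan is sound in shape, but the existence half needs to be rebuilt around the cohomological interpolation in \cite{KPX14} (or around a genuine relative analysis of $\mathcal{R}_A^{\times}$ under $\varphi$), because the current argument assumes the Frobenius normalization rather than proving it.
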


\begin{notation}\label{2.10}
i) For $\delta,\delta':\mathbb{Q}_p^{\times}\rightarrow \Gamma(X,\mathcal{O}_X)^{\times}$, we fix an isomorphism 
$\mathcal{R}_X(\delta)\otimes\mathcal{R}_X(\delta')\isom \mathcal{R}_X(\delta\delta')$ by $\bold{e}_{\delta}\otimes
\bold{e}_{\delta'}\mapsto \bold{e}_{\delta\delta'}$, and fix an isomorphism $\mathcal{R}_X(\delta)^{\vee}\isom \mathcal{R}_X(\delta^{-1})$ by $\bold{e}_{\delta}^{\vee}\mapsto \bold{e}_{\delta^{-1}}$.

ii) For $k\in \mathbb{Z}$, we define a continuous homomorphism
$x^k:\mathbb{Q}_p^{\times}\rightarrow \Gamma(X,\mathcal{O}_X)^{\times}:y\mapsto 
y^k$.  Define $|x|:\mathbb{Q}_p^{\times}\rightarrow \Gamma(X,\mathcal{O}_X)^{\times}:p\mapsto p^{-1}, a\mapsto 1$ for $a\in \mathbb{Z}_p^{\times}$. 
Then, the homomorphism $x|x|$ corresponds to Tate twist, i.e. we have an isomorphism 
$\mathcal{R}_X(1)\isom \mathcal{R}_X(x|x|)$. When we fix a generator $\zeta\in \mathbb{Z}_p(1)$, 
we identify $\mathcal{R}_X(1)=\mathcal{R}_X(x|x|)$ by $\bold{e}_1\mapsto \bold{e}_{x|x|}$.
\end{notation}

We next recall some cohomology theories concerning $(\varphi,\Gamma)$-modules. 
Denote by $\Delta$ for the largest $p$-power torsion subgroup of 
$\Gamma_{K}$. Fix  $\gamma\in \Gamma_K$ whose image in 
$\Gamma_K/\Delta$ is a topological generator.
For a $\Delta$-module $M$, put $M^{\Delta}=\{m\in M|\sigma(m)=m \,\text{ for all } \,\sigma\in \Delta\}$.

\begin{defn}\label{2.11}
For  a $(\varphi,\Gamma)$-module $M$ over $\mathcal{R}_A(\pi_K)$, we define the complexes 
$C^{\bullet}_{\varphi,\gamma}(M)$ and $C^{\bullet}_{\psi,\gamma}(M)$ of $A$-modules
concentrated in degree $[0,2]$, and define a morphism $\Psi_M$ between them as follows:
\begin{equation}\label{a}
\begin{CD}
C^{\bullet}_{\varphi,\gamma}(M)@.= [M^{\Delta} @> (\gamma-1,\varphi-1)>> 
M^{\Delta}\oplus M^{\Delta} @> (\varphi-1)\oplus(1-\gamma) >> M^{\Delta}] \\
@V \Psi_M VV  @ VV\mathrm{id} V @ VV \mathrm{id}\oplus -\psi V @ VV -\psi V \\
C^{\bullet}_{\psi,\gamma}(M)@.= [M^{\Delta} @> (\gamma-1,\psi-1)>> 
M^{\Delta}\oplus M^{\Delta} @> (\psi-1)\oplus(1-\gamma) >> M^{\Delta}] .
\end{CD}
\end{equation}
The map $\Psi_M$ is quasi-isomorphism by Proposition 2.3.4 of \cite{KPX14}.
\end{defn}
For $i\in \mathbb{Z}_{\geqq 0}$, define $\mathrm{H}^i_{\varphi,\gamma}(M)$ for 
the $i$-th cohomology of 
$C^{\bullet}_{\varphi,\gamma}(M)$, called the $(\varphi,\Gamma)$-cohomology of $M$. We similarly define $\mathrm{H}^i_{\psi,\gamma}(M)$ to be the $i$-th cohomology of $C^{\bullet}_{\psi,\gamma}(M)$, called the 
$(\psi,\Gamma)$-cohomology of $M$. In this article, we freely identify 
$C^{\bullet}_{\varphi,\gamma}(M)$  (respectively $\mathrm{H}^i_{\varphi,\gamma}(M)$) 
with $C^{\bullet}_{\psi,\gamma}(M)$ (respectively $\mathrm{H}^i_{\psi,\gamma}(M)$) via the 
quasi-isomorphism $\Psi_M$.

More generally, for $h=\varphi, \psi$ and any module $N$ with a commuting actions of $h$ 
and $\Gamma$, 
we similarly define the complexes $C^{\bullet}_{h,\gamma}(N)$and denote the resulting cohomology by 
$\mathrm{H}^i_{h,\gamma}(N)$. 
We denote $[x,y]\in \mathrm{H}^1_{h,\gamma}(N)$ 
(respectively $[z]\in \mathrm{H}^2_{h,\gamma}(N)$) for the 
element represented by a one cocycle $(x,y)\in N^{\Delta}\oplus N^{\Delta}$ (respectively 
by $z\in N^{\Delta}$). The functor 
$N\mapsto C^{\bullet}_{h,\gamma}(N)$ from the category of 
topological $A$-modules which are Hausdorff with commuting continuous actions 
of $h,\Gamma_K$ to the category of complexes of $A$-modules is independent of the choice of $\gamma$ up to 
canonical isomorphism, i.e. for another choice $\gamma'\in \Gamma_K$, we have a canonical isomorphism 
\begin{equation}\label{b}
\begin{CD}
C^{\bullet}_{h,\gamma}(N)@.= [N^{\Delta} @> (\gamma-1,h-1)>> 
N^{\Delta}\oplus N^{\Delta} @> (h-1)\oplus(1-\gamma) >> N^{\Delta}] \\
@V \iota_{\gamma,\gamma'} VV  @ VV\mathrm{id} V @ VV \frac{\gamma'-1}{\gamma-1}\oplus \mathrm{id} V @ VV 
\frac{\gamma'-1}{\gamma-1} V \\
C^{\bullet}_{h,\gamma'}(N)@.= [N^{\Delta} @> (\gamma'-1,h-1)>> 
N^{\Delta}\oplus N^{\Delta} @> (h-1)\oplus(1-\gamma') >> N^{\Delta}] .
\end{CD}
\end{equation}

For a commutative ring $R$, let denote $\bold{D}^-(R)$ for the derived category of 
bounded below complexes of $R$-modules. We use the  same notation $C^{\bullet}_{h,\gamma}(N)\in \bold{D}^-(A)$ for the object represented by this  complex.

 Let $V$ be a finite projective $A$-module with a continuous $A$-linear action of $G_K$. 
Let denote $C^{\bullet}_{\mathrm{cont}}(G_K, V)$ for the complex of continuous $G_K$-cochains 
with values in $V$, and denote $\mathrm{H}^i(K, V)$ for the cohomology. 
By Theorem 2.8 of \cite{Po13a}, we have a functorial isomorphism 
$$C^{\bullet}_{\mathrm{cont}}(G_K, V)\isom C^{\bullet}_{\varphi,\gamma}(\bold{D}_{\mathrm{rig}}(V))$$ 
in $\bold{D}^-(A)$ and a functorial $A$-linear isomorphism 
$$\mathrm{H}^i(K, V)\isom \mathrm{H}^i_{\varphi,\gamma}(\bold{D}_{\mathrm{rig}}(V)).$$

\begin{defn}\label{2.12}
For $(\varphi,\Gamma)$-modules $M, N$ over $\mathcal{R}_A(\pi_K)$, we have a 
natural $A$-bilinear cup product morphism 
$$C^{\bullet}_{\varphi,\gamma}(M)\times C^{\bullet}_{\varphi,\gamma}(N)
\rightarrow C^{\bullet}_{\varphi,\gamma}(M\otimes N),$$
see Definition 2.3.11 of \cite{KPX14} for the definition. This induces an $A$-bilinear graded commutative 
cup product pairing 
$$\cup:\mathrm{H}^i_{\varphi,\gamma}(M)
\times \mathrm{H}^{j}_{\varphi,\gamma}(N)\rightarrow \mathrm{H}^{i+j}_{\varphi,\gamma}(M\otimes N).$$ 
For example, this is defined by the formulae
$$x\cup [y]:=[x\otimes y] \text{ for } i=0, j=2,$$
$$[x_1,y_1]\cup [x_2,y_2]:=[x_1\otimes \gamma(y_2)-y_1\otimes \varphi(x_2)] \text{ for } i=j=1.$$
\end{defn}
\begin{rem}\label{2.12.111}
We remark that the definition of the cup product for $\mathrm{H}_{\varphi,\gamma}^1(-)\times
\mathrm{H}^1_{\varphi,\gamma}(-)\rightarrow \mathrm{H}^2_{\varphi,\gamma}(-)$ given in my previous paper \cite{Na14a} is $(-1)$-times of the above definition. The above one seems to be the standard one in the literatures. All the results of \cite{Na14a} holds without any changes 
when we use the above definition except Lemma 2.13 and Lemma 2.14 of \cite{Na14a}, where 
we need to multiply $(-1)$ for the commutative diagrams there to be commutative.
\end{rem}

\begin{defn}\label{dual}
Let denote by $M^*:=M^{\vee}(1)$ for  the Tate dual of $M$. Using the cup product,
the evaluation map $\mathrm{ev}:M^*\otimes M\rightarrow \mathcal{R}_A(\pi_K)(1):f\otimes x\mapsto f(x)$, the comparison isomorphism $\mathrm{H}^{2}(K, A(1))
\isom \mathrm{H}^2_{\varphi,\gamma}(\mathcal{R}_A(\pi_K)(1))$ and 
the Tate's trace map $\mathrm{H}^2(K, A(1))\isom A$, one gets the Tate duality pairings 

\[
\begin{array}{ll}
C^{\bullet}_{\varphi,\gamma}(M^*)\times C^{\bullet}_{\varphi,\gamma}(M)
&\rightarrow C^{\bullet}_{\varphi,\gamma}(M^*\otimes M) \\
&\rightarrow C^{\bullet}_{\varphi,\gamma}(\mathcal{R}_A(\pi_K)(1)))
\rightarrow \mathrm{H}^2_{\varphi,\gamma}(\mathcal{R}_A(\pi_K)(1))[-2] \\
&\isom \mathrm{H}^2(K, A(1))[-2]\isom A[-2]
\end{array}
\]
and 
$$\langle-,-\rangle:\mathrm{H}^i_{\varphi,\gamma}(M^*)\times \mathrm{H}^{2-i}_{\varphi,\gamma}(M)\rightarrow A.$$
\end{defn}

\begin{rem}\label{2.13}
In the appendix,
we explicitly describe the isomorphism 
$\mathrm{H}^2_{\varphi,\gamma}(\mathcal{R}_A(1))\allowbreak\isom \mathrm{H}^2(G_{\mathbb{Q}_p}, A(1))\isom A$ using the residue map; see Proposition \ref{explicitdual}.

\end{rem}


One of the main results of \cite{KPX14} which is crucial to formulate our conjecture is the following.
\begin{thm}\label{2.16}$(\mathrm{Theorem}\,\, 4.4.3, \mathrm{Theorem}\,\,4.4.4 \,\,\mathrm{of}$ \cite{KPX14}$)$
Let $M$ be a $(\varphi,\Gamma)$-module over $\mathcal{R}_A(\pi_K)$.
\begin{itemize}
\item[(1)]$C^{\bullet}_{\varphi,\gamma}(M)\in \bold{D}^{[0,2]}_{\mathrm{perf}}(A)$. In particular, the cohomology groups $\mathrm{H}^i_{\varphi,\gamma}(M)$ are finite $A$-modules.
\item[(2)]Let $A\rightarrow A'$ be a continuous morphism of $\mathbb{Q}_p$-affinoid algebras. Then, the canonical morphism $C^{\bullet}_{\varphi,\gamma}(M)\otimes^{\bold{L}}_{A}A'\rightarrow 
C^{\bullet}_{\varphi,\gamma}(M\hat{\otimes}_AA')$ is a quasi-isomorphism. In particular, if $A'$ is flat over $A$, 
we have $\mathrm{H}^i_{\varphi,\gamma}(M)\otimes_A A'\isom \mathrm{H}^i_{\varphi,\gamma}(M\hat{\otimes}_A A')$.
\item[(3)]$(\mathrm{Euler}$-$\mathrm{Poincar}$\'e$\,\,\mathrm{characteristic}\,\,\mathrm{ formula})$We have $\chi_A(C^{\bullet}_{\varphi,\gamma}(M))=
-[K:\mathbb{Q}_p]\cdot r_M$.
\item[(4)]$(\mathrm{Tate}\,\,\mathrm{ duality})$ The Tate duality pairing defined in Definition \ref{dual} induces a quasi-isomorphism 
$$C^{\bullet}_{\varphi,\gamma}(M)\isom \bold{R}\mathrm{Hom}_A(C^{\bullet}_{\varphi,\gamma}(M^*), A)[-2].$$

\end{itemize}
\end{thm}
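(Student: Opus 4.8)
The plan is to treat part~(1) as the substantive statement and to deduce (2), (3), (4) from it by derived base change together with a fibrewise reduction to the case of a $p$-adic coefficient field. For (1) I would first replace $C^{\bullet}_{\varphi,\gamma}(M)$ by the $\psi$-version $C^{\bullet}_{\psi,\gamma}(M)$ via the quasi-isomorphism $\Psi_M$ of Definition~\ref{2.11}, since $\psi$ has far better finiteness behaviour than $\varphi$; the amplitude $[0,2]$ is then immediate from the shape of the complex, so only perfectness is at stake. Because $|\Delta|$ is a power of $p$ and hence a unit in the $\mathbb{Q}_p$-algebra $A$, passage to $\Delta$-invariants is exact and $M^{\Delta}$ splits off a chosen Frobenius-stable model $M^{r_0}$ over $\mathcal{R}^{r_0}_A(\pi_K)$, so I may work throughout with $M^{\Delta}$ and its Fréchet completion $\varprojlim_s M^{[s,r_0]}$, on which $\psi$ acts.

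The heart of the matter --- and the step I expect to be the main obstacle --- is to show that the two-term complex $[M \xrightarrow{\psi-1} M]$, with its residual commuting $\Gamma_K$-action, represents a perfect complex (the analytic Iwasawa cohomology $\bold{R}\Gamma_{\mathrm{Iw}}(M)$) over the Fréchet--Stein algebra $\mathcal{R}_A(\Gamma_K)$. I would prove this by exploiting the radius-improving property of $\psi$ (it carries $\mathcal{R}_A^{[s/p,r/p]}(\pi_K)$, and hence $M^{[s/p,r/p]}$, into $M^{[s,r]}$): on each Banach constituent, $\psi-1$ differs from the identity by a map which becomes completely continuous after the natural restriction to a smaller annulus, which forces $\ker(\psi-1)$ and $\mathrm{coker}(\psi-1)$ to be finitely generated over $\mathcal{R}_A(\Gamma_K)$, and a standard coadmissibility and glueing argument over the Fréchet--Stein algebra then upgrades this to perfectness of the two-term complex. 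Granting this, $C^{\bullet}_{\psi,\gamma}(M^{\Delta})$ is identified with the derived tensor product $\bold{R}\Gamma_{\mathrm{Iw}}(M)\otimes^{\bold{L}}_{\mathcal{R}_A(\Gamma_K)}A$ --- the Koszul complex for the regular parameter $\gamma-1$, combined with the prime-to-$p$ descent along $\Delta$ --- so perfectness descends to $C^{\bullet}_{\psi,\gamma}(M)$, and hence to $C^{\bullet}_{\varphi,\gamma}(M)$, over $A$; this proves (1).

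For (2), every ingredient used above --- the annulus models $M^{[s,r]}$, the operators $\varphi,\psi$, the $\Gamma_K$-action, hence the defining three-term complex --- is formed compatibly with a continuous base change $A\to A'$, which produces the natural morphism $C^{\bullet}_{\varphi,\gamma}(M)\otimes^{\bold{L}}_A A'\to C^{\bullet}_{\varphi,\gamma}(M\hat{\otimes}_A A')$. Since both sides are perfect by (1), it suffices to check the morphism is a quasi-isomorphism at the level of the defining complexes of flat $A$-modules, a direct verification; the statement for the individual cohomology groups when $A'$ is flat over $A$ is then the degeneration of the base-change spectral sequence.

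Finally, (3) and (4) are both insensitive to base change and can therefore be checked fibrewise. For (3), $\chi_A(C^{\bullet}_{\varphi,\gamma}(M))$ and $[K:\mathbb{Q}_p]\,r_M$ are locally constant functions on $\mathrm{Spec}(A)$, so by (2) it suffices to evaluate at a maximal ideal $\mathfrak{m}$, where $L:=\kappa(\mathfrak{m})$ is a finite extension of $\mathbb{Q}_p$ and the identity $\dim_L\mathrm{H}^0-\dim_L\mathrm{H}^1+\dim_L\mathrm{H}^2=-[K:\mathbb{Q}_p]\,\mathrm{rk}\,M$ is the classical Euler--Poincar\'e formula for $(\varphi,\Gamma)$-modules over $\mathcal{R}_L$ (Liu, Colmez). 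For (4), the cup product and the trace map of Definition~\ref{dual} produce a morphism of perfect complexes $C^{\bullet}_{\varphi,\gamma}(M)\to \bold{R}\mathrm{Hom}_A(C^{\bullet}_{\varphi,\gamma}(M^*),A)[-2]$; its cone is perfect and, by (2), its formation commutes with $-\otimes^{\bold{L}}_A\kappa(\mathfrak{m})$, so --- using that a morphism of perfect complexes over a $\mathbb{Q}_p$-affinoid algebra which is a quasi-isomorphism after derived reduction modulo every maximal ideal is already a quasi-isomorphism --- it suffices to treat $A=L$, where the assertion is the local Tate duality theorem for $(\varphi,\Gamma)$-modules over the Robba ring, proved via the residue pairing on $\mathcal{R}_L$ and the explicit computation $\mathrm{H}^2_{\varphi,\gamma}(\mathcal{R}_L(1))\isom L$ (cf.\ Remark~\ref{2.13}).
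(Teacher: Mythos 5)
First, a point of calibration: the paper does not prove Theorem \ref{2.16} at all --- it is imported verbatim from Kedlaya--Pottharst--Xiao (Theorems 4.4.3 and 4.4.4 of \cite{KPX14}), so there is no in-paper proof to match your argument against. Your deductions of (2), (3) and (4) from (1) are essentially the standard ones and are sound: base change follows because the defining three-term complex of flat $A$-modules is formed compatibly with $A\rightarrow A'$ and both sides are perfect; the Euler characteristic and the duality morphism are then checked fibrewise at maximal ideals (using that affinoid algebras are Jacobson, so a perfect complex with vanishing derived fibres is acyclic), where they reduce to Liu's Euler--Poincar\'e formula and Tate duality over a $p$-adic field. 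The point that the amplitude claim in (1) needs $M$ flat over $A$ (Corollary 2.1.7 of \cite{KPX14}) in addition to perfectness is minor.

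The genuine gap is in your proof of (1), which is also where you diverge most from \cite{KPX14}. You propose to establish perfectness of $[M\xrightarrow{\psi-1}M]$ over the Fr\'echet--Stein algebra $\mathcal{R}_A(\Gamma_K)$ first and then recover $C^{\bullet}_{\psi,\gamma}(M)$ by derived base change along the augmentation. But the mechanism you offer --- that $\psi-1$ differs from the identity by a map which is completely continuous after shrinking the annulus --- is a statement entirely in the ``radius'' direction, i.e.\ about Banach modules over $A$, and cannot by itself produce finite generation of $\ker(\psi-1)$ and $\mathrm{coker}(\psi-1)$ over the distribution algebra: these modules are typically infinite-dimensional over $A$, and their finiteness over $\mathcal{R}_A(\Gamma_K)$ is an intrinsically Iwasawa-theoretic fact whose engine is the structure of $M^{\psi=0}$ as a $\mathcal{R}_A(\Gamma_K)$-module (the invertibility of $\gamma-1$ on $M^{\psi=0}$ and the resulting freeness statement, Theorem 3.1.1 of \cite{KPX14}), which your sketch never invokes. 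Moreover the logical order in \cite{KPX14} is the reverse of yours: they prove perfectness over the affinoid base directly, by exhibiting the complex $\widetilde{C}^{\bullet}_{\varphi,\gamma}(M^{[s,r]})$ as a complex of Banach $A$-modules admitting a completely continuous self-quasi-isomorphism (a Kiehl/Cartan--Serre argument), and only afterwards deduce the perfectness of the Iwasawa complex by applying the affinoid result to the cyclotomic deformation $\bold{Dfm}(M)$ (their Theorem 4.4.8, recalled in \S 4.1 of this paper). If you intend to quote that Iwasawa perfectness, your argument becomes circular; if you intend to prove it directly, the complete-continuity observation is not sufficient and the main technical content of the theorem remains to be supplied.
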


\begin{rem}\label{rank}
By the equality of (3) of the above theorem, the rank $r_M\in \mathrm{Map}(\mathrm{Spec}(\mathcal{R}_A(\pi_K)),\mathbb{Z}_{\geqq 0})$ is contained in $\mathrm{Map}(\mathrm{Spec}(A), 
\mathbb{Z}_{\geqq 0})$.

\end{rem}

Let $X$ be a rigid analytic space over $\mathbb{Q}_p$ and 
let $M$ be a $(\varphi,\Gamma)$-module over 
$\mathcal{R}_X(\pi_K)$. By (1) and (2) of the above theorem, the correspondence $U\mapsto \mathrm{H}^i_{\varphi,\gamma}(M|_U)$ for each affinoid open $U$ in $X$ 
defines a coherent $\mathcal{O}_X$-module for each $i\in[0,2]$, for which we also 
denote by $\mathrm{H}^i_{\varphi,\gamma}(M)$.

\subsection{Bloch-Kato exponential for $(\varphi,\Gamma)$-modules}

In \S2 of \cite{Na14a}, we developed the theory of 
the Bloch-Kato exponential purely in terms of 
$(\varphi,\Gamma)$-modules over Robba ring. This subsection is a complement of \S2
of \cite{Na14a}; we recall the definitions of the
Bloch-Kato exponential and the dual exponential for $(\varphi,\Gamma)$-modules 
over the ( relative ) Robba ring, and we slightly generalize the results of \cite{Na14a}, which are needed to formulate our conjecture in the next section. 

Define $n(K)\geqq 1$ to be the minimal integer $n$ such that 
$1/p^{n-1}\leqq \tilde{e}_KC(K)$, and put
$\mathcal{R}_A^{(n)}(\pi_K)=\mathcal{R}_A^{1/(p^{n-1}\tilde{e}_K)}(\pi_K)$ 
for $n\geqq n(K)$. 
For $n\geqq n(K)$, one has a
 $\Gamma_K$-equivariant $A$-algebra homomorphism 
 $$\iota_n:\mathcal{R}_A^{(n)}(\pi_K)\rightarrow (K_n\otimes_{\mathbb{Q}_p}A)[[t]]$$ such that 
 $$\iota_n(\pi)=\zeta_{p^n}\cdot\mathrm{exp}\left(\frac{t}{p^n}\right)-1\,\,\text{ and } \,\,\,
 \iota_n(a)=\varphi^{-n}(a)\,\,\,(a\in F').$$ 
 For $n\geqq n(K)$, we have the following  commutative diagrams:
 \begin{equation*}
 \begin{CD}
 \mathcal{R}^{(n)}_A(\pi_K) @> \iota_n >> (K_n\otimes_{\mathbb{Q}_p}A)[[t]]@.\,\,\,\,\, \mathcal{R}^{(n+1)}_A(\pi_K)@> \iota_{n+1} >> (K_{n+1}\otimes_{\mathbb{Q}_p}A)[[t]] \\
 @ VV \varphi V @VV \mathrm{can} V @VV \psi V @ VV \frac{1}{p}\cdot\mathrm{Tr}_{K_{n+1}/K_n} V\\
  \mathcal{R}^{(n+1)}_A(\pi_K) @> \iota_{n+1} >> (K_{n+1}\otimes_{\mathbb{Q}_p}A)[[t]],@. \,\,\,\,\,\mathcal{R}^{(n)}_A(\pi_K)@> \iota_{n} >> (K_n\otimes_{\mathbb{Q}_p}A)[[t]] ,
 \end{CD}
 \end{equation*}
 where $\mathrm{can}$ is the canonical injection and $\frac{1}{p}\cdot\mathrm{Tr}_{K_{n+1}/K_n}$ is 
 defined by $$\sum_{k\geqq 0}a_kt^k\mapsto 
 \sum_{k\geqq 0}\frac{1}{p}\cdot\mathrm{Tr}_{K_{n+1}/K_n}(a_k)t^k.$$

 Let $M$ be a $(\varphi,\Gamma)$-module over $\mathcal{R}_A(\pi_K)$ obtained as a base change of a $(\varphi,\Gamma)$-module $M^{r_0}$ over $\mathcal{R}_A^{r_0}(\pi_K)$ for some $0<r_0\leqq c(K)$. 
 Define $n(M)\in \mathbb{Z}_{\geqq  n(K)}$ to be  the minimal integer such that $1/p^{n-1}\leqq \tilde{e}_Kr_0$. 
 Put $M^{(n)}=M^{1/(p^{n-1}\tilde{e}_K)}$ for $n\geqq n(M)$, then 
 $\varphi$ and $\psi$ induce $\varphi:M^{(n)}\rightarrow M^{(n+1)}$ and 
 $\psi:M^{(n+1)}\rightarrow M^{(n)}$ respectively. Define 
 $$ \bold{D}^{+ }_{\mathrm{dif},n}(M)=M^{(n)}\otimes_{\mathcal{R}_A^{(n)}(\pi_K), \iota_n}(K_n\otimes_{\mathbb{Q}_p}A)[[t]] \,\,( \text{respectively}\,\,
 \bold{D}_{\mathrm{dif},n}(M)=\bold{D}^+_{\mathrm{dif},n}(M)[1/t]),$$
 which is a finite 
 projective $(K_n\otimes_{\mathbb{Q}_p}A)[[t]]$-module (respectively
 $(K_n\otimes_{\mathbb{Q}_p}A)((t))$-module )
 with a semilinear action of $\Gamma_K$. We also denote $\iota_n:M^{(n)}\rightarrow \bold{D}^{+ }_{\mathrm{dif},n}(M)$ for the map defined by $x\mapsto x\otimes 1$.
 
 Using the base change of the Frobenius structure $\varphi^*M^{(n)}\isom M^{(n+1)}$ by the map $\iota_{n+1}$,
 we obtain a $\Gamma_K$-equivariant $(K_{n+1}\otimes_{\mathbb{Q}_p}A)[[t]]$-linear isomorphism 
 \begin{multline*}
 \bold{D}^{+}_{\mathrm{dif},n}(M)\otimes_{(K_n\otimes_{\mathbb{Q}_p}A)[[t]]}
 (K_{n+1}\otimes_{\mathbb{Q}_p}A)[[t]]\isom \varphi^*(M^{(n)})\otimes_{\mathcal{R}_A^{(n+1)}(\pi_K),\iota_{n+1}}(K_{n+1}\otimes_{\mathbb{Q}_p}A)[[t]]\\
 \isom M^{(n+1)}\otimes_{\mathcal{R}_A^{(n+1)}(\pi_K),\iota_{n+1}}(K_{n+1}\otimes_{\mathbb{Q}_p}A)[[t]]
 =\bold{D}^+_{\mathrm{dif},n+1}(M).
 \end{multline*}
 Using this isomorphism, we obtain $\Gamma_K$-equivariant $(K_n\otimes_{\mathbb{Q}_p}A)[[t]]$-linear morphisms 
 $$\mathrm{can}:\bold{D}^+_{\mathrm{dif},n}(M)\xrightarrow{x\mapsto x\otimes 1}\bold{D}^+_{\mathrm{dif},n}(M)
 \otimes_{(K_n\otimes_{\mathbb{Q}_p}A)[[t]]}(K_{n+1}\otimes_{\mathbb{Q}_p}A)[[t]]
 \isom \bold{D}^+_{\mathrm{dif},n+1}(M)$$ and 

\begin{multline*}
\frac{1}{p}\cdot\mathrm{Tr}_{K_{n+1}/K_n}:\bold{D}^+_{\mathrm{dif},n+1}(M)\isom 
 \bold{D}^+_{\mathrm{dif},n}(M)\otimes_{(K_n\otimes_{\mathbb{Q}_p}A)[[t]]}(K_{n+1}\otimes_{\mathbb{Q}_p}A)[[t]]\\
 \xrightarrow{x\otimes f \mapsto 
 \frac{1}{p}\cdot\mathrm{Tr}_{K_{n+1}/K_n}(f)x}\bold{D}^+_{\mathrm{dif},n}(M).
 \end{multline*}
 
 These naturally induce $\mathrm{can}:\bold{D}_{\mathrm{dif},n}(M)\rightarrow 
 \bold{D}_{\mathrm{dif},n+1}(M)$ and $\frac{1}{p}\cdot\mathrm{Tr}_{K_{n+1}/K_n}:\bold{D}_{\mathrm{dif},n+1}(M)
 \rightarrow \bold{D}_{\mathrm{dif},n}(M)$, and we have the following commutative diagrams:
 \begin{equation*}
 \begin{CD}
 M^{(n)}@> \iota_n >> \bold{D}^+_{\mathrm{dif},n}(M)@.\,\,\,\,\, M^{(n+1)}@> \iota_{n+1} >> \bold{D}^+_{\mathrm{dif},n+1}(M) \\
 @ VV \varphi V @VV \mathrm{can} V @VV \psi V @ VV \frac{1}{p}\cdot\mathrm{Tr}_{K_{n+1}/K_n} V\\
 M^{(n+1)} @> \iota_{n+1} >> \bold{D}^+_{\mathrm{dif},n+1}(M),@.\,\,\,\,\, M^{(n)}@> \iota_{n} >> \bold{D}^+_{\mathrm{dif},n}(M).
 \end{CD}
 \end{equation*}
 Put $\bold{D}^{(+)}_{\mathrm{dif}}(M):=\varinjlim_{n\geqq n(M)}\bold{D}^{(+)}_{\mathrm{dif}, n}(M)$, where 
 the transition map is $\mathrm{can}:\bold{D}^{(+)}_{\mathrm{dif},n}(M)\rightarrow\bold{D}^{(+)}_{\mathrm{dif},n+1}(M)$. Then, we have $\bold{D}^{(+)}_{\mathrm{dif}}(M)=\bold{D}^{(+)}_{\mathrm{dif},n}(M)\otimes_{(K_n\otimes_{\mathbb{Q}_p}A)[[t]]}
 (K_{\infty}\otimes_{\mathbb{Q}_p}A)[[t]]$ for any $n\geqq n(M)$,  where we define 
 $(K_{\infty}\otimes_{\mathbb{Q}_p}A)[[t]]=\cup_{m\geqq 1}(K_m\otimes_{\mathbb{Q}_p}A)[[t]]$.

For an $A[\Gamma_K]$-module $N$,  we define a complex of $A$-modules 
concentrated in degree 
$[0,1]$
$$C^{\bullet}_{\gamma}(N)=[N^{\Delta}\xrightarrow{\gamma-1}N^{\Delta}]$$
and denote $\mathrm{H}^i_{\gamma}(N)$ for the cohomology of $C^{\bullet}_{\gamma}(N)$. If 
$N$ is a topological Hausdorff $A$-module with a continuous action of $\Gamma_K$, the complex $C^{\bullet}_{\gamma}(N)$ is also independent of the choice of $\gamma$ up to 
canonical isomorphism.  

Let $M$ be a $(\varphi,\Gamma)$-modules over $\mathcal{R}_A(\pi_K)$. For $n\geqq n(M)$ and $M_0=M, M[1/t]$, we define a complex $\widetilde{C}^{\bullet}_{\varphi,\gamma}(M^{(n)}_0)$ concentrated in degree $[0.2]$ by
$$\widetilde{C}^{\bullet}_{\varphi,\gamma}(M_0^{(n)}):=[M_0^{(n),\Delta}\xrightarrow{(\gamma-1)\oplus
(\varphi-1)}M_0^{(n),\Delta}\oplus M_0^{(n+1),\Delta}\xrightarrow{(\varphi-1)\oplus(1-\gamma)}
M_0^{(n+1),\Delta}].$$
Of course, we have 
$\varinjlim_{n}\widetilde{C}^{\bullet}_{\varphi,\gamma}(M_0^{(n)})= C^{\bullet}_{\varphi,\gamma}(M_0)$, where the transition map is the natural one induced by 
the canonical inclusion $M_0^{(n)}\hookrightarrow M_0^{(n+1)}$. We define 
another complex 
$$C^{(\varphi), \bullet}_{\varphi,\gamma}(M_0):=\varinjlim_{n,\varphi}\widetilde{C}^{\bullet}_{\varphi,\gamma}(M_0^{(n)}),$$ where the transition map is the natural one
induced by 
$\varphi:M_0^{(n)}\rightarrow M_0^{(n+1)}$. 
We similarly define 
$C^{(\varphi), \bullet}_{\gamma}(M_0):=\varinjlim_{n,\varphi}C^{\bullet}_{\gamma}
(M_0^{(n)})$ and 
denote $\mathrm{H}^{ (\varphi), i }_{\varphi,\gamma}(M_0) $ (respectively
$\mathrm{H}^{(\varphi), i}_{\gamma}(M_0)$) for the cohomology of $C^{(\varphi),\bullet}_{\varphi,\gamma}(M_0)$ (respectively $C^{(\varphi),\bullet}_{\gamma}(M_0)$).  For $n\geqq n(M)$, we equip $C^{\bullet}_{\gamma}(M_0^{(n)})$ with a structure of a complex of $F$-vector spaces by 
$ax:=\varphi^n(a)x$ for $a\in F, x\in C^{\bullet}_{\gamma}(M_0^{(n)})$. Then, $C^{(\varphi),\bullet}_{\gamma}(M_0)$
(respectively $\mathrm{H}^{(\varphi),i}_{\gamma}(M_0)$) is
 also naturally equipped with a structure of a complex of $F$-vector spaces (respectively a $F$-vector space).

By the compatibility of $\varphi:M^{(n)}\hookrightarrow M^{(n+1)}$ and
$\mathrm{can}:\bold{D}^+_{\mathrm{dif},n}(M)\hookrightarrow \bold{D}^+_{\mathrm{dif},n+1}(M)$ 
with respect to the map $\iota_n:M^{(n)}\rightarrow \bold{D}^+_{\mathrm{dif},n}(M)$, the map 
$\iota_n$ induces canonical maps
$$\iota:C_{\gamma}^{(\varphi),\bullet}(M)\rightarrow C^{\bullet}_{\gamma}
(\bold{D}^+_{\mathrm{dif}}(M))\text{  and }
\iota:C^{(\varphi),\bullet}_{\gamma}(M[1/t])\rightarrow C^{\bullet}_{\gamma}(\bold{D}_{\mathrm{dif}}(M))$$ which are $F\otimes_{\mathbb{Q}_p}A$-linear.

\begin{lemma}\label{2.18}
For $n\geqq n(M)$, the natural maps
$$C^{\bullet}_{\gamma}(\bold{D}^{(+)}_{\mathrm{dif},n}(M))\rightarrow 
C^{\bullet}_{\gamma}(\bold{D}^{(+)}_{\mathrm{dif},n+1}(M)),\,\, 
C^{\bullet}_{\gamma}(M^{(n)}_0)
\rightarrow C^{\bullet}_{\gamma}(M_0^{(n+1)})$$ and 
$$\widetilde{C}^{\bullet}_{\varphi,\gamma}(M_0^{(n)})\rightarrow 
\widetilde{C}^{\bullet}_{\varphi,\gamma}(M_0^{(n+1)})$$ for 
$M_0=M, M[1/t]$ which are induced by $\varphi$ are  quasi-isomorphism. 
Similarly, the maps $$C^{\bullet}_{\gamma}(\bold{D}^{(+)}_{\mathrm{dif},n}(M))\rightarrow 
C^{\bullet}_{\gamma}(\bold{D}^{(+)}_{\mathrm{dif}}(M)),\,\,
C^{\bullet}_{\gamma}(M^{(n)}_0)
\rightarrow C^{(\varphi),\bullet}_{\gamma}(M_0)$$ and 
$$\widetilde{C}^{\bullet}_{\varphi,\gamma}(M_0^{(n)})\rightarrow 
C^{(\varphi),\bullet}_{\varphi,\gamma}(M_0)$$ for 
$M_0=M, M[1/t]$ are quasi-isomorphism.
\end{lemma}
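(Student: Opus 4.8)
The plan is to prove all six quasi-isomorphisms by reducing everything to a single computation: that on the "$\varphi$-direct-limit" level, the operator $\varphi - 1$ becomes an isomorphism on the relevant $t$-adic completions (the $\bold{D}^{+}_{\mathrm{dif},n}(M)$ and $M^{(n)}_0$), so that the mapping-cone description of the transition maps is acyclic. More precisely, for a complex of the shape $C^{\bullet}_{\gamma}(-)$ the transition map induced by $\varphi$ from stage $n$ to stage $n+1$ sits in a short exact sequence of complexes whose third term is (up to shift) the cone of $\varphi : C^{\bullet}_{\gamma}(N^{(n)}) \to C^{\bullet}_{\gamma}(N^{(n+1)})$; and since $\varphi$ commutes with $\gamma - 1$, this cone is itself $C^{\bullet}_{\gamma}$ applied to the two-term complex $[N^{(n)} \xrightarrow{\varphi} N^{(n+1)}]$. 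So the whole statement follows once I know that $\varphi : N^{(n)} \to N^{(n+1)}$ is a quasi-isomorphism (indeed an isomorphism onto, with controllable kernel) for the modules $N = \bold{D}^{+}_{\mathrm{dif},n}(M)$, $M^{(n)}$, $M^{(n)}[1/t]$. For the $\bold{D}^{+}_{\mathrm{dif}}$-variant this is immediate: $\varphi$ on $\bold{D}^{+}_{\mathrm{dif},n}(M)$ is literally the isomorphism $\mathrm{can}$ composed with the Frobenius structure isomorphism $\varphi^{*}M^{(n)} \isom M^{(n+1)}$ base-changed along $\iota_{n+1}$, by the commutative diagrams recalled just above the lemma; so $\varphi$ is bijective there and the transition maps are isomorphisms of complexes.

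The remaining work, and the main obstacle, is the statement on $M^{(n)}_0$ itself (for $M_0 = M$ and $M_0 = M[1/t]$): here $\varphi : M^{(n)} \to M^{(n+1)}$ is not surjective, so I must show its cone is acyclic after applying $C^{\bullet}_{\gamma}(-)$, i.e. that $\gamma - 1$ is invertible on $\mathrm{coker}(\varphi)$ and on $\ker(\varphi)$. The point is that $\varphi$ makes $\mathcal{R}_A^{(n+1)}(\pi_K)$ a free module of rank $p$ over $\varphi(\mathcal{R}_A^{(n)}(\pi_K))$, with complementary basis $(1+\pi)^i$ ($0 \le i \le p-1$) in the $K = \mathbb{Q}_p$ case of Notation \ref{2.2}, and $\gamma - 1$ acts invertibly on the span of the $(1+\pi)^i$ with $i \neq 0$ — this is the standard computation behind $\psi$-cohomology agreeing with $\varphi$-cohomology (Proposition 2.3.4 of \cite{KPX14}), which I will invoke in the same spirit. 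Since $M^{(n)}$ is a finite projective $\mathcal{R}_A^{(n)}(\pi_K)$-module, tensoring with it and using that $\varphi$ is $\varphi$-semilinear, one gets $\mathrm{coker}(\varphi : M^{(n)} \to M^{(n+1)}) \cong M^{(n)} \otimes_{\mathcal{R}_A^{(n)}(\pi_K)} \left(\bigoplus_{i=1}^{p-1}(1+\pi)^i \varphi(\mathcal{R}_A^{(n)}(\pi_K))\right)$ as a $\Gamma$-module, on which $\gamma - 1$ is topologically invertible; a spectral-sequence or two-out-of-three argument on the short exact sequence of complexes then gives the quasi-isomorphism for one step, and for the passage to the colimit one uses that filtered colimits are exact and that cohomology commutes with filtered colimits.

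Having the one-step quasi-isomorphisms for $C^{\bullet}_{\gamma}$, the three remaining assertions — about $\widetilde{C}^{\bullet}_{\varphi,\gamma}(M^{(n)}_0)$, and the maps into the colimit complexes $C^{(\varphi),\bullet}_{\gamma}(M_0)$, $C^{(\varphi),\bullet}_{\varphi,\gamma}(M_0)$, $C^{\bullet}_{\gamma}(\bold{D}^{(+)}_{\mathrm{dif}}(M))$ — are formal consequences. The complex $\widetilde{C}^{\bullet}_{\varphi,\gamma}(M^{(n)}_0)$ is, by construction, the cone of $(\varphi - 1) : C^{\bullet}_{\gamma}(M^{(n)}_0) \to C^{\bullet}_{\gamma}(M^{(n+1)}_0)$ (reading the defining diagram as a total complex), so once $\varphi$ induces a quasi-isomorphism at each level, compatibility of cones with quasi-isomorphisms gives that the transition map $\widetilde{C}^{\bullet}_{\varphi,\gamma}(M^{(n)}_0) \to \widetilde{C}^{\bullet}_{\varphi,\gamma}(M^{(n+1)}_0)$ is a quasi-isomorphism. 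Finally, the maps from stage $n$ into the filtered colimit $C^{(\varphi),\bullet}_{?}(M_0) = \varinjlim_{n,\varphi}(\cdots)$ are quasi-isomorphisms because a filtered colimit of a system in which every transition map is a quasi-isomorphism is quasi-isomorphic to any term, again using exactness of filtered colimits on cohomology; and the same argument applied to $\bold{D}^{(+)}_{\mathrm{dif}}(M) = \varinjlim_{n,\mathrm{can}} \bold{D}^{(+)}_{\mathrm{dif},n}(M)$ — where now the transition maps $\mathrm{can}$ are the relevant ones and are quasi-isomorphisms on $C^{\bullet}_{\gamma}(-)$ by the same cone computation, the cokernel of $\mathrm{can}$ being a $\Gamma_K$-module on which $\gamma - 1$ is invertible (this time coming from the ramified-extension structure $K_{n+1}/K_n$, via the trace-splitting in the diagrams above) — yields the last quasi-isomorphism $C^{\bullet}_{\gamma}(\bold{D}^{(+)}_{\mathrm{dif},n}(M)) \to C^{\bullet}_{\gamma}(\bold{D}^{(+)}_{\mathrm{dif}}(M))$.
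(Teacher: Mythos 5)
Your overall strategy coincides with the paper's: the paper establishes the one-step quasi-isomorphisms $C^{\bullet}_{\gamma}(M_0^{(n)}) \to C^{\bullet}_{\gamma}(M_0^{(n+1)})$ by the $\Gamma_K$-equivariant splitting $M_0^{(n+1)} = \varphi(M_0^{(n)}) \oplus (M_0^{(n+1)})^{\psi=0}$ together with the invertibility of $\gamma - 1$ on the $\psi = 0$ complement (the paper cites Theorem 3.1.1 of \cite{KPX14} for this, which is the correct source, not Proposition 2.3.4, which is a downstream consequence), and then obtains the cone and colimit statements formally. Your cone reading of $\widetilde{C}^{\bullet}_{\varphi,\gamma}(M_0^{(n)})$ and the filtered-colimit arguments are fine.

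However, your treatment of $\bold{D}^{+}_{\mathrm{dif},n}(M)$ contains a genuine error. You claim the transition map there ``is literally the isomorphism $\mathrm{can}$ \ldots\ so $\varphi$ is bijective there and the transition maps are isomorphisms of complexes.'' This is false. What the paper records as an isomorphism is $\bold{D}^{+}_{\mathrm{dif},n}(M) \otimes_{(K_n\otimes_{\mathbb{Q}_p}A)[[t]]} (K_{n+1}\otimes_{\mathbb{Q}_p}A)[[t]] \isom \bold{D}^{+}_{\mathrm{dif},n+1}(M)$; the map $\mathrm{can}$ precomposes this with $x\mapsto x\otimes 1$, so it is the extension of scalars along $(K_n\otimes_{\mathbb{Q}_p}A)[[t]] \hookrightarrow (K_{n+1}\otimes_{\mathbb{Q}_p}A)[[t]]$ and has non-trivial cokernel. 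The quasi-isomorphism must be argued exactly as in the $M_0^{(n)}$ case, via the decomposition $\bold{D}^{+}_{\mathrm{dif},n+1}(M) = \mathrm{can}(\bold{D}^{+}_{\mathrm{dif},n}(M)) \oplus (\bold{D}^{+}_{\mathrm{dif},n+1}(M))^{\frac{1}{p}\mathrm{Tr}_{K_{n+1}/K_n}=0}$, with $\gamma - 1$ invertible on the second factor (obtained by base-changing the $\psi=0$ statement along $\iota_n$, as the paper does). You in fact invoke this very trace-complement computation for the colimit $\bold{D}^{(+)}_{\mathrm{dif}}(M)$ two paragraphs later, so the proposal is internally inconsistent: one passage calls the transition an isomorphism of complexes, the other says its cone requires the trace computation. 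Taken at face value, the earlier claim would break the proof at step one of the $\bold{D}^{+}_{\mathrm{dif}}$ chain.
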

\begin{proof}
The latter statement is trivial if we can prove the first statement.
Let's prove the first statement.
 We first note that 
 $\gamma-1:(M^{(n)}_0)^{\psi=0}\rightarrow 
(M^{(n)}_0)^{\psi=0}$ is isomorphism for $n\geqq n(M)+1$ 
by Theorem  3.1.1 of \cite{KPX14}  ( precisely, this fact for $M_0=M[1/t]$ follows from the proof of this theorem).
Taking the base change of this isomorphism 
by the map $\iota_n:\mathcal{R}_A^{(n)}(\pi_K)\rightarrow (K_n\otimes_{\mathbb{Q}_p}A)[[t]]$, we also have 
that $\gamma-1:(\bold{D}^{(+)}_{\mathrm{dif},n}(M))
^{\frac{1}{p}\cdot\mathrm{Tr}_{K_n/K_{n-1}}=0}\rightarrow (\bold{D}^{(+)}_{\mathrm{dif},n}(M))^{\frac{1}{p}\cdot
\mathrm{Tr}_{K_n/K_{n-1}}=0}$ is 
 isomorphism for  $n\geqq n(M)+1$. 
Using these facts, we prove the lemma as follows. 
Here, we only prove that the map $C^{\bullet}_{\gamma}(M^{(n)}_0)
\rightarrow C^{\bullet}_{\gamma}(M_0^{(n+1)})$ induced by $\varphi:M_0^{(n)}\rightarrow M_0^{(n+1)}$  is 
quasi-isomorphism for $n\geqq n(M)$ since the other cases can be proved in the same way.  Since 
we have a $\Gamma_K$-equivariant decomposition 
$M_0^{(n+1)}=\varphi(M_0^{(n)})\oplus (M_0^{(n+1)})^{\psi=0}$, 
 we obtain a decomposition
$C^{\bullet}_{\gamma}(M_0^{(n+1)})=\varphi(C^{\bullet}_{\gamma}(M_0^{(n)}))
\oplus C^{\bullet}_{\gamma}((M_0^{(n+1)})^{\psi=0})$. Since the complex $C^{\bullet}_{\gamma}((M_0^{(n+1)})^{\psi=0})$ is acyclic by the above remark and 
$\varphi:M_0^{(n)}\rightarrow M_0^{(n+1)}$ is injection, the map 
$\varphi:C^{\bullet}_{\gamma}(M_0^{(n)})\rightarrow C^{\bullet}_{\gamma}(M_0
^{(n+1)})$ is quasi-isomorphism.

\end{proof}

For another canonical map $C^{\bullet}_{\gamma}(M^{(n)}_0)\rightarrow 
C^{\bullet}_{\gamma}(M_0)$ which is induced by the canonical inclusion 
$M^{(n)}\hookrightarrow M$, we can show the following lemma.

\begin{lemma}\label{2.19}
For $n\geqq n(M)$ and $M_0=M, M[1/t]$, the inclusion 
$$\mathrm{H}^0_{\gamma}(M^{(n)}_0)\hookrightarrow \mathrm{H}^0_{\gamma}(M_0)$$  induced by 
the canonical inclusion $M^{(n)}_0\hookrightarrow M_0$ is 
isomorphism.
\end{lemma}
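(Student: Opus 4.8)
\emph{Plan.} The map in the statement is induced by the inclusion $M_0^{(n)}\hookrightarrow M_0$, hence injective, so it suffices to prove surjectivity. Put $N_j:=\mathrm{H}^0_{\gamma}(M_0^{(j)})=(M_0^{(j)})^{\Gamma_K}$ for $j\ge n(M)$ and $N_{\infty}:=\mathrm{H}^0_{\gamma}(M_0)=(M_0)^{\Gamma_K}$; the $N_j$ form an increasing chain of $A$-submodules of $M_0$ with $\bigcup_{j\ge n(M)}N_j=N_{\infty}$, and the claim is that $N_n=N_{\infty}$ for all $n\ge n(M)$. The heart of the proof is the identity $N_{m+1}=\varphi(N_m)$ for $m\ge n(M)$. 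To get it, note first that for $m\ge n(M)$ the radius $1/(p^{m-1}\tilde e_K)$ lies in the overconvergent range, so $\varphi$ makes $\mathcal{R}_A^{(m+1)}(\pi_K)$ free of rank $p$ over $\varphi(\mathcal{R}_A^{(m)}(\pi_K))$ with $\psi$ a $\Gamma_K$-equivariant left inverse; hence $M_0^{(m+1)}=\varphi(M_0^{(m)})\oplus (M_0^{(m+1)})^{\psi=0}$ as $\Gamma_K$-modules (for $M_0=M[1/t]$ this follows from the same decomposition of $M^{(m+1)}$ after inverting $t$, using $\varphi(t)=pt$). Taking $\Delta$-invariants — which commute with the finite direct sum and, by injectivity of $\varphi$, with $\varphi$ — and then kernels of $\gamma-1$, and using that $\gamma-1$ is an isomorphism on $(M_0^{(m+1)})^{\psi=0}$, hence on its $\Delta$-invariants, for $m+1\ge n(M)+1$ (Theorem 3.1.1 of \cite{KPX14}, invoked exactly as in the proof of Lemma \ref{2.18}), one gets $((M_0^{(m+1)})^{\psi=0})^{\Gamma_K}=0$ and therefore $N_{m+1}=(\varphi(M_0^{(m)}))^{\Gamma_K}=\varphi((M_0^{(m)})^{\Gamma_K})=\varphi(N_m)$, the middle equality because $\varphi$ is injective and $\Gamma_K$-equivariant. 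Iterating, $N_m=\varphi^{m-n(M)}(N_{n(M)})$ for all $m\ge n(M)$.

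Next I would use that $N_{\infty}=(M_0)^{\Gamma_K}$ is a finite $A$-module; this is standard (by Sen theory $\bold{D}^{(+)}_{\mathrm{dif},n}(M)^{\Gamma_K}$ is a finite $A$-module, independent of $n$ by Lemma \ref{2.18}, and the injections $\iota_n$ realize the $N_n$ as submodules of it, $A$ being Noetherian; alternatively one appeals to the finiteness theory for the cohomology of $(\varphi,\Gamma)$-modules). Granting this, the increasing chain $N_{n(M)}\subseteq N_{n(M)+1}\subseteq\cdots$ of $A$-submodules of the finite module $N_{\infty}$ stabilizes, say $N_{m_0}=N_{m_0+1}=N_{\infty}$ with $m_0\ge n(M)$. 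Since $\varphi$ is injective on $M_0$, so is $\varphi^{m_0-n(M)}$, and the identity of the previous paragraph gives $\varphi^{m_0-n(M)}(N_{n(M)})=N_{m_0}=N_{m_0+1}=\varphi^{m_0-n(M)}(N_{n(M)+1})$, forcing $N_{n(M)}=N_{n(M)+1}$; applying $\varphi$ repeatedly then yields $N_m=N_{m+1}$ for every $m\ge n(M)$, so $N_n=N_{\infty}$ for all $n\ge n(M)$, which is the assertion.

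\emph{Main obstacle.} The two substantive points are the Frobenius decomposition in the first paragraph — one must make sure that $m\ge n(M)$ really places $\mathcal{R}_A^{(m)}(\pi_K)$ in the overconvergent range and that the isomorphy of $\gamma-1$ on the $\psi=0$-part is used in precisely the form established for Lemma \ref{2.18}, and that it carries over to $M_0=M[1/t]$ — and the finiteness of $(M_0)^{\Gamma_K}$ over $A$, which is exactly what prevents the inclusion colimit $\bigcup_j N_j$ from being strictly larger than every $N_n$ and makes the Noetherian ascending-chain argument work; the remaining manipulations with $\varphi$ (and $\psi$) are purely formal.
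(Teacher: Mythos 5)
The first half of your argument is sound: the identity $\mathrm{H}^0_\gamma(M_0^{(m+1)})=\varphi\bigl(\mathrm{H}^0_\gamma(M_0^{(m)})\bigr)$ is exactly what the paper's Lemma \ref{2.18} delivers (your derivation via the $\psi$-splitting and Theorem 3.1.1 of \cite{KPX14} is the same one used there), and the reduction of the lemma to showing that the ascending chain $N_{n(M)}\subseteq N_{n(M)+1}\subseteq\cdots$ is constant is correct, as is the observation that injectivity of $\varphi$ propagates any single equality $N_{m_0}=N_{m_0+1}$ backward and forward along the chain.

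The gap is the claim that $N_\infty=(M_0)^{\Gamma_K}$ is a finite $A$-module, which you treat as ``standard'' but which is exactly what cannot be assumed here. The $\iota_n$ do realize each $N_n$ as a submodule of the finite $A$-module $\mathbf D^{(+)}_{\mathrm{dif},n}(M)^{\Gamma_K}$, but the compatibility between stages is $\iota_{n+1}\circ\varphi=\mathrm{can}\circ\iota_n$, not $\iota_{n+1}|_{M_0^{(n)}}=\mathrm{can}\circ\iota_n$; the maps $\iota_n$ and $\iota_{n+1}$ literally disagree on $\pi$, so they do not glue to an embedding of $N_\infty=\bigcup_n N_n$ into $\mathbf D_{\mathrm{dif}}(M)^{\Gamma_K}$. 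Knowing that each $N_n$ is finite and that all of them are abstractly isomorphic (via $\varphi$) does not force the chain to stabilize over a general affinoid $A$ — an increasing chain of pairwise-isomorphic finitely generated submodules can fail to stabilize over a ring that is not Artinian. The cohomological finiteness of Theorem \ref{2.16} also does not help directly: it concerns $\mathrm{H}^0_{\varphi,\gamma}(M)=M^{\varphi=1,\Gamma_K=1}$, not $M_0^{\Gamma_K}$. The paper sidesteps this entirely: when $A$ is finite over $\mathbb{Q}_p$ (or over a complete discretely valued field) each $N_n$ is a finite-dimensional vector space and the $\varphi$-isomorphism forces equal dimension, so each consecutive inclusion is already an equality without ever looking at $N_\infty$; the general affinoid case is then handled by the Kedlaya--Liu embedding $A\hookrightarrow\prod_i A_i$ into finite algebras over complete DVFs and a diagram chase using injectivity of $M_0^{(n+1)}/M_0^{(n)}\to\prod_i M_{0,i}^{(n+1)}/M_{0,i}^{(n)}$. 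Your argument would need that reduction, or an independent proof that $(M_0)^{\Gamma_K}$ is finitely generated over $A$, to close the gap.
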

\begin{proof}
It suffices to show that 
$\mathrm{H}^0_{\gamma}(M^{(n)}_0)\hookrightarrow \mathrm{H}^0_{\gamma}(M^{(n+1)}_0)$ is isomorphism for each $n\geqq n(M)$. We first prove this claim when 
$A$ is a finite $\mathbb{Q}_p$-algebra. In this case, we may assume that $A=\mathbb{Q}_p$. 
Since we have an inclusion 
$\iota_n:\mathrm{H}^0_{\gamma}(M_0^{(n)})
\hookrightarrow \mathrm{H}^0_{\gamma}(\bold{D}_{\mathrm{dif}}(M))$ and 
the latter is a finite dimensional $\mathbb{Q}_p$-vector space, 
$\mathrm{H}^0_{\gamma}(M_0^{(n)})$ is also finite dimensional. 
Since $\varphi:C^{\bullet}_{\gamma}(M^{(n)}_0)\rightarrow C^{\bullet}_{\gamma}(M_0^{(n+1)})$ is quasi-isomorphism for $n\geqq n(M)$ by the 
above lemma, we obtain an isomorphism $\varphi:\mathrm{H}^0_{\gamma}(M_0^{(n)})\isom 
\mathrm{H}^0_{\gamma}(M_0^{(n+1)})$. In particular, the dimension of $\mathrm{H}^0_{\gamma}(M_0^{(n)})$ is independent of $n\geqq n(M)$. Hence, the canonical inclusion 
$\mathrm{H}^0_{\gamma}(M_0^{(n)})\hookrightarrow \mathrm{H}^0_{\gamma}(M_0^{(n+1)})$ is isomorphism. 

We next prove the claim for general $A$. By Lemma 6.4 of \cite{KL10}, there exists a strict inclusion 
$A\hookrightarrow \prod_{i=1}^kA_i$ of topological rings, in which 
each $A_i$ is a finite algebra over a completely discretely valued field. 
If we similarly define the rings $\mathcal{R}^{(n)}_{A_i}(\pi_K)$, $\mathcal{R}_{A_i}(\pi_K)$, we can generalize the notions concerning $(\varphi,\Gamma)$-modules for $\mathcal{R}_{A_i}(\pi_K)$. 
In particular, the above claim holds for $M_{0,i}:=M_0\hat{\otimes}_A A_i$ for each $i$. 
Consider the following canonical diagram of exact sequences,
\begin{equation*}
\begin{CD}
0@>>> M_0^{(n)}@ >>> M_0^{(n+1)}@ >>>  M_0^{(n+1)}/M_0^{(n)}\rightarrow0\\
@.@VVV @ VVV @ VVV\\
0@>>> \prod_{i=1}^k M_{0,i}^{(n)}@>>> 
\prod_{i=1}^kM_{0,i}^{(n+1)} @>>> 
\prod_{i=1}^kM_{0,i}^{(n+1)}/M_{0,i}^{(n)}\rightarrow0.
\end{CD}
\end{equation*}
If we can show that the right vertical arrow is injection, then the claim for $A$ follows from 
the claim for each  $A_i$ by a simple diagram chase. To show that the right vertical arrow is 
injection, we may assume that $M=\mathcal{R}_A(\pi_K)$ since $M^{(n)}$ is finite 
projective over $\mathcal{R}_A^{(n)}(\pi_K)$ for each $n$. Then, the natural map 
$\mathcal{R}^{(n+1)}_A(\pi_K)[1/t]/\mathcal{R}_A^{(n)}(\pi_K)[1/t]\rightarrow 
\prod_{i=1}^k\mathcal{R}^{(n+1)}_{A_i}(\pi_K)[1/t]/\mathcal{R}_{A_i}^{(n)}(\pi_K)[1/t]$ is 
injection since the inclusion $A\hookrightarrow \prod_{i=1}^kA_i$ is strict, which 
proves the claim for general $A$, hence proves the lemma.

\end{proof}

\begin{rem}\label{2.20}
The author doesn't know whether the natural map 
$\mathrm{H}^1_{\gamma}(M_0^{(n)})\rightarrow \mathrm{H}^1_{\gamma}
(M_0)$ induced by the canonical inclusion $M_0^{(n)}\hookrightarrow M_0$ is isomorphism 
or not. 

\end{rem}

For the $(\varphi,\Gamma)$-cohomology, we can prove the following lemma.

\begin{lemma}\label{2.21}
\begin{itemize}
\item[(1)]
For  $n\geqq n(M)$ and for $M_0=M, M[1/t]$, the map 
$$\widetilde{C}^{\bullet}_{\varphi,\gamma}(M_0^{(n)})\allowbreak\rightarrow 
C^{\bullet}_{\varphi,\gamma}(M_0)$$ induced by the canonical inclusion 
$M_0^{(n)}\hookrightarrow M_0$ is quasi-isomorphism. 
\item[(2)]In $\bold{D}^{-}(A)$, the isomorphism 
$$C^{\bullet}_{\varphi,\gamma}(M_0)\isom C^{(\varphi),\bullet}_{\varphi,\gamma}(M_0)$$ which is obtained as the composition of the inverse of the isomorphism $\widetilde{C}^{\bullet}_{\varphi,\gamma}
(M_0^{(n)})\isom C^{\bullet}_{\varphi,\gamma}(M_0)$ in (1) with the isomorphism 
$\widetilde{C}^{\bullet}_{\varphi,\gamma}(M_0^{(n)})\isom C^{(\varphi),\bullet}_{\varphi,\gamma}(M_0)$ in Lemma \ref{2.18} is independent of the choice of $n\geqq n(M)$.
\end{itemize}

\end{lemma}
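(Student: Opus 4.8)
The plan is to establish (1) by comparing the finite-level complex $\widetilde{C}^{\bullet}_{\varphi,\gamma}(M_0^{(n)})$ with the full complex $C^{\bullet}_{\varphi,\gamma}(M_0)$ through the intermediate complexes at higher levels $\widetilde{C}^{\bullet}_{\varphi,\gamma}(M_0^{(m)})$ for $m\geq n$. Recall that $C^{\bullet}_{\varphi,\gamma}(M_0)=\varinjlim_m \widetilde{C}^{\bullet}_{\varphi,\gamma}(M_0^{(m)})$ where the transition maps are induced by the canonical inclusions $M_0^{(m)}\hookrightarrow M_0^{(m+1)}$. So it suffices to show that each transition map $\widetilde{C}^{\bullet}_{\varphi,\gamma}(M_0^{(m)})\to \widetilde{C}^{\bullet}_{\varphi,\gamma}(M_0^{(m+1)})$ induced by the \emph{inclusion} is a quasi-isomorphism for $m\geq n(M)$, and that the cohomology commutes with the direct limit (which is automatic, since filtered colimits are exact). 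For the quasi-isomorphism of transition maps: observe that $\widetilde{C}^{\bullet}_{\varphi,\gamma}(M_0^{(m)})$ is built from the two-term complexes $C^{\bullet}_{\gamma}(M_0^{(m)})$ and $C^{\bullet}_{\gamma}(M_0^{(m+1)})$ via a mapping-cone-type construction for $\varphi-1$; more precisely it is the total complex of $[C^{\bullet}_{\gamma}(M_0^{(m)})\xrightarrow{\varphi-1} C^{\bullet}_{\gamma}(M_0^{(m+1)})]$ where here $\varphi$ denotes the composite $C^{\bullet}_{\gamma}(M_0^{(m)})\xrightarrow{\varphi} C^{\bullet}_{\gamma}(M_0^{(m+1)})$. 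The inclusion-induced map of these bicomplexes is, at the two spots, the canonical inclusions $M_0^{(m)}\hookrightarrow M_0^{(m+1)}$ and $M_0^{(m+1)}\hookrightarrow M_0^{(m+2)}$. By Lemma \ref{2.18}, the corresponding maps $C^{\bullet}_{\gamma}(M_0^{(m)})\to C^{\bullet}_{\gamma}(M_0^{(m+1)})$ induced by \emph{$\varphi$} are quasi-isomorphisms; I would then deduce that the inclusion-induced ones are quasi-isomorphisms as well by an argument on the cones, using that the inclusion and $\varphi$ differ in a controlled way on the $\psi=0$ part.

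Here is the technical heart of (1). Fix $m\geq n(M)$. Using the $\Gamma_K$-equivariant decomposition $M_0^{(m+1)}=\varphi(M_0^{(m)})\oplus (M_0^{(m+1)})^{\psi=0}$ from the proof of Lemma \ref{2.18}, the complex $C^{\bullet}_{\gamma}(M_0^{(m+1)})$ splits $\Gamma$-equivariantly as $\varphi(C^{\bullet}_{\gamma}(M_0^{(m)}))\oplus C^{\bullet}_{\gamma}((M_0^{(m+1)})^{\psi=0})$, and the second summand is acyclic for $m\geq n(M)$ (again by the isomorphism $\gamma-1$ on $\psi=0$ parts, valid for level $\geq n(M)+1$; for $m=n(M)$ one passes through level $m+1\geq n(M)+1$ inside the double complex, so the same vanishing applies there). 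Since $\widetilde{C}^{\bullet}_{\varphi,\gamma}(M_0^{(m)})$ is quasi-isomorphic to $C^{\bullet}_{\gamma}$ of the cone of $\varphi-1:M_0^{(m)}\to M_0^{(m+1)}$, and the image of $\varphi-1$ modulo the acyclic $\psi=0$ part sees only the difference of two copies of the same acyclic-mod-$\varphi$ data, one checks directly that inclusion at level $m$ and inclusion at level $m+1$ induce the same map on cohomology of the cone. I expect a short diagram chase, combined with Lemma \ref{2.18}, to show each transition map in the colimit system $\{\widetilde{C}^{\bullet}_{\varphi,\gamma}(M_0^{(m)})\}_m$ is a quasi-isomorphism, and hence that $\widetilde{C}^{\bullet}_{\varphi,\gamma}(M_0^{(n)})\to C^{\bullet}_{\varphi,\gamma}(M_0)$ is too.

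For (2): both the isomorphism $\widetilde{C}^{\bullet}_{\varphi,\gamma}(M_0^{(n)})\isom C^{\bullet}_{\varphi,\gamma}(M_0)$ of (1) and the isomorphism $\widetilde{C}^{\bullet}_{\varphi,\gamma}(M_0^{(n)})\isom C^{(\varphi),\bullet}_{\varphi,\gamma}(M_0)$ of Lemma \ref{2.18} arise from maps of honest colimit systems, so I would prove independence of $n$ simply by checking that the relevant square commutes on the level of complexes (not just in the derived category): the diagram relating levels $n$ and $n+1$, with horizontal arrows the inclusion-induced (resp. $\varphi$-induced) transition maps into the colimit and vertical arrows the inclusion (resp. $\varphi$) at the chosen level, commutes by the functoriality of the colimit together with the compatibility of $\varphi$ and $\iota_n$ already recorded before Lemma \ref{2.18}. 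Then the composite $C^{\bullet}_{\varphi,\gamma}(M_0)\isom C^{(\varphi),\bullet}_{\varphi,\gamma}(M_0)$ is independent of $n$ because it is literally the colimit of the identity-type comparisons. The main obstacle I anticipate is purely bookkeeping in (1): making precise the double-complex/cone description of $\widetilde{C}^{\bullet}_{\varphi,\gamma}(M_0^{(n)})$ and arguing cleanly that the inclusion-induced (as opposed to $\varphi$-induced) transition maps are quasi-isomorphisms, since Lemma \ref{2.18} is phrased only for the $\varphi$-induced maps and the two differ precisely on the acyclic $\psi=0$ direction; isolating that difference and checking it dies on cohomology is the step that needs care.
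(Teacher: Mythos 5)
Your overall strategy for (1) -- reduce to showing each inclusion-induced transition map $\widetilde{C}^{\bullet}_{\varphi,\gamma}(M_0^{(m)})\to\widetilde{C}^{\bullet}_{\varphi,\gamma}(M_0^{(m+1)})$ is a quasi-isomorphism and then pass to the colimit -- is correct and matches the paper's plan. But the way you propose to carry out that reduction has a genuine gap, and you have half-noticed it yourself at the end. You want to split $C^{\bullet}_{\gamma}(M_0^{(m+1)})=\varphi(C^{\bullet}_{\gamma}(M_0^{(m)}))\oplus C^{\bullet}_{\gamma}((M_0^{(m+1)})^{\psi=0})$ and argue that the inclusion and the $\varphi$-map only differ along the acyclic $\psi=0$ summand. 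The problem is that this comparison happens at the level of the two-term complexes $C^{\bullet}_{\gamma}(-)$, and the paper explicitly warns (Remark \ref{2.20}) that the inclusion-induced map on these two-term complexes is \emph{not} known to be a quasi-isomorphism -- precisely the obstruction a naive cone argument would trip over. The decomposition of the inclusion into its $\varphi(M^{(m)})$ and $\psi=0$ components is $x\mapsto(\varphi\psi(x),\,x-\varphi\psi(x))$, which is \emph{not} close to the map $x\mapsto(\varphi(x),0)$ in any sense that dies on $\mathrm{H}^1_{\gamma}$; so your claim that ``the difference dies on cohomology'' would need to be justified at the level of the three-term complex $\widetilde{C}^{\bullet}_{\varphi,\gamma}$, where the extra room exists.

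The paper does this with a much shorter device: an explicit chain homotopy between the $\varphi$-induced and the $\mathrm{can}$-induced maps $\widetilde{C}^{\bullet}_{\varphi,\gamma}(M_0^{(n)})\to\widetilde{C}^{\bullet}_{\varphi,\gamma}(M_0^{(n+1)})$, given in degree $1$ by $(x,y)\mapsto y$ and in degree $2$ by $z\mapsto(z,0)$. One checks directly that $\varphi-\mathrm{can}=d\circ f+f\circ d$ in every degree (the built-in shift $M_0^{(n)}\ \to\ M_0^{(n)}\oplus M_0^{(n+1)}\ \to\ M_0^{(n+1)}$ in the complex is exactly what makes room for this). Since $\varphi$ is a quasi-isomorphism by Lemma \ref{2.18}, so is $\mathrm{can}$, and (1) follows by taking the colimit -- no appeal to the $\psi=0$ decomposition is needed, and Remark \ref{2.20} is sidestepped entirely. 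The same homotopy trick is also what the paper uses to close (2): your suggestion that the square commutes ``on the level of complexes by functoriality of the colimit'' is not quite right, because the two colimit systems have genuinely different transition maps ($\varphi$ vs.\ $\mathrm{can}$). What one actually shows is that $\mathrm{can}: C^{(\varphi),\bullet}_{\varphi,\gamma}(M_0)\to C^{(\varphi),\bullet}_{\varphi,\gamma}(M_0)$ is homotopic to the identity, which then yields a commutative square in $\bold{D}^-(A)$ relating levels $n$ and $n+1$ and hence the asserted independence. So: same target statement, but replace your $\psi=0$ decomposition argument with the explicit homotopy, and replace ``commutes on the level of complexes'' with ``commutes in $\bold{D}^-(A)$ because $\mathrm{can}$ is homotopic to the identity.''
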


\begin{proof}For $n\geqq n(M)$, we define a map $f_{\bullet}:\widetilde{C}^{\bullet}_{\varphi,\gamma}(M_0^{(n)})\rightarrow 
\widetilde{C}^{\bullet}_{\varphi,\gamma}(M_0^{(n+1)})[+1]$ by 
$f_1:M_0^{(n),\Delta}\oplus M_0^{(n+1),\Delta}\rightarrow M_0^{(n+1),\Delta}:(x,y)\mapsto 
y,$
and $f_2:M_0^{(n+1),\Delta}\rightarrow M_0^{(n+1),\Delta}\oplus M_0^{(n+2),\Delta}:
x\mapsto (x,0).$ This gives a homotopy between $\varphi:\widetilde{C}^{\bullet}_{\varphi,\gamma}(M_0^{(n)})\rightarrow \widetilde{C}^{\bullet}_{\varphi,\gamma}(M_0^{(n+1)})$ and 
$\mathrm{can}:\widetilde{C}^{\bullet}_{\varphi,\gamma}(M_0^{(n)})\allowbreak\rightarrow \widetilde{C}^{\bullet}_{\varphi,\gamma}(M_0^{(n+1)})$ induced by the canonical inclusion $M_0^{(n)}\hookrightarrow M_0^{(n+1)}$. Hence, $\mathrm{can}:\widetilde{C}^{\bullet}_{\varphi,\gamma}(M_0^{(n)})\rightarrow \widetilde{C}^{\bullet}_{\varphi,\gamma}(M_0^{(n+1)})$ is also isomorphism by Lemma \ref{2.18}, and 
the map $\widetilde{C}^{\bullet}_{\varphi,\gamma}(M_0^{(n)})\rightarrow 
C^{\bullet}_{\varphi,\gamma}(M_0)$ is also isomorphism by taking the limit, which proves (1). 

In a similar way, we can 
show that the map 
$\mathrm{can}:C^{(\varphi),\bullet}_{\varphi,\gamma}(M_0)\rightarrow C^{(\varphi),\bullet}_{\varphi,\gamma}(M_0)$ induced by the canonical inclusions $\mathrm{can}:M_0^{(n)}\hookrightarrow M_0^{(n+1)}$ for any $n\geqq n(M)$ is homotopic to 
the identity map. Hence, we obtain the following commutative diagram in $\bold{D}^-(A)$ for 
any $n\geqq n(M)$,
\begin{equation*}
\begin{CD}
\widetilde{C}^{\bullet}_{\varphi,\gamma}(M_0^{(n)})@>>> C^{(\varphi),\bullet}_{\varphi,\gamma}(M_0) \\
@VV\mathrm{can} V @ VV\mathrm{id} V \\
\widetilde{C}^{\bullet}_{\varphi,\gamma}(M_0^{(n+1)})@>>> C^{(\varphi),\bullet}_{\varphi,\gamma}(M_0),
\end{CD}
\end{equation*}
from which we obtain the second statement in the lemma.

\end{proof}

We define a morphism 
$$f:C^{\bullet}_{\varphi,\gamma}(M_0)\rightarrow 
C^{(\varphi),\bullet}_{\gamma}(M_0)$$ in $\bold{D}^-(A)$ as the composition of 
the isomorphism $C^{\bullet}_{\varphi,\gamma}(M_0)\isom 
C^{(\varphi),\bullet}_{\varphi,\gamma}(M_0)$ in the above lemma (2)
with the
map $C^{(\varphi),\bullet}_{\varphi,\gamma}(M_0)
\rightarrow C^{(\varphi),\bullet}_{\gamma}(M_0)$ which 
is induced by 
\begin{equation*}
\begin{CD}
\widetilde{C}^{\bullet}_{\varphi,\gamma}(M_0^{(n)})@.=[M_0^{(n),\Delta}@> (\gamma-1)\oplus(\varphi-1) >> M_0^{(n),\Delta}\oplus M_0^{(n+1),\Delta}
@> (\varphi-1)\oplus(1-\gamma) >> M_0^{(n+1),\Delta}]\\
@VVV@VV\mathrm{id} V @ VV (x,y)\mapsto x V @. \\
C^{\bullet}_{\gamma}(M_0^{(n)})@.=[M_0^{(n),\Delta}@> \gamma-1
 >> M_0^{(n),\Delta}].@.
\end{CD}
\end{equation*}
We define 
$$g:C^{\bullet}_{\varphi,\gamma}(M)
\xrightarrow{f}
 C^{(\varphi),\bullet}_{\gamma}(M)\xrightarrow{\iota} 
C^{\bullet}_{\gamma}(\bold{D}^+_{\mathrm{dif}}(M)).$$ 
We denote 
$$\mathrm{can}:C^{(\varphi),\bullet}_{\gamma}(M_0)
\rightarrow C^{(\varphi),\bullet}_{\gamma}(M_0)$$ for the map induced by the 
canonical inclusion
$\mathrm{can}:M_0^{(n)}\rightarrow M_0^{(n+1)}$ for each $n\geqq n(M)$. Under these notations, we prove the following 
proposition, which is a modified version of Theorem 2.8 of \cite{Na14a}.

\begin{prop}\label{2.22}
We have functorial two distinguished triangles (horizontal ones) 
 and the map from the above to the below:
\begin{equation}\label{3.5}
\begin{CD}
C^{\bullet}_{\varphi,\gamma}(M)
@> d_1>> 
C^{\bullet}_{\varphi,\gamma}(M[1/t])\oplus 
C^{\bullet}_{\gamma}(\bold{D}^+_{\mathrm{dif}}(M))
@> d_2 >> 
C^{\bullet}_{\gamma}(\bold{D}_{\mathrm{dif}}(M))\xrightarrow{[+1]} \\
@VV\mathrm{id} V @ VV f\oplus \mathrm{id} V @ VV x\mapsto (0,x)V\\
C^{\bullet}_{\varphi,\gamma}(M)@> d_3>> 
C^{(\varphi),\bullet}_{\gamma}(M[1/t])\oplus 
C^{\bullet}_{\gamma}(\bold{D}^+_{\mathrm{dif}}(M))
@> d_4 >> 
C^{(\varphi),\bullet}_{\gamma}(M[1/t])\oplus
C^{\bullet}_{\gamma}(\bold{D}_{\mathrm{dif}}(M))\xrightarrow{[+1]}
\end{CD}
\end{equation}
with $$d_1(x)=(x,g(x)),\,\,\, d_2(x,y)=g(x)-y,$$
and 
 $$d_3(x)=(f(x), g(x)),\,\,\,\,
d_4(x,y)=((\mathrm{can}-1)x, g(x)-y).$$ 

\end{prop}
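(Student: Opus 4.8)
The plan is to produce the two horizontal distinguished triangles by identifying each as the mapping fiber (or, equivalently, the total complex of a suitable mapping cone shifted) of an appropriate morphism of complexes, and then to construct the vertical maps so that both squares commute and the two triangles fit into a morphism of triangles. For the top triangle, the natural candidate is the triangle associated to the morphism
$$
g\oplus(-\mathrm{can}):\, C^{\bullet}_{\varphi,\gamma}(M[1/t])\oplus C^{\bullet}_{\gamma}(\bold{D}^+_{\mathrm{dif}}(M))\longrightarrow C^{\bullet}_{\gamma}(\bold{D}_{\mathrm{dif}}(M)),
$$
where $g$ on $C^{\bullet}_{\varphi,\gamma}(M[1/t])$ is the composite $f$ followed by $\iota$ into $C^{\bullet}_{\gamma}(\bold{D}_{\mathrm{dif}}(M))$ and $\mathrm{can}$ is the localization map $\bold{D}^+_{\mathrm{dif}}(M)\hookrightarrow \bold{D}_{\mathrm{dif}}(M)$. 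The point is that the fiber of this map computes $C^{\bullet}_{\varphi,\gamma}(M)$: this is essentially a reformulation of Theorem 2.8 of \cite{Na14a}, which exhibits $C^{\bullet}_{\varphi,\gamma}(M)$ as the fiber of $C^{\bullet}_{\varphi,\gamma}(M[1/t])\oplus C^{\bullet}_{\gamma}(\bold{D}^+_{\mathrm{dif}}(M))\to C^{\bullet}_{\gamma}(\bold{D}_{\mathrm{dif}}(M))$. I would first reduce to the level of honest complexes by replacing $C^{\bullet}_{\varphi,\gamma}(M_0)$ with $\widetilde{C}^{\bullet}_{\varphi,\gamma}(M_0^{(n)})$ for a fixed $n\geqq n(M)$, using Lemma \ref{2.21}(1) and Lemma \ref{2.18}, so that all maps in sight become maps of explicit three-term complexes in degrees $[0,2]$; functoriality in $M$ is then immediate because everything is built from the structural maps $\varphi-1$, $\gamma-1$, $\iota_n$.

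Next I would write down the bottom triangle. Here the relevant morphism is
$$
(\mathrm{can}-1)\oplus\bigl(g-(-\mathrm{can})\bigr):\, C^{(\varphi),\bullet}_{\gamma}(M[1/t])\oplus C^{\bullet}_{\gamma}(\bold{D}^+_{\mathrm{dif}}(M))\longrightarrow C^{(\varphi),\bullet}_{\gamma}(M[1/t])\oplus C^{\bullet}_{\gamma}(\bold{D}_{\mathrm{dif}}(M)),
$$
and one must check that its fiber is again $C^{\bullet}_{\varphi,\gamma}(M)$. The key input is Lemma \ref{2.21}(2) together with the observation, made in the proof of that lemma, that $\mathrm{can}-1$ on $C^{(\varphi),\bullet}_{\varphi,\gamma}(M_0)$ (and hence on $C^{(\varphi),\bullet}_{\gamma}(M_0)$) is homotopic to zero; more precisely $\mathrm{can}-1$ is built from the homotopy $f_\bullet$ of Lemma \ref{2.21}, which is what makes the passage from $C^{\bullet}_{\varphi,\gamma}$ to $C^{(\varphi),\bullet}_{\varphi,\gamma}$ an isomorphism and allows the extra summand $C^{(\varphi),\bullet}_{\gamma}(M[1/t])$ in the middle and right terms of the bottom row to cancel. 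Concretely I expect the fiber of the bottom map to deformation-retract onto the fiber of the top map along $\mathrm{can}-1$, which identifies both with $C^{\bullet}_{\varphi,\gamma}(M)$; this is where the definitions of $d_3$ and $d_4$ with the stated formulas get pinned down, since $d_3(x)=(f(x),g(x))$ lands in the middle term precisely so that $d_4\circ d_3$ is null-homotopic via the homotopy coming from $f_\bullet$.

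Finally I would verify that the displayed vertical maps --- $\mathrm{id}$ on the left, $f\oplus\mathrm{id}$ in the middle, $x\mapsto(0,x)$ on the right --- commute with $d_1,d_2$ and $d_3,d_4$ on the nose (at the level of the chosen explicit complexes), which is a short diagram chase using $d_3=(f\oplus\mathrm{id})\circ d_1$ on the nose and $d_4\circ(f\oplus\mathrm{id})=(0\oplus\mathrm{id})\circ d_2$ up to the contribution of $\mathrm{can}-1$ on the $C^{(\varphi),\bullet}_{\gamma}(M[1/t])$ factor, which is why the left-bottom composite of the square involving $d_4$ outputs $(\mathrm{can}-1)x$ in the first coordinate. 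Since both rows are fiber sequences with the same fiber $C^{\bullet}_{\varphi,\gamma}(M)$ and the squares commute, the induced map on fibers is forced to be (homotopic to) the identity, giving the asserted morphism of distinguished triangles. The main obstacle I anticipate is purely bookkeeping: keeping the signs and the $\varphi$-vs-$\mathrm{can}$ transition maps straight when realizing the derived-category isomorphism $C^{\bullet}_{\varphi,\gamma}(M_0)\cong C^{(\varphi),\bullet}_{\varphi,\gamma}(M_0)$ of Lemma \ref{2.21}(2) as an honest chain map after choosing $n$, and checking that the resulting homotopies (coming from $f_\bullet$) are compatible across the two triangles so that the vertical maps really are strict, not merely up to homotopy; once the right model is fixed, the verification of the formulas for $d_1,\dots,d_4$ and the commutativity of the two squares is routine.
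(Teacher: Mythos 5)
Your plan for the top triangle agrees with the paper's: both come down to Theorem 2.8 of the cited reference, realized at finite level $n$ via a short exact sequence of explicit complexes and then passed to the colimit along canonical inclusions. One caveat you have not anticipated: the explicit model at level $n$ for $C^{\bullet}_{\gamma}(\bold{D}^{(+)}_{\mathrm{dif}}(M))$ is not the naive two-term complex, but a three-term complex built from infinite products $\prod_{m\geqq n}\bold{D}^+_{\mathrm{dif},m}(M)$; this is what makes the level-$n$ exact sequence of complexes (the paper's $(6)$) actually exact on the nose, and one then needs the auxiliary quasi-isomorphisms $(7)$--$(9)$ to return to the naive $C^{\bullet}_{\gamma}$. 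Your reduction ``to explicit three-term complexes'' as stated will not produce an honest short exact sequence.

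The more serious gap is the bottom triangle. You write down the correct morphism $d_4$, but your justification that its fiber is $C^{\bullet}_{\varphi,\gamma}(M)$ --- that the null-homotopy of $\mathrm{can}$ versus $\mathrm{id}$ on $C^{(\varphi),\bullet}_{\varphi,\gamma}(M_0)$ lets the extra summand ``cancel'', and that the fiber of the bottom ``deformation-retracts'' onto the fiber of the top --- is a heuristic, not an argument. That null-homotopy (Lemma \ref{2.21}) is used in the paper only to make the identification $C^{\bullet}_{\varphi,\gamma}(M_0)\isom C^{(\varphi),\bullet}_{\varphi,\gamma}(M_0)$ independent of the level $n$; it does not by itself produce the bottom triangle. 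The decisive ingredient you are missing is the level-$n$ cone identification
\[
\widetilde{C}^{\bullet}_{\varphi,\gamma}(M^{(n)}[1/t])\;=\;\mathrm{Cone}\bigl(1-\varphi:C^{\bullet}_{\gamma}(M^{(n)}[1/t])\longrightarrow C^{\bullet}_{\gamma}(M^{(n+1)}[1/t])\bigr)[-1],
\]
which ``unrolls'' the middle term of the top row into two copies of the $\gamma$-complex glued by $1-\varphi$; after passing to the colimit along $\varphi$ and $a^{\prime\bullet}$, the transition map $1-\varphi$ becomes $1-\mathrm{can}$ on $C^{(\varphi),\bullet}_{\gamma}(M[1/t])$, and that is precisely what inserts the extra $C^{(\varphi),\bullet}_{\gamma}(M[1/t])$ summand in the middle and right of the bottom row with the map $\mathrm{can}-1$. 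Without making this identification explicit (or running a careful octahedral-axiom argument using the triangle $C^{(\varphi),\bullet}_{\varphi,\gamma}(M[1/t])\to C^{(\varphi),\bullet}_{\gamma}(M[1/t])\xrightarrow{1-\mathrm{can}}C^{(\varphi),\bullet}_{\gamma}(M[1/t])$), the claim that the bottom row is a distinguished triangle with the same fiber is unsubstantiated, and your final ``forced to be the identity on fibers'' reasoning has nothing to stand on. Also note that $\mathrm{can}-1$ is \emph{not} null-homotopic on $C^{(\varphi),\bullet}_{\gamma}(M_0)$ (it realizes $1-\varphi$ on $\bold{D}^K_{\mathrm{cris}}(M_0)$ in degree $0$), so any argument that implicitly treats that operator as negligible will fail.
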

\begin{rem}
In \S2 of \cite{Na14a}, we essentially proved that the above horizontal line in this proposition 
is distinguished triangle. For the application to the local $\varepsilon$-conjecture, we also 
need the below triangle, which involves $\bold{D}^K_{\mathrm{cris}}(M):=\mathrm{H}^0_{\gamma}
(M[1/t])$.

\end{rem}

\begin{proof}( of Proposition \ref{2.22})
We first show that the above horizontal line in the proposition is a distinguished 
triangle. Actually, this is the content of Theorem 2.8 of \cite{Na14a}, but 
we briefly recall the proof since we also use it to prove that the below line is 
a distinguished triangle. In this proof, we assume $\Delta=\{1\}$ for simplicity; the general case just follows by
taking the $\Delta$-fixed parts.

For $n\geqq n(M)$, we have the following exact sequence of $A$-modules:
\begin{equation}\label{4e}
0\rightarrow M^{(n)}\xrightarrow{c_1}M^{(n)}[1/t]\oplus \prod_{m\geqq n}
\bold{D}^+_{\mathrm{dif},m}(M)\xrightarrow{c_2}\cup_{k\geqq 0}\prod_{m\geqq n}
\frac{1}{t^k}\bold{D}^+_{\mathrm{dif},m}(M)\rightarrow 0
\end{equation}
with 
$$c_1(x)=(x, (\iota_m(x))_{m\geqq n})\,\,\text{  and } \,\,c_2(x, (y_m)_{m\geqq n})
=(\iota_m(x)-y_m)_{m\geqq n}$$ by Lemma 2.9 of \cite{Na14a} (
precisely, we proved it when $A$ is a finite $\mathbb{Q}_p$-algebra, but we can prove 
it for general $A$ in the same way). 
For $n\geqq n(M)$ and $k\geqq 0$, we define a complex 
$\widetilde{C}^{\bullet}_{\varphi,\gamma}(\frac{1}{t^k}\bold{D}^{+}_{\mathrm{dif}, n}(M))$ concentrated in degree in $[0,2]$ by
\begin{equation}\label{5e}
[\prod_{m\geqq n}\frac{1}{t^k}\bold{D}^+_{\mathrm{dif},m}(M)\xrightarrow{b_0}
\prod_{m\geqq n}\frac{1}{t^k}\bold{D}^+_{\mathrm{dif},m}(M)
\oplus \prod_{m\geqq n+1}\frac{1}{t^k}\bold{D}^+_{\mathrm{dif},m}(M)
\xrightarrow{b_1}\prod_{m\geqq n+1}\frac{1}{t^k}\bold{D}^+_{\mathrm{dif},m}(M)]
\end{equation}
with 
$$b_0((x_m)_{m\geqq n})=(((\gamma-1)x_m)_{m\geqq n}, (x_{m-1}-x_m)_{m\geqq 
n+1})$$
and $$b_1((x_m)_{m\geqq n}, (y_m)_{m\geqq n+1})
=((x_{m-1}-x_m)-(\gamma-1)y_m)_{m\geqq n+1}.$$
Put $\widetilde{C}^{\bullet}_{\varphi,\gamma}(\bold{D}_{\mathrm{dif},n}(M))
=\cup_{k\geqq 0}\widetilde{C}^{\bullet}_{\varphi,\gamma}(\frac{1}{t^k}\bold{D}^+_{\mathrm{dif},n}
(M))$. 
By the above exact sequence (\ref{4e}), we obtain the following exact sequence 
of complexes of $A$-modules:
\begin{equation}\label{6e}
0\rightarrow \widetilde{C}^{\bullet}_{\varphi,\gamma}(M^{(n)})
\rightarrow \widetilde{C}^{\bullet}_{\varphi,\gamma}(M^{(n)}[1/t])
\oplus \widetilde{C}^{\bullet}_{\varphi,\gamma}(\bold{D}^+_{\mathrm{dif},n}(M))
\rightarrow \widetilde{C}^{\bullet}_{\varphi,\gamma}(\bold{D}_{\mathrm{dif},n}(M))
\rightarrow 0.
\end{equation}

Moreover, the map $C^{\bullet}_{\gamma}(\bold{D}^+_{\mathrm{dif},n}(M))
\rightarrow \widetilde{C}^{\bullet}_{\varphi,\gamma}(\bold{D}^{+}_{\mathrm{dif},n}(M))$ which is defined by 
\begin{equation}\label{7e}
\begin{CD}
\bold{D}^+_{\mathrm{dif},n}(M)@> \gamma-1 >> \bold{D}^+_{\mathrm{dif},n}(M) @. \\
@VV x\mapsto (x)_{m\geqq n} V @ VV x\mapsto ((x)_{m\geqq n},0) V @. \\
\prod_{m\geqq n}\bold{D}^+_{\mathrm{dif},m}(M)@>>>
\prod_{m\geqq n}\bold{D}^+_{\mathrm{dif},m}(M)
\oplus \prod_{m\geqq n+1}\bold{D}^+_{\mathrm{dif},m}(M)
@>>>\prod_{m\geqq n+1}\bold{D}^+_{\mathrm{dif},m}(M)
\end{CD}
\end{equation} 
and the similar map $C^{\bullet}_{\gamma}(\bold{D}_{\mathrm{dif},n}(M))
\rightarrow \widetilde{C}^{\bullet}_{\varphi,\gamma}(\bold{D}_{\mathrm{dif},n}(M))$
are easily seen to be quasi-isomorphism since we have the following exact sequence 
\begin{equation}\label{8e}
0\rightarrow \bold{D}^{(+)}_{\mathrm{dif},n}(M)\xrightarrow{x\mapsto (x)_{m\geqq n}}
\prod_{m\geqq n}\bold{D}^{(+)}_{\mathrm{dif},m}(M)\xrightarrow{(x_m)_{m\geqq n}\mapsto (x_{m-1}-x_m)_{m\geqq n+1}} \prod_{m\geqq n+1}\bold{D}^{(+)}_{\mathrm{dif},m}(M)\rightarrow 0.
\end{equation}
Put $\widetilde{C}^{\bullet}_{\varphi,\gamma}(\bold{D}^{(+)}_{\mathrm{dif}}(M))
:=\varinjlim_{n,a^{\bullet}}\widetilde{C}^{\bullet}_{\varphi,\gamma}(\bold{D}^{(+)}_{\mathrm{dif},n}(M))$
where the transition map $a^{\bullet}:\widetilde{C}^{\bullet}_{\varphi,\gamma}(\bold{D}^{(+)}_{\mathrm{dif},n}\allowbreak(M))\rightarrow \widetilde{C}^{\bullet}_{\varphi,\gamma}(\bold{D}^{(+)}_{\mathrm{dif},n+1}(M))$ is defined by 
$$a^0((x_m)_{m\geqq n})=(x_m)_{m\geqq n+1}, \,\,\,\,a^1((x_m)_{m\geqq n}, (y_m)_{m\geqq n+1})
=((x_m)_{m\geqq n+1}, (y_m)_{m\geqq n+2},$$
 and $$a^2((x_m)_{m\geqq n+1})=(x_m)_{m\geqq n+2}.$$ 
 We also define $\widetilde{C}^{(\varphi),\bullet}_{\varphi,\gamma}(\bold{D}^{(+)}_{\mathrm{dif}}(M)):=\varinjlim_{n,a^{' \bullet}}\widetilde{C}^{\bullet}_{\varphi,\gamma}(\bold{D}^{(+)}_{\mathrm{dif},n}(M))$ where the transition map $a^{' \bullet}$ is defined by 
$$a^{' 0}((x_m)_{m\geqq n})=(x_{m-1})_{m\geqq n+1},\,\,\,\, a^{'1}((x_m)_{m\geqq n}, (y_m)_{m\geqq n+1})
=((x_{m-1})_{m\geqq n+1}, (y_{m-1})_{m\geqq n+2}),$$ and $$a^{' 2}((x_m)_{m\geqq n+1})
=(x_{m-1})_{m\geqq n+2}.$$ Then, it is easy to see that quasi-isomorphism $C^{\bullet}_{\gamma}(\bold{D}^{(+)}_{\mathrm{dif},n}(M))\isom \widetilde{C}^{\bullet}_{\varphi,\gamma}(\bold{D}^{(+)}_{\mathrm{dif},n}(M))$ defined 
in (\ref{7e}) is compatible with the transition maps $C^{\bullet}_{\gamma}(\bold{D}^{(+)}_{\mathrm{dif},n}(M))\hookrightarrow C^{\bullet}_{\gamma}(\bold{D}^{(+)}_{\mathrm{dif},n+1}(M))$, 
$a^{\bullet}$ and $a^{'\bullet}$, hence induces quasi-isomorphisms
\begin{equation}\label{9e}
C^{\bullet}_{\gamma}(\bold{D}^{(+)}_{\mathrm{dif}}(M))\isom \widetilde{C}^{\bullet}_{\varphi,\gamma}(\bold{D}^{(+)}_{\mathrm{dif}}(M))\,\,\text{ and }\,\, C^{\bullet}_{\gamma}(\bold{D}^{(+)}_{\mathrm{dif}}(M))\isom \widetilde{C}^{(\varphi),\bullet}_{\varphi,\gamma}(\bold{D}^{(+)}_{\mathrm{dif}}(M)).
\end{equation}
For $\widetilde{C}^{(\varphi),\bullet}_{\varphi,\gamma}(\bold{D}^{(+)}_{\mathrm{dif}}(M))$, 
we also have a left inverse 
\begin{equation}\label{10e}
\widetilde{C}^{(\varphi),\bullet}_{\varphi,\gamma}(\bold{D}^{(+)}_{\mathrm{dif}}(M))\rightarrow C^{\bullet}_{\gamma}(\bold{D}^{(+)}_{\mathrm{dif}}(M))
\end{equation}
 of the above quasi-isomorphism  $C^{\bullet}_{\gamma}(\bold{D}^{(+)}_{\mathrm{dif}}(M))\rightarrow \widetilde{C}^{(\varphi),\bullet}_{\varphi,\gamma}(
\bold{D}^{(+)}_{\mathrm{dif}}(M))$
which is obtained as the limit of the map 
\begin{equation*}
\begin{CD}
\prod_{m\geqq n}\bold{D}^+_{\mathrm{dif},m}(M)@>>>
\prod_{m\geqq n}\bold{D}^+_{\mathrm{dif},m}(M)
\oplus \prod_{m\geqq n+1}\bold{D}^+_{\mathrm{dif},m}(M)
@>>>\prod_{m\geqq n+1}\bold{D}^+_{\mathrm{dif},m}(M)\\
@VV (x_m)_{m\geqq n}\mapsto x_nV @ VV ((x_m)_{m\geqq n}, (y_m)_{m\geqq n+1})
\mapsto x_n V @.\\
\bold{D}^+_{\mathrm{dif},n}(M)@> \gamma-1 >> \bold{D}^+_{\mathrm{dif},n}(M).@. \end{CD}
\end{equation*}

Taking the limits of the map $\widetilde{C}^{\bullet}_{\varphi,\gamma}(M^{(n)})
\rightarrow \widetilde{C}^{\bullet}_{\varphi,\gamma}(\bold{D}^+_{\mathrm{dif},n}(M))
: x\mapsto (\iota_m(x))_{m\geqq n_0}$ ($n_0=n, n+1$), we obtain the following maps
\begin{equation}\label{11e}
C^{\bullet}_{\varphi,\gamma}(M)\rightarrow \widetilde{C}^{\bullet}_{\varphi,\gamma}(
\bold{D}^+_{\mathrm{dif}}(M))\,\, \text{ and } \,\, C^{(\varphi), \bullet}_{\varphi,\gamma}(M)\rightarrow \widetilde{C}^{(\varphi),\bullet}_{\varphi,\gamma}(
\bold{D}^+_{\mathrm{dif}}(M)).
\end{equation}

Taking the limit of the exact sequence (\ref{6e}) with respect to the transition map induced by 
the canonical inclusion $M_0^{(n)}\hookrightarrow M_0^{(n+1)}$ and $a_{\bullet}$, and 
taking the quasi isomorphism $C^{\bullet}_{\gamma}(\bold{D}^{(+)}_{\mathrm{dif}}(M))\isom \widetilde{C}^{\bullet}_{\varphi,\gamma}(\bold{D}^{(+)}_{\mathrm{dif}}(M))$ in (\ref{9e}), we obtain the 
following exact triangle which is the top horizontal line in the proposition
\begin{equation*}
C^{\bullet}_{\varphi,\gamma}(M)
\xrightarrow{d_1} C^{\bullet}_{\varphi,\gamma}(M[1/t])\oplus 
C^{\bullet}_{\gamma}(\bold{D}^+_{\mathrm{dif}}(M))
\xrightarrow{d_2} C^{\bullet}_{\gamma}(\bold{D}_{\mathrm{dif}}(M))
\xrightarrow{[+1]}.
\end{equation*}

On the other hands, since we have 
 $$C^{\bullet}_{\varphi,\gamma}(M^{(n)}[1/t])
=\mathrm{Cone}(1-\varphi: C^{\bullet}_{\gamma}(M^{(n)}[1/t])\rightarrow 
C^{\bullet}_{\gamma}(M^{(n+1)}[1/t]))[-1]$$ for 
$n\geqq n(M)$ 
 (where 
we define $\mathrm{Cone}(f:M^{\bullet}\rightarrow N^{\bullet})[-1]^n=
M^{n}\oplus N^{n-1}$ and $d:M^{n}\oplus N^{n-1}\rightarrow M^{n+1}\oplus N^{n}:
(x,y)\mapsto (d_M(x), -f(x)-d_N(y))$),
taking the limit of the exact sequence (\ref{6e}) with respect to the transition map induced by 
$\varphi:M_0^{(n)}\hookrightarrow M_0^{(n+1)}$ and $a'_{\bullet}$, 
and taking the left inverse $\widetilde{C}^{(\varphi), \bullet}_{\varphi,\gamma}(\bold{D}^{(+)}_{\mathrm{dif}}(M))\rightarrow C^{\bullet}_{\gamma}(\bold{D}^{(+)}_{\mathrm{dif}}(M))$ in (\ref{10e}), 
and identifying $C^{\bullet}_{\varphi,\gamma}(M)\isom C^{(\varphi),\bullet}_{\varphi,\gamma}(M)$ by Lemma 
\ref{2.21} (2), 
we obtain the following exact triangle which is the bottom horizontal arrow in the proposition
\begin{equation*}
C^{\bullet}_{\varphi,\gamma}(M)
\xrightarrow{d_3}C^{(\varphi),\bullet}_{\gamma}(M[1/t])
\oplus C^{\bullet}_{\gamma}(\bold{D}^+_{\mathrm{dif}}(M))
\xrightarrow{d_4}C^{(\varphi),\bullet}_{\gamma}(M[1/t])
\oplus C^{\bullet}_{\gamma}(\bold{D}_{\mathrm{dif}}(M))\xrightarrow{[+1]}
\end{equation*}
with $d_3(x)=(f(x), g(x))$ and $d_4(x,y)=((\mathrm{can}-1)(x), g(x)-y)$, 
which proves the proposition.

\end{proof}

We next recall some notions concerning the $p$-adic Hodge theory for   $(\varphi,\Gamma)$-modules over
 the Robba ring. 
For a $(\varphi,\Gamma)$-module $M$ over $\mathcal{R}_A(\pi_K)$, let denote 
$$\bold{D}_{\mathrm{dR}}^K(M):=\mathrm{H}^0_{\gamma}(\bold{D}_{\mathrm{dif}}(M))\,\,\text{ and }
\,\,\bold{D}_{\mathrm{dR}}^K(M)^{i}:=\bold{D}^K_{\mathrm{dR}}(M)
\cap t^i\bold{D}^+_{\mathrm{dif}}(M)$$ for  $i\in \mathbb{Z}$. 
Let denote 
$$\bold{D}^K_{\mathrm{cris}}(M):=\mathrm{H}^0_{\gamma}(M[1/t]).$$ By Lemma \ref{2.18}, $\varphi:C^{\bullet}_{\gamma}(M[1/t])
\rightarrow C^{\bullet}_{\gamma}(M[1/t])$ induces a $\varphi$-semilinear automorphism

$$\varphi:\bold{D}^K_{\mathrm{cris}}(M)\isom \bold{D}^K_{\mathrm{cris}}(M).$$ More precisely, 
by Lemma \ref{2.19}, 
we have $\bold{D}_{\mathrm{cris}}(M)=\mathrm{H}^0_{\gamma}(M^{(n)}[1/t])$ and $\varphi$ induces an automorphism 
$\varphi:\mathrm{H}^0_{\gamma}(M^{(n)}[1/t])\xrightarrow{\varphi}
\mathrm{H}^0_{\gamma}(M^{(n+1)}[1/t])=\mathrm{H}^0_{\gamma}(M^{(n)}[1/t])$ for 
$n\geqq n(M)$. Using these facts, we define  an isomorphism
$$j_1:\bold{D}^K_{\mathrm{cris}}(M)=\mathrm{H}^0_{\gamma}(M^{(n)}[1/t])
\xrightarrow{\varphi^n}\mathrm{H}^0_{\gamma}(M^{(n)}[1/t])\isom 
\mathrm{H}^{(\varphi), 0}_{\gamma}(M[1/t]),$$ which does not depend on the choice of $n$. Then, the map $\iota:C^{(\varphi),\bullet}_{\gamma}(M[1/t])
\rightarrow C^{\bullet}_{\gamma}(\bold{D}_{\mathrm{dif}}(M))$ induces a $F\otimes_{\mathbb{Q}_p}A$-linear injection 
$$\iota:\bold{D}^K_{\mathrm{cris}}(M)
\xrightarrow{j_1}\mathrm{H}^{(\varphi),0}_{\gamma}(M[1/t])\xrightarrow{\iota}
\bold{D}^K_{\mathrm{dR}}(M).$$  We define another isomorphism 
$$j_2:\bold{D}^K_{\mathrm{cris}}(M)\xrightarrow{j_1}\mathrm{H}^{(\varphi),0}_{\gamma}(M[1/t])
\xrightarrow{\mathrm{can}}\mathrm{H}^{(\varphi),0}_{\gamma}(M[1/t]),$$ where 
$\mathrm{H}^{(\varphi),0}_{\gamma}(M[1/t])
\xrightarrow{\mathrm{can}}\mathrm{H}^{(\varphi),0}_{\gamma}(M[1/t])$ is the  map 
 induced  by $\mathrm{can}:C^{(\varphi),\bullet}_{\gamma}(M[1/t])\allowbreak\rightarrow 
 C^{(\varphi),\bullet}_{\gamma}(M[1/t])$, which is isomorphism by Lemma \ref{2.21}. 
 Then, we obtain the following commutative diagram
 \begin{equation*}
 \begin{CD}
 \bold{D}_{\mathrm{cris}}^K(M)@> 1-\varphi >>\bold{D}^K_{\mathrm{cris}}(M) \\
 @VV j_1 V @ VV j_2 V\\
 \mathrm{H}^{(\varphi),0}_{\gamma}(M[1/t])@> \mathrm{can}-\mathrm{id} >>
 \mathrm{H}^{(\varphi),0}_{\gamma}(M[1/t]).
 \end{CD}
 \end{equation*}

 Let denote 
$$\mathrm{exp}_M:\bold{D}^K_{\mathrm{dR}}(M)\rightarrow \mathrm{H}^1_{\varphi,\gamma}(M), \,\,\,\,\mathrm{exp}_{f,M}:\bold{D}^K_{\mathrm{cris}}(M)\xrightarrow{j_2}\mathrm{H}
^{(\varphi),0}_{\gamma}(M[1/t])\rightarrow 
\mathrm{H}^1_{\varphi,\gamma}(M)$$ for the boundary maps obtained by taking the cohomology of 
 the exact triangles in Proposition \ref{2.22}. We define 
  $$\mathrm{H}^1_{\varphi,\gamma}(M)_{e}=\mathrm{Im}
 (\bold{D}^K_{\mathrm{dR}}(M)\xrightarrow{\mathrm{exp}_M} \mathrm{H}^1_{\varphi,\gamma}(M))$$and 
 
 $$\mathrm{H}^1_{\varphi,\gamma}(M)_{f}=\mathrm{Im}
 (\bold{D}^K_{\mathrm{cris}}(M)\oplus \bold{D}^K_{\mathrm{dR}}(M)\xrightarrow{\mathrm{exp}_{f,M}\oplus \mathrm{exp}_M} \mathrm{H}^1_{\varphi,\gamma}(M)).$$
 We call the latter group the finite cohomology.
 Put $t_M(K):=\bold{D}^K_{\mathrm{dR}}(M)/\bold{D}^K_{\mathrm{dR}}(M)^0$. By Proposition \ref{2.22}, we obtain the following diagram of exact sequences
 \begin{equation}\label{12e}
 \begin{CD}
 0\rightarrow\mathrm{H}^0_{\varphi,\gamma}(M)@>{x\mapsto x}>> \bold{D}^K_{\mathrm{cris}}(M)^{\varphi=1}
 @> x\mapsto \overline{\iota(x)} >> t_M(K)@> \mathrm{exp}_M>> \mathrm{H}^1_{\varphi,\gamma}(M)_e\rightarrow0\\
 @VV\mathrm{id}V @VV x \mapsto x V @ VVx\mapsto (0,x) V@VV x\mapsto x V\\
  0\rightarrow\mathrm{H}^0_{\varphi,\gamma}(M)@>x\mapsto x>> \bold{D}^K_{\mathrm{cris}}(M)
 @> d_5>> \bold{D}^K_{\mathrm{cris}}(M)\oplus t_M(K)@>d_6>> \mathrm{H}^1_{\varphi,\gamma}(M)_{f}\rightarrow 0
 \end{CD}
  \end{equation}
  with 
 $$d_5(x,y)=((1-\varphi)x,\overline{ \iota(x)})\,\,\text{ and } d_6=\mathrm{exp}_{f,M}\oplus \mathrm{exp}_M,$$
 where we also denote $\mathrm{exp}_M:t_M(K)\rightarrow \mathrm{H}^1_{\varphi,\gamma}(M)$ which is naturally induced by $\mathrm{exp}_M:\bold{D}^K_{\mathrm{dR}}(M)\rightarrow \mathrm{H}^1_{\varphi,\gamma}(M)$.

By the proof of Proposition \ref{2.22}, 
we obtain the following explicit formulae of $\mathrm{exp}_M$ and 
$\mathrm{exp}_{f,M}$, which are very useful in applications.

\begin{prop}\label{2.23}
We have the following formulae.
\begin{itemize}
\item[(1)]For $x\in \bold{D}^{K}_{\mathrm{dR}}(M)$, take $\tilde{x}\in M^{(n)}[1/t]^{\Delta}$ $(n\geqq n(M))$ such that 
$$\iota_m(\tilde{x})-x\in \bold{D}^+_{\mathrm{dif},m}(M)$$ for any $m\geqq n$ $($ such $\tilde{x}$ exists 
by the exact sequence $(\ref{4e})$ in the proof of Proposition $\ref{2.22})$. Then we have 
$$\mathrm{exp}_M(x)=[(\gamma-1)\tilde{x},(\varphi-1)\tilde{x}]\in \mathrm{H}^1_{\varphi,\gamma}(M).$$
\item[(2)]For $x\in \bold{D}^K_{\mathrm{cris}}(M)$, take $\tilde{x}\in M^{(n)}[1/t]^{\Delta}$ $(n\geqq n(M))$ 
such that 
$$\iota_n(\tilde{x})\in \bold{D}^+_{\mathrm{dif},n}(M)$$ and 
$$\iota_{n+k}(\tilde{x})-\sum_{l=1}^k\iota_{n+l}(\varphi^n(x))\in \bold{D}^+_{\mathrm{dif},n+k}(M)$$ for any $k\geqq 1$$ ($ we remark that  we have $\varphi^n(x)\in M^{(n)}[1/t]$ by Lemma $\ref{2.19}$ and that such $\tilde{x}$ exists 
by the exact sequence $(\ref{4e})$$)$. 
Then we have
$$\mathrm{exp}_{f,M}(x)=[(\gamma-1)\tilde{x}, (\varphi-1)\tilde{x}+\varphi^n(x)]\in \mathrm{H}^1_{\varphi,\gamma}(M).$$

\end{itemize}

\end{prop}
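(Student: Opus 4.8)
The plan is to compute the two boundary maps $\exp_M$ and $\exp_{f,M}$ of Proposition~\ref{2.22} at the level of cochains, by applying the snake lemma to the explicit short exact sequences of complexes from which the triangles of that proposition were constructed -- namely the sequences obtained by applying the ``$\widetilde{C}^\bullet_{\varphi,\gamma}(-)$''-construction to the localization sequence $(\ref{4e})$, passed to the two relevant colimits (over the canonical inclusions $M_0^{(n)}\hookrightarrow M_0^{(n+1)}$ for $\exp_M$, over the $\varphi$-transition maps for $\exp_{f,M}$).

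For (1): fix $n\ge n(M)$ with $x\in\bold{D}_{\mathrm{dif},n}(M)^\Delta$, and view $x$ as the degree-$0$ cocycle $(x)_{m\ge n}$ of $\widetilde{C}^\bullet_{\varphi,\gamma}(\bold{D}_{\mathrm{dif},n}(M))$ under the quasi-isomorphism $(\ref{7e})$. By surjectivity of $c_2$ in $(\ref{4e})$ there is $\tilde x\in M^{(n)}[1/t]^\Delta$ with $y_m:=\iota_m(\tilde x)-x\in\bold{D}^+_{\mathrm{dif},m}(M)$ for all $m\ge n$; then $(\tilde x,(y_m)_{m\ge n})$ is a lift of $(x)_{m\ge n}$ under the surjection in the top-triangle sequence. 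Applying the degree-$0$ differential of the middle term produces, on the $\widetilde{C}^\bullet_{\varphi,\gamma}(M^{(n)}[1/t])$-summand, the pair $((\gamma-1)\tilde x,(\varphi-1)\tilde x)$; using $\gamma x=x$ and the compatibility of $\varphi$ with $\mathrm{can}$ through the maps $\iota_m$, one checks that $(\gamma-1)\tilde x\in M^{(n)}$ and $(\varphi-1)\tilde x\in M^{(n+1)}$ (via the description $M^{(n)}=\{z\in M^{(n)}[1/t]\mid \iota_m(z)\in\bold{D}^+_{\mathrm{dif},m}(M)\ \forall m\ge n\}$, itself a consequence of $(\ref{4e})$), and that the full degree-$1$ output of the differential is precisely the image under $c_1$ of this $1$-cocycle. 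Hence $\exp_M(x)=[(\gamma-1)\tilde x,(\varphi-1)\tilde x]$. Replacing $\tilde x$ changes it by an element of $M^{(n)}$, hence alters the cocycle by a coboundary, and Lemma~\ref{2.21}(2) gives independence of $n$, so the formula is well posed.

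For (2): here $\exp_{f,M}$ is $j_2$ followed by the boundary map of the bottom triangle of Proposition~\ref{2.22}, built from the same localization sequence in the colimit over the $\varphi$-transition maps, using the cone identification $\widetilde{C}^\bullet_{\varphi,\gamma}(M^{(n)}[1/t])\cong\mathrm{Cone}(1-\varphi\colon C^\bullet_\gamma(M^{(n)}[1/t])\to C^\bullet_\gamma(M^{(n+1)}[1/t]))[-1]$ noted in the proof of Proposition~\ref{2.22}, the left inverse $(\ref{10e})$, Lemma~\ref{2.18}, and the identification $C^\bullet_{\varphi,\gamma}(M)\isom C^{(\varphi),\bullet}_{\varphi,\gamma}(M)$ of Lemma~\ref{2.21}(2). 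Unwinding $j_2$ -- which sends $x\in(M^{(n)}[1/t])^{\gamma=1}$ to the class of $\varphi^n(x)$, where $\varphi^n(x)\in M^{(n)}[1/t]$ by Lemma~\ref{2.19} -- together with the level shift in the $\varphi$-transition maps shows that the relevant lift is a $\tilde x\in M^{(n)}[1/t]^\Delta$ subject to exactly the two conditions in the statement (namely $\iota_n(\tilde x)\in\bold{D}^+_{\mathrm{dif},n}(M)$ and $\iota_{n+k}(\tilde x)-\sum_{l=1}^k\iota_{n+l}(\varphi^n(x))\in\bold{D}^+_{\mathrm{dif},n+k}(M)$ for $k\ge1$), the partial sums being forced by the repeated $\varphi$-shift. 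Running the snake-lemma computation as in (1) then yields $\exp_{f,M}(x)=[(\gamma-1)\tilde x,(\varphi-1)\tilde x+\varphi^n(x)]$, the extra summand $\varphi^n(x)$ being the contribution of the $\varphi$-direction of the differential; as a check $(\gamma-1)\varphi^n(x)=\varphi^n((\gamma-1)x)=0$, so the pair is a genuine $1$-cocycle.

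I expect the main obstacle to be the bookkeeping in (2): one must simultaneously track the $\varphi$-transition maps in the colimit, the isomorphism $j_2$ with its factor $\varphi^n$, and the chain of quasi-isomorphisms and the left inverse $(\ref{10e})$ used to present the bottom triangle in the stated form, and then verify that the boundary map computed in the $\widetilde{C}$-models agrees with the one in that presentation. The appearance of the partial sums $\sum_{l=1}^k\iota_{n+l}(\varphi^n(x))$ in the condition on $\tilde x$, and of $\varphi^n(x)$ rather than $x$ in the resulting cocycle, is precisely the output of this unwinding, and pinning down their interaction is the delicate point; part (1), by contrast, involves essentially no such bookkeeping, the only subtlety being the descent of the degree-$1$ cochain to the integral complex $\widetilde{C}^\bullet_{\varphi,\gamma}(M^{(n)})$.
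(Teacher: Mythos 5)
Your proposal is correct and uses essentially the same strategy as the paper: both compute $\exp_M$ and $\exp_{f,M}$ by a direct diagram chase in the explicit cone/short-exact-sequence models $(\ref{6e})$ built in the proof of Proposition~\ref{2.22}, taking a lift against $c_2$ and tracking the differential of the middle term. The paper is a bit more explicit in part (2) (it names the auxiliary complexes $C_1^\bullet,\dots,C_4^\bullet$ and writes out the coboundary $(\tilde x',\iota_n(\tilde x'))$ and the sign convention for the cone), while you correctly identify all the same ingredients -- the $\varphi$-transition maps, the factor $\varphi^n$ coming from $j_2$, and the left inverse $(\ref{10e})$ -- and defer the bookkeeping.
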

\begin{proof}
These formulae directly  follow from simple but a little bit long diagram chases in the proof of Proposition \ref{2.22}. For the convenience of the reader, we 
give a proof of these formulae. 

We first prove the formula (1). By the proof of Proposition \ref{2.22}, the above
exact triangle in this proposition is obtained by  taking the limit of 
the composition of the quasi-isomorphism 
$$\widetilde{C}^{\bullet}_{\varphi,\gamma}(M^{(n)})\isom 
\mathrm{Cone}(\widetilde{C}^{\bullet}_{\varphi,\gamma}(M^{(n)}[1/t]\oplus 
\widetilde{C}^{\bullet}_{\varphi,\gamma}(\bold{D}^+_{\mathrm{dif},n}(M))\rightarrow 
\widetilde{C}^{\bullet}_{\varphi,\gamma}(\bold{D}_{\mathrm{dif},n}(M)))[-1]:=C_1^{\bullet}$$
( which is obtained by the exact sequence (\ref{6e}) ) with the inverse of the quasi-isomorphism 
$$C_2^{\bullet}:=\mathrm{Cone}(\widetilde{C}^{\bullet}_{\varphi,\gamma}(M^{(n)}[1/t]\oplus 
C^{\bullet}_{\gamma}(\bold{D}^+_{\mathrm{dif},n}(M))\rightarrow 
C^{\bullet}_{\gamma}(\bold{D}_{\mathrm{dif},n}(M)))[-1]\isom C_1^{\bullet}$$
induced by  by the quasi-isomorphism
 $C^{\bullet}_{\gamma}(\bold{D}^{(+)}_{\mathrm{dif},n}(M))
\rightarrow \widetilde{C}^{\bullet}_{\varphi,\gamma}(\bold{D}^{(+)}_{\mathrm{dif},n}(M))
:x\mapsto (x)_{m\geqq n}$ of (\ref{9e}).

By definition of $\mathrm{exp}_M(-)$,  for $x\in \mathrm{H}^0_{\gamma}
(\bold{D}_{\mathrm{dif},n}(M))$, these quasi-isomorphisms send 
$\mathrm{exp}_M(x)$ ( 
which we see as an element of $\mathrm{H}^1(\widetilde{C}^{\bullet}_{\varphi,\gamma}(M^{(n)}))$)
 to the element $[0,0,x]\in \mathrm{H}^1(C_2^{\bullet})$ represented by 
 $(0,0,x)\in \widetilde{C}^1_{\varphi,\gamma}(M^{(n)}[1/t])\oplus 
 C^1_{\gamma}(\bold{D}^+_{\mathrm{dif},n}(M)
 )\oplus C^0_{\gamma}(\bold{D}_{\mathrm{dif},n}(M))$. 
 Take $\tilde{x}\in M^{(n)}[1/t]^{\Delta}$ satisfying the condition in (1), then it suffices to show that 
 $[(\gamma-1)\tilde{x},(\varphi-1)\tilde{x}]\in \mathrm{H}^1(\widetilde{C}^{\bullet}_{\varphi,\gamma}(M^{(n)}))$ and $[0,0,x]\in \mathrm{H}^1(C^{\bullet}_2)$ are the same element in 
 $\mathrm{H}^1(C_1^{\bullet})$. By definition, $[(\gamma-1)\tilde{x},(\varphi-1)\tilde{x}]$ is sent to 
 $$[((\gamma-1)\tilde{x}, (\varphi-1)\tilde{x}), ((\iota_m((\gamma-1)\tilde{x}))_{m\geqq n}, 
 (\iota_m((\varphi-1)\tilde{x}))_{m\geqq n+1}), 0]$$
 and $[0,0,x]$ is sent to 
 $$[0,0,(-x)_{m\geqq n}]$$ 
 in $\mathrm{H}^1(C^{\bullet}_1)$, both are represented by elements of 
 $\widetilde{C}^1_{\varphi,\gamma}(M^{(n)}[1/t])\oplus \widetilde{C}^1_{\varphi,\gamma}
 (\bold{D}^+_{\mathrm{dif},n}(M))\oplus \widetilde{C}^0_{\varphi,\gamma}(\bold{D}_{\mathrm{dif},n}(M))$ (we remark the sign; for $f:C^{\bullet}\rightarrow D^{\bullet}$, 
 we define $D^{\bullet-1}\rightarrow \mathrm{Cone}(C^{\bullet}\rightarrow D^{\bullet})[-1]$ by 
 $x\mapsto (-x, 0)$ and $\mathrm{Cone}(C^{\bullet}\rightarrow D^{\bullet})[-1]\rightarrow C^{\bullet}$ is defined by $(x,y)\mapsto y$). Then, it s easy to check that the difference of these two elements is the coboundary of the element 
 $$(\widetilde{x}, (\iota_m(\widetilde{x})-x)_{m\geqq n})\in C^0_1=M^{(n)}[1/t]^{\Delta}\oplus 
 \prod_{m\geqq n}\bold{D}^+_{\mathrm{dif},m}(M)^{\Delta},$$
 which proves (1). 
 
 We next prove (2). The below exact triangle in Proposition 
 \ref{2.22} is obtained by taking the limit of the composition of the quasi-isomorphism 
 $\widetilde{C}^{\bullet}_{\varphi,\gamma}(M^{(n)})\isom C_1^{\bullet}$ defined above with 
 the quasi-isomorphism 
 $$C_1^{\bullet}\isom \mathrm{Cone}(\widetilde{C}^{\bullet}_{\varphi,\gamma}(M^{(n)}[1/t])
 \oplus C^{\bullet}_{\gamma}(\bold{D}^+_{\mathrm{dif},n}(M))\rightarrow C^{\bullet}_{\gamma}
 (\bold{D}_{\mathrm{dif},n}(M)))[-1]:=C_3^{\bullet}$$
 induced by the map $\prod_{m\geqq n}\bold{D}^+_{\mathrm{dif},m}(M)\rightarrow 
 \bold{D}^+_{\mathrm{dif},n}(M):(x_m)_{m\geqq n}\rightarrow x_n$, with 
 the inverse of  the quasi-isomorphism 
 \begin{multline*}
 C^{\bullet}_3\isom \mathrm{Cone}(C^{\bullet}_{\gamma}(M^{(n)}[1/t])
 \oplus C^{\bullet}_{\gamma}(\bold{D}^+_{\mathrm{dif},n}(M))\\
 \rightarrow C^{\bullet}_{\gamma}(M^{(n+1)}[1/t])\oplus C^{\bullet}_{\gamma}(\bold{D}
 _{\mathrm{dif},n}(M)))[-1]:=C_4^{\bullet}
 \end{multline*}
 which is naturally obtained by the identity 
 $$\widetilde{C}^{\bullet}_{\varphi,\gamma}
 (M^{(n)}[1/t]))=\mathrm{Cone}(C^{\bullet}_{\gamma}(M^{(n)}[1/t])\xrightarrow{1-\varphi}
 C^{\bullet}_{\gamma}(M^{(n+1)}[1/t]))[-1].$$
 
 For $x'\in \mathrm{H}^0_{\gamma}(M^{(n+1)}[1/t])$, the image of $x'$ by the first boundary map of the cone $C_4^{\bullet}$ is equal to $[0,0,x',0]\in \mathrm{H}^1(C_4^{\bullet})$ which is represented by 
 the element $(0,0,x',0)\in C^{1}_{\gamma}(M^{(n)}[1/t])\oplus C^{1}_{\gamma}(\bold{D}^+_{\mathrm{dif},n}(M))
 \oplus C^0_{\gamma}(M^{(n+1)}[1/t])\oplus C^0_{\gamma}(\bold{D}_{\mathrm{dif},n}(M))$. 
 Take $\tilde{x}'\in M^{(n)}[1/t]^{\Delta}$ such that $\iota_n(\tilde{x}')\in \bold{D}^+_{\mathrm{dif},n}(M)$ and that $\iota_{n+k}(\tilde{x}')-\sum_{l=1}^k\iota_{n+l}(x')\in \bold{D}^+_{\mathrm{dif},n+k}(M)$ 
 for any $k\geqq 1$, then, by definition of the map $j_2:\bold{D}^K_{\mathrm{cris}}(M)
 \isom \mathrm{H}^{(\varphi),0}_{\gamma}(M[1/t])$ and $\mathrm{exp}_{f,M}$, it suffices to show that 
 the element $[(\gamma-1)\tilde{x}',(\varphi-1)\tilde{x}'+x']\in \mathrm{H}^1(\widetilde{C}^{\bullet}_{\varphi,\gamma}(M^{(n)}))$ is sent to $[0,0,x',0]\in \mathrm{H}^1(C_4^{\bullet})$ by the above quasi-
 isomorphisms. By definition, the element $[(\gamma-1)\tilde{x}',(\varphi-1)\tilde{x}'+x']$ is 
 sent to 
 $$[(\gamma-1)\tilde{x}', \iota_n((\gamma-1)\tilde{x}'), (\varphi-1)\tilde{x}'+x',0]\in \mathrm{H}^1(C^{\bullet}_4)$$ by the above quasi-isomorphism. Then, it is easy to check that 
 the difference of this element with 
 $[0,0,x',0]$ 
 is  the coboundary of the element 
 $$(\tilde{x}',\iota_n(\tilde{x}'))\in C^0_4=M^{(n)}[1/t]^{\Delta}\oplus \bold{D}^+_{\mathrm{dif},n}(M)^{\Delta},$$
 which proves the formula (2).

\end{proof}

We next generalize the Bloch-Kato duality concerning the finite cohomology for 
$(\varphi,\Gamma)$-modules. 
Let $L=A$ be a finite extension of $\mathbb{Q}_p$, and let $M$ be a $(\varphi,\Gamma)$-module 
over $\mathcal{R}_L(\pi_K)$. We say that $M$ is de Rham if the equality $\mathrm{dim}_{K}\bold{D}^K_{\mathrm{dR}}(M)=[L:\mathbb{Q}_p]\cdot r_M$ holds. When $M$ is de Rham, we have 
a natural $L$-bilinear perfect pairing 
\begin{multline}\label{13e}
[-,-]_{\mathrm{dR}}:\bold{D}^K_{\mathrm{dR}}(M^*)\times \bold{D}^K_{\mathrm{dR}}(M)
\xrightarrow{(f,x)\mapsto f(x)} \bold{D}^K_{\mathrm{dR}}(\mathcal{R}_L(1))\\
=L\otimes_{\mathbb{Q}_p}K\frac{1}{t}\bold{e}_1\xrightarrow{
\frac{a}{t}\bold{e}_1\mapsto \frac{1}{[K:\mathbb{Q}_p]}(\mathrm{id}\otimes\mathrm{tr}_{K/\mathbb{Q}_p})(a)}L,
\end{multline}
which induces a natural isomorphisms
$$\bold{D}^K_{\mathrm{dR}}(M)\isom 
\bold{D}^K_{\mathrm{dR}}(M^*)^{\vee}\,\,\text{ and } \bold{D}^K_{\mathrm{dR}}(M)^0\isom t_{M^*}(K)^{\vee}.$$

\begin{prop}\label{2.24}
Let $L=A$ be a finite extension of $\mathbb{Q}_p$, and 
let $M$ be a de Rham $(\varphi,\Gamma)$-module over $\mathcal{R}_L(\pi_K)$. 
Then, $\mathrm{H}^1_{\varphi,\gamma}(M)_{f}$  is the orthogonal 
complement of 
$\mathrm{H}^1_{\varphi,\gamma}(M^*)_{f}$ with respect to
the Tate duality pairing 
$\langle, \rangle:\mathrm{H}^1_{\varphi,\gamma}(M^*)\times \mathrm{H}^1_{\varphi,\gamma}(M)\rightarrow 
L$.
\end{prop}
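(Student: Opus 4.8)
The plan is to reduce the statement to a dimension count together with the orthogonality $\langle\mathrm{H}^1_{\varphi,\gamma}(M^*)_f,\mathrm{H}^1_{\varphi,\gamma}(M)_f\rangle=0$, and to prove the latter from the explicit description of the Bloch--Kato exponentials. For the dimension count, recall that $L$ is a field, so every object of $\bold{D}^{[0,2]}_{\mathrm{perf}}(L)$ is quasi-isomorphic to a bounded complex of finite dimensional $L$-vector spaces; thus the Euler--Poincar\'e formula (Theorem \ref{2.16}(3)) together with Tate duality (Theorem \ref{2.16}(4)), which over $L$ gives $\dim_L\mathrm{H}^2_{\varphi,\gamma}(M)=\dim_L\mathrm{H}^0_{\varphi,\gamma}(M^*)$, yields $\dim_L\mathrm{H}^1_{\varphi,\gamma}(M)=\dim_L\mathrm{H}^0_{\varphi,\gamma}(M)+\dim_L\mathrm{H}^0_{\varphi,\gamma}(M^*)+[K:\mathbb{Q}_p]r_M$. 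The lower row of $(\ref{12e})$ gives $\dim_L\mathrm{H}^1_{\varphi,\gamma}(M)_f=\dim_L\mathrm{H}^0_{\varphi,\gamma}(M)+\dim_L t_M(K)$ (note that $\bold{D}^K_{\mathrm{cris}}(M)$ is finite dimensional over $L$, since it embeds into $\bold{D}^K_{\mathrm{dR}}(M)$ by Lemma \ref{2.19}), while the isomorphism $\bold{D}^K_{\mathrm{dR}}(M)^0\isom t_{M^*}(K)^{\vee}$ of $(\ref{13e})$ and the de Rham hypothesis give $\dim_L t_M(K)+\dim_L t_{M^*}(K)=\dim_L\bold{D}^K_{\mathrm{dR}}(M)=[K:\mathbb{Q}_p]r_M$. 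Adding the identities for $M$ and $M^*$ we obtain
$$\dim_L\mathrm{H}^1_{\varphi,\gamma}(M)_f+\dim_L\mathrm{H}^1_{\varphi,\gamma}(M^*)_f=\dim_L\mathrm{H}^1_{\varphi,\gamma}(M).$$
Since the Tate pairing $\mathrm{H}^1_{\varphi,\gamma}(M^*)\times\mathrm{H}^1_{\varphi,\gamma}(M)\to L$ of Definition \ref{dual} is perfect (again because $L$ is a field and Theorem \ref{2.16}(4) is an isomorphism), once the orthogonality is known the orthogonal complement of $\mathrm{H}^1_{\varphi,\gamma}(M^*)_f$ has the same $L$-dimension as $\mathrm{H}^1_{\varphi,\gamma}(M)_f$ and contains it, hence equals it, which is the assertion.

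To prove the orthogonality, observe first that by $(\ref{12e})$ and the definitions of $\mathrm{exp}_M$ and $\mathrm{exp}_{f,M}$ the subspace $\mathrm{H}^1_{\varphi,\gamma}(M)_f$ is the image of the connecting homomorphism of the lower exact triangle of Proposition \ref{2.22} --- equivalently, $\mathrm{Im}(\mathrm{exp}_M)+\mathrm{Im}(\mathrm{exp}_{f,M})$ --- and likewise for $M^*$. It therefore suffices to show that $\langle a^*,a\rangle=0$ whenever $a$ is of the form $\mathrm{exp}_M(x)$ or $\mathrm{exp}_{f,M}(x)$ and $a^*$ is of the form $\mathrm{exp}_{M^*}(x^*)$ or $\mathrm{exp}_{f,M^*}(x^*)$. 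I would verify each such vanishing by representing $a$ and $a^*$ by the explicit cocycles of Proposition \ref{2.23}, forming the cup product with the formula of Definition \ref{2.12}, pushing forward along the evaluation map $\mathrm{ev}:M^*\otimes M\to\mathcal{R}_L(\pi_K)(1)$, and exhibiting the resulting $2$-cocycle valued in $\mathcal{R}_L(\pi_K)(1)$ as a coboundary --- the primitive being assembled from $\mathrm{ev}(\tilde x^*\otimes\tilde x)$ and its $\varphi$-translate, where $\tilde x,\tilde x^*$ are the lifts furnished by Proposition \ref{2.23} --- so that $\langle a^*,a\rangle$, which is the image of this cocycle under the trace isomorphism $\mathrm{H}^2_{\varphi,\gamma}(\mathcal{R}_L(\pi_K)(1))\isom L$, is zero. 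Conceptually this says that the cup product extends to a pairing between the lower exact triangles of Proposition \ref{2.22} for $M^*$ and for $M$ and the corresponding triangle for $\mathcal{R}_L(\pi_K)(1)$, which on the de Rham terms is induced by $\mathrm{ev}$ and, after the residue description of $\mathrm{H}^2_{\varphi,\gamma}(\mathcal{R}_L(\pi_K)(1))$ (cf. Remark \ref{2.13}), matches the normalized pairing $(\ref{13e})$; granted that, the vanishing is formal from the compatibility of cup products with connecting homomorphisms, since the $\mathrm{exp}$-classes all lie in the image of a connecting homomorphism.

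The main obstacle is exactly this last input: carrying out the bookkeeping of denominators and signs needed to produce those coboundaries, and pinning down the trace isomorphism $\mathrm{H}^2_{\varphi,\gamma}(\mathcal{R}_L(\pi_K)(1))\isom L$ as a residue in a way compatible with $\mathrm{ev}$ through the exact triangle of Proposition \ref{2.22} --- this compatibility being precisely what forces the normalizing factor $\tfrac{1}{[K:\mathbb{Q}_p]}(\mathrm{id}\otimes\mathrm{tr}_{K/\mathbb{Q}_p})$ in $(\ref{13e})$. An equivalent and perhaps more transparent route is to bring in the dual Bloch--Kato exponential of \cite{Na14a}, prove the adjunction formula $\langle z^*,\mathrm{exp}_M(x)\rangle=\pm[\mathrm{exp}^*_{M^*}(z^*),x]_{\mathrm{dR}}$ together with its counterpart for $\mathrm{exp}_f$, and observe that $\mathrm{H}^1_{\varphi,\gamma}(M)_f$ is the common kernel of the pertinent dual exponentials; but establishing that adjunction comes down to the same computation.
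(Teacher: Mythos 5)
Your argument matches the paper's: first the identity $\dim_L\mathrm{H}^1_{\varphi,\gamma}(M)_f+\dim_L\mathrm{H}^1_{\varphi,\gamma}(M^*)_f=\dim_L\mathrm{H}^1_{\varphi,\gamma}(M)$ (deduced from the lower row of $(\ref{12e})$ and the de Rham hypothesis exactly as you do), reducing the statement to the orthogonality $\langle\mathrm{H}^1_{\varphi,\gamma}(M^*)_f,\mathrm{H}^1_{\varphi,\gamma}(M)_f\rangle=0$; and the paper then establishes that orthogonality via a cup-product compatibility with the lower exact triangle of Proposition~\ref{2.22} (isolated as Lemma~\ref{2.25}, proved by the kind of explicit cocycle/diagram chase you sketch), from which the vanishing follows formally because classes in $\mathrm{H}^1_f$ lie in the kernel of $g$. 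So your proposal is correct and essentially the same route, with your ``conceptual'' paragraph being precisely the content of the paper's Lemma~\ref{2.25}.
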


\begin{proof}
We remark that we have $\mathrm{dim}_L\mathrm{H}^1_{\varphi,\gamma}(M)_{f}=\mathrm{dim}_L
(t_M(K)) +\mathrm{dim}_L\mathrm{H}^0_{\varphi,\gamma}(M)$ by the below exact sequence of (\ref{12e}). 
Using this formula for $M, M^*$, it is easy to check that we have 
$\mathrm{dim}_L\mathrm{H}^1_{\varphi,\gamma}(M)_f+
\mathrm{dim}_L\mathrm{H}^1_{\varphi,\gamma}(M^*)_f=\mathrm{dim}_L\mathrm{H}^1_{\varphi,\gamma}(M)$ under the assumption that $M$ is de Rham. Hence, it suffices to show that 
we have $\langle x,y\rangle=0$ for any $x\in \mathrm{H}^1_{\varphi,\gamma}(M^*)_{f}$ 
and $y\in \mathrm{H}^1_{\varphi,\gamma}(M)_{f}$ by comparing the dimensions. 
By definition of $\mathrm{H}^1_{\varphi,\gamma}(-)_f$, this claim follows from the following lemma \ref{2.25}.

\end{proof}
Let $M$ be a $(\varphi,\Gamma)$-module over $\mathcal{R}_A(\pi_K)$ (we don't need to assume that $M$ is de Rham). Using the isomorphism $j_2:\bold{D}^K_{\mathrm{cris}}(M^*)\isom \mathrm{H}^{(\varphi),0}_{\gamma}(M^*[1/t])$,
define an $A$-bilinear pairing
\begin{multline*}
h(-,-):(\bold{D}^K_{\mathrm{cris}}(M^*)\oplus \bold{D}^K_{\mathrm{dR}}(M^*))\times
(\mathrm{H}^{(\varphi), 1}_{\gamma}(M[1/t])\oplus \mathrm{H}^1_{\gamma}(\bold{D}^+_{\mathrm{dif}}(M)))\\
\rightarrow \mathrm{H}^{(\varphi), 1}_{\gamma}(
M^*\otimes M[1/t])\oplus \mathrm{H}^1_{\gamma}(\bold{D}_{\mathrm{dif}}(M^*\otimes M))
\end{multline*}
by $$h((x, y),([z],[w])):=([j_2(x)\otimes z], [y\otimes w]).$$ 

\begin{lemma}\label{2.25}
For $(x,y)\in \bold{D}^K_{\mathrm{cris}}(M^*)\oplus\bold{D}^K_{\mathrm{dR}}(M^*)$ and 
$z\in \mathrm{H}^1_{\varphi,\gamma}(M)$, we have 
$$f_2(h((x, y), g(z)))=(\mathrm{exp}_{f,M^*}(x)+\mathrm{exp}_{M^*}(y))\cup z \in \mathrm{H}^2_{\varphi,\gamma}(M^*\otimes M),$$ where 
$$g:\mathrm{H}^1_{\varphi,\gamma}(M)\rightarrow \mathrm{H}^{(\varphi),1}_{\gamma}(M)
\oplus \mathrm{H}^1_{\gamma}(\bold{D}^+_{\mathrm{dR}}(M))$$ is induced by $d_3$ and 
$$f_2:\mathrm{H}^{(\varphi), 1}_{\gamma}(
M^*\otimes M[1/t])\oplus \mathrm{H}^1_{\gamma}(\bold{D}_{\mathrm{dif}}(M^*\otimes M))
\rightarrow \mathrm{H}^2_{\varphi,\gamma}(M^*\otimes M)$$ is the second boundary map 
of the below exact triangle of Proposition \ref{2.22}.

\end{lemma}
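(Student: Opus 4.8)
The plan is to read the asserted identity as a projection formula: the cup product $\mathrm{H}^1_{\varphi,\gamma}(M^*)\times\mathrm{H}^1_{\varphi,\gamma}(M)\to\mathrm{H}^2_{\varphi,\gamma}(M^*\otimes M)$ is compatible, through the map $g$ and the pairing $h$, with the connecting homomorphisms of the bottom distinguished triangles of Proposition \ref{2.22} for $M^*$ and for $M^*\otimes M$. Indeed $\mathrm{exp}_{f,M^*}(x)+\mathrm{exp}_{M^*}(y)$ is by construction the image of $(j_2(x),y)$ under the connecting map $\partial_{M^*}\colon\mathrm{H}^0\big(C^{(\varphi),\bullet}_{\gamma}(M^*[1/t])\oplus C^\bullet_\gamma(\bold{D}_{\mathrm{dif}}(M^*))\big)\to\mathrm{H}^1_{\varphi,\gamma}(M^*)$, and $f_2$ is the corresponding degree-one connecting map for $M^*\otimes M$, so one wants $f_2(h((x,y),g(z)))=\partial_{M^*}(j_2(x),y)\cup z$. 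I would establish this by descending to the explicit cochain models used in the proofs of Propositions \ref{2.22} and \ref{2.23} and running the resulting diagram chase by hand.

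Concretely: fix $n\geqq n(M)$ and choose $\tilde x,\tilde y\in M^{*(n)}[1/t]^{\Delta}$ as in Proposition \ref{2.23}, so that with $w:=\tilde x+\tilde y$ and $u:=\varphi^n(x)$ the class $\mathrm{exp}_{f,M^*}(x)+\mathrm{exp}_{M^*}(y)$ is represented by the $1$-cocycle $((\gamma-1)w,(\varphi-1)w+u)\in M^{*,\Delta}\oplus M^{*,\Delta}$ of $C^\bullet_{\varphi,\gamma}(M^*)$. Represent $z$ by a cocycle $(z_1,z_2)\in M^{(n),\Delta}\oplus M^{(n+1),\Delta}$ of $\widetilde{C}^\bullet_{\varphi,\gamma}(M^{(n)})$ (possible by Lemma \ref{2.21}). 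Unwinding the definitions of $f$, $\iota$, $d_3$ and $j_2$, record that $g(z)$ is then represented by $(z_1,\iota_n(z_1))$ in $C^1_\gamma(M^{(n)}[1/t])\oplus C^1_\gamma(\bold{D}^+_{\mathrm{dif},n}(M))$ and that $j_2(x)$ is represented by $u$ in degree $0$, through the identifications of Lemmas \ref{2.18} and \ref{2.19} and the $\mathrm{can}$-twist in the definition of $j_2$. Forming the pairing $h$ on these cochains and pushing the result through the cochain description of $f_2$ extracted from the proof of Proposition \ref{2.22} produces an explicit $2$-cocycle of $C^\bullet_{\varphi,\gamma}(M^*\otimes M)$; on the other side Definition \ref{2.12} gives directly
\[
\big(\mathrm{exp}_{f,M^*}(x)+\mathrm{exp}_{M^*}(y)\big)\cup z=\big[(\gamma-1)w\otimes\gamma(z_2)-\big((\varphi-1)w+u\big)\otimes\varphi(z_1)\big].
\]
It then remains to exhibit a $1$-cochain of $C^\bullet_{\varphi,\gamma}(M^*\otimes M)$, built from $w\otimes z_1$, $\tilde x\otimes z_1$ and their images under the maps $\iota_m$, whose coboundary is the difference of these two $2$-cocycles.

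I expect the computation itself to be routine and the main obstacle to be the bookkeeping. One has to track carefully the three transition systems on the direct limits involved ($\mathrm{can}$ on $C^{(\varphi),\bullet}_\gamma$, $\varphi$ on $C^{(\varphi),\bullet}_{\varphi,\gamma}$, and the identifications of Lemmas \ref{2.18} and \ref{2.21}), the $\mathrm{Cone}(-)[-1]$ sign conventions fixed in the proofs of Propositions \ref{2.22} and \ref{2.23}, and the sign in the cup product of Definition \ref{2.12} (cf. Remark \ref{2.12.111}); all of these must conspire to cancel, since the stated identity carries no sign. A further delicate point is that $w=\tilde x+\tilde y$ lies only in $M^{*(n)}[1/t]$ and only the combinations $(\gamma-1)w$ and $(\varphi-1)w+u$ descend to $M^{*,\Delta}$, so the cup product has to be formed first in $(M^*\otimes M)^{(n)}[1/t]$ and the exact sequences $(\ref{4e})$ and $(\ref{8e})$ from the proof of Proposition \ref{2.22} then invoked to see that the resulting class genuinely lies in $\mathrm{H}^2_{\varphi,\gamma}(M^*\otimes M)$ and is the asserted one. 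One could instead try to formulate and prove the abstract projection formula directly, but checking that the $\gamma$-cochain cup products entering $h$ are compatible at the cochain level with the $(\varphi,\gamma)$-cochain cup product of Definition \ref{2.12} costs essentially the same effort.
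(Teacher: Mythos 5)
Your proposal is correct and uses the same basic technique as the paper---push everything down to the explicit cochain models of Propositions~\ref{2.22} and~\ref{2.23} and compute via Definition~\ref{2.12}---but it organizes the chase differently. The paper's proof splits the identity by bilinearity: it cites Lemma~2.13 of \cite{Na14a} for the $\mathrm{exp}_{M^*}(y)\cup z=f_2(h((0,y),g(z)))$ piece (implicitly invoking the sign adjustment of Remark~\ref{2.12.111}, since that lemma was proved with the opposite cup-product convention) and then asserts that the remaining $\mathrm{exp}_{f,M^*}(x)\cup z=f_2(h((x,0),g(z)))$ piece is a ``diagram chase similar to the proof of Proposition~\ref{2.23}'' which it omits. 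You instead merge the two at the outset, setting $w=\tilde x+\tilde y$ and representing the whole left-hand side by the single cocycle $((\gamma-1)w,(\varphi-1)w+\varphi^n(x))$; this bypasses the reference to \cite{Na14a} entirely at the cost of carrying $w\in M^{*(n)}[1/t]^\Delta$ (rather than $M^{*(n),\Delta}$) through the chase and of redoing the sign bookkeeping from scratch, both of which you correctly flag as the delicate points. Your identifications of $g(z)$ by $(z_1,\iota_n(z_1))$ and of $j_2(x)$ by $\varphi^n(x)$ at the $n$-th finite level are the right ones. Neither you nor the paper actually writes out the terminal coboundary verification, so the two arguments are at the same level of completeness; the trade-off is that the paper leans on a prior result while your route is self-contained but must keep $\tilde x$ and $\tilde y$ separate in the error-term management even after forming $w$, since their $\iota_m$-congruences are of different shapes.
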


\begin{proof}
The equality $\mathrm{exp}_{M^*}(y)\cup z =f_2(h((0,y), g(z)))$ for $y\in \bold{D}^K_{\mathrm{dR}}(M^*)$, $z\in \mathrm{H}^1_{\varphi,\gamma}(M)$ is proved in Lemma 2.13 
of \cite{Na14a}. 
Hence, it suffices to show the equality 
$$\mathrm{exp}_{f,M^*}(x)\cup z =f_2(h((x,0), g(z)))$$
for $x\in\bold{D}^K_{\mathrm{cris}}(M^*)$, 
whose proof is also just a diagram chase similar to  the proof of Proposition \ref{2.23}, hence 
we omit the proof.

\end{proof}
Finally, we compare our exponential map with the Bloch-Kato exponential map 
for $p$-adic representations. Here, we assume that $A=\mathbb{Q}_p$ for simplicity, we can 
do the same things for any $L=A$ a finite $\mathbb{Q}_p$-algebra.

For an $\mathbb{Q}_p$-representation $V$ of $G_{K}$, by Bloch-Kato \cite{BK90}, we have the following 
diagram of exact sequences
 \begin{equation}\label{14e}
 \begin{CD}
 0\rightarrow\mathrm{H}^0(K,V)@>{x\mapsto x}>> \bold{D}^K_{\mathrm{cris}}(V)^{\varphi=1}
 @> x\mapsto \overline{x} >> t_V(K)@> \mathrm{exp}_V>> \mathrm{H}^1_e(K,V)\rightarrow0\\
 @VV\mathrm{id}V @VV x \mapsto x V @ VVx\mapsto (0,x) V@VV x\mapsto x V\\
  0\rightarrow\mathrm{H}^0(K,V)@>x\mapsto x>> \bold{D}^K_{\mathrm{cris}}(V)
 @> d_5>> \bold{D}^K_{\mathrm{cris}}(V)\oplus t_V(K)@>d_6>> \mathrm{H}^1_f(K,V)\rightarrow 0
 \end{CD}
  \end{equation}
  with 
 $$d_5(x,y)=((1-\varphi)x,\overline{x})\,\,\text{ and } d_6=\mathrm{exp}_{f,V}\oplus \mathrm{exp}_V,$$
 which is associated to the exact sequences which is the tensor product of $V$ ( over $\mathbb{Q}_p$) with the 
 Bloch-Kato fundamental exact sequences
  \begin{equation*}
 \begin{CD}
 0@>>>\mathbb{Q}_p@>{x\mapsto (x,x)}>> \bold{B}^{\varphi=1}_{\mathrm{cris}}\oplus \bold{B}^+_{\mathrm{dR}}
 @>(x,y)\mapsto x-y>>  \bold{B}_{\mathrm{dR}}@>>> 0\\
 @.@VV\mathrm{id}V @VV (x,y) \mapsto (x,y)V @ VVx\mapsto (0,x) V\\
  0@>>>\mathbb{Q}_p@>x\mapsto (x,x)>> \bold{B}_{\mathrm{cris}}\oplus\bold{B}^+_{\mathrm{dR}}
 @> (x,y)\mapsto (x, (1-\varphi)x-y)>> \bold{B}_{\mathrm{cris}}\oplus\bold{B}_{\mathrm{dR}}@>>>  0.
 \end{CD}
  \end{equation*}
  We want to compare the diagram (\ref{14e}) with the diagram (\ref{12e}) for $M=\bold{D}_{\mathrm{rig}}(V)$. 
  More generally, as in \S2.4 of \cite{Na14a}, we compare the similar diagram defined below for a $B$-pair $W=(W_e,W^+_{\mathrm{dR}})$ 
  with the diagram (\ref{12e}) for the associated $(\varphi,\Gamma)$-module $\bold{D}_{\mathrm{rig}}(W)$. 
  For the definitions of $B$-pairs and the definition of the functor $W\mapsto \bold{D}_{\mathrm{rig}}(W)$ 
  which gives an equivalence between the category of $B$-pairs and that of $(\varphi,\Gamma)$-modules 
  over $\mathcal{R}(\pi_K)$ which we don't recall here, see \S2.5 \cite{Na14a} and \cite{Ber08a}.
  
  Let $W=(W_e,W^+_{\mathrm{dR}})$ be a $B$-pair for $K$. Put $W_{\mathrm{cris}}:=\bold{B}_{\mathrm{cris}}\otimes_{\bold{B}_e}W_e$, which is naturally equipped with an action of $\varphi$.  Since we have an exact sequence $0\rightarrow \bold{B}_{\mathrm{cris}}^{\varphi=1}\rightarrow 
  \bold{B}_{\mathrm{cris}}\xrightarrow{1-\varphi}\bold{B}_{\mathrm{cris}}\rightarrow 0$, we have a 
  natural quasi-isomorphism ( the vertical arrows) between the following two complexes of $G_K$-modules concentrated in 
  degree $[0,1]$
  \begin{equation*}
  \begin{CD}
  [W_e\oplus W^+_{\mathrm{dR}}@> (x,y)\mapsto x-y>> W_{\mathrm{dR}}]\\
  @VV (x,y)\mapsto (x,y) V@ VVx\mapsto (0,x) V\\
  [W_{\mathrm{cris}}\oplus W^+_{\mathrm{dR}}@> (x,y)\mapsto ((1-\varphi)x, x-y)>>
  W_{\mathrm{cris}}\oplus W_{\mathrm{dR}}].
  \end{CD}
  \end{equation*}
  Put $$C^{\bullet}_{\mathrm{cont}}(G_K,W):=\mathrm{Cone}(C^{\bullet}_{\mathrm{cont}}(G_K,W_e)\oplus C^{\bullet}_{\mathrm{cont}}(G_K, W^+_{\mathrm{dR}})
  \rightarrow C^{\bullet}_{\mathrm{cont}}(G_K,W_{\mathrm{dR}}))[-1]$$ and
   \begin{multline*}
   C^{\bullet}_{\mathrm{cont}}(G_K,W)':=\mathrm{Cone}(C^{\bullet}_{\mathrm{cont}}(G_K,W_{\mathrm{cris}})\oplus C^{\bullet}_{\mathrm{cont}}(G_K,W^+_{\mathrm{dR}})\\
   \rightarrow 
   C^{\bullet}_{\mathrm{cont}}(G_K,W_{\mathrm{cris}})\oplus  C^{\bullet}_{\mathrm{cont}}(G_K,W_{\mathrm{dR}}))[-1].
   \end{multline*}
   We identify 
   $$\mathrm{H}^i(K,W):=\mathrm{H}^i(C^{\bullet}_{\mathrm{cont}}(G_K,W))=\mathrm{H}^i(C^{\bullet}_{\mathrm{cont}}(G_K,W)')$$
    by the above quasi-isomorphism. Put $\bold{D}^K_{\mathrm{cris}}(W):=\mathrm{H}^0(K, W_{\mathrm{cris}})$, 
    $\bold{D}^K_{\mathrm{dR}}(W):=\mathrm{H}^0(K, W_{\mathrm{dR}})$, and $\bold{D}^K_{\mathrm{dR}}(W)^i:=\bold{D}^K_{\mathrm{dR}}(W)
    \cap t^iW^+_{\mathrm{dR}}$ for $i\in \mathbb{Z}$. Taking the cohomology of the mapping cones, then we obtain the similar diagram of exact sequences as in (\ref{14e}) for $W$.  By definition, it is clear that the diagram (\ref{14e}) for the associated $B$-pair $W(V):=(\bold{B}_e\otimes_{\mathbb{Q}_p}V, 
   \bold{B}^+_{\mathrm{dR}}\otimes_{\mathbb{Q}_p}V)$ is canonically isomorphic to 
   the diagram (\ref{14e}) for $V$ defined by Bloch-Kato.
   
   Our comparison result is the following.

\begin{prop}\label{2.27}
\begin{itemize}
\item[(1)]We have the following functorial isomorphisms 
\begin{itemize}
\item[(i)]$\mathrm{H}^i(K, W)\isom \mathrm{H}^i_{\varphi,\gamma}(\bold{D}_{\mathrm{rig}}(W))$, 
\item[(ii)]$\bold{D}^K_{\mathrm{dR}}(W)^j\isom \bold{D}^K_{\mathrm{dR}}(\bold{D}_{\mathrm{rig}}(W))^j$ for $j\in \mathbb{Z}$,
\item[(iii)]$\bold{D}^K_{\mathrm{cris}}(W)\isom \bold{D}^K_{\mathrm{cris}}(\bold{D}_{\mathrm{rig}}(W)).$

\end{itemize}
\item[(2)]The isomorphisms in $(1)$ induces an isomorphism from the diagram $(\ref{14e})$ for $W$ to 
the diagram $(\ref{12e})$ for $\bold{D}_{\mathrm{rig}}(W)$.
\end{itemize}

\end{prop}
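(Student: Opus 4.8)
The plan is to construct a single isomorphism, in $\bold{D}^{-}(\mathbb{Q}_p)$, between the two pairs of distinguished triangles underlying the two diagrams in question: on the $B$-pair side, the triangle $C^{\bullet}_{\mathrm{cont}}(G_K,W)\to C^{\bullet}_{\mathrm{cont}}(G_K,W_e)\oplus C^{\bullet}_{\mathrm{cont}}(G_K,W^+_{\mathrm{dR}})\to C^{\bullet}_{\mathrm{cont}}(G_K,W_{\mathrm{dR}})$ together with its crystalline refinement $C^{\bullet}_{\mathrm{cont}}(G_K,W)'$; on the $(\varphi,\Gamma)$-module side, the two triangles of Proposition \ref{2.22} for $M:=\bold{D}_{\mathrm{rig}}(W)$. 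First I would reduce everything to three ``local'' comparison quasi-isomorphisms, all functorial in $W$ and compatible with the relevant structure maps:
\begin{itemize}
\item[(a)] $C^{\bullet}_{\mathrm{cont}}(G_K,W_{\mathrm{cris}})\isom C^{(\varphi),\bullet}_{\gamma}(M[1/t])$, intertwining $1-\varphi$ on the left with $\mathrm{can}-\mathrm{id}$ on the right, and hence, after passing to cones, $C^{\bullet}_{\mathrm{cont}}(G_K,W_e)\isom C^{\bullet}_{\varphi,\gamma}(M[1/t])$;
\item[(b)] $C^{\bullet}_{\mathrm{cont}}(G_K,W^+_{\mathrm{dR}})\isom C^{\bullet}_{\gamma}(\bold{D}^+_{\mathrm{dif}}(M))$, compatibly with multiplication by $t^j$ on the lattices $t^jW^+_{\mathrm{dR}}$ and $t^j\bold{D}^+_{\mathrm{dif}}(M)$;
\item[(c)] $C^{\bullet}_{\mathrm{cont}}(G_K,W_{\mathrm{dR}})\isom C^{\bullet}_{\gamma}(\bold{D}_{\mathrm{dif}}(M))$, compatible with (b) and with the transition maps $W_{\mathrm{cris}}\to W_{\mathrm{dR}}$ and $M[1/t]\to\bold{D}_{\mathrm{dif}}(M)$.
\end{itemize}
Isomorphisms (b) and (c) are the $B$-pair analogues of Berger's dictionary between $W^+_{\mathrm{dR}}$, $W_{\mathrm{dR}}$ and $\bold{D}^+_{\mathrm{dif}}(M)$, $\bold{D}_{\mathrm{dif}}(M)$, combined with the vanishing of the higher continuous $H_K$-cohomology of $\bold{B}^+_{\mathrm{dR}}$-representations; these I would obtain exactly as in \S2.4 of \cite{Na14a} (which treats $W=W(V)$) together with \cite{Ber08a}, and they also recover the comparison of \cite{Po13a} in the representation case.

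Isomorphism (a) is the genuinely new ingredient. Using that $M[1/t]$ is the descent to $\mathcal{R}(\pi_K)[1/t]$ of $\widetilde{\bold{B}}^{\dagger}_{\mathrm{rig}}[1/t]\otimes_{\bold{B}_e}W_e$ and that $\bold{B}_{\mathrm{cris}}$-coefficients compute $W_{\mathrm{cris}}$-cohomology, I would show that the Herr-type colimit complex $C^{(\varphi),\bullet}_{\gamma}(M[1/t])$ — whose transition maps are the $\varphi$-renormalized ones — computes $\mathrm{H}^{\bullet}(K,W_{\mathrm{cris}})$, and then check on $\mathrm{H}^0$ that the resulting identification is exactly $j_1\colon\bold{D}^K_{\mathrm{cris}}(M)\isom\mathrm{H}^{(\varphi),0}_{\gamma}(M[1/t])$; this yields (1)(iii), and the compatibility of the crystalline Frobenius on $W_{\mathrm{cris}}$ with the operator $\mathrm{can}$ follows from the commutative square relating $j_1$ and $j_2$ recalled before Proposition \ref{2.23}. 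Granting (a)--(c), part (1) then comes out: (1)(iii) was just explained; (1)(ii) is obtained by taking $\mathrm{H}^0$ of (c) and intersecting with the $t^j$-lattices using (b); and (1)(i) follows by matching the cone defining $C^{\bullet}_{\mathrm{cont}}(G_K,W)'$ with the cone $d_3$ of the lower triangle of Proposition \ref{2.22}, since $(a)\oplus(b)$ and $(a)\oplus(c)$ identify the middle and right terms of that triangle with those of the $C^{\bullet}_{\mathrm{cont}}(G_K,W)'$-triangle, compatibly with $d_4$ and with the Galois-side differential $(x,y)\mapsto((1-\varphi)x,x-y)$; comparing in addition the upper triangles (via (b), (c), and the cone of $1-\varphi$ extracted from (a)) and the two vertical maps (the $f\oplus\mathrm{id}$ of Proposition \ref{2.22} versus the quasi-isomorphism between the $W_e$- and $W_{\mathrm{cris}}$-presentations recalled above) pins down the whole three-dimensional picture.

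For part (2), both diagram (\ref{14e}) for $W$ and diagram (\ref{12e}) for $M$ are read off mechanically from these cones: the rows are the long exact cohomology sequences of the two horizontal triangles, $\mathrm{H}^1_e$ is the image of the boundary map out of $\bold{D}^K_{\mathrm{dR}}$ and $\mathrm{H}^1_f$ the image of the boundary map out of $\bold{D}^K_{\mathrm{cris}}\oplus\bold{D}^K_{\mathrm{dR}}$, and $d_5,d_6$ are determined functorially by the cone structure. Since the cones are matched compatibly in part (1), the diagrams coincide once one checks that the identifications $\bold{D}^K_{\mathrm{cris}}(W)\isom\bold{D}^K_{\mathrm{cris}}(M)$ and $t_W(K)\isom t_M(K)$ of (1) carry the Galois-side $d_5(x,y)=((1-\varphi)x,\overline{x})$ to the $(\varphi,\Gamma)$-module-side $d_5(x,y)=((1-\varphi)x,\overline{\iota(x)})$; this is once more the $j_1$--$j_2$ square together with the compatibility, built into (a) and (c), of the two reduction maps. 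I expect the main obstacle to be isomorphism (a), and more precisely the verification that all the operators $\varphi$, $\mathrm{can}$, $\tfrac1p\mathrm{Tr}_{K_{n+1}/K_n}$ appearing in the two colimit constructions correspond under the dictionary: one essentially has to redo, on the Galois side, the limit arguments in the proof of Proposition \ref{2.22} and check term by term that the comparison isomorphisms intertwine them, keeping careful track of the signs in the mapping-cone conventions and of the $\varphi^n$-renormalization hidden in $j_1$. This bookkeeping is of the same nature as, but heavier than, the corresponding arguments in \S2.4 of \cite{Na14a}.
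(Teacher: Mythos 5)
Your plan is to promote the whole comparison to a single isomorphism of distinguished triangles, resting on three cochain‐level identifications, of which (a) — a quasi-isomorphism $C^{\bullet}_{\mathrm{cont}}(G_K,W_{\mathrm{cris}})\isom C^{(\varphi),\bullet}_{\gamma}(M[1/t])$ — you rightly single out as ``the genuinely new ingredient.'' That is exactly where the argument stops being a proof: you assert (a) but do not prove it, and it is not a statement available in the sources you cite. What is proved in \cite{Na14a} (Theorem 2.21) and \cite{Po13a} is a comparison between $G_K$-cohomology and \emph{$(\varphi,\Gamma)$-cohomology} of $M$ (and of $M[1/t]$ via the upper triangle of Proposition \ref{2.22}); it is a different and substantially finer statement to relate $G_K$-cohomology of the big period module $W_{\mathrm{cris}}=\bold{B}_{\mathrm{cris}}\otimes_{\bold{B}_e}W_e$ to the $\Gamma$-only, $\varphi$-renormalized colimit complex $C^{(\varphi),\bullet}_{\gamma}(M[1/t])$. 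There is no map of this kind constructed in the literature (and it is unclear whether the groups $\mathrm{H}^{1}(G_K,W_{\mathrm{cris}})$ and $\mathrm{H}^{(\varphi),1}_{\gamma}(M[1/t])$ even agree); calling this ``bookkeeping of the same nature as [Na14a, \S2.4]'' seriously underestimates the difficulty.

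The paper in fact avoids this obstruction entirely and goes a different route. For (1)(iii) — the only genuinely new point in part (1) — it argues directly with Fontaine rings: it uses $\widetilde{\bold{B}}^+_{\mathrm{rig}}=\bigcap_{n}\varphi^n(\bold{B}^+_{\mathrm{cris}})$ and finiteness of $\bold{D}^K_{\mathrm{cris}}(W)$ to replace $\bold{B}_{\mathrm{cris}}$ by $\widetilde{\bold{B}}^{+}_{\mathrm{rig}}[1/t]$, then the Berger-type argument (as in Proposition 3.4 of \cite{Ber02}) to pass to $\widetilde{\bold{B}}^{\dagger}_{\mathrm{rig}}[1/t]$, and finally Th\'eor\`eme 1.2 of \cite{Ber09} to descend the $G_K$-invariants into $\bold{D}_{\mathrm{rig}}(W)[1/t]$. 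Note in particular that even the $\mathrm{H}^0$ of your purported (a) would require exactly this argument, so (a) cannot be used as a shortcut to (1)(iii). For part (2), the paper observes that everything except one square is either formal or already in \cite{Na14a}, and it handles the remaining $\mathrm{exp}_f$-square not by a cone comparison at all but via the $\mathrm{Ext}^1$-interpretation of $\mathrm{H}^1$: it constructs the extension class $\mathrm{exp}_{f,W}(a)$ explicitly as a $B$-pair $W_a$, computes $\bold{D}_{\mathrm{rig}}(W_a)$ from the $\widetilde{\bold{B}}^{\dagger,r_n}_{\mathrm{rig}}$-lattice description (using $\varphi^{-m}(\bold{e}_{\mathrm{cris}})=\bold{e}_{\mathrm{cris}}-\sum_{k=1}^m\varphi^{-k}(a)$ and $\iota_{n+k}\circ\varphi^n=\varphi^{-k}$), and matches it with the explicit extension $D_a$ furnished by the formula in Proposition \ref{2.23}(2). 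You should either discard (a) and argue along these lines, or give an actual proof of (a); as written, (a) is an unjustified assertion sitting at the heart of your argument, and it is precisely the step the paper's authors deliberately avoided.
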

\begin{proof}
We already proved (i), (ii) of (1) and the comparison of the above exact sequence in $(\ref{14e})$ for $W$ 
with that in $(\ref{12e})$ for $\bold{D}_{\mathrm{rig}}(W)$, see Theorem 2.21 of \cite{Na14a} or the references 
in the proof of this theorem. 

Moreover, the isomorphism (iii) may be well known to the experts, but we give 
a proof of this since we couldn't  find suitable references. In this proof, we freely use the notations in \S2.5 \cite{Na14a} or in \cite{Ber08a}; please see these references. We first note that the inclusion 
$(\widetilde{\bold{B}}^+_{\mathrm{rig}}[1/t]\otimes_{\bold{B}_e}W_e)^{G_K}\hookrightarrow 
\bold{D}^K_{\mathrm{cris}}(W)$ induced by the natural inclusion $\widetilde{\bold{B}}^+_{\mathrm{rig}}
:=\cap_{n\geqq 0}\varphi^n(\bold{B}^+_{\mathrm{cris}})\hookrightarrow \bold{B}^+_{\mathrm{cris}}$ is 
isomorphism since $\bold{D}^K_{\mathrm{cris}}(W)$ is a finite dimensional $\mathbb{Q}_p$-vector space 
on which $\varphi$ acts as an automorphism. Moreover, in the same way as the proof of 
Proposition 3.4 of \cite{Ber02}, we can show that the natural inclusion 
$(\widetilde{\bold{B}}^+_{\mathrm{rig}}[1/t]\otimes_{\bold{B}_e}W_e)^{G_K}\hookrightarrow 
(\widetilde{\bold{B}}^{\dagger}_{\mathrm{rig}}[1/t]\otimes_{\bold{B}_e}W_e)^{G_K}$ is also isomorphism. 
Since we have $\widetilde{\bold{B}}^{\dagger}_{\mathrm{rig}}[1/t]\otimes_{\bold{B}_e}W_e=
\widetilde{\bold{B}}^{\dagger}_{\mathrm{rig}}[1/t]\otimes_{\mathcal{R}(\pi_K)[1/t]}\bold{D}_{\mathrm{rig}}(W)[1/t]$ by definition of 
$\bold{D}_{\mathrm{rig}}(W)$, it suffices to show that the natural inclusion 
$\bold{D}^K_{\mathrm{cris}}(\bold{D}_{\mathrm{rig}}(W))\hookrightarrow 
(\widetilde{\bold{B}}^{\dagger}_{\mathrm{rig}}[1/t]\otimes_{\mathcal{R}(\pi_K)[1/t]}\bold{D}_{\mathrm{rig}}(W)[1/t])^{G_K}=:D_0$ is isomorphism. Moreover, it suffices to show that $D_0$ is contained in $\bold{D}_{\mathrm{rig}}(W)[1/t]$. This claim is proved as follows. Define $\mathcal{R}(\pi_K)\otimes_FD_0\subseteq \widetilde{\bold{B}}^{\dagger}_{\mathrm{rig}}\otimes_FD_0$ which are $(\varphi,\Gamma)$-module over $\mathcal{R}(\pi_K)$ (
respectively $(\varphi,G_K)$-module over $\widetilde{\bold{B}}^{\dagger}_{\mathrm{rig}}$). Then, by Th\'eor\`eme 1.2 of \cite{Ber09}, the natural map $\widetilde{\bold{B}}^{\dagger}_{\mathrm{rig}}\otimes_FD_0\rightarrow \widetilde{\bold{B}}^{\dagger}_{\mathrm{rig}}[1/t]\otimes_{\mathcal{R}(\pi_K)[1/t]}\bold{D}_{\mathrm{rig}}(W)[1/t]:a\otimes x\mapsto a\cdot x$ (which is actually an inclusion) of $(\varphi,G_K)$-modules factors through 
$\mathcal{R}(\pi_K)\otimes_FD_0\rightarrow \bold{D}_{\mathrm{rig}}(W)[1/t]$, in particular 
we have $D_0\subseteq \bold{D}_{\mathrm{rig}}(W)[1/t]$, which proves the claim.

We next prove that the below exact sequences in (\ref{14e}) for $W$ is isomorphic to 
that in (\ref{12e}) for $\bold{D}_{\mathrm{rig}}(W)$ by the isomorphisms in (1) of this proposition. 
Since the other commutativities are clear, or were already proved in Theorem 2.21 \cite{Na14a}, 
it suffices to show that the following diagram commutes
\begin{equation}
\begin{CD}
\bold{D}^K_{\mathrm{cris}}(\bold{D}_{\mathrm{rig}}(W))@> \mathrm{exp}_{f,\bold{D}_{\mathrm{rig}}(W)
}>> \mathrm{H}^1_{\varphi,\gamma}(\bold{D}_{\mathrm{rig}}(W)) \\
@VV \isom V @ VV \isom V\\
\bold{D}^K_{\mathrm{cris}}(W)@> \mathrm{exp}_{f,W} >>
\mathrm{H}^1(K, W).
\end{CD}
\end{equation}
In the same way as the proof of Theorem 2.21 \cite{Na14a}, 
we assume that $\Delta=\{1\}$, and using the canonical identifications
$$\mathrm{H}^1(K,W)\isom \mathrm{Ext}^1(B, W),\,\,\,\,\mathrm{H}^1_{\varphi,\gamma}(\bold{D}_{\mathrm{rig}}(W))
\isom \mathrm{Ext}^1(\mathcal{R}(\pi_K), \bold{D}_{\mathrm{rig}}(W))$$
( where we denote by $B=(\bold{B}_e,\bold{B}^+_{\mathrm{dR}})$ for the trivial $B$-pair), it suffices 
to show that, for $a\in \bold{D}^K_{\mathrm{cris}}(\bold{D}_{\mathrm{rig}}(W))$,  the extension corresponding to $\mathrm{exp}_{f,\bold{D}_{\mathrm{rig}}(W)}(a)$ is 
sent to the extension corresponding to $\mathrm{exp}_{f, W}(a)$ by the inverse functor $W(-)$ of $\bold{D}_{\mathrm{rig}}(-)$. We prove this claim as follows. Take $n\geqq 1$ sufficiently large such that 
$a\in (\bold{D}^{(n)}_{\mathrm{rig}}(W)[1/t])^{\Gamma_K}$. Take $\tilde{a}\in \bold{D}^{(n)}_{\mathrm{rig}}[1/t]$ satisfying the condition in (2) of Proposition \ref{2.23}. By (2) of this proposition, then the extension 
$D_a$ corresponding to $\mathrm{exp}_{f,\bold{D}_{\mathrm{rig}}(W)}(a)$ is written by 
$$[0\rightarrow \bold{D}_{\mathrm{rig}}(W)\xrightarrow{x\mapsto (x,0)} \bold{D}_{\mathrm{rig}}(W)\oplus 
\mathcal{R}(\pi_K)\bold{e}\xrightarrow{(x,y\bold{e})\mapsto y}\mathcal{R}(\pi_K)\rightarrow 0]$$
such that 
$$\varphi((x, y\bold{e}))=(\varphi(x)+\varphi(y)((\varphi-1)\tilde{a}+\varphi^n(a)), \varphi(y)\bold{e})$$
and 
$$\gamma((x, y\bold{e}))=(\gamma(x)+\gamma(y)(\gamma-1)\tilde{a}, \gamma(y)\bold{e})$$

(here, we remark that there is an mistake in \cite{Na14a}; in the proof  of Theorem 2.21 of \cite{Na14a}, 
$D_a$ should be defined by 
$$\varphi((x, y\bold{e}))=(\varphi(x)+\varphi(y)(\varphi-1)\tilde{a}, \varphi(y)\bold{e})$$
and 
$$\gamma((x, y\bold{e}))=(\gamma(x)+\gamma(y)(\gamma-1)\tilde{a}, \gamma(y)\bold{e})).$$

On the other hand, by definition of $\mathrm{exp}_{f,W}$, the extension 
$$W_a:=(W_{e,a}, W^+_{\mathrm{dR},a}:=W^+_{\mathrm{dR}}\oplus \bold{B}^+_{\mathrm{dR}}\bold{e}_{\mathrm{dR}})$$ corresponding to $\mathrm{exp}_{f,W}(a)$  is defined by 
$$g(x,y\bold{e}_{\mathrm{dR}})=(g(x), g(y)\bold{e}_{\mathrm{dR}})$$ 
for $x\in W^+_{\mathrm{dR}}$, $y\in \bold{B}^+_{\mathrm{dR}}$, $g\in G_K$, and $W_{e,a}$ is defined as the kernel 
of the following surjection $$W_{\mathrm{cris},a}:=W_{\mathrm{cris}}\oplus \bold{B}_{\mathrm{cris}}\bold{e}_{\mathrm{cris}}\rightarrow W_{\mathrm{cris},a}:(x,y\bold{e}_{\mathrm{cris}})\mapsto ((\varphi-1)x+\varphi(y)a, (\varphi-1)y\bold{e}_{\mathrm{cris}})$$
on which $G_K$ acts by $g(\bold{e}_{\mathrm{cris}})=\bold{e}_{\mathrm{cris}}$ 
(actually, this is equal to the kernel of the surjection
$$W_{\mathrm{rig},a}:=W_{\mathrm{rig}}\oplus \widetilde{\bold{B}}^+_{\mathrm{rig}}[1/t]\bold{e}_{\mathrm{cris}}\rightarrow W_{\mathrm{rig},a}:(x,y\bold{e}_{\mathrm{cris}})\mapsto ((\varphi-1)x+\varphi(y)a, (\varphi-1)y\bold{e}_{\mathrm{cris}})$$
where we define $W_{\mathrm{rig}}:=\widetilde{\bold{B}}^+_{\mathrm{rig}}[1/t]\otimes_{\bold{B}_e}W_e$)
and the isomorphism $\bold{B}_{\mathrm{dR}}\otimes_{\bold{B}_e}W_{e,a}\isom \bold{B}_{\mathrm{dR}}\otimes_{\bold{B}^+_{\mathrm{dR}}}W^+_{\mathrm{dR}}$ is defined by 
$$\bold{B}_{\mathrm{dR}}\otimes_{\bold{B}_e}W_{e,a}
=\bold{B}_{\mathrm{dR}}\otimes_{\bold{B}_{\mathrm{cris}}}W_{\mathrm{cris},a}\xrightarrow{(x,y\bold{e}_{\mathrm{cris}})\mapsto (x,y\bold{e}_{\mathrm{dR}})} \bold{B}_{\mathrm{dR}}\otimes_{\bold{B}^+_{\mathrm{dR}}}W^+_{\mathrm{dR}}.$$ 
Then, by definition of the functor $\bold{D}_{\mathrm{rig}}(-)$ in \S2.2 \cite{Ber08a} (where the notation $D(-)$ is 
used), $\widetilde{\bold{B}}^{\dagger,r_n}_{\mathrm{rig}}\otimes_{\mathcal{R}^{(n)}(\pi_K)}\bold{D}^{(n)}_{\mathrm{rig}}(W_a)$ is equal to 
\begin{equation}\label{16e}
\{x\in \widetilde{\bold{B}}^{\dagger,r_n}_{\mathrm{rig}}[1/t]\otimes_{\bold{B}_{e}}W_{e,a}
|\iota_m(x)\in W^+_{\mathrm{dR},a} \text{ for any } m\geqq n \}.
\end{equation}
Since we have $\widetilde{\bold{B}}^{\dagger,r_n}_{\mathrm{rig}}[1/t]\otimes_{\bold{B}_e}W_{e,a}=
\widetilde{\bold{B}}^{\dagger,r_n}_{\mathrm{rig}}[1/t]\otimes_{\widetilde{\bold{B}}^+_{\mathrm{rig}}[1/t]}W_{\mathrm{rig},a}$, 
$\varphi^{-m}(\bold{e}_{\mathrm{cris}})=\bold{e}_{\mathrm{cris}}-\sum_{k=1}^m\varphi^{-k}(a)$ 
for $m\geqq 1$ and we have $\iota_{n+k}\circ\varphi^n=\varphi^{-k}$, 
it is easy to see that the group (\ref{16e}) is equal to 
$$\widetilde{\bold{B}}^{\dagger,r_n}_{\mathrm{rig}}\otimes_{\mathcal{R}^{(n)}(\pi_K)}\bold{D}^{(n)}_{\mathrm{rig}}(W)
\oplus \widetilde{\bold{B}}^{\dagger,r_n}_{\mathrm{rig}}(\tilde{a}+\varphi^n(\bold{e}_{\mathrm{cris}})),$$
which is easily to seen to be isomorphic to $\widetilde{\bold{B}}^{\dagger,r_n}_{\mathrm{rig}}\otimes_{\mathcal{R}^{(n)}(\pi_K)}D_a^{(n)}$ as a $(\varphi,G_K)$-module. Therefore, we obtain the isomorphism 
$$\bold{D}_{\mathrm{rig}}(W_a)\isom D_a$$ as an extension by Th\'eor\`eme 1.2 of  \cite{Ber09}, which proves the proposition.

\end{proof}




\section{Local $\varepsilon$-conjecture for $(\varphi,\Gamma)$-modules over the Robba ring}
From now on, we assume that $K=\mathbb{Q}_p$, and we freely
omit the notation $\mathbb{Q}_p$, i.e. we use the notation $\Gamma$, 
$\mathcal{R}_A$, $\bold{D}_{\mathrm{dR}}(M)$, 
$\bold{D}_{\mathrm{cris}}(M)$, $t_M$ etc instead of $\Gamma_{\mathbb{Q}_p}$, $\mathcal{R}_A(\pi_{\mathbb{Q}_p})$, $\bold{D}^{\mathbb{Q}_p}_{\mathrm{dR}}(M)$, 
$\bold{D}^{\mathbb{Q}_p}_{\mathrm{cris}}(M), t_M(\mathbb{Q}_p)\cdots $. Moreover, since 
Kato's and our conjectures are formulated after fixing a $\mathbb{Z}_p$-basis $\zeta=\{\zeta_{p^n}\}_{n\geqq 0}$ of $\mathbb{Z}_p(1)$, we also fix a parameter $\pi:=\pi_{\zeta}$ of $\mathcal{R}_A$ and 
denote $t=\mathrm{log}(1+\pi)$ as in Notation \ref{2.2}.

In this section, we formulate a conjecture which is a natural 
generalization of Kato's ( $p$-adic ) local $\varepsilon$-conjecture, where the main objects were
$p$-adic or torsion representations of $G_{\mathbb{Q}_p}$,  for $(\varphi,\Gamma)$-modules over the relative Robba ring $\mathcal{R}_A$. 
Since the article  \cite{Ka93b} in which the conjecture was stated has been unpublished until now, and since the compatibility of our conjecture with his conjecture is an important 
 part of our conjecture, here we also recall his original conjecture.
 
\subsection{Determinant functor}
Kato's and our conjectures are formulated using the theory of the determinant functor (\cite{KM76}). 
In this subsection, we briefly recall this theory following \cite{KM76}, \S2.1 of \cite{Ka93a}.

Let $R$ be a commutative ring. We define a category $\mathcal{P}_{R}$ such that whose objects are 
pairs $(L,r)$ where $L$ is an invertible $R$-module and $r:\mathrm{Spec}(R)\rightarrow \mathbb{Z}$ is a
 locally constant function, whose morphisms are defined by $\mathrm{Mor}_{\mathcal{P}_R}((L,r), (M,s)):=
 \mathrm{Isom}_R(L, M)$ if $r=s$, or empty otherwise. We call the objects of this category graded invertible 
 $R$-modules. The category $\mathcal{P}_R$ is equipped with the structure of a ( tensor )  product defined by $(L,r)\boxtimes (M,s):=(L\otimes_R M, r+s)$ with the natural associativity constraint and the commutativity constraint $(L,r)\boxtimes (M,s)\isom (M,s)\boxtimes (L, r):l\otimes m\mapsto (-1)^{r s}m\otimes l$. 
  From now on, we always identify $(L,r)\boxtimes (M,s)=(M,s)\boxtimes (L, r)$ by this constraint isomorphism. 
  The unit object for the product 
 is $\bold{1}_R:=(R, 0)$. For each $(L,r)$, set $L^{\vee}:=\mathrm{Hom}_R(L, R)$, then 
 $(L,r)^{-1}:=(L^{\vee},-r)$ becomes an inverse of $(L, r)$ by the isomorphism 
 $i_{(L,r)}:(L,r)\boxtimes(L^{\vee},-r)\isom \bold{1}_R$ induced by the evaluation map 
 $L\otimes_R\mathrm{Hom}_R(L, R)\isom R:x\otimes f\mapsto f(x)$. We remark that we have 
 $i_{(L, r)^{-1}}=(-1)^ri_{(L,r)}$.  
 For a ring homomorphism $f:R\rightarrow R'$, one has a 
 base change functor $(-)\otimes_RR':\mathcal{P}_R\rightarrow\mathcal{P}_{R'}$ defined by $(L,r)\mapsto (L,r)\otimes_{R}R':=(L\otimes_RR',r\circ f^*)$ where 
 $f^*:\mathrm{Spec}(R')\rightarrow \mathrm{Spec}(R)$.
 
 For a category $\mathcal{C}$, denote by $(\mathcal{C},\mathrm{is})$ for the category such that 
 whose objects are the same as $\mathcal{C}$ and the morphisms are all isomorphisms in $\mathcal{C}$. 
 Define a functor
 $$\mathrm{Det}_R:(\bold{P}_{\mathrm{fg}}(R),\mathrm{is})\rightarrow \mathcal{P}_R:
 P\mapsto (\mathrm{det}_RP,\mathrm{rk}_RP)$$
 where $\mathrm{rk}_R:\bold{P}_{\mathrm{fg}}(R)\rightarrow \mathbb{Z}_{\geqq 0}$ is the $R$-rank of $P$ 
 and $\mathrm{det}_RP:=\wedge_R^{\mathrm{rk}_RP}P$. Note that $\mathrm{Det}_R(0)=\bold{1}_R$ is the unit object. For a short exact sequence $0\rightarrow P_1\rightarrow P_2\rightarrow P_3\rightarrow 0$ 
 in $\bold{P}_{\mathrm{fg}}(R)$, we always identify $\mathrm{Det}_R(P_1)\boxtimes \mathrm{Det}_R(P_3)$ with $\mathrm{Det}_R(P_2)$ by the following functorial isomorphism (put $r_i:=\mathrm{rk}_RP_i$)
 \begin{equation}\label{17d}
  \mathrm{Det}_R(P_1)\boxtimes \mathrm{Det}_R(P_3)\isom \mathrm{Det}_R(P_2)
  \end{equation}
 induced by $$ (x_1\wedge\cdots\wedge x_{r_1})\otimes(\overline{x_{r_1+1}}\wedge\cdots\wedge \overline{x_{r_2}})\mapsto x_1\wedge\cdots\wedge x_{r_1}\wedge x_{r_1+1}\wedge\cdots \wedge x_{r_2}
 $$ where $x_1,\cdots, x_{r_1}$ (resp. $\overline{x_{r_1+1}},\cdots, \overline{x_{r_2}}$) 
 are local sections of $P_1$ (resp. $P_3$) and $x_i\in P_2$ ($i=r_1+1, \cdot r_2$) is a lift of $\overline{{x}_i}\in P_3$.
 
  For a bounded complex $P^{\bullet}$ in $\bold{P}_{\mathrm{fg}}(R)$, define $\mathrm{Det}_R(P^{\bullet})\in 
  \mathcal{P}_R$ by 
  $$\mathrm{Det}_R(P^{\bullet}):=\boxtimes_{i\in \mathbb{Z}}\mathrm{Det}_R(P^i)^{(-1)^i}.$$ 
  For a short exact sequence 
  $0\rightarrow P_1^{\bullet}\rightarrow P_2^{\bullet}\rightarrow P_3^{\bullet}\rightarrow 0$ of bounded complexes in $\bold{P}_{\mathrm{fg}}(R)$, we define a 
  canonical isomorphism 
  \begin{equation}\label{17e}
  \mathrm{Det}_R(P_1^{\bullet})\boxtimes \mathrm{Det}_R(P_3^{\bullet})\isom \mathrm{Det}_R(P_2^{\bullet})
  \end{equation}
  by applying the isomorphism (\ref{17d}) to each exact sequence 
   $0\rightarrow P^i_1\rightarrow P^i_2\rightarrow P^i_3\rightarrow 0$. Moreover, if $P^{\bullet}$ is an 
   acyclic bounded complex in $\bold{P}_{\mathrm{fg}}(R)$, we can define a canonical isomorphism
   \begin{equation}\label{acyclic}
   h_{P^{\bullet}}:\mathrm{Det}_R(P^{\bullet})\isom \bold{1}_R,
   \end{equation}
   which is characterized by the following properties:
   when $P^{\bullet}:=[P^i\xrightarrow{f} P^{i+1}]$ is concentrated in degree $[i, i+1]$, we define this as the composite
  \begin{multline*}
  \mathrm{Det}_R(P^{\bullet})=\mathrm{Det}_R(P^{i})\boxtimes \mathrm{Det}_R(P^{i+1})^{-1}
  \\
  \xrightarrow{\mathrm{Det}(f)\boxtimes \mathrm{id}}\mathrm{Det}_R(P^{i+1})\boxtimes 
  \mathrm{Det}_R(P^{i+1})^{-1}\xrightarrow{\delta_{\mathrm{Det}_R(P^{i+1})}}\bold{1}_R
  \end{multline*}
  when $i$ is even (when $i$ is odd, we similarly define it using $f^{-1}:P^{i+1}\isom P^i$), and for a short exact sequence 
  $0\rightarrow P_1^{\bullet}\rightarrow P_2^{\bullet}\rightarrow P_3^{\bullet}\rightarrow 0$ of 
  acyclic bounded complexes of $\bold{P}_{\mathrm{fg}}(R)$, we have the following commutative 
  diagram 
   \begin{equation*}
 \begin{CD}
 \mathrm{Det}_R(P_1^{\bullet})\boxtimes \mathrm{Det}_R(P_3^{\bullet})@> \isom >> \mathrm{Det}_R(P_2^{\bullet}) \\
 @VVh_{P_1^{\bullet}}\boxtimes h_{P_3^{\bullet}}V @VV h_{P_2^{\bullet}}V \\
  \bold{1}_R\boxtimes \bold{1}_R@ > = >> \bold{1}_R.
  \end{CD}
  \end{equation*}
  The theory of determinants of \cite{KM76} enables us to uniquely (up to canonical isomorphism) extend $\mathrm{Det}_R(-)$ to a functor
  $$\mathrm{Det}_R:(\bold{D}^b_{\mathrm{perf}}(R),\mathrm{is})\rightarrow \mathcal{P}_R$$
  such that the isomorphism (\ref{17e}) extends to the following situation: 
  for any exact sequence $0\rightarrow P_1^{\bullet}
  \rightarrow P_2^{\bullet}\rightarrow P_3^{\bullet}\rightarrow 0$ of complexes of $R$-modules such that each $P_i^{\bullet}$ is quasi-isomorphic 
  to a bounded complex in $\bold{P}_{\mathrm{fg}}(R)$, there exists a canonical isomorphism 
   \begin{equation}
    \mathrm{Det}_R(P^{\bullet}_1)\boxtimes \mathrm{Det}_R(P^{\bullet}_3)\isom  \mathrm{Det}_R(P^{\bullet}_2).
   \end{equation}
  By this property, if 
  $P^{\bullet}\in \bold{D}^b_{\mathrm{perf}}(R)$ satisfies that $\mathrm{H}^i(P^{\bullet})[0]\in \bold{D}^b_{\mathrm{perf}}(R)$ for 
  any $i$, there exists a canonical isomorphism 
  $$\mathrm{Det}_R(P^{\bullet})\isom \boxtimes_{i\in \mathbb{Z}}\mathrm{Det}_R(
  \mathrm{H}^i(P^{\bullet})[0])^{(-1)^i}.$$

  For $(L,r)\in \mathcal{P}_R$, define $(L, r)^{\vee}:=(L^{\vee}, r)\in \mathcal{P}_R$, which induces an anti equivalnce $(-)^{\vee}:\mathcal{P}_R\isom \mathcal{P}_R$.
  For $P\in \bold{P}_{\mathrm{fg}}(R)$, then we have a canonical 
  isomorphism $\mathrm{Det}_R(P^{\vee})\isom \mathrm{Det}_R(P)^{\vee}$ defined by the isomorphism 
  \begin{multline*}
  \mathrm{det}_R(P^{\vee})\isom (\mathrm{det}_RP)^{\vee}\\
  f_1\wedge\cdots \wedge f_r\mapsto [x_1\wedge\cdots \wedge x_r\mapsto \sum_{\sigma\in \mathfrak{S}_r}
  \mathrm{sgn}(\sigma)f_1(x_{\sigma(1)})\cdots f_r(x_{\sigma(r)})].
  \end{multline*}
  This naturally extends to $(\bold{D}_{\mathrm{perf}}^b(R), \mathrm{is})$, i.e. for any $P^{\bullet}\in \bold{D}^b_{\mathrm{perf}}(R)$, 
  there exists a canonical isomorphism 
  \begin{equation}\label{20a}
\mathrm{Det}_R(\bold{R}\mathrm{Hom}_R(P^{\bullet}, R))\isom \mathrm{Det}_R(P^{\bullet})^{\vee}. \end{equation}

\subsection{Fundamental lines}

Both Kato's conjecture and ours concern with the existence of a compatible family of 
canonical trivialization of some graded invertible modules defined by using the determinants of the Galois cohomologies of 
Galois representations or $(\varphi,\Gamma)$-modules. We call these graded invertible modules the fundamental lines, 
of which we explain in this subsection.

Kato's conjecture concerns with pairs $(\Lambda,T)$ such that
\begin{itemize}
\item[(i)]
$\Lambda$ is a noetherian semi-local ring which is complete with respect to 
the $\mathfrak{m}_{\Lambda}$-adic topology (where 
$\mathfrak{m}_{\Lambda}$ is the Jacobson radial of $\Lambda$) such 
that $\Lambda/\mathfrak{m}_{\Lambda}$ is a finite ring 
with the order a power of $p$,
\item[(ii)]$T$ is a $\Lambda$-representation of $G_{\mathbb{Q}_p}$, 
i.e. a finite projective $\Lambda$-module equipped with a continuous $\Lambda$-linear action of 
 $G_{\mathbb{Q}_p}$.
\end{itemize}

Our conjecture conjecture concerns with pairs $(A,M)$ such that
\begin{itemize}
\item[(i)]$A$ is a $\mathbb{Q}_p$-affinoid algebra,
\item[(ii)]$M$ is a $(\varphi,\Gamma)$-module over $\mathcal{R}_A$.

\end{itemize}

For each pair $(B, N)=(\Lambda,T) $ or $(A, M)$ as above, we'll define graded invertible 
$\Lambda$-modules $\Delta_{B,i}(N)\in \mathcal{P}_B$ for $i=1,2$ as below, and the fundamental line 
will be defined as $\Delta_{B}(N):=\Delta_{B,1}(N)\boxtimes\Delta_{B,2}(N)\in \mathcal{P}_B$.

We first define $\Delta_{\Lambda,i}(T)$ for $(\Lambda,T)$. Denote by $C^{\bullet}_{\mathrm{cont}}(G_{\mathbb{Q}_p}, T)$ for the complex of continuous cochains of $G_{\mathbb{Q}_p}$ with values in $T$. It is known that 
$C^{\bullet}_{\mathrm{cont}}(G_{\mathbb{Q}_p}, T)\in \bold{D}^{-}(\Lambda)$ is contained in $\bold{D}^{b}_{\mathrm{perf}}(\Lambda)$ and that satisfies 
the similar properties (1), (2), (3), (4) 
in Theorem \ref{2.16}. In particular, we can define a graded invertible 
$\Lambda$-module 
$$\Delta_{\Lambda,1}(T):=\mathrm{Det}_{\Lambda}(C^{\bullet}_{\mathrm{cont}}(G_{\mathbb{Q}_p}, T)),$$ 
( whose degree is $-r_T:=-\mathrm{rk}_{\Lambda}T$ by the Euler-Poincar\'e formula) which satisfies the following properties:
\begin{itemize}
\item[(i)]For each continuous homomorphism 
$f:\Lambda\rightarrow \Lambda'$, there exists a canonical $\Lambda'$-linear isomorphism
$$\Delta_{\Lambda,1}(T)\otimes_{\Lambda}\Lambda'\isom \Delta_{\Lambda',1}(T\otimes_{\Lambda}\Lambda'),$$
\item[(ii)]For each exact sequence 
$0\rightarrow T_1\rightarrow T_2\rightarrow T_3\rightarrow 0$ of 
$\Lambda$-representations of $G_{\mathbb{Q}_p}$, there exists a canonical 
$\Lambda$-linear isomorphism 
$$\Delta_{\Lambda,1}(T_1)\boxtimes
\Delta_{\Lambda,1}(T_3)\isom\Delta_{\Lambda,1}(T_2),$$
\item[(iii)]The Tate duality $C^{\bullet}_{\mathrm{cont}}(G_{\mathbb{Q}_p},T)\isom 
\bold{R}\mathrm{Hom}_{\Lambda}(C^{\bullet}_{\mathrm{cont}}(G_{\mathbb{Q}_p},T^*),\Lambda)[-2]$  and the isomorphism 
(\ref{20a}) induce a canonical $\Lambda$-linear isomorphism $$\Delta_{\Lambda,1}(T)\isom \Delta_{\Lambda,1}(T^*)^{\vee}.$$

\end{itemize}

We next define $\Delta_{\Lambda,2}(T)$ as follows. 
For $a\in \Lambda^{\times}$, we define 
$$\Lambda_a:=\{x\in W(\overline{\mathbb{F}}_p)\widehat{\otimes}_{\mathbb{Z}_p}\Lambda| (\varphi\otimes \mathrm{id}_{\Lambda})(x)=(1\otimes a)x\},$$ which is an invertible $\Lambda$-module. In the same way as in Theorem \ref{2.9}, for any rank one $\Lambda$-representation $T_0$, there exists unique (up to isomorphism) pair $(\delta_{T_0}, \mathcal{L}_{T_0})$ where $\delta_{T_0}:\mathbb{Q}_p^{\times}\rightarrow 
\Lambda^{\times}$ is a continuous homomorphism and 
$\mathcal{L}_{T_0}$ is an invertible $\Lambda$-module such that
$T_0\isom \Lambda(\tilde{\delta}_{T_0})\otimes_{\Lambda}\mathcal{L}_{T_0}$,  where 
we denote by $\tilde{\delta}_{T_0}:G_{\mathbb{Q}_p}^{\mathrm{ab}}\rightarrow \Lambda^{\times}$ 
for  the continuous character which satisfies $\tilde{\delta}_{T_0}\circ \mathrm{rec}_{\mathbb{Q}_p}=
\delta_{T_0}$. Under these definitions, we define $a(T):=\delta_{\mathrm{det}_{\Lambda}T}(p)\in \Lambda^{\times}$, and define an invertible $\Lambda$-module 
$$\mathcal{L}_{\Lambda}(T):=\Lambda_{a(T)}\otimes_{\Lambda}\mathrm{det}_{\Lambda}T$$ and define a graded 
invertible $\Lambda$-module
$$\Delta_{\Lambda,2}(T):=(\mathcal{L}_{\Lambda}(T), 
r_T).$$
Since we have a canonical isomorphism $\Lambda_{a_1}\otimes_{\Lambda}\Lambda_{a_2}\isom \Lambda_{a_1a_2}:
x\otimes y\mapsto xy$ for any $a_1,a_2\in \Lambda$, $\Delta_{\Lambda,2}(T)$
also satisfies the similar properties:
\begin{itemize}
\item[(i)]For $f:\Lambda\rightarrow \Lambda'$, there exists a canonical isomorphism
$\Delta_{\Lambda,2}(T)\otimes_{\Lambda}\Lambda'\isom \Delta_{\Lambda,2}(T\otimes_{\Lambda}
\Lambda')$, 
\item[(ii)]For $0\rightarrow T_1\rightarrow T_2\rightarrow 
T_3\rightarrow 0$, there exists a canonical isomorphism 
$\Delta_{\Lambda,2}(T_1)\boxtimes
\Delta_{\Lambda,2}(T_3)\isom \Delta_{\Lambda,2}(T_2)$,
\item[(iii)]Let $r_T$ be the rank of $T$, then there exists a canonical isomorphism 
$$\Delta_{\Lambda,2}(T)\isom \Delta_{\Lambda,2}(T^*)^{\vee}\boxtimes (\Lambda(r_T),0)
$$ which is induced by the product of the isomorphisms
$\Lambda_{\delta_{\mathrm{det}_{\Lambda}T}(p)}\isom (\Lambda_{\delta_{\mathrm{det}_{\Lambda}T^*}(p)})^{\vee}:x\mapsto [y\mapsto y\otimes x]$ (remark that we have 
$\Lambda_{\delta_{\mathrm{det}_{\Lambda}T^*}(p)}\otimes \Lambda_{\delta_{\mathrm{det}_{\Lambda}T}(p)}\isom \Lambda:y\otimes x\mapsto yx$
since we have $\delta_{\mathrm{det}_{\Lambda}T}(p)
=\delta_{\mathrm{det}_{\Lambda}T^*}(p)^{-1}$) and the isomorphism 
$\mathrm{det}_{\Lambda}T\isom\mathrm{det}_{\Lambda}(T^*)^{\vee}\otimes_{\Lambda}\Lambda(r_T)
$ induced by the canonical isomorphism $T\isom (T^*)^{\vee}(1):x\mapsto [y\mapsto y(x)\otimes \bold{e}_{-1}]\otimes \bold{e}_1$.

\end{itemize}

Finally, we define 
$$\Delta_{\Lambda}(T):=\Delta_{\Lambda,1}(T)\boxtimes\Delta_{\Lambda,2}(T)\in \mathcal{P}_B,$$
then $\Delta_{\Lambda}(T)$ also satisfies the similar properties (i), (ii) as those for $\Delta_{\Lambda,i}(T)$ and 
\begin{itemize}
\item[(iii)]there exists a canonical isomorphism 
$$\Delta_{\Lambda}(T)\isom \Delta_{\Lambda}(T^*)^{\vee}\boxtimes(\Lambda(r_T), 0).$$

\end{itemize}

Next, we define the fundamental line $\Delta_{A}(M)$ for 
$(\varphi,\Gamma)$-modules $M$ over $\mathcal{R}_A$. 
Let $A$ be a $\mathbb{Q}_p$-affinoid algebra, and let 
$M$ be a $(\varphi,\Gamma)$-module over $\mathcal{R}_A$. 
By Theorem \ref{2.16} of Kedlaya-Pottharst-Xiao, 
we can define a graded invertible $A$-module 
$$\Delta_{A,1}(M):=\mathrm{Det}_AC^{\bullet}_{\varphi,\gamma}(M)\in \mathcal{P}_A$$
which satisfies the similar properties (i), (ii), (iii) as those for $\Delta_{\Lambda,1}(T)$.
We next define $\Lambda_{A,2}(M)$ as follows. By Theorem \ref{2.9} of Kedlaya-Pottharst-Xiao, 
there exists unique ( up to isomorphism ) pair
$(\delta_{\mathrm{det}_{\mathcal{R}_A}M}, \mathcal{L}_{\mathrm{det}_{\mathcal{R}_A}M})$ where 
$\delta_{\mathrm{det}_{\mathcal{R}_A}M}:\mathbb{Q}_p^{\times}\rightarrow A^{\times}$ is 
a continuous homomorphism and $\mathcal{L}_{\mathrm{det}_{\mathcal{R}_A}M}$ is an invertible 
$A$-module such that $\mathrm{det}_{\mathcal{R}_A}M\isom \mathcal{R}_A(\delta_{\mathrm{det}_{\mathcal{R}_A}M})\otimes_{\mathcal{R}_A}\mathcal{L}_{\mathrm{det}_{\mathcal{R}_A}M}$. Then, we define 
an $A$-module 
$$\mathcal{L}_A(M):=\{x\in \mathrm{det}_{\mathcal{R}_A}M|\varphi(x)=\delta_{\mathrm{det}_{\mathcal{R}_A}M}(p)x, 
\gamma(x)=\delta_{\mathrm{det}_{\mathcal{R}_A}M}(\chi(\gamma))x\,\,(\gamma\in \Gamma)\}$$
which is an invertible $A$ module since it is isomorphic to $\mathcal{L}_{\mathrm{det}_{\mathcal{R}_A}M}$, 
and define a graded invertible $A$-module 
$$\Delta_{A,2}(M):=(\mathcal{L}_A(M),r_M)\in \mathcal{P}_A.$$
 By definition, it is easy to check that $\Delta_{A,2}(M)$ satisfies 
the similar properties (i), (ii), (iii) as those  for $\Delta_{\Lambda,2}(T)$. Finally, we define a 
graded invertible $A$-module $\Delta_{A}(M)$ which we call the fundamental line by 
$$\Delta_{A}(M):=\Delta_{A,1}(M)\boxtimes\Delta_{A,2}(M)\in \mathcal{P}_A,$$
which also satisfies the similar properties (i), (ii), (iii) as those for $\Delta_{\Lambda}(T)$.

More generally, let $X$ be a rigid analytic space over $\mathbb{Q}_p$,  and let 
$M$ be a $(\varphi,\Gamma)$-module over $\mathcal{R}_{X}$. By the base change property (i) of $\Delta_A(M)$, we can also functorially define a graded invertible $\mathcal{O}_X$-module 
$$\Delta_{X}(M)\in\mathcal{P}_{\mathcal{O}_X}$$ on $X$ (we can natural generalize the notion of graded 
invertible modules in this setting) such that there exists a canonical isomorphism 
$$\Gamma(\mathrm{Max}(A), \Delta_X(M))\isom \Delta_A(M|_{\mathrm{Max}(A)})$$ for any affinoid open 
$\mathrm{Max}(A)\subseteq X$.

We next compare Kato's fundamental line $\Delta_{\Lambda}(T)$ 
with our fundamental line $\Delta_A(M)$. Let $f:\Lambda\rightarrow A$ 
be a continuous ring homomorphism, where $\Lambda$ is equipped with $\mathfrak{m}_{\Lambda}$-adic topology and $A$ is equipped with $p$-adic topology.
Let $T$ be a $\Lambda$-representation of $G_{\mathbb{Q}_p}$. 
Let denote by  $M:=\bold{D}_{\mathrm{rig}}(T\otimes_{\Lambda}A)$ for  the $(\varphi,\Gamma)$-module 
over $\mathcal{R}_A$ associated to the $A$-representation $T\otimes_{\Lambda}A$ of $G_{\mathbb{Q}_p}$. By Theorem 2.8 of \cite{Po13a}, there exists  a canonical 
quasi-isomorphism $C^{\bullet}_{\mathrm{cont}}(G_{\mathbb{Q}_p}, T)\otimes^{\bold{L}}_{\Lambda}A\isom 
C^{\bullet}_{\varphi,\gamma}(M)$, and this induces an $A$-linear isomorphism 
$$\Delta_{\Lambda,1}(T)\otimes_{\Lambda}A\isom \Delta_{A,1}(M).$$
We also have the following isomorphism.

\begin{lemma}\label{3.1}
In the above situation, there exists a canonical $A$-linear isomorphism 
$$\Delta_{\Lambda,2}\otimes_{\Lambda}A\isom \Delta_{A,2}(M).$$

\end{lemma}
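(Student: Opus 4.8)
The plan is to reduce the statement to a comparison of the two invertible modules $\Lambda_{a(T)}\otimes_\Lambda\det_\Lambda T$ and $\mathcal{L}_A(M)$ together with their graded degrees, and to build the isomorphism out of the classification of rank one objects on both sides. First I would observe that both $\Delta_{\Lambda,2}(-)$ and $\Delta_{A,2}(-)$ are multiplicative in short exact sequences and compatible with passing to determinants; since $\det_\Lambda T$ is a rank one $\Lambda$-representation and $\det_{\mathcal{R}_A}M \cong \mathbf{D}_{\mathrm{rig}}(\det_\Lambda T\otimes_\Lambda A)$ (because $\mathbf{D}_{\mathrm{rig}}(-)$ is exact and commutes with $\otimes$ and with $\wedge^{\mathrm{top}}$), we may replace $T$ by its determinant and thereby assume $T$ has rank one. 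The degrees match: both sides have degree $r_T = \mathrm{rk}_\Lambda T = r_M$ (using Theorem~\ref{2.9}, Theorem~\ref{2.6}, and the Euler–Poincar\'e formula to identify the ranks). So the whole content is the construction of a canonical $A$-linear isomorphism of invertible modules
$$\bigl(\Lambda_{a(T)}\otimes_\Lambda \det_\Lambda T\bigr)\otimes_\Lambda A \;\isom\; \mathcal{L}_A(M).$$

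For the rank one case, write $T \cong \Lambda(\tilde\delta_T)\otimes_\Lambda \mathcal{L}_T$ with $\delta_T:\mathbb{Q}_p^\times\to\Lambda^\times$ as in the definition of $\Delta_{\Lambda,2}$. Then $M = \mathbf{D}_{\mathrm{rig}}(T\otimes_\Lambda A) \cong \mathcal{R}_A(\delta_T\circ f\,|x|^{?})\otimes_A (\mathcal{L}_T\otimes_\Lambda A)$; here the key computational input is the standard fact that for a continuous character $\eta:G_{\mathbb{Q}_p}\to A^\times$ factoring through $G_{\mathbb{Q}_p}^{\mathrm{ab}}$ via $\mathrm{rec}_{\mathbb{Q}_p}$, one has $\mathbf{D}_{\mathrm{rig}}(A(\eta))\cong\mathcal{R}_A(\delta)$ where $\delta = \eta\circ\mathrm{rec}_{\mathbb{Q}_p}$, after fixing the uniformizer normalization $\mathrm{rec}_{\mathbb{Q}_p}(p)$ consistently with the chosen $\zeta$ and the identification $\mathcal{R}_A(1) = \mathcal{R}_A(x|x|)$ from Notation~\ref{2.10}. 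Granting this, $\delta_{\det_{\mathcal{R}_A}M} = \delta_T\circ f$ and $\mathcal{L}_{\det_{\mathcal{R}_A}M} = \mathcal{L}_T\otimes_\Lambda A$, so $\mathcal{L}_A(M)$ is the $\delta_T(p)$-eigenspace of $\varphi$ (and the appropriate $\Gamma$-eigenspace) inside $\det_{\mathcal{R}_A}M$, which is isomorphic to $\mathcal{L}_T\otimes_\Lambda A$ once we trivialize $\mathcal{R}_A(\delta_T\circ f)$ by $\mathbf{e}_{\delta_T\circ f}$.

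On the other side, $\Lambda_{a(T)}$ with $a(T) = \delta_T(p)$ is by definition the $\delta_T(p)$-eigenspace of $\varphi$ in $W(\overline{\mathbb{F}}_p)\widehat\otimes_{\mathbb{Z}_p}\Lambda$; after base change along $f$, $\Lambda_{a(T)}\otimes_\Lambda A$ receives a canonical map to $A_{\delta_T(p)\cdot}$, the analogous eigenspace in $W(\overline{\mathbb{F}}_p)\widehat\otimes_{\mathbb{Z}_p} A$. The comparison isomorphism I want then comes from relating this unramified Frobenius eigenspace to the unramified part of the Robba ring: there is a canonical $\varphi$-equivariant, $\Gamma$-equivariant embedding of $W(\overline{\mathbb{F}}_p)[1/p]\widehat\otimes_{\mathbb{Q}_p} A$ (or rather its $\mathcal{R}$-analogue $\widetilde{\mathbf{B}}^\dagger_{\mathrm{rig}}$, cf.\ Remark~\ref{2.1}) which identifies the $a$-eigenspace of $\varphi$ with $\mathcal{L}_A(\mathcal{R}_A(\delta))$ for the unramified-twist part of $\delta$. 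Concretely one checks that for $\delta$ with $\delta|_{\mathbb{Z}_p^\times}=1$, $\mathcal{L}_A(\mathcal{R}_A(\delta))\cong A_{\delta(p)}$ functorially, and the general $\delta$ reduces to this case by the ramified part being captured by $\mathcal{L}_T\otimes_\Lambda A = \det_\Lambda T\otimes_\Lambda A$ in the right way. Assembling: $\bigl(\Lambda_{a(T)}\otimes_\Lambda\det_\Lambda T\bigr)\otimes_\Lambda A \cong A_{\delta_T(p)}\otimes_A (\det_\Lambda T\otimes_\Lambda A) \cong \mathcal{L}_A(M)$, and one checks the construction is independent of the choice of the trivializing pair $(\delta_T,\mathcal{L}_T)$ (it is only unique up to isomorphism) because any two choices differ by an isomorphism of rank one representations, under which all the constructions are visibly functorial.

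The main obstacle I expect is pinning down the normalizations so that the isomorphism is genuinely \emph{canonical} and not merely an abstract isomorphism of invertible modules: this requires carefully tracking the dependence on $\zeta$ (through $\pi = \pi_\zeta$, through $\mathrm{rec}_{\mathbb{Q}_p}$, and through the identification $\mathcal{R}_A(1)=\mathcal{R}_A(x|x|)$), verifying $\varphi$- and $\Gamma$-equivariance of the embedding $W(\overline{\mathbb{F}}_p)[1/p]\widehat\otimes A\hookrightarrow\widetilde{\mathbf{B}}^\dagger_{\mathrm{rig},A}$ at the level of eigenspaces, and confirming that the resulting isomorphism is compatible with the three structural properties (base change, exact sequences, duality) so that it agrees with the one implicitly used to formulate condition~(v) of Conjecture~\ref{1.1}. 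The actual identification of eigenspaces is a short computation with Witt vectors and Teichm\"uller lifts, but making it canonical and checking functoriality in $(\Lambda,T)$ is where the care is needed.
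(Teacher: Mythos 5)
Your overall strategy---reduce to rank one via the classification theorem, strip off the invertible module, and compare $\Lambda_{a(T)}$ with $\mathcal{L}_A(M)$ as Frobenius (and $\Gamma$-) eigenspaces---is exactly the route the paper takes, so the skeleton is right. However, the proposal stops short of the one computation that is the actual content of the lemma, and two things signal this. First, the ``$|x|^{?}$'' in your expression for $M$ shows the normalization is unsettled; in the paper's conventions there is no extra twist: with $\tilde\delta\circ\mathrm{rec}_{\mathbb{Q}_p}=\delta$, one has $\mathbf{D}_{\mathrm{rig}}(\Lambda(\tilde\delta)\otimes_\Lambda A)\cong\mathcal{R}_A(f\circ\delta)$ on the nose (this is consistent with $(x|x|)(p)=1$, $(x|x|)|_{\mathbb{Z}_p^\times}=\mathrm{id}$ matching $\chi\circ\mathrm{rec}_{\mathbb{Q}_p}$). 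Second, you plan to handle the unramified and ramified parts of $\delta$ separately and then reassemble, but the paper avoids this split entirely by writing the explicit Witt-vector formula
$$\mathbf{D}_{\mathrm{rig}}(\Lambda(\tilde\delta)\otimes_\Lambda A)=\bigl(W(\overline{\mathbb{F}}_p)\hat\otimes_{\mathbb{Z}_p}\Lambda(\tilde\delta)\bigr)^{\mathrm{rec}_{\mathbb{Q}_p}(p)=1}\otimes_\Lambda\mathcal{R}_A,$$
justified by the fact that the image of $H_{\mathbb{Q}_p}$ in $G_{\mathbb{Q}_p}^{\mathrm{ab}}$ is the closure of $\langle\mathrm{rec}_{\mathbb{Q}_p}(p)\rangle$. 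From this formula the eigenspace computation is immediate:
$$\mathcal{L}_A(M)=\bigl((W(\overline{\mathbb{F}}_p)\hat\otimes_{\mathbb{Z}_p}\Lambda(\tilde\delta))^{\mathrm{rec}_{\mathbb{Q}_p}(p)=1}\otimes_\Lambda\mathcal{R}_A\bigr)^{\varphi=f(\delta(p)),\,\Gamma=f\circ\delta\circ\chi}=\mathcal{L}_\Lambda(T)\otimes_\Lambda A,$$
because the $(\varphi,\Gamma)$-eigenconditions kill the $\mathcal{R}_A$-factor in a manifestly canonical way. This is precisely the step your sketch defers (``a short computation with Witt vectors and Teichm\"uller lifts''); writing it out this way also disposes of your worry about independence of the trivializing pair $(\delta_T,\mathcal{L}_T)$, since no basis $\mathbf{e}_\delta$ is ever chosen. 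So: right approach, but the pivotal identification needs to be made explicit rather than promised, and the normalization needs to be fixed.
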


\begin{proof}

By definition, it suffices to show the lemma when $T$ is rank one. Hence, 
we may assume that $T=\Lambda(\tilde{\delta})\otimes_{\Lambda}\mathcal{L}$ for 
a continuous homomorphism  $\delta:\mathbb{Q}_p^{\times}
\rightarrow \Lambda^{\times}$ and an invertible $\Lambda$-module $\mathcal{L}$ (where 
$\tilde{\delta}$ is the character of $G^{\mathrm{ab}}_{\mathbb{Q}_p}$ such that $\tilde{\delta}\circ \mathrm{rec}_{\mathbb{Q}_p}=\delta$). 
Moreover, since we have a canonical isomorphism 
$$\bold{D}_{\mathrm{rig}}(
(\Lambda(\tilde{\delta})\otimes_{\Lambda}\mathcal{L})\otimes_{\Lambda}A)\isom 
\bold{D}_{\mathrm{rig}}(\Lambda(\tilde{\delta})\otimes_{\Lambda}A)\otimes_{A}
(\mathcal{L}\otimes_{\Lambda}A)$$ by the exactness of $\bold{D}_{\mathrm{rig}}(-)$, it suffices to show the lemma when 
$\mathcal{L}=\Lambda$.

Since the image of $H_{\mathbb{Q}}:=\mathrm{Gal}(\overline{\mathbb{Q}}_p/\mathbb{Q}_{p,\infty})$ in $G_{\mathbb{Q}_p}^{\mathrm{ab}}$ is the closed subgroup 
which is topologically generated by $\mathrm{rec}_{\mathbb{Q}_p}(p)$, we have 
$$\bold{D}_{\mathrm{rig}}(\Lambda(\tilde{\delta})\otimes_{\Lambda}A)=
(W(\overline{\mathbb{F}}_p)\hat{\otimes}_{\mathbb{Z}_p}\Lambda(\tilde{\delta}))^{\mathrm{rec}_{\mathbb{Q}_p}(p)=1}\otimes_{\Lambda}\mathcal{R}_A,$$ 
by definition of 
$\bold{D}_{\mathrm{rig}}(-)$,
and the right hand side is isomorphic to $\mathcal{R}_{A}(f\circ \delta)$. Hence, we obtain
 \[
\begin{array}{ll}
\mathcal{L}_A(M)&=((W(\overline{\mathbb{F}}_p)\hat{\otimes}_{\mathbb{Z}_p}\Lambda(\tilde{\delta}))^{\mathrm{rec}_{\mathbb{Q}_p}(p)=1}\otimes_{\Lambda}\mathcal{R}_A)^{\varphi=f(\delta(p)),\Gamma=f\circ \delta\circ \chi}\\
&=(W(\overline{\mathbb{F}}_p)\hat{\otimes}_{\mathbb{Z}_p}\Lambda(\tilde{\delta}))^{\mathrm{rec}_{\mathbb{Q}_p}(p)=1}\otimes_{\Lambda}A= \mathcal{L}_{\Lambda}(T)\otimes_{\Lambda}A, 
\end{array}
\]
which proves the lemma.
\end{proof}

Taking the products of these two canonical isomorphisms, we obtain the following corollary.

\begin{corollary}\label{3.2}
In the above situation, there exists a canonical isomorphism 
$$\Delta_{\Lambda}(T)\otimes_{\Lambda}A
\isom \Delta_{A}(M).$$

\end{corollary}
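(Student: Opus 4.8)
The corollary is essentially immediate once the two comparison isomorphisms from Theorem 2.8 of \cite{Po13a} and Lemma \ref{3.1} are in hand, so the content of the proof is bookkeeping: I must check that the product of these two isomorphisms respects the extra multiplicative structure built into $\Delta_{\Lambda}(T)$ and $\Delta_A(M)$, not merely that both sides are abstractly isomorphic. Concretely, the plan is to define the desired isomorphism $\Delta_{\Lambda}(T)\otimes_{\Lambda}A\isom\Delta_A(M)$ as the product (under $\boxtimes$, and using that $\boxtimes$ commutes with $-\otimes_{\Lambda}A$) of the isomorphism $\Delta_{\Lambda,1}(T)\otimes_{\Lambda}A\isom\Delta_{A,1}(M)$ coming from the quasi-isomorphism $C^{\bullet}_{\mathrm{cont}}(G_{\mathbb{Q}_p},T)\otimes^{\bold L}_{\Lambda}A\isom C^{\bullet}_{\varphi,\gamma}(M)$ of \cite{Po13a} (together with the compatibility of $\mathrm{Det}$ with derived base change along $\Lambda\to A$), and the isomorphism $\Delta_{\Lambda,2}(T)\otimes_{\Lambda}A\isom\Delta_{A,2}(M)$ of Lemma \ref{3.1}.

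First I would record that, for a bounded perfect complex $P^{\bullet}$ over $\Lambda$ and a ring map $\Lambda\to A$, there is a canonical isomorphism $\mathrm{Det}_{\Lambda}(P^{\bullet})\otimes_{\Lambda}A\isom\mathrm{Det}_A(P^{\bullet}\otimes^{\bold L}_{\Lambda}A)$ in $\mathcal{P}_A$; this is part of the formalism of \cite{KM76} recalled in $\S$3.1, and it is compatible with the base-change isomorphism (i) and with Tate duality via (\ref{20a}). Applying this to $P^{\bullet}=C^{\bullet}_{\mathrm{cont}}(G_{\mathbb{Q}_p},T)$ and invoking \cite{Po13a}, Theorem 2.8, gives the degree-$(-r_T)$ part of the comparison. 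For the degree-$r_T$ part, Lemma \ref{3.1} already does the work; its proof reduces to the rank-one case $T=\Lambda(\tilde\delta)$ and identifies both $\mathcal{L}_A(M)$ and $\mathcal{L}_{\Lambda}(T)\otimes_{\Lambda}A$ with $(W(\overline{\mathbb{F}}_p)\hat\otimes_{\mathbb{Z}_p}\Lambda(\tilde\delta))^{\mathrm{rec}_{\mathbb{Q}_p}(p)=1}\otimes_{\Lambda}A$, using that $\bold{D}_{\mathrm{rig}}(\Lambda(\tilde\delta)\otimes_{\Lambda}A)\isom\mathcal{R}_A(f\circ\delta)$. Multiplying the two gives the isomorphism $\Delta_{\Lambda}(T)\otimes_{\Lambda}A\isom\Delta_A(M)$.

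The main point to be careful about — and the step I expect to cost the most — is \emph{canonicity}: I would verify that the resulting isomorphism is independent of auxiliary choices (the presentation of $C^{\bullet}_{\varphi,\gamma}$ by a perfect complex, the choice of $\gamma$, the splitting of $T$ in the rank-one reduction of Lemma \ref{3.1}) and that it is compatible with further base changes $A\to A'$, with short exact sequences $0\to T_1\to T_2\to T_3\to 0$ (via property (ii) of $\Delta_{\Lambda,i}$ and $\Delta_{A,i}$), and with Tate duality (via property (iii)). These compatibilities all follow by reducing to the corresponding statements for $\Delta_{\bullet,1}$ and $\Delta_{\bullet,2}$ separately: for the first factor from the functoriality of $\mathrm{Det}$ and the naturality of the \cite{Po13a} quasi-isomorphism, and for the second factor from the explicit description in Lemma \ref{3.1} and the canonical isomorphisms $\Lambda_{a_1}\otimes_{\Lambda}\Lambda_{a_2}\isom\Lambda_{a_1a_2}$. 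Once these diagrams are checked to commute, the corollary follows, and I would remark that the same argument produces the corresponding comparison over a rigid analytic space $X$ by gluing over affinoids.
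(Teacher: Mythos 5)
Your proposal matches the paper's argument: the paper obtains Corollary \ref{3.2} precisely by taking the $\boxtimes$-product of the isomorphism $\Delta_{\Lambda,1}(T)\otimes_{\Lambda}A\isom\Delta_{A,1}(M)$ induced by Pottharst's quasi-isomorphism $C^{\bullet}_{\mathrm{cont}}(G_{\mathbb{Q}_p},T)\otimes^{\bold L}_{\Lambda}A\isom C^{\bullet}_{\varphi,\gamma}(M)$ with the isomorphism $\Delta_{\Lambda,2}(T)\otimes_{\Lambda}A\isom\Delta_{A,2}(M)$ of Lemma \ref{3.1}. Your additional remarks on canonicity and compatibility with base change, exact sequences, and duality are consistent with (and slightly more explicit than) what the paper records, but the underlying approach is the same.
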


\begin{exa}\label{3.3}
The typical example of the above base change property is the following. 
For $\Lambda$ as above, let denote by $X$ for the associated rigid analytic space. 
More precisely, $X$ is the union of affinoids $\mathrm{Max}(A_n)$ for $n\geqq 1$, where 
$A_n$ is the $\mathbb{Q}_p$-affinoid algebra defined by $A_n:=\Lambda[\frac{\mathfrak{m}_{\Lambda}^n}{p}]^{\wedge}[1/p]$ ( for a ring $R$,  denote by $R^{\wedge}$ for the $p$-adic completion). Let $T$ be a $\Lambda$-representation of $G_{\mathbb{Q}_p}$, and let
denote by $M_n:=\bold{D}_{\mathrm{rig}}(T\otimes_{\Lambda}A_n)$. Since 
$M_n$ is compatible with the base change with respect to the canonical map 
$A_n\rightarrow A_{n+1}$ for any $n$, $\{M_n\}_{n\geqq 1}$ defines a $(\varphi,\Gamma)$-module 
$\mathcal{M}$ over $\mathcal{R}_{X}$. Then, the canonical isomorphism 
$\Delta_{\Lambda}(T)\otimes_{\Lambda}A_n\isom \Delta_{A_n}(M_n)$ defined in the above corollary glues to an isomorphism 
$$\Delta_{\Lambda}(T)\otimes_{\Lambda}\mathcal{O}_X\isom \Delta_{X}(\mathcal{M}).$$
Moreover, using the terminology of coadmissible modules (\cite{ST03}), we can define this comparison isomorphism without using sheaves. Let define $A_{\infty}:=\Gamma(X,\mathcal{O}_X)$ and 
$\Delta_{A_{\infty}}(M_{\infty}):=\varprojlim_{n}\Delta_{A_n}(M_n)$. Taking the limit of 
the isomorphism $\Delta_{\Lambda}(T)\otimes_{\Lambda}A_n\isom \Delta_{A_n}(M_n)$ we obtain 
an $A_{\infty}$-linear isomorphism 
$$\Delta_{\Lambda}(T)\otimes_{\Lambda}A_{\infty}\isom \Delta_{A_{\infty}}(M_{\infty}).$$
Then, the theory of coadmissible modules (Corollary 3.3 of \cite{ST03}) says that to consider the isomorphism 
$\Delta_{\Lambda}(T)\otimes_{\Lambda}\mathcal{O}_X\isom \Delta_{X}(\mathcal{M})$ is the same 
 as to consider the isomorphism $\Delta_{\Lambda}(T)\otimes_{\Lambda}A_{\infty}\isom \Delta_{A_{\infty}}(M_{\infty}).$ In fact, we will frequently use the latter object $\Delta_{A_{\infty}}(M_{\infty})$ in \S4.

\end{exa}

\subsection{de Rham $\varepsilon$-isomorphism}
In this subsection, we assume that $L=A$ is a  finite extension of 
$\mathbb{Q}_p$. We define a trivialization
$$\varepsilon^{\mathrm{dR}}_{L,\zeta}(M):\bold{1}_L\isom\Delta_L(M)$$
which we call the de Rham $\varepsilon$-isomorphism
 for each de Rham 
$(\varphi,\Gamma)$-module $M$ over $\mathcal{R}_L$ and for each 
$\mathbb{Z}_p$-basis $\zeta=\{\zeta_{p^n}\}_{n\geqq 0}$ of $\mathbb{Z}_p(1)$.

Let $M$ be a de Rham $(\varphi,\Gamma)$-module over $\mathcal{R}_L$. 
We first recall the definition of Deligne-Langlands' and Fontaine-Perrin-Riou's 
$\varepsilon$-constant associated to $M$ (\cite{De73}, \cite{FP94}). 

We first briefly  recall the theory of $\varepsilon$-constants of Deligne and Langlands (\cite{De73}).
Let $W_{\mathbb{Q}_p}\subseteq G_{\mathbb{Q}_p}$ be the Weil group of $\mathbb{Q}_p$. 
Let $E$ be a field of characteristic zero, and let $V=(V, \rho)$ be an $E$ representation of $W_{\mathbb{Q}_p}$, i.e. $V$ is a finite dimensional $E$-vector space equipped with a smooth $E$-linear action $\rho$ of $W_{\mathbb{Q}_p}$. Let denote by $V^{\vee}$ for the dual 
$(\mathrm{Hom}_E(V,E),\rho^{\vee})$ of $V$. Denote by $E(|x|)$ the rank one $E$-representation 
of $W_{\mathbb{Q}_p}$ corresponding to the continuos homomorphism 
$|x|:\mathbb{Q}_p^{\times}
\rightarrow E^{\times}:p\mapsto 1/p, a\mapsto 1 (a\in \mathbb{Z}_p^{\times})$ via the local class field theory. 
Put $V^{\vee}(|x|):=V^{\vee}\otimes_E E(|x|)$. Assume that $E$ is a field which contains 
$\mathbb{Q}(\zeta_{p^{\infty}})$. The definition of the $\varepsilon$-constants depends on 
the choice of an additive character of $\mathbb{Q}_p$ and a Haar measure on $\mathbb{Q}_p$. In this article, we fix the Haar measure $dx$ on $\mathbb{Q}_p$ for which 
$\mathbb{Z}_p$ has measure $1$. For each $\mathbb{Z}_p$-basis
$\zeta=\{\zeta_{p^n}\}_{n\geqq 0}$ of $\mathbb{Z}_p(1)$, we define an additive character 
$\psi_{\zeta}:\mathbb{Q}_p\rightarrow E^{\times}$ such that 
 $\psi_{\zeta}(1/p^n):=\zeta_{p^n}$ for $n\geqq 1$. In this article, 
 we don't recall the precise definition of $\varepsilon$-constants, but we recall here some of their 
 basic properties under the fixed additive character $\psi_{\zeta}$ 
 and the fixed Haar measure $dx$. Under these fixed datum, we can attach a constant 
$\varepsilon(V, \psi_{\zeta}, dx)\in E^{\times}$ 
for each $V$ as above which satisfies the following  properties 
(we denote $\varepsilon(V,\zeta):=\varepsilon(V, \psi_{\zeta}, dx)$ for simplicity):
\begin{itemize}
\item[(1)]For each exact sequence $0\rightarrow V_1\rightarrow V_2\rightarrow V_3\rightarrow 0$ of 
finite dimensional $E$-vector spaces with continuous actions of $W_{\mathbb{Q}_p}$, we have 
$$\varepsilon(V_2,\zeta)=\varepsilon(V_1,\zeta)\varepsilon(V_3,\zeta).$$
\item[(2)]For each $a\in \mathbb{Z}_p^{\times}$, we define $\zeta^a:=\{\zeta_{p^n}^a\}_{n\geqq 1}$. 
Then, we have 
$$\varepsilon(V, \zeta^a)=\mathrm{det}_EV(\mathrm{rec}_{\mathbb{Q}_p}(a))\varepsilon(V,\zeta).$$
\item[(3)]$\varepsilon(V,\zeta)\varepsilon(V^{\vee}(|x|),\zeta^{-1})=1.$ 
\item[(4)]$\varepsilon(V,\zeta)=1$ if $V$ is unramified. 
\item[(5)]If $\mathrm{dim}_EV=1$ and corresponds to a locally constant homomorphism 
$\delta:\mathbb{Q}^{\times}_p\rightarrow E^{\times}$ via the local class field theory, then 
$$\varepsilon(V,\zeta)=\delta(p)^{n(\delta)}(\sum_{i\in (\mathbb{Z}/p^{n(\delta)}\mathbb{Z})^{\times}}
\delta(i)^{-1}\zeta_{p^{n(\delta)}}^i), $$
where $n(\delta)\geqq 0$ is the conductor of $\delta$, i.e. the minimal integer $n\geqq 0$ such that $\delta|_{(1+p^n\mathbb{Z}_p)
\cap\mathbb{Z}_p^{\times}}=1$ (then $\delta|_{\mathbb{Z}_p^{\times}}$ factors through $ (\mathbb{Z}/p^{n(\delta)}\mathbb{Z})^{\times}$).
\end{itemize}
For a Weil-Deligne  representation $W=(V,\rho, N)$ of $W_{\mathbb{Q}_p}$ defined over $E$, we set 
$$\varepsilon(W,\zeta):=\varepsilon((V,\rho), \zeta)\cdot \mathrm{det}_E(-\mathrm{Fr}_p|V^{L_p}/(V^{N=0})^{I_p}),$$ which also satisfies 
$$\varepsilon(W, \zeta)\cdot \varepsilon(W^{\vee}(|x|),\zeta^{-1})=1.$$

Next, we define the $\varepsilon$-constant for each de Rham $(\varphi,\Gamma)$-module 
over $\mathcal{R}_L$ following Fontaine-Perrin-Riou (\cite{FP94}). Let $M$ be a de Rham $(\varphi,\Gamma)$-module 
over $\mathcal{R}_L$. Then $M$ is potentially semi-stable by the result of Berger (for example, see Th\'eor\`eme III.2.4 of \cite{Ber08b}) which is based on the Crew's conjecture proved by Andr\'e, Mebkhout, Kedlaya. Hence, we can define a filtered $(\varphi,N,G_{\mathbb{Q}_p})$-module 
$\bold{D}_{\mathrm{pst}}(M):=\cup_{K\subseteq \overline{\mathbb{Q}}_p}\bold{D}^K_{\mathrm{st}}(M|_K)$ which is a free $\mathbb{Q}_p^{\mathrm{ur}}\otimes_{\mathbb{Q}_p}L$-module whose rank is $r_M$, where $K$ run through all the finite extensions of 
$\mathbb{Q}_p$ and we define $\bold{D}^K_{\mathrm{st}}(M|_K):=(\mathcal{R}_L(\pi_K)[\mathrm{log}(\pi),1/t]\otimes_{\mathcal{R}_{L}}M)^{\Gamma_K=1}$. Set $\bold{D}_{\mathrm{st}}(M):=\bold{D}_{\mathrm{st}}^{\mathbb{Q}_p}(M)$. 
Following Fontaine, one can define a Weil-Deligne representation $W(M):=(W(M)
,\rho,N)$ of $W_{\mathbb{Q}_p}$ defined over $\mathbb{Q}_p^{\mathrm{ur}}\otimes_{\mathbb{Q}_p}L$ such that $N$ is the natural one and $\rho(g)(x):=\varphi^{v(g)}(g\cdot x)$ for  
$g\in W_{\mathbb{Q}_p}$ and $x\in W(M)$, where we denote by $g\cdot x$ 
for the natural action of $G_{\mathbb{Q}_p}$ on $W(M)$ and 
$v:W_{\mathbb{Q}_p}\twoheadrightarrow W^{\mathrm{ab}}_{\mathbb{Q}_p}\xrightarrow{\mathrm{rec}_{\mathbb{Q}_p}^{-1}}\mathbb{Q}_p^{\times}\xrightarrow{v_p} \mathbb{Z}$. Taking the base change of $W(M)$ by 
the natural inclusion $\mathbb{Q}_p^{\mathrm{ur}}\otimes_{\mathbb{Q}_p}L\hookrightarrow 
\mathbb{Q}_p^{\mathrm{ab}}\otimes_{\mathbb{Q}_p}L$, and decomposing $\mathbb{Q}_p^{\mathrm{ab}}\otimes_{\mathbb{Q}_p}L\isom \prod_{\tau}L_{\tau}$ into a finite product of 
 fields $L_{\tau}$. We obtain a Weil-Deligne representation $W(M)_{\tau}$ of $W_{\mathbb{Q}_p}$  
 defined over $L_{\tau}$ for each $\tau$. Hence, we can define 
 the $\varepsilon$-constant $\varepsilon(W(M)_{\tau}, \tau(\zeta))\in L^{\times}_{\tau}$, where $\tau(\zeta)$ is the image of $\zeta$ in $L_{\tau}$ by the projection 
 $\mathbb{Q}_p^{\mathrm{ab}}\otimes_{\mathbb{Q}_p}L\rightarrow L_{\tau}$. Then, the product
 $$\varepsilon_L(W(M),\zeta):=(\varepsilon(W(M)_{\tau}, \tau(\zeta)))_{\tau}\in \prod_{\tau}L_{\tau}^{\times}$$ is contained in $L_{\infty}^{\times}:=(\mathbb{Q}_p(\zeta_{p^{\infty}})\otimes_{\mathbb{Q}_p}L)^{\times}
 \subseteq (\mathbb{Q}_p(\zeta_{p^{\infty}})\otimes_{\mathbb{Q}_p}\mathbb{Q}_p^{ur}\otimes_{\mathbb{Q}_p}L)^{\times}$ since it is easy to check that $\varepsilon_L(W(M),\zeta)$ is fixed by $1\otimes \varphi\otimes 1$.


Using this definition, for each de Rham $(\varphi,\Gamma)$-module $M$ over 
$\mathcal{R}_L$, we construct a trivialization
$\varepsilon^{\mathrm{dR}}_{L,\zeta}(M):\bold{1}_L\isom\Delta_L(M)$ as follows. 
We will first define two isomorphisms
 $$\theta_L(M):\bold{1}_L\isom \Delta_{L,1}(M)\boxtimes\mathrm{Det}_L(\bold{D}_{\mathrm{dR}}(M))$$ and 
 $$\theta_{\mathrm{dR},L}(M,\zeta):\mathrm{Det}_L(\bold{D}_{\mathrm{dR}}(M))\isom\Delta_{L,2}(M)
 $$ (we remark that $\theta_{\mathrm{dR},L}(M,\zeta)$ depends on the choice of $\zeta$), and then define $\varepsilon_{L,\xi}^{\mathrm{dR}}(M)$ as the following composites
  \begin{multline*}
  \varepsilon_{L,\xi}^{\mathrm{dR}}(M):\bold{1}_L\xrightarrow{\Gamma_L(M)\cdot\theta_L(M)}\Delta_{L,1}(M)\boxtimes\mathrm{Det}_L(\bold{D}_{\mathrm{dR}}(M))\\
  \xrightarrow{\mathrm{id}\boxtimes \theta_{\mathrm{dR},L}(M,\zeta)}\Delta_{L,1}(M)
  \boxtimes \Delta_{L,2}(M)=\Delta_L(M),
  \end{multline*}
  where $\Gamma_L(M)\in \mathbb{Q}^{\times}$ is defined by 
  $$\Gamma_L(M):=\prod_{r\in\mathbb{Z}}\Gamma^*(r)^{-\mathrm{dim}_L\mathrm{gr}^{-r}\bold{D}_{\mathrm{dR}}(M)},$$
  where we set 
  $$\Gamma^*(r):=\begin{cases} (r-1)! &  (r\geqq 1)\\
  \frac{(-1)^r}{(-r)!} & (r\leqq 0)\end{cases}.$$

We first define $\theta_L(M):\bold{1}_L\isom\Delta_{L,1}(M)\boxtimes\mathrm{Det}_L(\bold{D}_{\mathrm{dR}}(M))$. 
By the result of \S 2.2, we have the following exact sequence of $L$-vector spaces
\begin{equation}\label{exact}
0\rightarrow \mathrm{H}^0_{\varphi,\gamma}(M_0)\rightarrow \bold{D}_{\mathrm{cris}}(M_0)_1
\xrightarrow{x\mapsto ((1-\varphi)x, \overline{x})} \bold{D}_{\mathrm{cris}}(M_0)_2\oplus t_{M_0}\xrightarrow{
\mathrm{exp}_{f,M_0}\oplus \mathrm{exp}_{M_0}} \mathrm{H}^1_{\varphi,\gamma}(M_0)_{f}\rightarrow 0
\end{equation}
 for $M_0=M, M^*$, where we denote $\bold{D}_{\mathrm{cris}}(M_0)_i=\bold{D}_{\mathrm{cris}}(M_0)$ for $i=1,2$.

Using Tate duality and the 
de Rham duality 
$$\bold{D}_{\mathrm{dR}}(M)^0\isom t_{M^*}^{\vee}:x\mapsto [\overline{y}\mapsto [y,x]_{\mathrm{dR}}]$$ (here $y\in \bold{D}_{\mathrm{dR}}(M^*)$ is a lift of $\bar{y}$) and Proposition \ref{2.24}, we define 
a map 
\begin{multline*}
\mathrm{exp}^*_{M^*}:\mathrm{H}^1_{\varphi,\gamma}(M)_{/f}:=\mathrm{H}^1_{\varphi,\gamma}(M)/\mathrm{H}^1_{\varphi,\gamma}(M)_f\xrightarrow{x\mapsto [y\mapsto \langle y,x\rangle]} 
\mathrm{H}^1_{\varphi,\gamma}(M^*)_f^{\vee}\\
\xrightarrow{\mathrm{exp}_{M^*}^{\vee}} t_{M^*}^{\vee}\isom \bold{D}_{\mathrm{dR}}(M)^0
\end{multline*}
which is called the dual exponential map and was studied in \S2.4 of \cite{Na14a}. 
Using this map, as the dual of the exact sequence $(\ref{exact})$ for $M_0=M^*$, we obtain 
an exact sequence
\begin{equation}\label{18e}
0\rightarrow \mathrm{H}^1_{\varphi,\gamma}(M)_{/f}\xrightarrow{\mathrm{exp}_{f,M^*}^{\vee}
\oplus\mathrm{exp}^*_{M^*}}
\bold{D}_{\mathrm{cris}}(M^*)_2^{\vee}\oplus \bold{D}_{\mathrm{dR}}(M)^0\xrightarrow{(*)} \bold{D}_{\mathrm{cris}}(M^*)_1^{\vee}
\rightarrow \mathrm{H}^2_{\varphi,\gamma}(M)\rightarrow 0,
\end{equation}
where the map $\bold{D}_{\mathrm{cris}}(M^*)_2^{\vee}\rightarrow \bold{D}_{\mathrm{cris}}(M^*)_1^{\vee}$ in $(*)$ is the dual of $(1-\varphi)$.
 Therefore, as the composite of the exact sequences (\ref{exact}) for $M_0=M$ and (\ref{18e}), we obtain the following exact sequence 
 \begin{multline}\label{exact2}
 0\rightarrow \mathrm{H}^0_{\varphi,\gamma}(M)\rightarrow \bold{D}_{\mathrm{cris}}(M)_1\xrightarrow{x\mapsto ((1-\varphi)x, \bar{x})} 
 \bold{D}_{\mathrm{cris}}(M)_2\oplus t_M\rightarrow \mathrm{H}^1_{\varphi,\gamma}(M)\\
 \rightarrow \bold{D}_{\mathrm{cris}}(M^*)_2^{\vee}\oplus \bold{D}_{\mathrm{dR}}(M)^0\rightarrow 
 \bold{D}_{\mathrm{cris}}(M^*)^{\vee}_1\rightarrow \mathrm{H}_{\varphi,\gamma}^2(M)\rightarrow 0
 \end{multline}
 Applying the trivialization (\ref{acyclic}) to this exact sequence and using the canonical isomorphisms 
 $i_{\mathrm{Det}_L(\bold{D}_{\mathrm{cris}}(M)_1)}:
\mathrm{Det}_L(\bold{D}_{\mathrm{cris}}(M)_2)\boxtimes\mathrm{Det}_L(\bold{D}_{\mathrm{cris}}(M)_1)^{-1}\isom \bold{1}_L$ and 
$i_{\mathrm{Det}_L(\bold{D}_{\mathrm{cris}}(M^*)^{\vee}_1)}:
\mathrm{Det}_L(\bold{D}_{\mathrm{cris}}(M^*)_2^{\vee})\boxtimes
\mathrm{Det}_L(\bold{D}_{\mathrm{cris}}(M^*)_1^{\vee})^{-1}\isom \bold{1}_L$
and $\mathrm{Det}_L(\bold{D}^0_{\mathrm{dR}}(M))\boxtimes \mathrm{Det}_L(t_M)\isom \mathrm{Det}_L(\bold{D}_{\mathrm{dR}}(M))$, we obtain a canonical 
 isomorphism 
 
 $$\theta_L(M):\bold{1}_L\isom \Delta_{L,1}(M)\boxtimes \mathrm{Det}_L(\bold{D}_{\mathrm{dR}}(M)).$$

Next, we define an isomorphism 
$\theta_{\mathrm{dR},L}(M,\zeta):
\mathrm{Det}_L(\bold{D}_{\mathrm{dR}}(M))\isom \Delta_{L,2}(M)$. 
To define this, we show the following lemma.
\begin{lemma}\label{3.4}
Let $\{h_1,h_2,\cdots,h_{r_M}\}$ be the set of Hodge-Tate weights of $M$ $($with multiplicity$)$. Put 
$h_M:=\sum_{i=1}^{r_M}h_i$. For any $n\geqq n(M)$ such that $\varepsilon_L(W(M),\zeta)\in L_n:=\mathbb{Q}_p(\zeta_{p^n})\otimes_{\mathbb{Q}_p}L$, the map
\begin{multline*}
\mathcal{L}_{L}(M)\rightarrow \bold{D}_{\mathrm{dif},n}(\mathrm{det}_{\mathcal{R}_L}M)=L_n((t))\otimes_{\iota_n, \mathcal{R}^{(n)}_L}(\mathrm{det}_{\mathcal{R}_L}M)^{(n)}: \\
x\mapsto \frac{1}{\varepsilon_L(W(M),\zeta)}\cdot\frac{1}{t^{h_M}}\otimes\varphi^n(x)
\end{multline*}

induces an isomorphism 
$$f_{M,\zeta}:\mathcal{L}_L(M)\isom \bold{D}_{\mathrm{dR}}(\mathrm{det}_{\mathcal{R}_L}M),$$ 
and doesn't depend on the choice of $n$.
\end{lemma}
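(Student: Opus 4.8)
The plan is to reduce the whole statement to the rank one $(\varphi,\Gamma)$-module $N:=\det_{\mathcal{R}_L}M$. Every object appearing in the lemma is built from $N$ --- indeed $\mathcal{L}_L(M)$ is defined via $\det_{\mathcal{R}_L}M$, the target is $\bold{D}_{\mathrm{dif},n}(N)$, and $h_M$ is the sum of the Hodge--Tate weights of $M$, which equals the Hodge--Tate weight of $N$ --- except for the scalar $\varepsilon_L(W(M),\zeta)\in L_\infty^\times$, of which only the Galois transformation behaviour will be used. Since $M$ is de Rham, so is $N$; by Theorem~\ref{2.9} and because $L$ is a field we may write $N\isom\mathcal{R}_L(\delta)$ for a continuous character $\delta\colon\mathbb{Q}_p^\times\to L^\times$, so that $\mathcal{L}_L(M)=L\bold{e}_\delta$ and $\bold{D}_{\mathrm{dR}}(N)$ is a free $L$-module of rank one (as $N$ is de Rham and $K=\mathbb{Q}_p$). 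Under these identifications the map of the lemma sends $\bold{e}_\delta$ to
\[
\xi_n:=\frac{\delta(p)^n}{\varepsilon_L(W(M),\zeta)}\cdot\frac{1}{t^{h_M}}\otimes\bold{e}_\delta\in\bold{D}_{\mathrm{dif},n}(N),
\]
which is manifestly non-zero. Hence it suffices to prove: (i) $\xi_n$ is fixed by $\Gamma$, so that it lies in $\bold{D}_{\mathrm{dR}}(N)=\mathrm{H}^0_\gamma(\bold{D}_{\mathrm{dif}}(N))$; (ii) the resulting $L$-linear map $f_{M,\zeta}$ does not depend on $n$; and then (iii) $f_{M,\zeta}$ is an isomorphism, which is automatic from (i) since a non-zero $L$-linear map between free $L$-modules of rank one is an isomorphism.

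For (i) I would argue as follows. On the one hand $\gamma(t)=\chi(\gamma)t$ and $\gamma(\bold{e}_\delta)=\delta(\chi(\gamma))\bold{e}_\delta$, so $\gamma$ multiplies $\frac{1}{t^{h_M}}\otimes\bold{e}_\delta$ by $\chi(\gamma)^{-h_M}\delta(\chi(\gamma))$. On the other hand $\gamma$ acts on $L_\infty^\times$ through $\zeta\mapsto\zeta^{\chi(\gamma)}$ and fixes $L$ and $\mathbb{Q}_p^{\mathrm{ur}}$, so applying $\gamma$ to $\varepsilon_L(W(M),\zeta)$ is the same as replacing $\zeta$ by $\zeta^{\chi(\gamma)}$ in its definition; by property (2) of the $\varepsilon$-constants recalled above this gives $\gamma(\varepsilon_L(W(M),\zeta))=\det W(M)(\mathrm{rec}_{\mathbb{Q}_p}(\chi(\gamma)))\cdot\varepsilon_L(W(M),\zeta)$. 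Since $\bold{D}_{\mathrm{pst}}$ and the passage to Weil--Deligne representations are compatible with determinants, $\det W(M)=W(\det M)=W(N)$, which under local class field theory corresponds to the smooth character $\delta_{\mathrm{sm}}$ attached to $N$; from the classification of rank one de Rham $(\varphi,\Gamma)$-modules one has $\delta|_{\mathbb{Z}_p^\times}(u)=u^{h_M}\delta_{\mathrm{sm}}(u)$ for $u\in\mathbb{Z}_p^\times$. Combining, $\gamma$ multiplies $\xi_n$ by
\[
\delta_{\mathrm{sm}}(\chi(\gamma))^{-1}\cdot\chi(\gamma)^{-h_M}\cdot\delta(\chi(\gamma))=\delta_{\mathrm{sm}}(\chi(\gamma))^{-1}\cdot\chi(\gamma)^{-h_M}\cdot\chi(\gamma)^{h_M}\delta_{\mathrm{sm}}(\chi(\gamma))=1,
\]
so $\xi_n$ is $\Gamma$-fixed (hence $\Delta$-fixed, as $\Delta\subseteq\Gamma$) and lies in $\bold{D}_{\mathrm{dR}}(N)$.

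For (ii), take $n$ and $n+1$ both $\geq n(M)$ with $\varepsilon_L(W(M),\zeta)\in L_n$. By the commutative square $\mathrm{can}\circ\iota_n=\iota_{n+1}\circ\varphi$ from \S2.2 one has $\mathrm{can}\bigl(\frac{1}{t^{h_M}}\otimes\varphi^n(x)\bigr)=\frac{1}{t^{h_M}}\otimes\varphi^{n+1}(x)$, while $\mathrm{can}$ acts on coefficients as the inclusion $L_n\hookrightarrow L_{n+1}$ and hence fixes $\varepsilon_L(W(M),\zeta)$; thus $\mathrm{can}(\xi_n)=\xi_{n+1}$, and since $\bold{D}_{\mathrm{dR}}(N)=\varinjlim_n\bold{D}_{\mathrm{dif},n}(N)^\Gamma$ the map $f_{M,\zeta}$ is well defined independently of $n$ (Lemma~\ref{2.19} is what guarantees that $\varphi^n(x)$ really does lie in $N^{(n)}$).

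The main obstacle is step (i), and within it the bookkeeping of normalizations: establishing cleanly the identifications $\det W(M)=W(\det M)\leftrightarrow\delta_{\mathrm{sm}}$ and the precise relation $\delta|_{\mathbb{Z}_p^\times}(u)=u^{h_M}\delta_{\mathrm{sm}}(u)$, i.e.\ matching Fontaine--Perrin-Riou's conventions for $\bold{D}_{\mathrm{pst}}$ and for the associated Weil--Deligne representation with the sign conventions for Hodge--Tate weights and with the explicit description of rank one $(\varphi,\Gamma)$-modules in Theorem~\ref{2.9}. Once these identities are pinned down, the $\Gamma$-invariance is exactly the one-line scalar computation displayed above, and (ii) and (iii) are formal.
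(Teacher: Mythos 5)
Your proposal is correct and follows essentially the same route as the paper: reduce to the rank one module $\det_{\mathcal R_L}M$, use the explicit description $M\isom\mathcal R_L(\tilde\delta x^{h_M})$ with $W(M)$ corresponding to $\tilde\delta |x|^{h_M}$ under local class field theory, and apply property~(2) of the $\varepsilon$-constants to compute the $\Gamma$-action on $\varepsilon_L(W(M),\zeta)$, which exactly cancels the $\Gamma$-action on $t^{-h_M}\otimes\varphi^n(x)$. The normalization identities you flagged as the remaining bookkeeping are precisely what the paper asserts without further comment (classification of rank one de~Rham $(\varphi,\Gamma)$-modules as $\mathcal R_L(\tilde\delta x^{h_M})$, and $W(\mathcal R_L(\tilde\delta x^{h_M}))\leftrightarrow\tilde\delta|x|^{h_M}$), so your argument fills in once those standard facts are invoked.
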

\begin{proof}
The independence of $n$ follows from the definition of the transition map 
$\bold{D}_{\mathrm{dif},n}(-)\hookrightarrow \bold{D}_{\mathrm{dif},n+1}(-)$. 

We show that $f_{M,\zeta}$ is isomorphism. Comparing the dimensions, it suffices to show that the image of the map in the lemma is contained in $\bold{D}_{\mathrm{dR}}(\mathrm{det}_{\mathcal{R}_L}M)$, i.e. is fixed by  
the action of $\Gamma$.
Since we have $\varepsilon_L(W(M),\zeta)/\varepsilon_L(
W(\mathrm{det}_{\mathcal{R}_L}M),\zeta)\in L^{\times}(\subseteq L_{\infty}^{\times}),$ 
it suffices to show the claim when $M$ is of rank one. We assume that $M$ is of rank one.  
By the classification of rank one de Rham 
$(\varphi,\Gamma)$-modules, there exists a locally constant homomorphism 
$\tilde{\delta}:\mathbb{Q}_p^{\times}\rightarrow L^{\times}$ such that $M\isom \mathcal{R}_L(\tilde{\delta}\cdot x^{h_M})$. The corresponding representation $W(M)$ 
of $W_{\mathbb{Q}_p}$ 
is given by the homomorphism $\tilde{\delta}\cdot|x|^{h_M}:\mathbb{Q}_p^{\times}\rightarrow L^{\times}
$ via the local class field theory. By the property (2) of $\varepsilon$-constants, then 
we have $\gamma(\varepsilon_L(\bold{D}_{\mathrm{pst}}(M),\zeta))=\tilde{\delta}(\chi(\gamma))\varepsilon_L(W(M),\zeta)$ 
for $\gamma\in \Gamma$, which proves the claim since we have $\gamma(\varphi^n(x))=\tilde{\delta}(\chi(\gamma))\chi(\gamma)^{h_M}\varphi^n(x)$ for $x\in \mathcal{L}_L(M), \gamma\in \Gamma$ by definition.

\end{proof}

Since we have a canonical isomorphism $\bold{D}_{\mathrm{dR}}(\mathrm{det}_{\mathcal{R}_L}M)
\isom \mathrm{det}_L\bold{D}_{\mathrm{dR}}(M)$, the isomorphism $f_{M,\zeta}$ induces an isomorphism 
$f_{M,\zeta}:\Delta_{L,2}(M)\isom \mathrm{Det}_L(\bold{D}_{\mathrm{dR}}(M))$. We define 
the isomorphism $\theta_{\mathrm{dR},L}(M,\zeta)$ as the inverse
$$\theta_{\mathrm{dR},L}(M,\zeta):=f_{M,\zeta}^{-1}: \mathrm{Det}_L(\bold{D}_{\mathrm{dR}}(M))\isom
\Delta_{L,2}(M).$$

\begin{rem}\label{3.5}
The isomorphism $f_{M,\zeta}$, and hence the isomorphism $\theta_{\mathrm{dR},L}(M,\zeta)$ depend on the choice of $\zeta$. If we choose another $\mathbb{Z}_p$-basis of $\mathbb{Z}_p(1)$ which can be written as 
$\zeta^a:=\{\zeta^a_{p^n}\}_{n\geqq 0}$ for unique $a\in \mathbb{Z}_p^{\times}$, then $f_{M,\zeta^a}$ is defined using $\varepsilon_{L}(W(M),\zeta^a)$ and the parameter 
$\pi_{\zeta^a}$ (see remark \ref{2.1}) and $t_{a}:=\mathrm{log}(1+\pi_{\zeta^a})$. Since we have 
$\varepsilon_{L}(W(M),\zeta^a)=\mathrm{det}W(M)(\mathrm{rec}_{\mathbb{Q}_p}(a))
\varepsilon_{L}(W(M),\zeta)$ and $\pi_{\zeta^a}=(1+\pi)^a-1$ and 
$t_a=at$, we have $f_{M,\zeta^a}=\frac{1}{\delta_{\mathrm{det}_{\mathcal{R}_L}M}(a)}f_{M,\zeta}$, and hence we also have 
$$\theta_{\mathrm{dR},L}(M,\zeta^a)=\delta_{\mathrm{det}_{\mathcal{R}_L}M}(a)\cdot
\theta_{\mathrm{dR},L}(M,\zeta),$$
and also obtain 
$$\varepsilon^{\mathrm{dR}}_{L,\zeta^a}(M)=\delta_{\mathrm{det}_{\mathcal{R}_L}M}(a)\cdot \varepsilon^{\mathrm{dR}}_{L,\zeta}(M).$$

\end{rem}



\begin{rem}\label{3.7}
In \cite{Ka93b} or \cite{FK06}, Kato and Fukaya-Kato defined their de Rham $\varepsilon$-isomorphism $\varepsilon^{\mathrm{dR}}_{L,\zeta}(V)':\bold{1}_L\isom\Delta_L(V)$ 
(using a different notation) for each de Rham $L$-representation $V$ of $G_{\mathbb{Q}_p}$ 
using the original Bloch-Kato exponential map. Using Proposition \ref{2.27}, we can compare 
our $\varepsilon_{L,\zeta}^{\mathrm{dR}}(\bold{D}_{\mathrm{rig}}(V))$ with their $\varepsilon^{\mathrm{dR}}_{L,\zeta}(V)'$  under the canonical isomorphism $\Delta_{L}(V)\isom \Delta_L(\bold{D}_{\mathrm{rig}}(V))$ defined in 
Corollary \ref{3.2}. We remark that our's and their's are different since they used (in our notation) the 
$\varepsilon$-constant $\varepsilon_{L}((\bold{D}_{\mathrm{pst}}(V), \rho),\zeta)$ associated to the representation $(\bold{D}_{\mathrm{pst}}(V), \rho)$ of $W_{\mathbb{Q}_p}$ instead of $W(V)$. Since one has 
$$\varepsilon_L(W(V),\zeta)=\varepsilon_L((\bold{D}_{\mathrm{pst}}(V), \rho),\zeta)\cdot 
\mathrm{det}_L(-\varphi|\bold{D}_{\mathrm{st}}(V)/\bold{D}_{\mathrm{cris}}(V)),$$
the correct relation between our's and their's are 
\begin{equation}
\varepsilon_{L,\zeta}^{\mathrm{dR}}(\bold{D}_{\mathrm{rig}}(V))=
\mathrm{det}_L(-\varphi|\bold{D}_{\mathrm{st}}(V)/\bold{D}_{\mathrm{cris}}(V))\cdot \varepsilon_{L,\zeta}^{\mathrm{dR}}(V)'.
\end{equation} Moreover, we insist that our's are correct one, since we show in Lemma 
\ref{3.8} below that our $\varepsilon_{L,\zeta}^{\mathrm{dR}}(M)$ is compatible with exact sequence (but $\varepsilon^{\mathrm{dR}}_{L,\zeta}(V)'$ may not satisfy this compatibility).

\end{rem}

Finally in this subsection, we prove a lemma on the compatibility of 
the de Rham $\varepsilon$-isomorphism with exact sequences and the Tate duality.
\begin{lemma}\label{3.8}
\begin{itemize}
\item[(1)]For any exact sequence $0\rightarrow M_1\rightarrow M_2\rightarrow M_3\rightarrow 0$, 
we have 
$$\varepsilon_{L,\zeta}^{\mathrm{dR}}(M_2)=\varepsilon_{L,\zeta}^{\mathrm{dR}}(M_1)\boxtimes \varepsilon_{L,\zeta}^{\mathrm{dR}}(M_3)$$ under the canonical isomorphism 
$\Delta_L(M_2)\isom \Delta_L(M_1)\boxtimes\Delta_{L}(M_3)$.
\item[(2)]
One has the following commutative diagram of isomorphisms 
\begin{equation*}
 \begin{CD}
 \Delta_L(M)@> \mathrm{can} >> \Delta_L(M^*)^{\vee}\boxtimes (L(r_M), 0)\\
 @A \varepsilon_{L,\zeta^{-1}}^{\mathrm{dR}}(M) AA @VV 
 \varepsilon_{L,\zeta}^{\mathrm{dR}}(M^*)^{\vee}\boxtimes[\bold{e}_{r_M}\mapsto 1] V\\
  \bold{1}_L@> \mathrm{can}>> \bold{1}_L\boxtimes\bold{1}_L.
  \end{CD}
  \end{equation*}

\end{itemize}

\end{lemma}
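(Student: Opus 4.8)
The plan is to reduce both statements to the rank one case by dévissage and then to the three basic building blocks: unramified twists $\mathcal{R}_L(\tilde{\delta})$ with $\tilde{\delta}$ unramified, tamely/wildly ramified locally constant twists $\mathcal{R}_L(\tilde{\delta})$, and the pure Tate twists $\mathcal{R}_L(x^k)$ (equivalently $\mathcal{R}_L, \mathcal{R}_L(1)$ and their powers). For part (1), the point is that $\varepsilon^{\mathrm{dR}}_{L,\zeta}(M)$ is built as the composite $\Gamma_L(M)\cdot\theta_L(M)$ followed by $\mathrm{id}\boxtimes\theta_{\mathrm{dR},L}(M,\zeta)$, so I would check compatibility of each of the three factors separately. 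The factor $\Gamma_L(M)$ is multiplicative in exact sequences because Hodge--Tate weights (with multiplicity) add up in a short exact sequence of de Rham $(\varphi,\Gamma)$-modules, hence $\mathrm{gr}^{-r}\bold{D}_{\mathrm{dR}}(-)$ is additive. The isomorphism $\theta_L(M)$ is defined purely from the exact sequence \eqref{exact2}, which is functorial and compatible with the long exact cohomology sequences attached to $0\to M_1\to M_2\to M_3\to 0$ (using the compatibility of $\mathrm{exp}$, $\mathrm{exp}_f$, $\mathrm{exp}^*$ with exact sequences established in \S2.2 and \cite{Na14a}); applying the determinant formalism of \S3.1, in particular the commutative square relating the trivialization $h_{P^\bullet}$ of an acyclic complex with short exact sequences of acyclic complexes, gives the compatibility of $\theta_L$. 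Finally $\theta_{\mathrm{dR},L}(M,\zeta)=f_{M,\zeta}^{-1}$ is compatible with exact sequences because $\mathcal{L}_L(-)$, $\bold{D}_{\mathrm{dR}}(-)$ and the determinant functor are all multiplicative, and because the $\varepsilon$-factor is multiplicative in exact sequences of Weil--Deligne representations by property (1) of the $\varepsilon$-constants, while $\bold{D}_{\mathrm{pst}}(-)$ and the Hodge--Tate weights are additive; one checks the three maps $f_{M_i,\zeta}$ fit into the obvious commutative triangle of determinant lines.

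For part (2), the strategy is to compare $\varepsilon^{\mathrm{dR}}_{L,\zeta^{-1}}(M)$ and $\varepsilon^{\mathrm{dR}}_{L,\zeta}(M^*)^{\vee}$ factor by factor under the canonical duality isomorphism $\Delta_L(M)\isom\Delta_L(M^*)^{\vee}\boxtimes(L(r_M),0)$, which on $\Delta_{L,1}$ comes from Tate duality (Theorem \ref{2.16}(4)) together with \eqref{20a}, and on $\Delta_{L,2}$ from the explicit isomorphisms $\mathcal{L}_L(M)\isom\mathcal{L}_L(M^*)^{\vee}$ and $\mathrm{det}_{\mathcal{R}_L}M\isom\mathrm{det}_{\mathcal{R}_L}(M^*)^{\vee}(r_M)$ described just before \S3.2. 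The $\Gamma$-factor contributes $\Gamma_L(M)\Gamma_L(M^*)$, and using $\mathrm{gr}^{-r}\bold{D}_{\mathrm{dR}}(M^*)\isom(\mathrm{gr}^{r-1}\bold{D}_{\mathrm{dR}}(M))^{\vee}$ and the functional equation $\Gamma^*(r)\Gamma^*(1-r)=(-1)^{\max\{r,1-r\}}$ (so essentially $\pm1$) one sees the product is a sign that matches the sign appearing in the duality of determinants; I would bookkeep this sign against the $(-1)^{rs}$ commutativity constraint and against $i_{(L,r)^{-1}}=(-1)^r i_{(L,r)}$ from \S3.1. For $\theta_L$, the key input is that the exact sequence \eqref{exact2} for $M$ is, up to the identifications of Proposition \ref{2.24} and the Bloch--Kato/Tate dualities, the $L$-linear dual of \eqref{exact2} for $M^*$ with the roles of the first and last terms swapped; this is exactly the kind of statement proved in \cite{Na14a} for the Bloch--Kato exact sequence, and I would cite/adapt it, then invoke the compatibility of $h_{P^\bullet}$ with $\bold{R}\mathrm{Hom}(-,R)$ from the determinant formalism. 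For $\theta_{\mathrm{dR},L}$, the computation reduces to the behaviour of $f_{M,\zeta}$: by Remark \ref{3.5} we have $f_{M,\zeta^{-1}}=\delta_{\mathrm{det}_{\mathcal{R}_L}M}(-1)^{-1}f_{M,\zeta}$ wait --- more to the point, the definition of $f_{M,\zeta}$ via $1/\varepsilon_L(W(M),\zeta)$ together with the functional equation $\varepsilon_L(W(M),\zeta)\varepsilon_L(W(M)^{\vee}(|x|),\zeta^{-1})=1$ (property (3), combined with $W(M^*)\cong W(M)^{\vee}(|x|)$ up to the Hodge filtration) and the identity $t^{h_M}t^{h_{M^*}}=t^{h_M-h_M}=1$ on $\mathrm{det}$ shows that $f_{M^*,\zeta}^{\vee}$ and $f_{M,\zeta^{-1}}$ are inverse-transpose to each other up to the $(r_M)$-twist, which is precisely the duality isomorphism on $\Delta_{L,2}$.

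By the dévissage of part (1) it then suffices to verify the commutative square of part (2) for rank one $M$, and by twisting (using that both sides are visibly compatible with tensoring by a de Rham character, which again follows from part (1) applied to exact sequences built from $M$) it suffices to treat the three basic cases above; in each of these the cohomology groups $\mathrm{H}^i_{\varphi,\gamma}$, the module $\bold{D}_{\mathrm{cris}}$, the tangent space $t_M$ and the $\varepsilon$-factor are all explicitly computable (for $\mathcal{R}, \mathcal{R}(1)$ this is done in the appendix; see Remark \ref{2.13} and Proposition \ref{explicitdual}), so the verification becomes a finite explicit check. The main obstacle I anticipate is the sign and normalization bookkeeping: keeping straight the commutativity constraint $(-1)^{rs}$, the factor $i_{(L,r)^{-1}}=(-1)^r i_{(L,r)}$, the sign in $\Gamma^*(r)\Gamma^*(1-r)$, the sign conventions in the cone/shift functors and in the cup-product (cf.\ Remark \ref{2.12.111}), and the sign hidden in the identification $W(M^*)\cong W(M)^{\vee}(|x|)$ from the Hodge filtration, and checking that all of these conspire to produce exactly the identity in the stated square rather than a stray $\pm1$. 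I would isolate this by first proving the square commutes up to a global sign depending only on $r_M$ and the Hodge--Tate weights, and then pin the sign down to $+1$ using the single explicit case $M=\mathcal{R}_L$ (where everything is computed in the appendix).
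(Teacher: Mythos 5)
Your plan for part (1) has a genuine gap. You propose to check compatibility of $\Gamma_L$, $\theta_L$ and $\theta_{\mathrm{dR},L}$ with short exact sequences one at a time, citing the multiplicativity of Hodge--Tate weights, the functoriality of the seven-term exact sequence defining $\theta_L$, and ``property (1) of the $\varepsilon$-constants.'' But two of these three factors are, individually, \emph{not} compatible with exact sequences, and for closely related reasons. First, $\bold{D}_{\mathrm{cris}}(-) = \bold{D}_{\mathrm{st}}(-)^{N=0}$ is not exact on de Rham $(\varphi,\Gamma)$-modules, so for a short exact sequence $0\to M_1\to M_2\to M_3\to 0$ there is no naive short exact sequence of the seven-term complexes defining $\theta_L(M_i)$, and the determinant formalism does not directly produce the desired triangle of isomorphisms. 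Second, the Deligne--Langlands constant you invoke is multiplicative only for $W_{\mathbb{Q}_p}$-representations $(V,\rho)$; for a Weil--Deligne representation $W=(V,\rho,N)$ the paper's $\varepsilon(W,\zeta)$ carries the extra factor $\det_E(-\mathrm{Fr}_p\,|\,V^{I_p}/(V^{N=0})^{I_p})$, and this monodromy correction is \emph{not} multiplicative in short exact sequences — precisely because $(-)^{N=0}$ is not exact, which is the same failure as that of $\bold{D}_{\mathrm{cris}}$. So your assertion that $\theta_{\mathrm{dR},L}$ is multiplicative ``by property (1)'' misreads the scope of that property (indeed Remark 3.7 makes it explicit that the Fukaya--Kato constant $\varepsilon_L((\bold{D}_{\mathrm{pst}}(V),\rho),\zeta)$, which \emph{is} multiplicative, would lead to a $\varepsilon^{\mathrm{dR}}$ that is \emph{not} compatible with exact sequences). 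The paper's actual proof of (1) works around this by a compensation device: it replaces $\theta_L$ and $\theta_{\mathrm{dR},L}$ with modified isomorphisms $\theta_L'$ (built from $1-\varphi^{-1}$) and $\theta_{\mathrm{dR},L}'$ (built from the constant $\varepsilon_L((\bold{D}_{\mathrm{pst}}(M),\rho),\zeta)\cdot\det_L(-\varphi\,|\,\bold{D}_{\mathrm{st}}(M))$, which involves only the \emph{exact} functors $\bold{D}_{\mathrm{pst}}$ and $\bold{D}_{\mathrm{st}}$), observes that $\varepsilon^{\mathrm{dR}}_{L,\zeta}(M)$ is unchanged, and then proves multiplicativity of each primed factor: for $\theta_{\mathrm{dR},L}'$ this is immediate, and for $\theta_L'$ the non-exactness of $\bold{D}_{\mathrm{cris}}$ is absorbed into a six-term exact sequence linking $\bold{D}_{\mathrm{cris}}(M_i)$ and $\bold{D}_{\mathrm{cris}}(M_i^*)^\vee$ whose boundary map intertwines $\varphi^{-1}$ with $\varphi^{\vee}$. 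Without this (or an equivalent) device your argument stalls at the very step you declare routine.

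For part (2), your factor-by-factor comparison under duality, with sign bookkeeping against $\Gamma^*(r)\Gamma^*(1-r)$, the commutativity constraint $(-1)^{rs}$, and the identity $i_{(L,r)^{-1}}=(-1)^r i_{(L,r)}$, is in substance the paper's strategy: it is proved by three commutative diagrams comparing the contributions of $\Gamma_L$, $\theta_L$ and $\theta_{\mathrm{dR},L}$ for $M$ and $M^*$, with the sign $(-1)^{h_M+\dim_L t_M}$ arising and cancelling as you anticipate. The paper, however, does not need to reduce to rank one; your proposed d\'evissage of (2) through (1) is also logically delicate because (2) is used, via the six-term sequence above, inside the very proof of (1).
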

\begin{proof}
We first prove (1). The proof is identical to that of Proposition 3.3.8 of \cite{FK06}, but we give a proof for convenience of the readers. We first remark that we have 
\begin{equation}\label{26.111}
\Gamma_L(M)\cdot \Gamma_L(M^*)=(-1)^{h_M+\mathrm{dim}_Lt_M}
\end{equation}
since we have 
$$\Gamma^*(r)\cdot \Gamma^*(1-r)=\begin{cases} (-1)^{r-1} & (r\geqq 1)\\
 (-1)^r & (r\leqq 0). \end{cases}$$
 
 We next remark that one has the following commutative diagram
 \begin{equation}\label{27.111}
 \begin{CD}
 \bold{1}_L@> \theta_L(M)>> \Delta_{L,1}(M)\boxtimes\mathrm{Det}_L(M)\\
 @V (-1)^{\mathrm{dim}_L t_M} VV @VV \mathrm{can} V\\
  \bold{1}_L@< \theta_L(M^*)^{\vee}<< \Delta_{L,1}(M^*)^{\vee}\boxtimes\mathrm{Det}_L(M^*)^{\vee},
  \end{CD}
  \end{equation}
  where the right vertical arrow is induced by the Tate duality, since one has the following commutative diagram
  
  \begin{equation*}
 \begin{CD}
 t_M @> -\mathrm{exp}_M>> \mathrm{H}^1_{\varphi,\gamma}(M) @> \mathrm{exp}^*_{M^*} >> \bold{D}_{\mathrm{dR}}(M)^0\\
 @V \bar{x}\mapsto [y, \mapsto [y,x]_{\mathrm{dR}}] VV @V x\mapsto [y\mapsto \langle y,x\rangle ]  VV@ VVx\mapsto [\bar{y}\mapsto [y,x]_{\mathrm{dR}}] V\\
  \bold{D}_{\mathrm{dR}}(M^*)^{\vee}@> (\mathrm{exp}^*_M)^{\vee} >> \mathrm{H}^1_{\varphi,\gamma}(M^*)^{\vee}@> (\mathrm{exp}_{M^*})^{\vee}>> (t_{M^*})^{\vee}.
  \end{CD}
  \end{equation*}
  Finally, we remark that one has the following commutative diagram 
   \begin{equation}\label{28.111}
 \begin{CD}
 \mathrm{Det}_L(M)@> \theta_{\mathrm{dR},L}(M,\zeta^{-1})>> \Delta_{L,2}(M)\\
 @V (-1)^{h_M}\cdot\mathrm{can}VV @VV \mathrm{can} V\\
  \mathrm{Det}_L(M^*)^{\vee}= \mathrm{Det}_L(M^*)^{\vee}\boxtimes\bold{1}_L@< \theta_{\mathrm{dR},L}(M^*,\zeta)^{\vee}\boxtimes[\bold{e}_{r_M}\mapsto 1]<< \Delta_{L,2}(M^*)^{\vee}
  \boxtimes (L(r_M),0),
  \end{CD}
  \end{equation}
  where the vertical maps $\mathrm{can}$ are also defined by the duality, since we have 
  $$\varepsilon_{L}(W(V),\zeta^{-1})\cdot \varepsilon_L(W(V^*), \zeta)=1.$$ Then, (1) follows from 
  the commutative diagrams (\ref{26.111}), (\ref{27.111}) and (\ref{28.111}).

We next prove (1). We first define an isomorphism
$$\theta_L(M)':\bold{1}_L\isom \Delta_{L,1}(M)\boxtimes \mathrm{Det}_L(\bold{D}_{\mathrm{dR}}(M))$$ in the same way as $\theta_L(M)$ using the following exact sequence
  \begin{equation}\label{exact}
0\rightarrow \mathrm{H}^0_{\varphi,\gamma}(M)\rightarrow \bold{D}_{\mathrm{cris}}(M)_1
\xrightarrow{x\mapsto ((1-\varphi^{-1})x, \overline{x})} \bold{D}_{\mathrm{cris}}(M_0)_2\oplus t_{M_0}\xrightarrow{
\mathrm{exp}_{f,M}\oplus \mathrm{exp}_{M}} \mathrm{H}^1_{\varphi,\gamma}(M)_{f}\rightarrow 0
\end{equation} (we use $\varphi^{-1}$ instead of $\varphi$) and (\ref{18e}), and define 
$$\theta_{\mathrm{dR},L}(M,\zeta)':\mathrm{Det}_L(\bold{D}_{\mathrm{dR}}(M))\isom \Delta_{L,2}(M)$$ in the same way as $\theta_{\mathrm{dR},L}(M,\zeta)'$ using the constant 
$$\varepsilon_{L}(W(M),\zeta)\cdot \mathrm{det}_L(-\varphi|\bold{D}_{\mathrm{cris}}(M))
=\varepsilon_{L}((\bold{D}_{\mathrm{pst}}(M),\rho),\zeta)\cdot \mathrm{det}_L(-\varphi|\bold{D}_{\mathrm{st}}(M))$$ 
instead of $\varepsilon_{L}(W(V),\zeta)$. Since we have $\theta_L(M)'=\theta_L(M)\cdot 
\mathrm{det}_L(-\varphi^{-1}|\bold{D}_{\mathrm{cris}}(M))$, $\varepsilon_{L,\zeta}^{\mathrm{dR}}(M)$ can be defined using the triple $(\Gamma_L(M), \theta_L(M)', \theta_{\mathrm{dR},L}(M,\zeta)')$ instead of $(\Gamma_L(M), \theta_L(M), \theta_{\mathrm{dR},L}(M,\zeta))$. 

Let $0\rightarrow M_1\rightarrow M_2\rightarrow M_3\rightarrow 0$ be an exact sequence as in (1). 
Since one has $\Gamma(M_2)=\Gamma(M_1)\cdot \Gamma(M_3)$, it suffices to show that both 
$\theta_{L}(-)'$ and $\theta_{\mathrm{dR},L}(-)'$ are compatible with the exact sequence. 

Since we have 
$$\varepsilon_{L}((\bold{D}_{\mathrm{pst}}(M_2),\rho),\zeta)=\varepsilon_{L}((\bold{D}_{\mathrm{pst}}(M_1),\rho),\zeta)\cdot \varepsilon_{L}((\bold{D}_{\mathrm{pst}}(M_3),\rho),\zeta)$$ 
and 
$$ \mathrm{det}_L(-\varphi|\bold{D}_{\mathrm{st}}(M_2))=\mathrm{det}_L(-\varphi|\bold{D}_{\mathrm{st}}(M_1))\cdot \mathrm{det}_L(-\varphi|\bold{D}_{\mathrm{st}}(M_3))$$
(since $\bold{D}_{\mathrm{pst}}(-)$ and $\bold{D}_{\mathrm{st}}(-)$ are exact for de Rham 
$(\varphi,\Gamma)$-modules), the isomorphism $\theta_{\mathrm{dR},L}(-)'$ is compatible with the exact sequence.
  
 We remark that the functor $\bold{D}_{\mathrm{cris}}(-)$ is not exact (in general) for de Rham 
 $(\varphi,\Gamma)$-modules, but we have the following exact sequence
\begin{multline*}
0\rightarrow \bold{D}_{\mathrm{cris}}(M_1)\rightarrow \bold{D}_{\mathrm{cris}}(M_2)\rightarrow \bold{D}_{\mathrm{cris}}(M_3)\\
\xrightarrow{(*)}\bold{D}_{\mathrm{cris}}(M_1^*)^{\vee}\rightarrow \bold{D}_{\mathrm{cris}}(M_2^*)^{\vee}\rightarrow \bold{D}_{\mathrm{cris}}(M_3^*)^{\vee}\rightarrow 0
\end{multline*} such that the boundary map $(*)$ satisfies the following commutative diagram

 \begin{equation*}
 \begin{CD}
 \bold{D}_{\mathrm{cris}}(M_3)@> (*) >> \bold{D}_{\mathrm{cris}}(M_1^*)^{\vee}\\
 @VV \varphi^{-1}V @VV \varphi^{\vee} V\\
  \bold{D}_{\mathrm{cris}}(M_3)@> (*) >> \bold{D}_{\mathrm{cris}}(M_1^*)^{\vee},
  \end{CD}
  \end{equation*}
  from which the compatibility of $\theta_L(-)'$ with the exact sequence follows, which finishes the 
  proof of the lemma.

 \end{proof}

 
 \subsection{Formulation of the local $\varepsilon$-conjecture}
 In this subsection, using the definitions in the previous subsections, we formulate 
 the following conjecture which we call local $\varepsilon$-conjecture. 
 This conjecture is a combination of Kato's original $\varepsilon$-conjecture for $(\Lambda, T)$ with 
 our conjecture for $(A,M)$. To state both situations in a same time, we use notation 
 $(B,N)$ for $(\Lambda,T)$ or $(A,M)$, and $f:B\rightarrow B'$ for 
 $f:\Lambda\rightarrow \Lambda'$ or $f:A\rightarrow A'$.
 
 \begin{conjecture}\label{3.9}
 We can uniquely define a $B$-linear isomorphism 
 $$\varepsilon_{B,\zeta}(N):\bold{1}_B\isom \Delta_{B}(N)$$ for each pair $(B, N)$ as above and for each 
 $\mathbb{Z}_p$-basis $\zeta$ of $\mathbb{Z}_p(1)$ satisfying the following conditions.
\begin{itemize}
\item[(i)]Let $f:B\rightarrow B'$ be a continuous homomorphism. Then, we have 
$$\varepsilon_{B,\zeta}(N)\otimes\mathrm{id}_{B'}=\varepsilon_{B',\zeta}(N\otimes_{B}B')$$
under the canonical isomorphism $\Delta_{B}(N)\otimes_{B}B'\isom \Delta_{B'}
(N\otimes_{B}B')$.
\item[(ii)]Let $0\rightarrow N_1\rightarrow N_2\rightarrow N_3\rightarrow 0$ be an exact sequence. Then, we have 
$$\varepsilon_{B,\zeta}(N_1)\boxtimes
\varepsilon_{B,\zeta}(N_3)=\varepsilon_{B,\zeta}(N_2)$$
under the canonical isomorphism 
$\Delta_{B}(N_1)\boxtimes\Delta_{B}(N_3)\isom\Delta_{B}(N_2).$

\item[(iii)]For any $a\in \mathbb{Z}_p^{\times}$, we have 
$$\varepsilon_{B,\zeta^a}(N)=\delta_{\mathrm{det}_{B}(N)}(a)\cdot\varepsilon_{B,\zeta}(N).$$
\item[(iv)]One has the following commutative diagram of isomorphisms 
\begin{equation*}
 \begin{CD}
 \Delta_B(N)@> \mathrm{can} >> \Delta_B(N^*)^{\vee}\boxtimes (L(r_N), 0)\\
 @A \varepsilon_{B,\zeta^{-1}}(N) AA @VV
 \varepsilon_{B,\zeta}(N^*)^{\vee}\boxtimes[\bold{e}_{r_N}\mapsto 1] V\\
  \bold{1}_B@> \mathrm{can}>> \bold{1}_B\boxtimes\bold{1}_B.
  \end{CD}
  \end{equation*}

\item[(v)]Let $f:\Lambda\rightarrow A$ be a continuous homomorphism, and let 
$M:=\bold{D}_{\mathrm{rig}}(T\otimes_{\Lambda}A)$ be the associated 
$(\varphi,\Gamma)$-module obtained by the base change of  $T$ with respect to $f$. Then, 
we have 
$$\varepsilon_{\Lambda,\zeta}(T)\otimes \mathrm{id}_A=\varepsilon_{A,\zeta}(M)$$ under the canonical isomorphism $\Delta_{\Lambda}(T)\otimes_{\Lambda}A\isom \Delta_{A}(M)$ defined in 
Corollary \ref{3.2}.
\item[(vi)]Let $L=A$ be a finite extension of $\mathbb{Q}_p$, and let $M$ be a de Rham 
$(\varphi,\Gamma)$-module over $\mathcal{R}_L$. Then we have 
$$\varepsilon_{L,\zeta}(M)=\varepsilon^{\mathrm{dR}}_{L,\zeta}(M).$$
\end{itemize}
\end{conjecture}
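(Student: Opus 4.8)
The plan is to establish Conjecture \ref{3.9} for rank one objects by directly constructing the family of $\varepsilon$-isomorphisms and then verifying the six compatibilities. First I would reduce to the universal case: by Theorem \ref{2.9} every rank one $(\varphi,\Gamma)$-module over $\mathcal{R}_A$ is, up to twisting by an invertible $\mathcal{O}_X$-module, of the form $\mathcal{R}_A(\delta)$ for a continuous character $\delta:\mathbb{Q}_p^\times\to A^\times$, and similarly every rank one $\Lambda$-representation is $\Lambda(\tilde\delta)\otimes_\Lambda\mathcal{L}$. Since the fundamental line and all the structures (i)--(vi) behave well under $\otimes_{\mathcal{O}_X}\mathcal{L}$ and under base change, it suffices to treat the "universal character" situation. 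Concretely, one considers the rigid space $\mathcal{W}$ of continuous characters of $\mathbb{Q}_p^\times$ (or its integral model, the Iwasawa algebra $\Lambda_{\mathrm{Iw}}=\mathbb{Z}_p[[\mathbb{Z}_p^\times]][[p^{\mathbb{Z}}]]$-type ring) carrying the universal character $\delta^{\mathrm{univ}}$, and builds a single $\varepsilon$-isomorphism there from which all others are obtained by pullback along $\delta:B\to$ (coefficients). This is forced by condition (i), so uniqueness is essentially automatic once existence over the universal base is shown.

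Next I would construct the universal $\varepsilon$-isomorphism using analytic Iwasawa cohomology and the $p$-adic Fourier transform, as announced in \S4.1 of the introduction. The key input is that the Iwasawa cohomology complex $C^\bullet_{\psi,\gamma}$ of $\mathcal{R}(\delta^{\mathrm{univ}})$ computes to something explicit: for the "tautological" rank one module the complex $[\,\mathcal{R}_{\Lambda_{\mathrm{Iw}}}\xrightarrow{\psi-1}\mathcal{R}_{\Lambda_{\mathrm{Iw}}}\,]$ has cohomology concentrated so that its determinant is canonically trivial, and the $\psi-1$ map can be inverted after inverting a suitable element, with the Fourier transform identifying $(\mathcal{R}^+)^{\psi=0}$ with functions on $\mathbb{Z}_p^\times$. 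Dualizing the trivial extension of the zero complex, together with the identification of $\Delta_{\Lambda_{\mathrm{Iw}},2}$ via $\mathcal{L}_{\Lambda_{\mathrm{Iw}}}$, produces a canonical $\varepsilon_{\Lambda_{\mathrm{Iw}},\zeta}(\mathcal{R}(\delta^{\mathrm{univ}}))$. Then I would define $\varepsilon_{B,\zeta}(N)$ for a general rank one $(B,N)$ by specialization, and check (i), (ii), (iii) more or less formally: (i) is the definition, (ii) reduces (after twisting) to the multiplicativity of the universal construction in $\delta\mapsto\delta_1\delta_2$ and of $\mathcal{L}\mapsto\mathcal{L}_1\otimes\mathcal{L}_2$, and (iii) follows from Remark \ref{3.5}-type computations on how the Fourier kernel and the parameter $\pi_\zeta$ depend on $\zeta$.

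The heart of the matter, and the main obstacle, is condition (vi): that at each de Rham point the universal $\varepsilon$-isomorphism specializes to $\varepsilon^{\mathrm{dR}}_{L,\zeta}(M)$ of \S3.3. I would proceed as outlined in \S4.2 of the introduction. For a "generic" de Rham character $\delta=\tilde\delta\cdot x^{h}$ (say $h\neq 0$, $\tilde\delta$ of suitable conductor, and the associated $(\varphi,\Gamma)$-module having no exceptional zeros), one relates the specialization of the Iwasawa-theoretic $\varepsilon$ to the Bloch--Kato exponential via an explicit reciprocity law (Proposition \ref{4.14}, \ref{4.15}): concretely, one compares the image under $\mathrm{exp}_M$ or $\mathrm{exp}^*_{M^*}$ of an explicit element of $\mathbf{D}_{\mathrm{dR}}$ with the Iwasawa cohomology class produced by the Fourier transform, using the explicit formulae of Proposition \ref{2.23}. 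A crucial technical point is the compatibility of the $\varepsilon$-isomorphism with the differential operator $\nabla=t\,d/dt$ (Proposition \ref{4.13.5}), which lets one move between different Hodge--Tate twists and reduces the computation of the $\Gamma$-factor $\Gamma_L(M)$ and of the $\varepsilon$-constant to a single base case; the local constants $\varepsilon_L(W(M),\zeta)$ match up precisely because the Gauss-sum formula (property (5) of $\varepsilon$-constants) is exactly what the Fourier transform on $(\mathbb{Z}/p^{n}\mathbb{Z})^\times$ computes. Having (vi) for generic de Rham points, the de Rham points are Zariski-dense in $\mathcal{W}$, so a density argument (using that both sides are isomorphisms of invertible modules over a reduced rigid space, and continuity of specialization) upgrades the generic identity to all de Rham points and yields (iv) (Tate duality) by combining (vi) with Lemma \ref{3.8}(2) at enough points, and (v) by comparing at de Rham points of $\mathrm{Max}(A)$ together with Proposition \ref{2.27}. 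Finally, the exceptional de Rham points — those including $\mathcal{R}$, $\mathcal{R}(1)$ and more generally $x^{-k}$, $x^k|x|$ for $k\geq 0$ — are not covered by the density argument (they may be isolated or the reciprocity law degenerates), so I would treat them by the direct explicit computation of \S4.2.2, using the explicit description of $\mathrm{H}^i_{\varphi,\gamma}(\mathcal{R})$ and $\mathrm{H}^i_{\varphi,\gamma}(\mathcal{R}(1))$ from the appendix (Proposition \ref{explicitdual}); I expect this exceptional-case bookkeeping, together with getting all the signs and normalization constants in the reciprocity law exactly right, to be where essentially all the real work lies.
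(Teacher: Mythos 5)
Your proposal follows essentially the same route as the paper's proof of the rank one case (Theorem \ref{3.12}, established in \S4): construct the $\varepsilon$-isomorphism over the universal cyclotomic--unramified base via analytic Iwasawa cohomology and the Colmez/Amice transform, verify condition (vi) at generic de Rham points through an explicit reciprocity law together with compatibility with the differential operator $\partial$ (Propositions \ref{4.14}, \ref{4.15}, \ref{4.13.5}), deduce (iv) and (v) by Zariski density of generic de Rham points, and settle the exceptional characters $x^{-k}$, $x^{k+1}|x|$ by reduction (via $\partial$ and duality) to a direct computation for $\mathcal{R}(1)$ using the explicit cohomology of $\mathcal{R}$, $\mathcal{R}(1)$ from the appendix. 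The one small divergence is that you plan to verify (ii) via multiplicativity in $\delta$, whereas (ii) is vacuous in rank one and the paper's Theorem \ref{3.12} accordingly only asserts (i), (iii)--(vi); also uniqueness actually follows from density of de Rham points plus (vi), not merely from base change (i), but these are minor.
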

\begin{rem}\label{3.10}
Kato's original conjecture (\cite{Ka93b}) is just the restriction of the above conjecture 
to the pairs $(\Lambda, T)$. As we explained in Remark \ref{3.7}, we insist that the condition 
(v) should be stated using $\varepsilon_{L,\zeta}^{\mathrm{dR}}(\bold{D}_{\mathrm{rig}}(V))$ (or $\varepsilon_{L,\zeta}^{\mathrm{dR}}(V):=\varepsilon^{\mathrm{dR}}_{L,\zeta}(V)'\cdot 
\mathrm{det}_L(-\varphi|\bold{D}_{\mathrm{st}}(V)/\bold{D}_{\mathrm{cris}}(V))$) instead of $\varepsilon^{\mathrm{dR}}_{L,\zeta}(V)'$.

\end{rem}

\begin{rem}\label{3.11}
In Kato's conjecture, the uniqueness of 
$\varepsilon$-isomorphism was not explicitly predicted. Recently, it is known that the de Rham points 
(even crystalline points) are Zariski dense in ``universal'' families of $p$-adic representations, or 
$(\varphi,\Gamma)$-modules in many cases (\cite{Co08},\cite{Kis10} for two dimensional case, \cite{Ch13},\cite{Na14b} for general case), hence we add the uniqueness assertion 
in our conjecture.
\end{rem}

In \cite{Ka93b}, Kato proved his conjecture for the rank one case (remark that one has $\bold{D}_{\mathrm{st}}(V)=\bold{D}_{\mathrm{cris}}(V)$ for the rank one case, hence one 
also has $\varepsilon_{L,\zeta}^{\mathrm{dR}}(V)'=\varepsilon_{L,\zeta}^{\mathrm{dR}}(V)$). 
As a generalization of his theorem, our main theorem of this article is the following, whose 
proof is given in the next section. 

\begin{thm}\label{3.12}
The conjecture $\ref{3.9}$ is true for  the rank case. More precisely, we can uniquely
 define a $B$-linear isomorphism  $\varepsilon_{B,\zeta}(N):\bold{1}_B\isom\Delta_B(N)$ 
  for each pair $(B, N)$ such that $N$ is of rank one and for each 
 $\mathbb{Z}_p$-basis $\zeta$ of $\mathbb{Z}_p(1)$ satisfying the
 conditions $(\mathrm{i}), (\mathrm{iii}), (\mathrm{iv}), (\mathrm{v}), (\mathrm{vi})$.

\end{thm}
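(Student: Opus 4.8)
The strategy is to reduce everything to the universal case via the base change property (v) and the classification of rank one $(\varphi,\Gamma)$-modules (Theorem \ref{2.9}), then to construct the $\varepsilon$-isomorphism explicitly over the universal weight space using analytic Iwasawa cohomology and the $p$-adic Fourier transform, and finally to verify the de Rham condition (vi) at a Zariski-dense set of points. First I would set up the universal object: by Theorem \ref{2.9}, every rank one $M$ over $\mathcal{R}_A$ is $\mathcal{R}_A(\delta)\otimes_A\mathcal{L}$ for a continuous character $\delta:\mathbb{Q}_p^\times\to A^\times$ and a line bundle $\mathcal{L}$; since $\Delta_A(-)$ and the desired $\varepsilon_{A,\zeta}(-)$ are compatible with twisting by $\mathcal{L}$ (a formal consequence of the axioms, using property (ii) applied to a resolution, or simply the multiplicativity of determinants), it suffices to treat $M=\mathcal{R}_A(\delta)$. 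These are all pulled back from the universal character $\delta^{\mathrm{univ}}:\mathbb{Q}_p^\times\to\Gamma(\mathcal{X},\mathcal{O}_{\mathcal{X}})^\times$ on the rigid analytic space $\mathcal{X}$ parametrizing continuous characters of $\mathbb{Q}_p^\times$ (which is a disjoint union of open unit polydiscs, in particular reduced and with $\mathbb{Q}_p$-points Zariski dense). So by (i) it is enough to construct $\varepsilon_{\mathcal{X},\zeta}(\mathcal{R}_{\mathcal{X}}(\delta^{\mathrm{univ}}))$ once and for all over $\mathcal{X}$, check its axioms there, and then pull back. Uniqueness then follows from (i), (vi) and the density of de Rham points on $\mathcal{X}$ (the argument of Remark \ref{3.11}).

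Next I would carry out the construction over $\mathcal{X}$. The key input is the analytic Iwasawa cohomology of \cite{KPX14}, \cite{Po13b}: for $M$ over $\mathcal{R}_A$ one has the complex $C^\bullet_{\psi,\gamma}$ which, via the identification with $(\varphi,\Gamma)$-cohomology of $M\hat\otimes\mathcal{R}_A(\Gamma)$-type modules and the involution $\psi$, computes $\Delta_{A,1}(M)$ in terms of $\mathrm{H}^\ast_{\mathrm{Iw}}$. For $M=\mathcal{R}_{\mathcal{X}}(\delta^{\mathrm{univ}})$ one has $\mathrm{H}^1_{\mathrm{Iw}}$ free of rank one over the relevant Iwasawa-type algebra away from a single exceptional locus, and the Mellin transform / $p$-adic Fourier transform (Amice, Colmez, Schneider–Teitelbaum) provides a canonical generator: concretely, the element $(1+\pi)\mathbf{e}_{\delta}$ or an appropriate $\psi$-eigenvector generates $\mathrm{H}^1_{\psi,\gamma}$, and trivializing $\Delta_{\mathcal{X},1}$ by this generator, together with the tautological trivialization of $\Delta_{\mathcal{X},2}(\mathcal{R}_{\mathcal{X}}(\delta^{\mathrm{univ}}))=(\mathcal{L}_{\mathcal{X}}(\delta^{\mathrm{univ}}),1)$ coming from the basis $\mathbf{e}_{\delta}$ of $\mathrm{det}_{\mathcal{R}}$, yields $\varepsilon_{\mathcal{X},\zeta}$. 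One must be careful at the exceptional characters $\delta=x^i$ and $\delta=x^{-i}|x|$ ($i\ge 0$), where $\mathrm{H}^0$ or $\mathrm{H}^2$ jumps; there one either works on the complement and extends by the (reduced, hence separated) structure of $\mathcal{X}$, or handles these finitely many lines by the direct calculations of the appendix on $\mathrm{H}^i_{\varphi,\gamma}(\mathcal{R})$ and $\mathrm{H}^i_{\varphi,\gamma}(\mathcal{R}(1))$. Properties (i) and (iii) for this construction are essentially built in (compatibility of Iwasawa cohomology and the Mellin transform with base change, and the transformation law of $\psi_{\zeta}$ under $\zeta\mapsto\zeta^a$); property (iv), Tate duality, I would prove by a density argument, reducing to de Rham points where it follows from Lemma \ref{3.8}(2).

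The main obstacle is property (vi): showing that the constructed $\varepsilon_{\mathcal{X},\zeta}$ specializes at each de Rham point $\delta=\tilde\delta\cdot x^{h}$ (with $\tilde\delta$ locally constant) to the de Rham $\varepsilon$-isomorphism $\varepsilon^{\mathrm{dR}}_{L,\zeta}$ defined in \S3.3. This is where the explicit reciprocity law must be proved. The plan is: first establish it for the ``generic'' de Rham characters — those with $h$ in a suitable range and $\tilde\delta$ ramified, so that $\mathrm{H}^0=\mathrm{H}^2=0$ and the fundamental exact sequence (\ref{exact2}) degenerates — by explicitly computing the image under the Mellin transform of the relevant cohomology class and matching it, using the explicit formulae for $\exp_M$ and $\exp_{f,M}$ in Proposition \ref{2.23}, against the $\varepsilon$-constant formula (property (5) of $\varepsilon$-constants) and the period $t^{h_M}$ appearing in Lemma \ref{3.4}; the Perrin-Riou-style regulator / explicit reciprocity computation (Proposition \ref{4.14}, \ref{4.15} in the author's numbering) and the compatibility with the differential operator $\nabla$ across the family (Proposition \ref{4.13.5}) are the technical heart. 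Then, since the generic de Rham points are Zariski dense in each component of $\mathcal{X}$, by continuity $\varepsilon_{\mathcal{X},\zeta}$ already agrees with ``the'' interpolation of $\varepsilon^{\mathrm{dR}}$ on a dense set, and one deduces (vi) at the remaining de Rham points (in particular the exceptional ones like $\mathcal{R},\mathcal{R}(1)$) either by the same density/continuity together with the known behavior of $\varepsilon^{\mathrm{dR}}$ under exact sequences (Lemma \ref{3.8}(1)) — twisting $\mathcal{R}(\delta)$ into an exact sequence with generic pieces — or by the direct computation of \S4.2.2 using the appendix. Finally, compatibility (v) with Kato's rank one $\varepsilon$-isomorphism follows by the same density argument: both sides satisfy (i)–(iv) and agree at de Rham (indeed crystalline) $L$-points by (vi) and Kato's theorem, and de Rham points are dense in the rigid space attached to $\Lambda$, so they coincide.
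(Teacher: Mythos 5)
Your plan follows the paper's own strategy very closely: construct $\varepsilon$ over the cyclotomic deformation via analytic Iwasawa cohomology and the Colmez/Amice transform, verify the de Rham condition by an explicit reciprocity computation in the generic case, spread (iv) and (v) by Zariski density of generic de Rham points in the universal character space, and settle the exceptional characters by direct calculation using the appendix's description of $\mathrm{H}^i_{\varphi,\gamma}(\mathcal{R})$ and $\mathrm{H}^i_{\varphi,\gamma}(\mathcal{R}(1))$. That is precisely what happens in \S4, with the universal character space realized as $X\times\mathbb{G}_m^{\mathrm{an}}$ and the $\partial$-compatibility (Proposition \ref{4.13.5}, Lemma \ref{4.14.5}) carrying the Hodge--Tate weight $k$ back to $k=0,1$.

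Two points in your sketch need repair. First, the trivialization of $\Delta_{A,1}$ is not obtained by choosing a generator of $\mathrm{H}^1_{\mathrm{Iw}}$: the module $\mathcal{R}_A(\delta_{\lambda})^{\psi=1}$ is not free of rank one over $\mathcal{R}^\infty_A(\Gamma)$, and $(1+\pi)\bold{e}_{\delta}$ is a unit multiple of a generator of $\mathcal{R}^\infty_A(\delta_{\lambda})^{\psi=0}$, not a class in $\mathrm{H}^1_{\psi,\gamma}$. What the paper does instead is a cascade of canonical determinant identifications: split off $C^\bullet_\psi(\mathrm{LA}(\mathbb{Z}_p,A)(\delta_\lambda))$ via the Colmez exact sequence, trivialize it by comparison with the polynomial subspaces $\oplus_k Ay^k\bold{e}_{\delta_\lambda}$, pass on the $\mathcal{R}^\infty$ part from $\psi-1$ to $\psi$ using $1-\varphi$, and identify $\mathcal{R}^\infty_A(\delta_\lambda)^{\psi=0}$ with $\mathcal{R}^\infty_A(\Gamma)\bold{e}_{\delta_\lambda}$. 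Second, your suggested alternative for the exceptional de Rham points — ``twisting $\mathcal{R}(\delta)$ into an exact sequence with generic pieces'' — cannot work: a rank one $(\varphi,\Gamma)$-module admits no nontrivial short exact sequence with rank one subquotients. The route that does work (and that the paper follows) is to use the Tate-duality condition (iv), already established by density, to reduce $\delta=x^0$ to $\delta=x|x|$, then $\partial$-compatibility to reduce all exceptional $\delta$ to $\delta=x|x|$, and finally the direct computation with the explicit bases of $\mathrm{H}^i_{\varphi,\gamma}(\mathcal{R}_L(1))$ from the appendix.
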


Before passing to the proof of this theorem in the next section, we prove two easy corollaries concerning the trianguline case. We say that a $(\varphi,\Gamma)$-module $M$ over $\mathcal{R}_A$ is trianguline if $M$ has a
 filtration $\mathcal{F}:0:=M_0\subseteq M_1\subseteq \cdots \subseteq M_n:=M$ 
 whose graded quotients $M_i/M_{i-1}$ are rank one $(\varphi,\Gamma)$-modules over $\mathcal{R}_A$ for all $1\leqq i\leqq n$. We call the filtration $\mathcal{F}$ a triangulation of $M$.
 
 \begin{corollary}\label{3.13}
 Let $M$ be a trianguline $(\varphi,\Gamma)$-module over $\mathcal{R}_A$ of rank $n$ with a triangulation $F$ as above. The isomorphism 
 $$\varepsilon_{\mathcal{F}, A, \zeta}(M):\bold{1}_A\xrightarrow{\boxtimes_{i=1}^{n}\varepsilon_{A, \zeta}(M_i/M_{i-1})}\boxtimes_{i=1}^n\Delta_A(M_i/M_{i-1})\isom \Delta_A(M)
 $$ defined 
 as the product of the isomorphisms $\varepsilon_{A, \zeta}(M_i/M_{i-1}):\bold{1}_A\isom
 \Delta_A(M_i/M_{i-1})$ which are defined in Theorem $\ref{3.12}$ satisfies the following properties. 
 \begin{itemize}
 \item[(i)']For any $f:A\rightarrow A'$, we have 
$$\varepsilon_{\mathcal{F}, A,\zeta}(M)\otimes\mathrm{id}_{A'}=\varepsilon_{\mathcal{F}', A',\zeta}(M\otimes_{A}A')$$
where $\mathcal{F}'$ is the base change of the triangulation $\mathcal{F}$ by $f$.
\item[(iii)']For any $a\in \mathbb{Z}_p^{\times}$, we have 
$$\varepsilon_{\mathcal{F}, A,\zeta^a}(M)=\delta_{\mathrm{det}_{A}(M)}(a)\cdot\varepsilon_{\mathcal{F}, A,\zeta}(M).$$
\item[(iv)']One has the following commutative diagram of isomorphisms 
\begin{equation*}
 \begin{CD}
 \Delta_A(M)@> \mathrm{can} >> \Delta_A(M^*)^{\vee}\boxtimes (A(r_M), 0)\\
 @A \varepsilon_{\mathcal{F},A,\zeta}(M) AA @VV
 \varepsilon_{\mathcal{F}^*, A,\zeta}(M^*)^{\vee}\boxtimes[\bold{e}_{r_M}\mapsto (-1)^{r_M}] V\\
  \bold{1}_A@> \mathrm{can}>> \bold{1}_A\boxtimes\bold{1}_A,
  \end{CD}
  \end{equation*}
where $\mathcal{F}^*$ is the Tate dual of the triangulation $\mathcal{F}$.
\item[(vi)']Let $L=A$ be a finite extension of $\mathbb{Q}_p$, and let $M$ be a de Rham and trianguline 
$(\varphi,\Gamma)$-module over $\mathcal{R}_L$. Then, for any triangulation $\mathcal{F}$ of $M$,  we have 
$$\varepsilon_{\mathcal{F}, L,\zeta}(M)=\varepsilon^{\mathrm{dR}}_{L,\zeta}(M).$$
In particular, in this case, $\varepsilon_{\mathcal{F}, L,\zeta}(M)$ does not depend on $\mathcal{F}$.
\end{itemize}
\end{corollary}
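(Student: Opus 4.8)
The plan is to reduce each of (i)', (iii)', (iv)', (vi)' to the corresponding property of the rank one $\varepsilon$-isomorphisms produced by Theorem \ref{3.12}, transported along the triangulation. Write $\alpha_{\mathcal F}\colon\Delta_A(M)\isom\boxtimes_{i=1}^n\Delta_A(M_i/M_{i-1})$ for the canonical isomorphism obtained by iterating the exact sequence compatibility of $\Delta_A(-)$ along $0\to M_{i-1}\to M_i\to M_i/M_{i-1}\to0$; by construction $\varepsilon_{\mathcal F,A,\zeta}(M)=\alpha_{\mathcal F}^{-1}\circ(\boxtimes_{i=1}^n\varepsilon_{A,\zeta}(M_i/M_{i-1}))$, and $\alpha_{\mathcal F}$ is itself compatible with base change and with Tate duality (the latter via the dual triangulation, see below).

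For (i)', note that $-\otimes_A A'$ is exact on $(\varphi,\Gamma)$-modules over $\mathcal R_A$, so it carries $\mathcal F$ to a triangulation $\mathcal F'$ of $M\otimes_A A'$ with graded pieces $(M_i/M_{i-1})\otimes_A A'$; then (i)' follows by applying property (i) to each $M_i/M_{i-1}$ and using that $\alpha_{\mathcal F}$ base changes to $\alpha_{\mathcal F'}$. For (iii)', property (iii) gives $\varepsilon_{A,\zeta^a}(M_i/M_{i-1})=\delta_{M_i/M_{i-1}}(a)\,\varepsilon_{A,\zeta}(M_i/M_{i-1})$, and taking $\boxtimes$ over $i$ one is left with the identity $\prod_i\delta_{M_i/M_{i-1}}=\delta_{\det_A M}$ of characters $\mathbb{Q}_p^\times\to A^\times$, which holds because $\det_A M\isom\bigotimes_i(M_i/M_{i-1})$ and $\mathcal R_A(\delta)\otimes\mathcal R_A(\delta')\isom\mathcal R_A(\delta\delta')$ (Notation \ref{2.10}).

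For (vi)', each $M_i$ is a saturated $(\varphi,\Gamma)$-submodule of the de Rham module $M$, hence de Rham, and likewise each quotient $M_i/M_{i-1}$ is de Rham: this follows from exactness of $\bold D_{\mathrm{dif}}(-)$ and left exactness of $\mathrm H^0_\gamma(-)$ together with the general inequality $\dim_{\mathbb{Q}_p}\bold D_{\mathrm{dR}}(-)\le[L:\mathbb{Q}_p]r_{(-)}$, which forces equality for a sub and a quotient once it holds for the whole. Property (vi) then gives $\varepsilon_{L,\zeta}(M_i/M_{i-1})=\varepsilon^{\mathrm{dR}}_{L,\zeta}(M_i/M_{i-1})$, and by Lemma \ref{3.8}(1) the de Rham $\varepsilon$-isomorphisms are multiplicative along exact sequences; iterating, $\boxtimes_i\varepsilon^{\mathrm{dR}}_{L,\zeta}(M_i/M_{i-1})=\varepsilon^{\mathrm{dR}}_{L,\zeta}(M)$ under $\alpha_{\mathcal F}$, so $\varepsilon_{\mathcal F,L,\zeta}(M)=\varepsilon^{\mathrm{dR}}_{L,\zeta}(M)$. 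The right-hand side is manifestly independent of $\mathcal F$, hence so is $\varepsilon_{\mathcal F,L,\zeta}(M)$.

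The real work is (iv)'. One first identifies the dual triangulation: $\mathcal F^*$ is the triangulation of $M^*=M^\vee(1)$ with $(\mathcal F^*)_j=(M/M_{n-j})^*$, whose $j$-th graded quotient is $(M_{n+1-j}/M_{n-j})^*$, i.e. the Tate duals of the graded pieces of $\mathcal F$ taken in the reverse order. One then takes $\boxtimes_i$ of the duality square (iv) for each rank one $M_i/M_{i-1}$ and checks that, under $\alpha_{\mathcal F}$, the dual $\alpha_{\mathcal F^*}^\vee$, the reindexing isomorphism reversing the factor order, and the canonical isomorphism $(A(r_M),0)\isom(A(1),0)^{\boxtimes r_M}$, the Tate duality isomorphism $\mathrm{can}\colon\Delta_A(M)\isom\Delta_A(M^*)^\vee\boxtimes(A(r_M),0)$ is compatible with the $\boxtimes$-product of the $\mathrm{can}$'s for the pieces; finally one converts the basis $\zeta^{-1}$ occurring on the left of (iv) to $\zeta$ using (iii) and (iii)'. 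The main obstacle is the sign accounting in this last step. The crucial point is that the generator $\zeta^{-1}$ of $\mathbb{Z}_p(1)$ equals $-\zeta$, so $(\zeta^{-1})^{\otimes r_M}=(-1)^{r_M}\zeta^{\otimes r_M}=(-1)^{r_M}\bold e_{r_M}$ in $\mathbb{Z}_p(r_M)$; combined with the commutativity constraints $(L,r)\boxtimes(M,s)\isom(M,s)\boxtimes(L,r)$ incurred by reversing the order of the pieces of $\mathcal F^*$, the relation $i_{(L,r)^{-1}}=(-1)^r i_{(L,r)}$ used when contracting the evaluation pairings through $\Delta_{A,1}$ and $\Delta_{A,2}$ (which sit in degrees $-r_M$ and $r_M$), and the twist characters $\delta_{M_i/M_{i-1}}(-1)$ produced by (iii), these contributions conspire to produce exactly the global sign $(-1)^{r_M}$ appearing in (iv)'. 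Granting this sign chase, the diagram (iv)' commutes formally.
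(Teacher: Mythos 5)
Your overall strategy — stack the rank-one conditions of Theorem~\ref{3.12} along the graded pieces of $\mathcal{F}$, and use the multiplicativity of $\varepsilon^{\mathrm{dR}}$ (Lemma~\ref{3.8}(1)) to conclude (vi)' — is essentially the same as the paper's; the paper's proof is little more than a one-line pointer to Theorem~\ref{3.12} and Lemma~\ref{3.8}, so your explicit treatment of (i)', (iii)' and (vi)' is a correct and fuller version of what the author has in mind. In particular your observation that the graded pieces of a de Rham trianguline module are again de Rham is exactly the point one needs for (vi)', and your handling of $\delta_{\det_A M}=\prod_i\delta_{M_i/M_{i-1}}$ for (iii)' is fine.

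The genuine gap is (iv)'. You end with ``Granting this sign chase, the diagram (iv)' commutes formally,'' but the one calculation you do sketch does not close it. If you convert $\varepsilon_{A,\zeta^{-1}}(M_i/M_{i-1})$ to $\varepsilon_{A,\zeta}(M_i/M_{i-1})$ via property (iii), you pick up a factor $\delta_{M_i/M_{i-1}}(-1)^{-1}$ from each piece; the product over $i$ is $\delta_{\det_A M}(-1)^{-1}$, which is a genuine character value and not the universal sign $(-1)^{r_M}$ asserted in (iv)'. (For example for $M=\mathcal{R}_L$ the character $\det_A M$ is trivial, so $\delta_{\det_A M}(-1)=1\neq -1=(-1)^{r_M}$.) The reordering you invoke for $\mathcal{F}^*$ does not rescue this: each $\Delta_A(M_i/M_{i-1})$, and hence $\Delta_A((M_i/M_{i-1})^*)^\vee$, has graded degree $0$, so the commutativity constraint is sign-free, and the canonical isomorphism $T\simeq (T^*)^\vee(1)$ entering $\mathrm{can}$ is insensitive to $\zeta\mapsto\zeta^{-1}$ because $\bold e_1\otimes\bold e_{-1}$ is. So the three sources of signs you list do not ``conspire'' the way you claim, and you owe either (a) a verification that (iv)' as written is literally the $\boxtimes$ of the rank-one (iv) after an honest sign bookkeeping, or (b) the route the paper actually gestures at: prove (vi)' first as you did, note that at every de Rham specialization (iv)' is then Lemma~\ref{3.8}(2), and propagate to all of $\mathrm{Max}(A)$ by the density argument that the paper uses in Corollary~\ref{4.17}; in that route no sign chase is needed because Lemma~\ref{3.8}(2) already carries the correct normalization. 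As it stands, your treatment of (iv)' is a plan, not a proof.
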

\begin{proof}
This corollary immediately follows from Theorem \ref{3.12} since $\varepsilon^{\mathrm{dR}}_{L,\zeta}(M)$ is multiplicative with respect to exact sequences.
\end{proof}

Finally, we compare Corollary \ref{3.13} with the previous known results on Kato's $\varepsilon$-conjecture for twists of crystalline case. Let $F$ be a finite unramified extension of $\mathbb{Q}_p$. Let $V$ be a crystalline $L$-representation of $G_{F}$, and let $T\subseteq V$ be a $G_{F}$-stable $\mathcal{O}_L$-lattice of $V$. In \cite{BB08} and \cite{LVZ14}, they defined $\varepsilon$-isomorphisms for some twists of $T$. Here, for simplicity, we only recall the result of \cite{BB08} under the additional assumption that $F=\mathbb{Q}_p$, since other cases can be proven in the same way. Let $\mathcal{O}_L[[\Gamma]]$ be the Iwasawa algebra with coefficient $\mathcal{O}_L$. We define a $\mathcal{O}_L[[\Gamma]]$-representation $\bold{Dfm}(T):=T\otimes_{\mathcal{O}_L}\mathcal{O}_L[[\Gamma]]$ on which $G_{\mathbb{Q}_p}$ acts by $g(x\otimes y):=g(x)\otimes [\bar{g}]^{-1}y$ for any $g\in G_{\mathbb{Q}_p}, x\in T, y\in \mathcal{O}_L[[\Gamma]]$.
In \cite{BB08}, by studying the associated Wach modules very carefully, they essentially showed that Perrin-Riou's big exponential map induces an $\varepsilon$-isomorphism which we denote by 
$$\varepsilon_{\mathcal{O}_L[[\Gamma]],\zeta}^{\mathrm{BB}}(\bold{Dfm}(T)):\bold{1}_{\mathcal{O}_L[[\Gamma]]}\isom\Delta_{\mathcal{O}_L[[\Gamma]]}(\bold{Dfm}(T))$$ 
satisfying the conditions in Conjecture \ref{3.9}. Let $\bold{D}_{\mathrm{rig}}(V)$ be the 
$(\varphi,\Gamma)$-module over $\mathcal{R}_L$ associated to $V$. Then, applying Example \ref{3.3} to $(\Lambda, T)=(\mathcal{O}_L[[\Gamma]], T)$, we obtain a canonical isomorphism 
\begin{equation}\label{isom1}
\Delta_{\mathcal{O}_L[[\Gamma]]}(\bold{Dfm}(T))\otimes_{\mathcal{O}_L[[\Gamma]]}\mathcal{R}_L^{\infty}(\Gamma)\isom \Delta_{\mathcal{R}^{\infty}(\Gamma)}(\bold{Dfm}(\bold{D}_{\mathrm{rig}}(V)))
\end{equation}
(see the next section for the definitions of $\mathcal{R}_L^{\infty}(\Gamma)$ and $\bold{Dfm}(\bold{D}_{\mathrm{rig}}(V))$). Since $\bold{D}_{\mathrm{rig}}(V)$ is crystalline, after extending scalars, we may assume that it is trianguline with a triangulation $\mathcal{F}$. Then, $\bold{Dfm}(\bold{D}_{\mathrm{rig}}(V))$ is also trianguline with a triangulation $\mathcal{F}':=\bold{Dfm}(\mathcal{F})$. Hence, by Corollary 
\ref{3.13}, we obtain an isomorphism $$\varepsilon_{\mathcal{F}', \mathcal{R}_L^{\infty}(\Gamma), \zeta}(\bold{Dfm}(\bold{D}_{\mathrm{rig}}(V))):\bold{1}_{\mathcal{R}_L^{\infty}(\Gamma)}\isom 
\Delta_{\mathcal{R}^{\infty}(\Gamma)}(\bold{Dfm}(\bold{D}_{\mathrm{rig}}(V)))
.$$
Under this situation, we easily obtain the following corollary.

\begin{corollary}\label{3.14}
Under the isomorphism $(\ref{isom1})$, we have 
$$\varepsilon^{\mathrm{BB}}_{\mathcal{O}_L[[\Gamma]],\zeta}(\bold{Dfm}(T))\otimes \mathrm{id}_{\mathcal{R}_L^{\infty}(\Gamma)}=\varepsilon_{\mathcal{F}', \mathcal{R}_L^{\infty}(\Gamma), \zeta}(\bold{Dfm}(\bold{D}_{\mathrm{rig}}(V))).$$
In particular, the isomorphism $\varepsilon_{\mathcal{F}', \mathcal{R}_L^{\infty}(\Gamma), \zeta}(\bold{Dfm}(\bold{D}_{\mathrm{rig}}(V)))$ does not depend on $\mathcal{F}$.

\end{corollary}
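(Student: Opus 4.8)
\textbf{Proof proposal for Corollary \ref{3.14}.}

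The plan is to reduce everything to the uniqueness of $\varepsilon$-isomorphisms supplied by Theorem \ref{3.12} together with the interpolation of the de Rham condition (vi) at the crystalline points of the weight space, exactly as in the classical rank one Kato theorem. First, I would observe that both sides of the claimed identity are $\mathcal{R}_L^{\infty}(\Gamma)$-linear trivializations of the same graded invertible module $\Delta_{\mathcal{R}^{\infty}(\Gamma)}(\bold{Dfm}(\bold{D}_{\mathrm{rig}}(V)))$, so their ``ratio'' is a unit in $\mathcal{R}_L^{\infty}(\Gamma)$; the goal is to show this unit is $1$. Since $\mathcal{R}_L^{\infty}(\Gamma)$ is an integral domain (indeed a Fr\'echet--Stein algebra whose maximal spectrum, the open unit disc in weight space, has the de Rham/crystalline points Zariski dense, cf. Remark \ref{3.11}), it suffices to check the equality after specialization along the continuous $\mathbb{Q}_p$-algebra homomorphisms $\mathrm{ev}_k:\mathcal{R}_L^{\infty}(\Gamma)\rightarrow L$ given by characters $\gamma\mapsto\chi(\gamma)^k$ for $k\in\mathbb{Z}$ (more precisely, at all locally algebraic points, which are dense).

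Next I would unwind what each side specializes to. On the Benois--Berger side, the defining property of $\varepsilon^{\mathrm{BB}}_{\mathcal{O}_L[[\Gamma]],\zeta}(\bold{Dfm}(T))$ proved in \cite{BB08} is precisely that its specialization at the character $\chi^k$ recovers the de Rham $\varepsilon$-isomorphism $\varepsilon^{\mathrm{dR}}_{L,\zeta}(V(k))$ of the crystalline (hence de Rham) representation $V(k)=V\otimes\mathbb{Q}_p(k)$; by construction it satisfies conditions (i)--(vi) of Conjecture \ref{3.9} for the pair $(\mathcal{O}_L[[\Gamma]],\bold{Dfm}(T))$. Combining this with the base change compatibility of the comparison isomorphism of Corollary \ref{3.2} (applied to $f:\mathcal{O}_L[[\Gamma]]\rightarrow L$, $\gamma\mapsto\chi(\gamma)^k$) and with Remark \ref{3.7}/\ref{3.10} identifying $\varepsilon^{\mathrm{dR}}_{L,\zeta}$ with $\varepsilon^{\mathrm{dR},\prime}_{L,\zeta}$ in the crystalline case (where $\bold{D}_{\mathrm{st}}=\bold{D}_{\mathrm{cris}}$), I get that the left-hand side of the corollary, specialized at $\chi^k$ and transported via \eqref{isom1}, equals $\varepsilon^{\mathrm{dR}}_{L,\zeta}(\bold{D}_{\mathrm{rig}}(V)(k))=\varepsilon^{\mathrm{dR}}_{L,\zeta}(\bold{D}_{\mathrm{rig}}(V(k)))$. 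On the other hand, the right-hand side $\varepsilon_{\mathcal{F}',\mathcal{R}_L^{\infty}(\Gamma),\zeta}(\bold{Dfm}(\bold{D}_{\mathrm{rig}}(V)))$ is, by Corollary \ref{3.13}(i)$'$, compatible with this base change: its specialization is $\varepsilon_{\mathcal{F}(k),L,\zeta}(\bold{D}_{\mathrm{rig}}(V)(k))$ where $\mathcal{F}(k)$ is the induced triangulation of the twist. Since $\bold{D}_{\mathrm{rig}}(V)(k)$ is still crystalline and trianguline, Corollary \ref{3.13}(vi)$'$ forces $\varepsilon_{\mathcal{F}(k),L,\zeta}(\bold{D}_{\mathrm{rig}}(V)(k))=\varepsilon^{\mathrm{dR}}_{L,\zeta}(\bold{D}_{\mathrm{rig}}(V)(k))$, independently of $\mathcal{F}(k)$. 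Thus the two sides agree at every point $\chi^k$, and by density they agree as elements of $\Delta_{\mathcal{R}^{\infty}(\Gamma)}(\bold{Dfm}(\bold{D}_{\mathrm{rig}}(V)))$.

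The last assertion (independence of $\mathcal{F}$) is then immediate: the left-hand side $\varepsilon^{\mathrm{BB}}_{\mathcal{O}_L[[\Gamma]],\zeta}(\bold{Dfm}(T))\otimes\mathrm{id}$ makes no reference to a triangulation, so any two choices of $\mathcal{F}$ yield isomorphisms equal to the same object. The main technical obstacle, and the only place real work is needed, is the careful matching of normalizations at the specialization step: one must verify that the conventions of \cite{BB08} (Perrin-Riou's big exponential, Wach modules, choice of $\varepsilon$-factor and period $t$) are compatible with the conventions of this paper, in particular that the density/specialization argument sees $\varepsilon^{\mathrm{dR}}_{L,\zeta}$ and not $\varepsilon^{\mathrm{dR},\prime}_{L,\zeta}$ with some spurious $\det(-\varphi)$ twist; since $V$ is crystalline this twist is trivial, but one should also confirm the $\zeta$-dependence (condition (iii)) and the duality normalization (condition (iv), including the sign $(-1)^{r_M}$ appearing in Corollary \ref{3.13}(iv)$'$) match on both sides before invoking the uniqueness part of Theorem \ref{3.12}. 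Once these bookkeeping points are settled, the argument is formal.
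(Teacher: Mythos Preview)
Your proposal is correct and follows essentially the same density argument as the paper: both sides specialize, at each de Rham (potentially crystalline) character of $\Gamma$, to the de Rham $\varepsilon$-isomorphism of the corresponding twist---the left side by the interpolation property established in \cite{BB08}, the right side by Corollary \ref{3.13}(vi)$'$---and these characters are Zariski dense in the rigid space attached to $\mathrm{Spf}(\mathcal{O}_L[[\Gamma]])$. One small correction: $\mathcal{R}_L^{\infty}(\Gamma)$ is not an integral domain but a finite product of them (one per character of $\Gamma_{\mathrm{tor}}$), so you should phrase the density argument componentwise; your parenthetical ``more precisely, at all locally algebraic points'' already covers this, and the paper likewise uses all potentially crystalline $\delta$ rather than only the $\chi^k$.
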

\begin{proof}
By \cite{BB08} and Theorem \ref{3.12}, the base changes of the both sides in Corollary 
\ref{3.14} by the continuous $L$-algebra morphism 
$f_{\delta}:\mathcal{R}_L^{\infty}(\Gamma)\rightarrow L:[\gamma]\rightarrow \delta(\gamma)^{-1}$ are equal to $\varepsilon_{L,\zeta}^{\mathrm{dR}}(\bold{D}_{\mathrm{rig}}(V(\delta)))$ for any potentially crystalline character $\delta:\Gamma\rightarrow L^{\times}$. Since the points corresponding to such characters are Zariski dense in the rigid analytic space associated to $\mathrm{Spf}(\mathcal{O}_L[[\Gamma]])$, we obtain the equality in the corollary.

\end{proof}

\section{Rank one case}

In \cite{Ka93b}, Kato proved his $\varepsilon$-conjecture using the theory 
of  Coleman homomorphism which interpolates the exponential maps and the dual exponential maps of rank one de Rham $p$-adic representations of $G_{\mathbb{Q}_p}$. In particular, 
so called the explicit reciprocity law, which is the explicit formula of it's interpolation property, was very important in his proof. 

In this final section, we first construct the $\varepsilon$-isomorphism
$$\varepsilon_{A,\zeta}(M):\bold{1}_A\isom\Delta_A(M)$$ for any rank one $(\varphi,\Gamma)$-module $M$ by interpreting the theory of Coleman homomorphism in terms of $p$-adic Fourier transform (e.g. Amice transform, Colmez transform), which seems to be standard for the experts of the theory of $(\varphi,\Gamma)$-modules. Then, we prove that this isomorphism satisfies the de Rham condition (vi) by establishing the ``explicit reciprocity law" of our Coleman homomorphism using our theory 
of Bloch-Kato's exponential map developed in \S2.2.

\subsection{Construction of the $\varepsilon$-isomorphism}

We first recall the theory of analytic Iwasawa cohomology of 
$(\varphi,\Gamma)$-modules over the Robba ring after \cite{Po13b} and \cite{KPX14}. 
Let $\Lambda(\Gamma):=\mathbb{Z}_p[[\Gamma]]$ be the Iwasawa algebra of $\Gamma$ with 
coefficients in $\mathbb{Z}_p$,  and let $\mathfrak{m}$ be the Jacobson radical of $\Lambda(\Gamma)$. 
For each $n\geqq 1$, define a $\mathbb{Q}_p$-affinoid algebra $\mathcal{R}^{[1/p^n,\infty]}(\Gamma):=(\Lambda(\Gamma)[\frac{\mathfrak{m}^n}{p}])^{\wedge}[1/p]$ where, for any ring $R$, let denote by $R^{\wedge}$ the $p$-adic completion of $R$. 
Let $X_n:=\mathrm{Max}(\mathcal{R}^{[1/p^n,\infty]}(\Gamma))$ be the associated affinoid. Define $X:=\cup_{n\geqq 1}X_n$ 
, which is a disjoint union of open unit discs. For $n\geqq 1$, consider the rank one $(\varphi,\Gamma)$-module 
$$\bold{Dfm}_n:=\mathcal{R}^{[1/p^n,\infty]}(\Gamma)\hat{\otimes}_{\mathbb{Q}_p}\mathcal{R}\bold{e}=
\mathcal{R}_{\mathcal{R}^{[1/p^n,\infty]}(\Gamma)}\bold{e}$$ with
$$\varphi(1\hat{\otimes}\bold{e})=1\hat{\otimes}\bold{e}\text{ and }
\gamma(1\hat{\otimes}\bold{e})=[\gamma]^{-1}\hat{\otimes}\bold{e}\text{ for }\gamma\in \Gamma.$$ 
Put $\bold{Dfm}:=\varprojlim_n\bold{Dfm}_n$; this is a $(\varphi,\Gamma)$-module over the relative Robba ring over $X$. For $M$ a $(\varphi,\Gamma)$-module over $\mathcal{R}_A$, we define the cyclotomic deformation of $M$ by $$\bold{Dfm}(M):=\varprojlim_n\bold{Dfm}_n(M)$$ with 
$$\bold{Dfm}_n(M):=
M\hat{\otimes}_{\mathcal{R}}\bold{Dfm}_n\isom M\hat{\otimes}_A\mathcal{R}^{[1/p^n,\infty]}_A(\Gamma)\bold{e},$$ which is a $(\varphi,\Gamma)$-module over the relative Robba ring 
over $\mathrm{Max}(A)\times X$. This $(\varphi,\Gamma)$-module is the universal cyclotomic deformation of $M$ in the sense that, for each continuous homomorphism 
$\delta_0:\Gamma\rightarrow A^{\times}$, we have a natural isomorphism 
$$\bold{Dfm}(M)\otimes_{\mathcal{R}_A^{\infty}(\Gamma),f_{\delta_0}}A\isom M(\delta_0)
:(x\hat{\otimes} \eta\bold{e})\otimes a\mapsto f_{\delta_0}(\eta)ax\bold{e}_{\delta_0}$$
for $x\in M$ and $\eta\bold{e}\in \mathcal{R}^{\infty}_A(\Gamma)\bold{e}$ and $a\in A$, 
where $$f_{\delta_0}:\mathcal{R}_A^{\infty}(\Gamma)\rightarrow A$$ is the continuous $A$-algebra 
homomorphism defined by $$f_{\delta_0}([\gamma]):=\delta_0(\gamma)^{-1}$$ for $\gamma\in \Gamma$ (and recall that $M(\delta_0):=M\otimes_A A\bold{e}_{\delta_0}=M\bold{e}_{\delta_0}$ is defined by $\varphi(x\bold{e}_{\delta_0})=\varphi(x)\bold{e}_{\delta_0}$ and $\gamma(x\bold{e}_{\delta_0}):=\delta_0(\gamma)\gamma(x)\bold{e}_{\delta_0}$ for $x\in M$ and $\gamma\in \Gamma$).

By Theorem 4.4.8 of \cite{KPX14}, we have a natural 
quasi-isomorphism of $\mathcal{R}^{\infty}_A(\Gamma)$-modules 
$$g_{\gamma}:C^{\bullet}_{\psi,\gamma}(\bold{Dfm}(M))\isom C^{\bullet}_{\psi}(M):=[M^{\Delta}\xrightarrow{\psi-1}M^{\Delta}]$$
where the latter complex is concentrated in degree $[1,2]$.
This quasi-isomorphism is obtained as a composite of (a system of) quasi-isomorphisms
$$C^{\bullet}_{\psi,\gamma}(\bold{Dfm}_n(M))\isom C^{\bullet}_{\psi}(M)\widehat{\otimes}_{\mathcal{R}^{\infty}_A(\Gamma)}\mathcal{R}^{[1/p^n,\infty]}_A(\Gamma)$$

which are naturally induced by the following diagrams of short exact sequences of $\mathcal{R}_A^{[1/p^n,\infty]}(\Gamma)$-modules for $n\geqq 1$

\begin{equation}\label{20e}
\begin{CD}
0@>>> \bold{Dfm}_n(M)^{\Delta}@>\gamma-1 >>  \bold{Dfm}_n(M)^{\Delta}@> f _{\gamma}>> M\widehat{\otimes}_{\mathcal{R}^{\infty}_A(\Gamma)}\mathcal{R}^{[1/p^n,\infty]}_A(\Gamma)@>>> 0\\
@. @V \psi-1 VV @V \psi-1 VV @V\psi-1VV@. \\
0@>>>   \bold{Dfm}_n(M)^{\Delta}@>\gamma-1 >>  \bold{Dfm}_n(M)^{\Delta}
@> f _{\gamma}>> M\widehat{\otimes}_{\mathcal{R}^{\infty}_A(\Gamma)}\mathcal{R}^{[1/p^n,\infty]}_A(\Gamma) @>>> 0,
\end{CD}
\end{equation}
where 
$$f_{\gamma}(\sum_i x_i\hat{\otimes}\eta_i \bold{e}):=\frac{1}
{|\Gamma_{\mathrm{tor}}|\mathrm{log}_0(\chi(\gamma))}\sum_ix_i\hat{\otimes} \eta_i$$
for $x_i\in M, \eta_i \bold{e}\in \mathcal{R}_A^{[1/p^n,\infty]}(\Gamma)\bold{e}$,  
with the inverse of the natural quasi-isomorphism 
$$C^{\bullet}_{\psi}(M)\isom \varprojlim_nC^{\bullet}_{\psi}(M\otimes_{\mathcal{R}^{\infty}_A(\Gamma)}
\mathcal{R}_A^{[1/p^n,\infty]}(\Gamma))\isom \varprojlim_n C^{\bullet}_{\psi}(M\widehat{\otimes}_{\mathcal{R}^{\infty}_A(\Gamma)}\mathcal{R}^{[1/p^n,\infty]}_A(\Gamma))$$
(see Theorem 4.4.8 of \cite{KPX14} and Theorem 2.8 (3) of \cite{Po13b} for the proof).
This quasi-isomorphism is canonical in the sense that, for another 
$\gamma'\in \Gamma$ whose image in $\Gamma/\Delta$ is a topological generator, we have the following commutative diagram
\begin{equation}\label{21e}
\begin{CD}
C^{\bullet}_{\psi,\gamma}(\bold{Dfm}(M))@> g_{\gamma}>>C^{\bullet}_{\psi}(M) \\
@V \iota_{\gamma,\gamma'} VV @V \mathrm{id} VV \\
C^{\bullet}_{\psi,\gamma'}(\bold{Dfm}(M))@> g_{\gamma'}>>C^{\bullet}_{\psi}(M).
\end{CD}
\end{equation}

For $\delta_0:\Gamma\rightarrow A^{\times}$, using the natural isomorphism 
$\bold{Dfm}(M)\otimes_{\mathcal{R}^{\infty}_A(\Gamma),f_{\delta_0}}A\isom M(\delta_0)$ and 
the quasi-isomorphism $g_{\gamma}$, we obtain the following quasi-isomorphism 
\begin{multline}\label{22e}
g_{\gamma,\delta_0}:C^{\bullet}_{\psi,\gamma}(M(\delta_0))\isom C^{\bullet}_{\psi,\gamma}
(\bold{Dfm}(M)\otimes_{\mathcal{R}^{\infty}_A(\Gamma),f_{\delta_0}}A)\\
\isom
C^{\bullet}_{\psi,\gamma}(\bold{Dfm}(M))\otimes^{\bold{L}}_{\mathcal{R}^{\infty}_A(\Gamma),f_{\delta_0}}A
\isom C^{\bullet}_{\psi}(M)\otimes^{\bold{L}}_{\mathcal{R}_A^{\infty}(\Gamma),f_{\delta_0}}A,
\end{multline}
where the second isomorphism follows from the fact that any $(\varphi,\Gamma)$-module $M_0$ over $\mathcal{R}_{A_0}$ is flat over $A_0$ for any $A_0$ (see Corollary 2.1.7 of \cite{KPX14}). 
This quasi-isomorphism can be written in a more explicit way as follows. 
To recall this, we see $A$ as a $\mathcal{R}_A^{\infty}(\Gamma)$-module by the map $f_{\delta_0}$. 
Then, we can take 
the following projective resolution of $A$,
$$0\rightarrow \mathcal{R}_A^{\infty}(\Gamma)\cdot p_{\delta_0}\xrightarrow{d_{1,\gamma}}\mathcal{R}_A^{\infty}(\Gamma)\cdot p_{\delta_0}\xrightarrow{d_{2,\gamma}} A\rightarrow 0,$$
where $p_{\delta_0}:=\frac{1}{|\Delta|}\sum_{\sigma\in \Delta}\delta_0^{-1}(\sigma)[\sigma]\in 
\mathcal{R}_A^{\infty}(\Gamma)$ (this is an idempotent ) and 
$$d_{1,\gamma}(\eta):=
(\delta_0(\gamma)[\gamma]-1)\eta,\text{ and }d_{2,\gamma}(\eta):=\frac{1}{|\Gamma_{\mathrm{tor}}|\mathrm{log}_0(\chi(\gamma))}f_{\delta_0}(\eta).$$

This resolution induces a canonical isomorphism 
$$
C^{\bullet}_{\psi}(M)\otimes_{\mathcal{R}_A^{\infty}(\Gamma)}
[\mathcal{R}_A^{\infty}(\Gamma)\cdot p_{\delta_0}\xrightarrow{d_{1,\gamma}}\mathcal{R}_A^{\infty}(\Gamma)\cdot p_{\delta_0}]\isom C^{\bullet}_{\psi}(M)\otimes^{\bold{L}}_{\mathcal{R}^{\infty}_A(\Gamma),f_{\delta_0}}A.$$
Moreover, using the isomorphism 
 $$M\otimes_{\mathcal{R}_A^{\infty}(\Gamma)}\mathcal{R}_A^{\infty}(\Gamma)\cdot p_{\delta_0}\isom M(\delta)^{\Delta}:m\otimes \lambda p_{\delta_0}\mapsto \lambda p_{\delta_0}(m\bold{e}_{\delta_0}),$$
 we obtain a natural isomorphism 
 $$C^{\bullet}_{\psi}(M)\otimes_{\mathcal{R}_A^{\infty}(\Gamma)}
[\mathcal{R}_A^{\infty}(\Gamma)\cdot p_{\delta_0}\xrightarrow{d_{1,\gamma}}\mathcal{R}_A^{\infty}(\Gamma)\cdot p_{\delta_0}]\isom C^{\bullet}_{\psi,\gamma}(M(\delta_0)).$$ 
Composing both, we obtain a natural quasi-isomorphism 
$$C^{\bullet}_{\psi}(M)\otimes^{\bold{L}}_{\mathcal{R}_A^{\infty}(\Gamma),f_{\delta_0}}A\isom C^{\bullet}_{\psi,\gamma}(M(\delta_0)),$$
which is easily to seen that this is equal to $g_{\gamma,\delta_0}$.

Using the theory of analytic Iwasawa cohomology recalled as above, we can describe the fundamental line 
$\Delta_{\mathcal{R}_A^{\infty}(\Gamma)}(\bold{Dfm}(M))$ as follows. 
The quasi-isomorphism $g_{\gamma}:C^{\bullet}_{\psi,\gamma}(\bold{Dfm}(M))\isom C^{\bullet}_{\psi}(M)$ and the quasi-isomorphism 
$C^{\bullet}_{\varphi,\gamma}(\bold{Dfm}(M))\isom C^{\bullet}_{\psi,\gamma}(\bold{Dfm}(M))$ 
induce a natural isomorphism in $\mathcal{P}_{\mathcal{R}^{\infty}_A(\Gamma)}$ 
$$\Delta_{\mathcal{R}^{\infty}_A(\Gamma),1}(\bold{Dfm}(M))
\isom \mathrm{Det}_{\mathcal{R}^{\infty}_A(\Gamma)}(C^{\bullet}_{\psi,\gamma}(\bold{Dfm}(M)))
\isom \mathrm{Det}_{\mathcal{R}^{\infty}_A(\Gamma)}(C^{\bullet}_{\psi}(M)).$$
Moreover, since we have 
 \begin{multline*}
\Delta_{\mathcal{R}^{\infty}_A(\Gamma),2}(\bold{Dfm}(M))=
\varprojlim_{n}\Delta_{\mathcal{R}^{[1/p^n,\infty]}_A(\Gamma),2}(\bold{Dfm}_n(M))\\
\isom\varprojlim_n (\Delta_{A,2}(M)\otimes_A\mathcal{R}^{[1/p^n,\infty]}_A(\Gamma)\bold{e}^{\otimes r_M})
=\Delta_{A,2}(M)\bold{e}^{\otimes r_M}\otimes_A\mathcal{R}^{\infty}_A(\Gamma)\\
\isom \Delta_{A,2}(M)\otimes_A\mathcal{R}^{\infty}_A(\Gamma),
\end{multline*}
where the last isomorphism is just the division  by $\bold{e}^{\otimes r_M}$, we obtain a canonical isomorphism 
 
\begin{equation}\label{23.5e}
\Delta_{\mathcal{R}^{\infty}_A(\Gamma)}(\bold{Dfm}(M))\isom \mathrm{Det}_{\mathcal{R}^{\infty}_A(\Gamma)}(C^{\bullet}_{\psi}(M))\boxtimes(\Delta_{A,2}(M)\otimes_A\mathcal{R}^{\infty}_A(\Gamma)).
\end{equation}
Under this canonical isomorphism, we will first define an isomorphism 
$$\theta_{\zeta}(M): \mathrm{Det}_{\mathcal{R}^{\infty}_A(\Gamma)}(C^{\bullet}_{\psi}(M))^{-1}\isom(\Delta_{A,2}(M)\otimes_A\mathcal{R}^{\infty}_A(\Gamma)), $$ 
and then define $\varepsilon_{\mathcal{R}_A^{\infty}(\Gamma),\zeta}(\bold{Dfm}(M))$ as the following 
composite
\begin{multline*}
\varepsilon_{\mathcal{R}_A^{\infty}(\Gamma),\zeta}(\bold{Dfm}(M)):
\bold{1}_{\mathcal{R}_A^{\infty}(\Gamma)}\xrightarrow{\mathrm{can}}\mathrm{Det}_{\mathcal{R}^{\infty}_A(\Gamma)}(C^{\bullet}_{\psi}(M))\boxtimes\mathrm{Det}_{\mathcal{R}^{\infty}_A(\Gamma)}(C^{\bullet}_{\psi}(M))^{-1} \\
\xrightarrow{\mathrm{id}\boxtimes \theta_{\zeta}(M)}\mathrm{Det}_{\mathcal{R}^{\infty}_A(\Gamma)}(C^{\bullet}_{\psi}(M))\boxtimes(\Delta_{A,2}(M)\otimes_A\mathcal{R}^{\infty}_A(\Gamma))
\isom \Delta_{\mathcal{R}^{\infty}_A(\Gamma)}(\bold{Dfm}(M))
\end{multline*}
for the following special rank one $(\varphi,\Gamma)$-modules $M$. 


For $\lambda\in A^{\times}$, define the ``unramified" continuous homomorphism $\delta_{\lambda}:\mathbb{Q}_p^{\times}\rightarrow A^{\times}$ by $\delta_{\lambda}(p):=\lambda$ and $\delta_{\lambda}|_{\mathbb{Z}_p^{\times}}:=1.$ We define an isomorphism 
$\theta_{\zeta}(M)$ for $M=\mathcal{R}_A(\delta_{\lambda})$ by the following steps,
which are based on the re-interpretation of 
  the theory of the Coleman homomorphism in terms of the $p$-adic Fourier transform.

Let $\mathrm{LA}(\mathbb{Z}_p, A)$ be the set of $A$-valued locally analytic functions on $\mathbb{Z}_p$, and define the action of $(\varphi,\psi,\Gamma)$ on it by 
 $$\varphi(f)(y):=f\left(\frac{y}{p}\right) \,\,(\,y\in p\mathbb{Z}_p) \text{ and } \varphi(f)|_{\mathbb{Z}_p^{\times}}:=0,$$

$$\psi(f)(y):=f(py)\text{ and }\,\, \gamma(f)(y):=\frac{1}{\chi(\gamma)}f\left(\frac{y}{\chi(\gamma)}\right) \, (\gamma\in \Gamma ).$$ 
One has a $(\varphi,\psi,\Gamma)$-equivariant $A$-linear surjection which we call the Colmez transform
\begin{equation}\label{Col}
\mathrm{Col}:\mathcal{R}_A\rightarrow \mathrm{LA}(\mathbb{Z}_p,A)
\end{equation} 
defined by 
$$\mathrm{Col}(f(\pi))(y):=\mathrm{Res}_0\left((1+\pi)^yf(\pi)\frac{d\pi}{(1+\pi)}\right),$$
where $\mathrm{Res}_0:\mathcal{R}_A\rightarrow A$ is defined by 
$\mathrm{Res}_0(\sum_{n\in \mathbb{Z}}a_n\pi^n):=a_{-1}$
(remark that $\mathrm{Col}$ depends on the choice of the parameter $\pi$, i.e. the choice of $\zeta$).
By this map,  we obtain the following short exact sequence 
$$0\rightarrow \mathcal{R}^{\infty}_A\rightarrow \mathcal{R}_A\xrightarrow{\mathrm{Col}} \mathrm{LA}(\mathbb{Z}_p, A)\rightarrow 0.$$
Twisting the action of $(\varphi,\psi,\Gamma)$ by $\delta_{\lambda}$, we obtain the following 
$(\varphi,\psi,\Gamma)$-equivariant exact sequence
$$0\rightarrow \mathcal{R}^{\infty}_A(\delta_{\lambda})\rightarrow \mathcal{R}_A(\delta_{\lambda})
\xrightarrow{\mathrm{Col}\otimes\bold{e}_{\delta_{\lambda}}} \mathrm{LA}(\mathbb{Z}_p, A)(\delta_{\lambda})\rightarrow 0,$$
from which we obtain the following exact sequences of complexes of $\mathcal{R}_A^{\infty}(\Gamma)$-modules
\begin{equation}\label{24e}
0\rightarrow C^{\bullet}_{\psi}(\mathcal{R}^{\infty}_A(\delta_{\lambda}))\rightarrow C^{\bullet}_{\psi}(\mathcal{R}_A(\delta_{\lambda}))\rightarrow C^{\bullet}_{\psi}(\mathrm{LA}(\mathbb{Z}_p, A)(\delta_{\lambda}))
\rightarrow 0.
\end{equation}

For each $k\geqq 0$, we define the algebraic function 
$$y^k:\mathbb{Z}_p\rightarrow A: a\mapsto a^k,$$ then 
$Ay^{k}\bold{e}_{\delta_{\lambda}}\subseteq \mathrm{LA}(\mathbb{Z}_p,A)(\delta_{\lambda})$ is 
a $\psi$-stable sub $\mathcal{R}^{\infty}_A(\Gamma)$-module. By Lemme 2.9 of \cite{Ch13}, the natural inclusion 
\begin{equation}\label{25e}
C^{\bullet}_{\psi}(\oplus_{0=k}^NAy^k\bold{e}_{\delta_{\lambda}})\hookrightarrow C^{\bullet}_{\psi}(\mathrm{LA}(\mathbb{Z}_p, A)(\delta_{\lambda}))
\end{equation}
is quasi-isomorphism for sufficiently large $N$. 

Set $P_i^k:=Ay^k\bold{e}_{\delta_{\lambda}}$ for $i=1,2$. Since we have $Ay^k\bold{e}_{\delta_{\lambda}}[0]\in \bold{D}^{[-1,0]}_{\mathrm{perf}}(\mathcal{R}_A^{\infty}(\Gamma))$ for any
$k\geqq 0$, 
the natural exact sequence 
$$0\rightarrow P^{k}_1[-1]\rightarrow C^{\bullet}_{\psi}(Ay^k\bold{e}_{\delta_{\lambda}})\rightarrow P^{k}_2[-2]\rightarrow 0$$ 
induces a canonical isomorphism 
$$
g_k:\mathrm{Det}_{\mathcal{R}_A^{\infty}(\Gamma)}(C^{\bullet}_{\psi}(Ay^k\bold{e}_{\delta_{\lambda}}))
\isom \mathrm{Det}_{\mathcal{R}_A^{\infty}(\Gamma)}(P^{k}_2)\boxtimes 
\mathrm{Det}_{\mathcal{R}_A^{\infty}(\Gamma)}(P^{k}_1)^{-1}\\
\xrightarrow{i_{\mathrm{Det}_{\mathcal{R}_A^{\infty}(\Gamma)}(P^{k}_1)}}\bold{1}_{\mathcal{R}_A^{\infty}(\Gamma)}.$$ We remark that, if the complex $C^{\bullet}_{\psi}(Ay^k\bold{e}_{\delta_{\lambda}})$ is acyclic, then the composite of this isomorphism with the inverse of the 
canonical trivialization isomorphism $h_{\mathrm{Det}_{\mathcal{R}_A^{\infty}(\Gamma)}(C^{\bullet}_{\psi}(Ay^k\bold{e}_{\delta_{\lambda}}))}:\mathrm{Det}_{\mathcal{R}_A^{\infty}(\Gamma)}(C^{\bullet}_{\psi}(Ay^k\bold{e}_{\delta_{\lambda}}))\isom \bold{1}_{\mathcal{R}_A^{\infty}(\Gamma)}$ is the identity map. Hence, if we define the isomorphism 
\begin{equation}\label{27e}
g^N:=\boxtimes_{0=k}^Ng_k:\mathrm{Det}_{\mathcal{R}_A^{\infty}(\Gamma)}(C^{\bullet}_{\psi}(\oplus_{k=0}^NAy^k\bold{e}_{\delta_{\lambda}}))\isom \bold{1}_{\mathcal{R}_A^{\infty}(\Gamma)}, 
\end{equation}
then, by (\ref{25e}) and (\ref{27e}) (for sufficiently large $N$), 
we obtain an isomorphism 
 \begin{equation}\label{29e}
\iota_0:\mathrm{Det}_{\mathcal{R}^{\infty}_A(\Gamma)}(C^{\bullet}_{\psi}(\mathrm{LA}(\mathbb{Z}_p, A)(\delta_{\lambda}))\isom \bold{1}_{\mathcal{R}^{\infty}_A(\Gamma)}, 
\end{equation}
which is independent of the choice of (sufficiently large) $N$.


Since $C^{\bullet}_{\psi}(\mathrm{LA}(\mathbb{Z}_p, A)(\delta_{\lambda}))$  and $C^{\bullet}_{\psi}(\mathcal{R}_A(\delta_{\lambda}))$ are both perfect complexes, 
we also have $$C^{\bullet}_{\psi}(\mathcal{R}^{\infty}_A(\Gamma))\in \bold{D}^b_{\mathrm{perf}}(\mathcal{R}^{\infty}_{A}(\Gamma))$$ by the exact sequence (\ref{24e}), and then we obtain an isomorphism 
 \begin{multline}\label{35f}
 \iota_1:\mathrm{Det}_{\mathcal{R}_A^{\infty}(\Gamma)}(C^{\bullet}_{\psi}(\mathcal{R}_A(\delta_{\lambda})))\isom \mathrm{Det}_{\mathcal{R}^{\infty}_{A}(\Gamma)}(C^{\bullet}_{\psi}(\mathcal{R}^{\infty}_A(\delta_{\lambda})))\boxtimes\mathrm{Det}_{\mathcal{R}^{\infty}_A(\Gamma)}(C^{\bullet}_{\psi}(\mathrm{LA}(\mathbb{Z}_p, A)(\delta_{\lambda})))\\
\xrightarrow{\mathrm{id}\boxtimes \iota_0} \mathrm{Det}_{\mathcal{R}^{\infty}_A(\Gamma)}(C^{\bullet}_{\psi}(\mathcal{R}^{\infty}_A(\delta_{\lambda})))\isom\mathrm{Det}_{\mathcal{R}_A^{\infty}(\Gamma)}(\mathcal{R}_A^{\infty}(\delta_{\lambda})^{\psi=1}[0])^{-1}, 
\end{multline}
where the last isomorphism is the isomorphism which is naturally induced by the exact sequence 
$0\rightarrow \mathcal{R}^{\infty}_A(\delta_{\lambda})^{\psi=1}\rightarrow\mathcal{R}_A^{\infty}(\delta_{\lambda})\xrightarrow{\psi-1} \mathcal{R}_A^{\infty}(\delta_{\lambda})\rightarrow 0$ (where the subjectivity is proved in Lemma 
2.9 (v) of \cite{Ch13}).

We next consider the complex $C^{\bullet}_{\psi}(\mathcal{R}^{\infty}_A(\delta_{\lambda}))$. 
For a $\mathcal{R}^{\infty}_A(\Gamma)$-module $M$ with linear
 actions of $\varphi$ and $\psi$, define a complex 
$$C^{\bullet}_{\widetilde{\psi}}(M):=[M\xrightarrow{\psi} M]\in \bold{D}^{[1,2]}(\mathcal{R}^{\infty}_A(\Gamma)),$$ 
and define a map of complexes $\alpha_M:C^{\bullet}_{\psi}(M)\rightarrow C^{\bullet}_{\tilde{\psi}}(M)$ 
by 
\begin{equation}
\begin{CD}
C^{\bullet}_{\psi}(M): [ @.M @>\psi-1>>M ] \\
 @VV\alpha_M V@ VV 1-\varphi V @VV  \mathrm{id}_M V \\
 C^{\bullet}_{\widetilde{\psi}}(M):[ @.M @> \psi >>M] .
  \end{CD}
  \end{equation}
  
  For $N\geqq 0$, set $D_N:=\oplus_{0\leqq k\leqq N} At^k\bold{e}_{\delta_{\lambda}}$. 
  Since we have $At^k\bold{e}_{\delta_{\lambda}}[0]\in \bold{D}_{\mathrm{perf}}^{[-1,0]}(\mathcal{R}_A^{\infty}(\Gamma))$, we can define a canonical isomorphism 
  \begin{equation}\label{37a}
  \mathrm{Det}_{\mathcal{R}_A^{\infty}(\Gamma)}(C^{\bullet}_{\psi}(D_N))\isom \bold{1}_{\mathcal{R}_A^{\infty}(\Gamma)}
  \end{equation}
  in the same way as the isomorphism (\ref{27e}).
  Then, the natural exact sequence $0\rightarrow C^{\bullet}_{\psi}(D_N) \rightarrow 
  C^{\bullet}_{\psi}(\mathcal{R}^{\infty}_A(\delta_{\lambda}))\rightarrow 
  C^{\bullet}_{\psi}(\mathcal{R}^{\infty}_A(\delta_{\lambda})/D_N)\rightarrow 0$ induces a canonical isomorphism 
  \begin{multline}\label{39a}
   \mathrm{Det}_{\mathcal{R}_A^{\infty}(\Gamma)}(C^{\bullet}_{\psi}(\mathcal{R}_A^{\infty}(\delta_{\lambda})))\isom  \mathrm{Det}_{\mathcal{R}_A^{\infty}(\Gamma)}(C^{\bullet}_{\psi}(D_N))\boxtimes \mathrm{Det}_{\mathcal{R}_A^{\infty}(\Gamma)}(C^{\bullet}_{\psi}(\mathcal{R}_A^{\infty}(\delta_{\lambda})/D_N))\\
   \isom \mathrm{Det}_{\mathcal{R}_A^{\infty}(\Gamma)}(C^{\bullet}_{\psi}(\mathcal{R}_A^{\infty}(\delta_{\lambda})/D_N))
   \end{multline}
   where the last isomorphism is induced by the isomorphism (\ref{37a}).
 
 Since the map $1-\varphi:\mathcal{R}_A^{\infty}(\delta_{\lambda})/D_N\rightarrow \mathcal{R}_A^{\infty}(\delta_{\lambda})/D_N$ is isomorphism for sufficiently large $N$ by Lemme 2.9 (ii) of \cite{Ch13}, the map 
 $\alpha_{(\mathcal{R}^{\infty}_A(\delta_{\lambda})/D_N)}$ 
 is also isomorphism for sufficiently large $N$. Hence, for sufficiently large $N$, we obtain a canonical isomorphism 
 \begin{equation}\label{40a}
 \mathrm{Det}_{\mathcal{R}_A^{\infty}(\Gamma)}(C^{\bullet}_{\psi}(\mathcal{R}_A^{\infty}(\delta_{\lambda})/D_N))\isom \mathrm{Det}_{\mathcal{R}_A^{\infty}(\Gamma)}(C^{\bullet}_{\widetilde{\psi}}(\mathcal{R}_A^{\infty}(\delta_{\lambda})/D_N)).
  \end{equation}
Since the complex $C^{\bullet}_{\widetilde{\psi}}(D_N)$ 
 is acyclic (since $\psi:At^k\bold{e}_{\delta_{\lambda}}\rightarrow At^k\bold{e}_{\delta_{\lambda}}$ is isomorphism for any $k\geqq 0$), the natural exact sequence $0\rightarrow C^{\bullet}_{\widetilde{\psi}}(D_N) \rightarrow 
  C^{\bullet}_{\widetilde{\psi}}(\mathcal{R}^{\infty}_A(\delta_{\lambda}))\rightarrow 
  C^{\bullet}_{\widetilde{\psi}}(\mathcal{R}^{\infty}_A(\delta_{\lambda})/D_N)\rightarrow 0$ induces a canonical isomorphism 
  \begin{multline}\label{40f}
  \mathrm{Det}_{\mathcal{R}_A^{\infty}(\Gamma)}(C^{\bullet}_{\widetilde{\psi}}(\mathcal{R}_A^{\infty}(\delta_{\lambda})/D_N))\isom  \mathrm{Det}_{\mathcal{R}_A^{\infty}(\Gamma)}(C^{\bullet}_{\widetilde{\psi}}(D_N))\boxtimes \mathrm{Det}_{\mathcal{R}_A^{\infty}(\Gamma)}(C^{\bullet}_{\widetilde{\psi}}(\mathcal{R}_A^{\infty}(\delta_{\lambda})/D_N))\\
 \isom \mathrm{Det}_{\mathcal{R}_A^{\infty}(\Gamma)}(C^{\bullet}_{\widetilde{\psi}}(\mathcal{R}_A^{\infty}(\delta_{\lambda}))),
  \end{multline} 
  where the first isomorphism is induced by the inverse of the 
  isomorphism $h_{C^{\bullet}_{\widetilde{\psi}}(D_N)}:\mathrm{Det}_{\mathcal{R}_A^{\infty}(\Gamma)}(C^{\bullet}_{\widetilde{\psi}}(D_N))\isom \bold{1}_{\mathcal{R}_A^{\infty}(\Gamma)}.$

  Moreover, the exact sequence $0\rightarrow \mathcal{R}_A^{\infty}(\delta_{\lambda})^{\psi=0}
  \rightarrow \mathcal{R}^{\infty}_A(\delta_{\lambda})\xrightarrow{\psi}\mathcal{R}^{\infty}_A(\delta_{\lambda})\rightarrow 0$ and the isomorphism 
  \begin{equation}\label{zeta}
  \mathcal{R}^{\infty}_A(\Gamma)\bold{e}_{\delta_{\lambda}}\isom \mathcal{R}^{\infty}_A(\delta_{\lambda})^{\psi=0}:\lambda\bold{e}_{\delta_{\lambda}}\mapsto (\lambda\cdot(1+\pi)^{-1})\bold{e}_{\delta_{\lambda}}
  \end{equation}
  (remark that this isomorphism depends on the choice of $\zeta$) naturally induces the following isomorphism
  
    \begin{equation}\label{41f}
  \mathrm{Det}_{\mathcal{R}_A^{\infty}(\Gamma)}(C^{\bullet}_{\widetilde{\psi}}(\mathcal{R}_A^{\infty}(\delta_{\lambda})))^{-1}\isom  \mathrm{Det}_{\mathcal{R}_A^{\infty}(\Gamma)}(\mathcal{R}_A^{\infty}(\delta_{\lambda})^{\psi=0})\isom (\mathcal{R}^{\infty}_A(\Gamma)\bold{e}_{\delta_{\lambda}},1).
  \end{equation}
  
  Finally, as the composites of the inverses of the isomorphisms (\ref{35f}), (\ref{39a}), (\ref{40a}), (\ref{40f}), and the isomorphism (\ref{41f}), we define the desired isomorphism 
  $$\theta_{\zeta}(\mathcal{R}_A(\delta_{\lambda})):\mathrm{Det}_{\mathcal{R}_A^{\infty}(\Gamma)}(C^{\bullet}_{\psi}
  (\mathcal{R}_A(\delta_{\lambda})))^{-1}\isom (\mathcal{R}^{\infty}_A(\Gamma)\bold{e}_{\delta_{\lambda}},1)=\Delta_{A,2}(\mathcal{R}_A(\delta_{\lambda}))\otimes_A\mathcal{R}_A^{\infty}(\Gamma).$$

\begin{defn}
Using the isomorphism (\ref{23.5e}), for $M=\mathcal{R}_A(\delta_{\lambda})$, we define the $\varepsilon$-isomorphism by 

\begin{multline*}
\varepsilon_{\mathcal{R}_A^{\infty}(\Gamma),\zeta}(\bold{Dfm}(M)):
\bold{1}_{\mathcal{R}_A^{\infty}(\Gamma)}\xrightarrow{\mathrm{can}}\mathrm{Det}_{\mathcal{R}^{\infty}_A(\Gamma)}(C^{\bullet}_{\psi}(M))\boxtimes\mathrm{Det}_{\mathcal{R}^{\infty}_A(\Gamma)}(C^{\bullet}_{\psi}(M))^{-1} \\
\xrightarrow{\mathrm{id}\boxtimes \theta_{\zeta}(M)}\mathrm{Det}_{\mathcal{R}^{\infty}_A(\Gamma)}(C^{\bullet}_{\psi}(M))\boxtimes(\Delta_{A,2}(M)\otimes_A\mathcal{R}^{\infty}_A(\Gamma))
\isom \Delta_{\mathcal{R}^{\infty}_A(\Gamma)}(\bold{Dfm}(M)).
\end{multline*}

 \end{defn}

Before defining the $\varepsilon$-isomorphism for general rank one case, we check that 
the isomorphism $\varepsilon_{\mathcal{R}_A^{\infty}(\Gamma),\zeta}(\bold{Dfm}(\mathcal{R}_A(\delta_{\lambda})))$ defined above satisfies the properties (i) and (iii) in Conjecture \ref{3.9}

For the property (i),  it is clear that, for each continuous homomorphism $f:A\rightarrow A'$ (and set $\lambda'=f(\lambda)$), we have 
$$\varepsilon_{\mathcal{R}^{\infty}_A(\Gamma),\zeta}(\bold{Dfm}(\mathcal{R}_A(\delta_{\lambda})))\otimes \mathrm{id}_{A'}=\varepsilon_{\mathcal{R}^{\infty}_{A'}(\Gamma),\zeta}(\bold{Dfm}(\mathcal{R}_{A'}(\delta_{\lambda'})))$$
 under the canonical isomorphism 
 \[
\begin{array}{ll}
\Delta_{\mathcal{R}^{\infty}_A(\Gamma)}(\bold{Dfm}(\mathcal{R}_A(\delta_{\lambda})))\otimes_A
A'&\isom \Delta_{\mathcal{R}^{\infty}_{A'}(\Gamma)}(\bold{Dfm}(\mathcal{R}_{A}(\delta_{\lambda})
\widehat{\otimes}_{A}A'))\\
&\isom  \Delta_{\mathcal{R}^{\infty}_{A'}(\Gamma)}(\bold{Dfm}(\mathcal{R}_{A'}(\delta_{\lambda'}
))),
\end{array}
\]
where the last isomorphism is induced by the isomorphism 
$$\mathcal{R}_A(\delta_{\lambda})\widehat{\otimes}_{A}A'\isom \mathcal{R}_{A'}(\delta_{\lambda'}): g(\pi)\bold{e}_{\delta_{\lambda}}\widehat{\otimes}a\mapsto 
ag^{f}(\pi)\bold{e}_{\delta_{\lambda'}}$$
(here we define $g^f(\pi):=\sum_{n\in \mathbb{Z}}f(a_n)\pi^n\in \mathcal{R}_{A'}$ 
 for $g(\pi)=\sum_{n\in \mathbb{Z}}a_n\pi^n\in \mathcal{R}_A$). The property (iii) easily follows from (\ref{zeta}) since one has $(1+\pi_{\zeta^a})=(1+\pi_{\zeta})^a=[\sigma_a]\cdot (1+\pi_{\zeta})$ for $a\in \mathbb{Z}_p^{\times}$.

Next, we consider a rank one $(\varphi,\Gamma)$-module over $\mathcal{R}_A$ of the 
form $\mathcal{R}_A(\delta)$ for a general continuous homomorphism $\delta:\mathbb{Q}_p^{\times}
\rightarrow A^{\times}$. Set 
$$\lambda:=\delta(p)  \text{ and } \delta_0:=\delta|_{\mathbb{Z}_p^{\times}}$$ which we freely see as a homomorphism $\delta_0:\Gamma\rightarrow A^{\times}$ by identifying 
$\chi:\Gamma\isom \mathbb{Z}_p^{\times}$.
We define the continuous $A$-algebra homomorphism 
$$f_{\delta_0}:\mathcal{R}^{\infty}_A(\Gamma)\rightarrow A$$ 
which is uniquely characterized by $f_{\delta_0}([\gamma])=\delta_{0}(\gamma)^{-1}$ for any $\gamma\in \Gamma$.
Then, we have a canonical 
isomorphism
$$\bold{Dfm}(\mathcal{R}_A(\delta_{\lambda}))\otimes_{\mathcal{R}^{\infty}_A(\Gamma),f_{\delta_0}}A\isom \mathcal{R}_A(\delta)$$
defined by 
$$(f(\pi)\bold{e}_{\delta_{\lambda}}\widehat{\otimes}\eta\bold{e})\otimes a:=af_{\delta_0}(\eta)f(\pi)\bold{e}_{\delta}$$ 
for $f(\pi)\in \mathcal{R}_A$, $\eta\in \mathcal{R}^{\infty}_A(\Gamma)$ , $a\in A$, 
which also induces a canonical  isomorphism 
$$\Delta_{\mathcal{R}^{\infty}_A(\Gamma)}(\bold{Dfm}(\mathcal{R}_A(\delta_{\lambda})))\otimes_{\mathcal{R}^{\infty}_A(\Gamma),f_{\delta_0}}A\isom \Delta_{A}(\mathcal{R}_A(\delta)).$$

\begin{defn}We define the isomorphism
$$\varepsilon_{A,\zeta}(\mathcal{R}_A(\delta)):\bold{1}_A\isom 
\Delta_A(\mathcal{R}_A(\delta))$$ 
by
$$\varepsilon_{A,\zeta}(\mathcal{R}_A(\delta)):=\varepsilon_{\mathcal{R}^{\infty}_A(\Gamma),\zeta}
(\bold{Dfm}(\mathcal{R}_A(\delta_{\lambda})))\otimes \mathrm{id}_A$$
under the above isomorphism.
\end{defn}

Next, we consider a rank one $(\varphi,\Gamma)$-module of the form 
$\mathcal{R}_A(\delta)\otimes_A\mathcal{L}$ for an invertible $A$-module $\mathcal{L}$.
\begin{lemma}\label{4.3}
Let $M$ be a $(\varphi,\Gamma)$-module 
over $\mathcal{R}_A$ (of any rank),  and let $\mathcal{L}$ be an invertible $A$-module.
Then, there exist a canonical $A$-linear isomorphism 
$$\Delta_A(M\otimes_A\mathcal{L})\isom\Delta_A(M).$$
\end{lemma}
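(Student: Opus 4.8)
\textbf{Proof proposal for Lemma \ref{4.3}.}

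The plan is to reduce the statement to the compatibility of the two building blocks $\Delta_{A,1}$ and $\Delta_{A,2}$ with tensoring by an invertible $A$-module, and then to assemble the resulting isomorphisms. For $\Delta_{A,1}$, the key observation is that tensoring $M$ with $\mathcal{L}$ simply tensors the complex $C^{\bullet}_{\varphi,\gamma}(M)$ with $\mathcal{L}$ over $A$, since $\mathcal{L}$ is finite projective (hence flat) over $A$ and the formation of $C^{\bullet}_{\varphi,\gamma}$ commutes with such base change by Theorem \ref{2.16}(2); more precisely $C^{\bullet}_{\varphi,\gamma}(M\otimes_A\mathcal{L})\isom C^{\bullet}_{\varphi,\gamma}(M)\otimes_A\mathcal{L}$. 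Then, for a perfect complex $P^{\bullet}$ over $A$ of Euler characteristic (rank) $r$ and an invertible $A$-module $\mathcal{L}$, one has a canonical isomorphism $\mathrm{Det}_A(P^{\bullet}\otimes_A\mathcal{L})\isom \mathrm{Det}_A(P^{\bullet})\boxtimes(\mathcal{L}^{\otimes r},0)$, coming degreewise from $\mathrm{det}_A(P^i\otimes_A\mathcal{L})\isom \mathrm{det}_A(P^i)\otimes_A\mathcal{L}^{\otimes \mathrm{rk}P^i}$ and the alternating sum of ranks. Applying this with $r=\chi_A(C^{\bullet}_{\varphi,\gamma}(M))=-r_M$ (Theorem \ref{2.16}(3)) gives
$$\Delta_{A,1}(M\otimes_A\mathcal{L})\isom \Delta_{A,1}(M)\boxtimes(\mathcal{L}^{\otimes(-r_M)},0).$$
For $\Delta_{A,2}$, since $\mathrm{det}_{\mathcal{R}_A}(M\otimes_A\mathcal{L})\isom \mathrm{det}_{\mathcal{R}_A}(M)\otimes_A\mathcal{L}^{\otimes r_M}$, the pair $(\delta,\mathcal{L}')$ attached to $\mathrm{det}_{\mathcal{R}_A}(M\otimes_A\mathcal{L})$ by Theorem \ref{2.9} is $(\delta_{\mathrm{det}_{\mathcal{R}_A}M},\mathcal{L}_{\mathrm{det}_{\mathcal{R}_A}M}\otimes_A\mathcal{L}^{\otimes r_M})$, so that $\mathcal{L}_A(M\otimes_A\mathcal{L})\isom \mathcal{L}_A(M)\otimes_A\mathcal{L}^{\otimes r_M}$ and hence
$$\Delta_{A,2}(M\otimes_A\mathcal{L})\isom\Delta_{A,2}(M)\boxtimes(\mathcal{L}^{\otimes r_M},0).$$

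Combining the two displays, the $(\mathcal{L}^{\otimes(-r_M)},0)$ and $(\mathcal{L}^{\otimes r_M},0)$ factors cancel via the canonical pairing $i_{(\mathcal{L}^{\otimes r_M},0)}\colon(\mathcal{L}^{\otimes r_M},0)\boxtimes(\mathcal{L}^{\otimes(-r_M)},0)\isom\bold{1}_A$ from \S3.1, yielding the desired canonical isomorphism $\Delta_A(M\otimes_A\mathcal{L})\isom\Delta_A(M)$. I would note that $r_M\in\mathrm{Map}(\mathrm{Spec}(A),\mathbb{Z}_{\geqq 0})$ by Remark \ref{rank}, so all the ranks above are genuinely the same locally constant function on $\mathrm{Spec}(A)$ and the grading bookkeeping in $\mathcal{P}_A$ is consistent; the commutativity/sign constraint in $\mathcal{P}_A$ does not intervene because the two graded pieces being cancelled are inverse to each other.

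The only real point requiring care — and the step I expect to be the main obstacle — is pinning down \emph{canonicity} and the precise sign/normalization of the isomorphism $\mathrm{Det}_A(P^{\bullet}\otimes_A\mathcal{L})\isom\mathrm{Det}_A(P^{\bullet})\boxtimes(\mathcal{L}^{\otimes\chi},0)$, since this must be independent of the choice of representing complex $P^{\bullet}$ and compatible with short exact sequences and with duality (so that Lemma \ref{4.3} is actually usable in \S4 when one specializes $\mathcal{L}$ and invokes conditions (i), (iv)). The cleanest way is to first define it for a single module $P\in\bold{P}_{\mathrm{fg}}(A)$ placed in degree $0$ via the natural map $\mathrm{det}_A(P\otimes_A\mathcal{L})\isom(\mathrm{det}_AP)\otimes_A\mathcal{L}^{\otimes\mathrm{rk}P}$ (sending $(x_1\otimes\ell_1)\wedge\cdots\wedge(x_r\otimes\ell_r)$ to $(x_1\wedge\cdots\wedge x_r)\otimes(\ell_1\otimes\cdots\otimes\ell_r)$, which is well defined and independent of how one distributes $\mathcal{L}$), then extend to bounded complexes degreewise, check compatibility with the exact-sequence isomorphism \eqref{17e} and with the trivialization \eqref{acyclic} of acyclic complexes, and finally invoke the universal property of $\mathrm{Det}_A$ from \cite{KM76} recalled in \S3.1 to descend to $(\bold{D}^b_{\mathrm{perf}}(A),\mathrm{is})$. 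This is routine but tedious; everything else in the proof is immediate from Theorems \ref{2.16} and \ref{2.9} and the formalism of \S3.1.
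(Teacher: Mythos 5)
Your proposal is correct and follows essentially the same route as the paper's own proof: decompose $\Delta_A$ into $\Delta_{A,1}\boxtimes\Delta_{A,2}$, use $C^{\bullet}_{\varphi,\gamma}(M\otimes_A\mathcal{L})\isom C^{\bullet}_{\varphi,\gamma}(M)\otimes_A\mathcal{L}$ together with the Euler--Poincar\'e formula on one factor and $\mathcal{L}_A(M\otimes_A\mathcal{L})\isom\mathcal{L}_A(M)\otimes_A\mathcal{L}^{\otimes r_M}$ on the other, and cancel the resulting $\mathcal{L}^{\otimes\pm r_M}$ terms via the evaluation pairing $i_{(\mathcal{L}^{\otimes r_M},0)}$. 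The extra discussion you give on pinning down the canonicity of $\mathrm{Det}_A(P^{\bullet}\otimes_A\mathcal{L})\isom\mathrm{Det}_A(P^{\bullet})\boxtimes(\mathcal{L}^{\otimes\chi},0)$ is a reasonable elaboration of a step the paper leaves implicit, not a departure from its argument.
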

\begin{proof}
The natural isomorphism 
$C^{\bullet}_{\varphi,\gamma}(M\otimes_A\mathcal{L})
\isom C^{\bullet}_{\varphi,\gamma}(M)\otimes_A\mathcal{L}$ induces 
an isomorphism
 
$$\Delta_{A,1}(M\otimes_A\mathcal{L})
\isom\Delta_{A,1}(M)\boxtimes(\mathcal{L}^{\otimes- r_M},0).$$
Since we also have a natural isomorphism 
$\mathcal{L}_{A}(M\otimes_A\mathcal{L})\isom \mathcal{L}_{A}(M)\otimes_A\mathcal{L}^{\otimes r_M}$, we obtain a natural  isomorphism 
$$\Delta_{A,2}(M\otimes_A\mathcal{L})\isom \Delta_{A,2}(M)\boxtimes(\mathcal{L}^{\otimes r_M},0).$$
Then, the isomorphism in the lemma is obtained by taking the products of these isomorphisms with the canonical isomorphism $i_{(\mathcal{L}^{\otimes r_M},0)}:(\mathcal{L}^{\otimes r_M},0)
\boxtimes (\mathcal{L}^{\otimes -r_M},0)\isom \bold{1}_A$.

\end{proof}
\begin{defn}
We define the isomorphism
$$\varepsilon_{A,\zeta}(\mathcal{R}_A(\delta)\otimes_A\mathcal{L}):\bold{1}_A\isom
\Delta_A(\mathcal{R}_A(\delta)\otimes_A\mathcal{L})$$ 
by $$\varepsilon_{A,\zeta}(\mathcal{R}_A(\delta)\otimes_A\mathcal{L}):=\varepsilon_{A,\zeta}(\mathcal{R}_A(\delta))
$$ under the above isomorphism $\Delta_A(M\otimes_A\mathcal{L})\isom \Delta_A(M).$

\end{defn}

Finally, let $M$ be a general rank one $(\varphi,\Gamma)$-module over $\mathcal{R}_A$. 
By Theorem \ref{2.9}, there exists unique pair $(\delta,\mathcal{L})$  such that $g:M\isom \mathcal{R}(\delta)\otimes_A\mathcal{L}$. This isomorphism induces an isomorphism 
$g_{*}:\Delta_A(M)\isom \Delta_A(\mathcal{R}_A(\delta)\otimes_A\mathcal{L})$. 
\begin{defn}
Under the above situation, we define 
$$\varepsilon_{A,\zeta}(M):=
\varepsilon_{A,\zeta}(\mathcal{R}_A(\delta)\otimes_A\mathcal{L}))\circ g_{*}:\bold{1}_A\isom 
\Delta_A(M).$$
\end{defn}
\begin{lemma}\label{4.6}
The isomorphism $\varepsilon_{A,\zeta}(M)$ is well-defined, i.e. does not depend on $g$.

\end{lemma}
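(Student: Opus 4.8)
The plan is to show that the construction of $\varepsilon_{A,\zeta}(M)$ is invariant under the choice of decomposition $M\isom \mathcal{R}_A(\delta)\otimes_A\mathcal{L}$. By Theorem \ref{2.9}, the pair $(\delta,\mathcal{L})$ is unique \emph{up to isomorphism}, so the issue is purely that a single $M$ may admit two isomorphisms $g,g':M\isom \mathcal{R}_A(\delta)\otimes_A\mathcal{L}$ with the \emph{same} target $\mathcal{R}_A(\delta)\otimes_A\mathcal{L}$ (and the case of two different but isomorphic targets reduces to this by composing with an isomorphism between the targets, which exists by uniqueness). Thus I would first reduce: it suffices to prove that for any automorphism $h$ of $\mathcal{R}_A(\delta)\otimes_A\mathcal{L}$ as a $(\varphi,\Gamma)$-module, the induced automorphism $h_*$ of $\Delta_A(\mathcal{R}_A(\delta)\otimes_A\mathcal{L})$ is compatible with $\varepsilon_{A,\zeta}(\mathcal{R}_A(\delta)\otimes_A\mathcal{L})$, i.e. $h_*\circ \varepsilon_{A,\zeta}(\mathcal{R}_A(\delta)\otimes_A\mathcal{L})=\varepsilon_{A,\zeta}(\mathcal{R}_A(\delta)\otimes_A\mathcal{L})$. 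Given this, if $g'=h\circ g$ then $g'_*=h_*\circ g_*$, and the two candidate definitions agree.

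The second step is to identify the automorphism group. An automorphism of the rank one $(\varphi,\Gamma)$-module $\mathcal{R}_A(\delta)\otimes_A\mathcal{L}$ is the same as an automorphism of the invertible $A$-module $\mathcal{L}$ commuting with the structure, i.e. multiplication by a unit $u\in A^{\times}=\mathrm{Aut}_A(\mathcal{L})$ (here one uses that $\mathrm{End}_{\mathcal{R}_A,(\varphi,\Gamma)}(\mathcal{R}_A(\delta))=A$, which follows from the classification or a direct computation with $\varphi,\gamma$). So I only need to treat $h=u\cdot\mathrm{id}$ for $u\in A^{\times}$. Now I would trace through the definition of $u_*$ on the two factors $\Delta_{A,1}$ and $\Delta_{A,2}$. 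On $\Delta_{A,1}(\mathcal{R}_A(\delta)\otimes_A\mathcal{L})=\mathrm{Det}_A C^{\bullet}_{\varphi,\gamma}(\mathcal{R}_A(\delta)\otimes_A\mathcal{L})$, the map $u$ acts on the complex $C^{\bullet}_{\varphi,\gamma}$ as scalar multiplication by $u$ in each of the (locally) rank $1,2,1$ pieces, hence on the determinant it acts by $u^{1}\cdot u^{-2}\cdot u^{1}=u^{0}=1$; so $u_*$ is the identity on $\Delta_{A,1}$. On $\Delta_{A,2}(\mathcal{R}_A(\delta)\otimes_A\mathcal{L})=(\mathcal{L}_A(\mathcal{R}_A(\delta)\otimes_A\mathcal{L}), r_M)$ with $r_M=1$, the map $u$ acts by multiplication by $u$. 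Thus $u_*$ on $\Delta_A(\mathcal{R}_A(\delta)\otimes_A\mathcal{L})=\Delta_{A,1}\boxtimes\Delta_{A,2}$ is multiplication by $u$ via the $\Delta_{A,2}$ factor. It remains to check that $\varepsilon_{A,\zeta}(\mathcal{R}_A(\delta)\otimes_A\mathcal{L})$ is invariant under this scaling; but by construction $\varepsilon_{A,\zeta}(\mathcal{R}_A(\delta)\otimes_A\mathcal{L})$ was \emph{defined} via the canonical isomorphism $\Delta_A(\mathcal{R}_A(\delta)\otimes_A\mathcal{L})\isom\Delta_A(\mathcal{R}_A(\delta))$ of Lemma \ref{4.3}, and that isomorphism was built precisely using $i_{(\mathcal{L},0)}:(\mathcal{L},0)\boxtimes(\mathcal{L}^{\vee},0)\isom\bold{1}_A$, under which scaling $\mathcal{L}$ by $u$ and $\mathcal{L}^{\vee}$ by $u^{-1}$ cancel. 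So the composite is unchanged.

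The cleanest way to organize this is therefore: (1) reduce well-definedness to invariance under $\mathrm{Aut}(\mathcal{R}_A(\delta)\otimes_A\mathcal{L})=A^{\times}$; (2) compute that for $u\in A^{\times}$ the induced map $u_*$ on $\Delta_A$ is multiplication by $u$ localized in the $\Delta_{A,2}$-factor, using the Euler characteristic computation $1-2+1=0$ for $\Delta_{A,1}$ and $\mathrm{rk}=1$ for $\Delta_{A,2}$; (3) observe that the isomorphism $\Delta_A(M\otimes_A\mathcal{L})\isom\Delta_A(M)$ of Lemma \ref{4.3} intertwines $u_*$ with the identity, because it is built from the evaluation pairing $i_{(\mathcal{L}^{\otimes r_M},0)}$, so that precomposition of $\varepsilon_{A,\zeta}(\mathcal{R}_A(\delta))$ with it is insensitive to $u$. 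I expect the main (though still modest) obstacle to be keeping the bookkeeping of the Lemma \ref{4.3} isomorphism consistent: one must verify carefully that the two isomorphisms $\Delta_{A,1}(M\otimes\mathcal{L})\isom\Delta_{A,1}(M)\boxtimes(\mathcal{L}^{\otimes -r_M},0)$ and $\Delta_{A,2}(M\otimes\mathcal{L})\isom\Delta_{A,2}(M)\boxtimes(\mathcal{L}^{\otimes r_M},0)$ are themselves $u_*$-equivariant for the obvious scaling actions, so that the cancellation genuinely happens in the tensor product; this is a short naturality check on the functorial isomorphisms defining $\mathrm{Det}_A$ and $\mathcal{L}_A(-)$, not a serious difficulty.
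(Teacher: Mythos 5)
Your reduction to the automorphism group $\mathrm{Aut}(\mathcal{R}_A(\delta)\otimes_A\mathcal{L})=A^{\times}$ is correct and matches the paper's approach, and your computation on $\Delta_{A,2}$ is also correct. However, your computation on $\Delta_{A,1}$ contains a genuine error. You write that multiplication by $u$ acts on the complex $C^{\bullet}_{\varphi,\gamma}$ "in each of the (locally) rank $1,2,1$ pieces," and conclude it acts by $u^{1-2+1}=u^0=1$ on the determinant. But those $1,2,1$ are ranks over $\mathcal{R}_A(\pi_K)$, not over $A$: the terms $M^{\Delta}$, $M^{\Delta}\oplus M^{\Delta}$, $M^{\Delta}$ are not finitely generated $A$-modules, so they are not the terms of a perfect complex to which the naive alternating-rank count applies. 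The action of a scalar $u\in A^{\times}$ on the determinant of a perfect complex is multiplication by $u^{\chi}$, where $\chi$ is the Euler characteristic over $A$. By the Euler--Poincar\'e formula (Theorem~\ref{2.16}(3)), $\chi_A(C^{\bullet}_{\varphi,\gamma}(M))=-r_M=-1$ for rank one $M$, so $u$ acts by $u^{-1}$ on $\Delta_{A,1}$, not by $1$. (This is exactly the point of the paper's Lemma~\ref{4.7}.)

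Because of this error, you arrive at the conclusion that $u_*$ acts by $u$ on $\Delta_A(\mathcal{R}_A(\delta)\otimes_A\mathcal{L})$, and then you are forced to invent an additional cancellation step to rescue the argument. But the cancellation you describe via $i_{(\mathcal{L},0)}$ in Lemma~\ref{4.3} is a non-sequitur: that pairing cancels the contributions of $\mathcal{L}^{\otimes -r_M}$ and $\mathcal{L}^{\otimes r_M}$ coming from the decomposition of $\Delta_{A,1}$ and $\Delta_{A,2}$, not a scalar $u\in A^{\times}$ acting on the module $\mathcal{R}_A(\delta)\otimes_A\mathcal{L}$ itself. A fixed $A$-linear isomorphism $\Delta_A(M\otimes_A\mathcal{L})\isom\Delta_A(M)$ cannot turn a nontrivial scalar automorphism into the identity. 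Your proposal is also internally inconsistent: if $u_*$ were truly the identity on $\Delta_{A,1}(M\otimes\mathcal{L})$ as you claim in step (2), there would be no $u^{-1}$ factor available to cancel the $u$ on $\Delta_{A,2}$ in step (3). Once the $\Delta_{A,1}$ computation is corrected to $u^{-r_M}$, the product $u^{-r_M}\cdot u^{r_M}=1$ gives the identity on $\Delta_A$ directly and no further step is needed, which is precisely the argument in Lemma~\ref{4.7}.
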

\begin{proof}
Since we have $\mathrm{Aut}(\mathcal{R}_A(\delta)\otimes_A\mathcal{L})=A^{\times}$ 
(where $\mathrm{Aut}(M)$ is the group of automorphisms of $M$ as $(\varphi,\Gamma)$-module over 
$\mathcal{R}_A$), it suffices to show the following lemma.
\end{proof}
\begin{lemma}\label{4.7}
Let $M$ be a $(\varphi,\Gamma)$-module over $\mathcal{R}_A$. 
For $a\in A^{\times}$, let denote by $g_a:M\isom M:x\mapsto ax$. Then, we have 
$$(g_a)_{*}=\mathrm{id}_{\Delta_A(M)}$$.
\end{lemma}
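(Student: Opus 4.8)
Lemma \ref{4.7} asserts that the scalar automorphism $g_a : M \isom M$, $x \mapsto ax$, induces the identity on the graded invertible module $\Delta_A(M) = \Delta_{A,1}(M) \boxtimes \Delta_{A,2}(M)$. The plan is to compute the effect of $(g_a)_*$ on each of the two factors separately and check that the two contributions cancel. Write $r = r_M$ for the rank (locally constant on $\mathrm{Spec}(A)$; since $M$ is treated here on an affinoid we may as well assume it is constant, handling the general case componentwise).

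First I would handle $\Delta_{A,1}(M) = \mathrm{Det}_A C^{\bullet}_{\varphi,\gamma}(M)$. The automorphism $g_a$ acts on the complex $C^{\bullet}_{\varphi,\gamma}(M)$ termwise as multiplication by $a$, i.e. it is the scalar $a \cdot \mathrm{id}$ on a perfect complex concentrated in degrees $[0,2]$ whose Euler characteristic is $\chi_A(C^{\bullet}_{\varphi,\gamma}(M)) = -r$ by the Euler--Poincar\'e formula (Theorem \ref{2.16}(3), with $K = \mathbb{Q}_p$). The determinant of multiplication by $a$ on a perfect complex $P^{\bullet}$ is multiplication by $a^{\chi_A(P^{\bullet})}$ on $\mathrm{Det}_A(P^{\bullet})$: this follows from the definition $\mathrm{Det}_A(P^{\bullet}) = \boxtimes_i \mathrm{Det}_A(P^i)^{(-1)^i}$, since on each $\mathrm{Det}_A(P^i) = \det_A P^i$ multiplication by $a$ acts as $a^{\mathrm{rk}_A P^i}$, and the alternating tensor product collects the signs into $\sum_i (-1)^i \mathrm{rk}_A P^i = \chi_A(P^{\bullet})$. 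Hence $(g_a)_*$ acts on $\Delta_{A,1}(M)$ as multiplication by $a^{-r}$. (One should note that the value of $\mathrm{Det}$ on a perfect complex is computed from any bounded complex of finite projectives quasi-isomorphic to it, and a scalar endomorphism is compatible with such a replacement, so the computation is legitimate.)

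Next I would handle $\Delta_{A,2}(M) = (\mathcal{L}_A(M), r)$, where $\mathcal{L}_A(M) \subseteq \det_{\mathcal{R}_A} M$ is the eigenspace cut out by $\varphi$ and $\Gamma$ acting through $\delta_{\det_{\mathcal{R}_A} M}$. The automorphism $g_a$ on $M$ induces $\det_{\mathcal{R}_A}(g_a) = a^r \cdot \mathrm{id}$ on $\det_{\mathcal{R}_A} M$, hence multiplication by $a^r$ on the subspace $\mathcal{L}_A(M)$; so $(g_a)_*$ acts on $\Delta_{A,2}(M)$ as multiplication by $a^r$. Finally, under the product pairing $\Delta_{A,1}(M) \boxtimes \Delta_{A,2}(M) = \Delta_A(M)$, the total effect of $(g_a)_*$ is multiplication by $a^{-r} \cdot a^{r} = 1$, which is what we want. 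I would also record that in the general case where $r$ is only locally constant, the same cancellation holds fiberwise over each connected component of $\mathrm{Spec}(A)$, so $(g_a)_* = \mathrm{id}_{\Delta_A(M)}$ globally.

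The only genuinely non-routine point is the computation of the action of a scalar endomorphism on the determinant of a perfect complex, i.e. the identity $\mathrm{Det}_A(a \cdot \mathrm{id}_{P^{\bullet}}) = a^{\chi_A(P^{\bullet})}$; everything else is bookkeeping with the compatibility isomorphisms of $\mathrm{Det}$ from \S3.1. That identity is a standard property of the Knudsen--Mumford determinant functor (it is already implicit in the functoriality of $\mathrm{Det}_A$ together with its behavior on short exact sequences), so I would either cite \cite{KM76} or give the two-line argument above using $\mathrm{Det}_A(P^{\bullet}) = \boxtimes_i \mathrm{Det}_A(P^i)^{(-1)^i}$. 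With Lemma \ref{4.7} in hand, Lemma \ref{4.6} follows at once: any two trivializations $g, g' : M \isom \mathcal{R}_A(\delta) \otimes_A \mathcal{L}$ differ by an element of $\mathrm{Aut}(\mathcal{R}_A(\delta) \otimes_A \mathcal{L}) = A^{\times}$, and Lemma \ref{4.7} says this automorphism acts trivially on the fundamental line, so $g_* = g'_*$ and $\varepsilon_{A,\zeta}(M)$ is independent of the choice of $g$.
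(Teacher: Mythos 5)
Your proof is correct and follows exactly the same route as the paper's: $g_a$ acts on $\Delta_{A,1}(M)$ by $a^{-r_M}$ via the Euler--Poincar\'e formula, and on $\Delta_{A,2}(M)$ by $a^{r_M}$ by definition, and the two cancel. The paper states this in one line without unpacking the determinant computation, but the underlying argument is identical to yours.
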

\begin{proof}
This lemma immediately follows from the fact that 
$g_a$ induces $\Delta_{1,A}(M)\isom \Delta_{A,1}(M):x\mapsto a^{-r_M}x$ (by Euler-Poincar\'e formula) and 
$\Delta_{A,2}(M)\isom \Delta_{A,2}(M):x\mapsto a^{r_M}x$ by definition. 
\end{proof}

\begin{rem}
By definition, it is clear that $\varepsilon_{A,\zeta}(M)$ constructed above satisfies the condition (i) and (iii) in Conjecture \ref{3.9}. It also seems to be easy 
to directly prove the condition (iv), (v) of Conjecture 
\ref{3.9}.  However, in the next subsection, we prove the conditions (iv) and (v) using density arguments in the process of 
verifying the condition (vi).
\end{rem}
\begin{rem}
Define $\mathcal{O}_{\mathcal{E}}:=\{\sum_{n\in \mathbb{Z}}a_n\pi^n|a_n\in \mathbb{Z}_p, a_{-n}\rightarrow 0 \,(n\rightarrow +\infty) \}$, 
$\mathcal{O}_{\mathcal{E}^{+}}:=\mathbb{Z}_p[[\pi]]$,
and $\mathcal{O}_{\mathcal{E}^{+},\Lambda}:=\mathcal{O}_{\mathcal{E}^{+}}\widehat{\otimes}_{\mathbb{Z}_p}\Lambda$. Define $\mathcal{C}^0(\mathbb{Z}_p, \Lambda)$ 
to be the $\Lambda$-modules of $\Lambda$-valued continuous functions on $\mathbb{Z}_p$. Using the similar exact sequence 
$$0\rightarrow \mathcal{O}_{\mathcal{E}^+,\Lambda}\rightarrow \mathcal{O}_{\mathcal{E},\Lambda}\xrightarrow{\mathrm{Col}}\mathcal{C}^0(\mathbb{Z}_p,\Lambda)\rightarrow 0,$$
and using the equivalence between the category of $\Lambda$-representations of $G_{\mathbb{Q}_p}$ with that of \'etale 
$(\varphi,\Gamma)$-modules over $\mathcal{O}_{\mathcal{E},\Lambda}$ \cite{Dee01}, it seems to be possible to define an epsilon isomorphism $\varepsilon_{\Lambda,\zeta}(\Lambda(\tilde{\delta}))$ for any $\tilde{\delta}:G_{\mathbb{Q}_p}^{\mathrm{ab}}\rightarrow \Lambda^{\times}$ in the same way as the definition of $\varepsilon_{A,\zeta}(\mathcal{R}_A(\delta))$. Using this $\varepsilon$-isomorphism, it is clear that 
our epsilon isomorphism $\varepsilon_{A,\zeta}(\mathcal{R}_A(\delta))$ satisfies the condition (v) in Conjecture \ref{3.9}. Moreover, if one know the Kato's construction of 
his epsilon isomorphism, one can easily compare the isomorphism $\varepsilon_{\Lambda,\zeta}(\Lambda(\tilde{\delta}))$ with the Kato's one.

\end{rem}



\subsection{Verification of the conditions (iv), (v), (vi)}
In this final subsection, we prove that our $\varepsilon$-isomorphism $\varepsilon_{A,\zeta}(M)$ constructed in the 
previous subsection satisfies the conditions (iv), (v), (vi) of  Conjecture \ref{3.9}. 
Of course, the essential part is to prove the condition (vi); the other conditions follow from 
it using density arguments. 

Therefore, in this subsection, we mainly concentrate on the case where 
$A=L$ is a finite extension of $\mathbb{Q}_p$. 
Before verifying the condition (vi), we describe the isomorphism $\varepsilon_{L,\zeta}
(\mathcal{R}_L(\delta)):\bold{1}_L\isom \Delta_L(\mathcal{R}_L(\delta))$ for any continuous homomorphism $\delta=\delta_{\lambda}\delta_0:\mathbb{Q}_p^{\times}\rightarrow L^{\times}$ in a more explicit way. 

For a $\mathcal{R}^{\infty}_L(\Gamma)$-module $N$, define a $\Gamma$-module $N(\delta_0):=N\bold{e}_{\delta_0}$ by $\gamma(x\bold{e}_{\delta_0})=\delta_0(\gamma)([\gamma]\cdot x)\bold{e}_{\delta_0}$ for any $\gamma\in \Gamma$.
Then, we have a natural quasi-isomorphism
$$N[-1]\otimes^{\bold{L}}_{\mathcal{R}^{\infty}_L(\Gamma),f_{\delta_0}}L\isom
N\otimes_{\mathcal{R}^{\infty}_L(\Gamma)}
[\mathcal{R}^{\infty}_L(\Gamma)p_{\delta_0}\xrightarrow{d_{1,\gamma}}\mathcal{R}^{\infty}_L(\Gamma)p_{\delta_0}]
\isom C^{\bullet}_{\gamma}(N(\delta_0)).$$
Hence, if $N[0]\in \bold{D}^b_{\mathrm{perf}}(\mathcal{R}_L^{\infty}(\Gamma))$, then we obtain a natural isomorphism
$$\mathrm{Det}_L(N[-1])\otimes_{\mathcal{R}^{\infty}_L(\Gamma),f_{\delta_0}}L\isom 
\mathrm{Det}_L(C^{\bullet}_{\gamma}(N(\delta_0)))\isom \boxtimes_{i=0,1}\mathrm{Det}_L(\mathrm{H}^i_{\gamma}(N(\delta_0)))^{(-1)^i}.
$$ 
Moreover, if $N$ is also equipped with a commuting linear action of $\psi$ such that $C^{\bullet}_{\psi}(M)\in 
\bold{D}^b_{\mathrm{perf}}(\mathcal{R}^{\infty}_L(\Gamma))$, then we obtain a natural isomorphism 
$$\mathrm{Det}_L(C^{\bullet}_{\psi}(N))\otimes_{\mathcal{R}^{\infty}_L(\Gamma),f_{\delta_0}}L\isom 
\mathrm{Det}_L(C^{\bullet}_{\psi,\gamma}(N(\delta_0)))\isom \boxtimes_{i=0}^2\mathrm{Det}_L(\mathrm{H}^i_{\psi,\gamma}(N(\delta_0)))^{(-1)^i}.
$$ 
In particular, the isomorphism $\overline{\theta}_{\zeta}(\mathcal{R}_L(\delta)):=\theta_{\zeta}(\mathcal{R}_L(\delta_{\lambda}))\otimes_{\mathcal{R}^{\infty}_L(\Gamma),f_{\delta_0}}\mathrm{id}_L$ can be seen as the following isomorphism 
\begin{equation}\label{isom}
\overline{\theta}_{\zeta}(\mathcal{R}_L(\delta)):\boxtimes_{i=0}^2\mathrm{Det}_L(\mathrm{H}^i_{\psi,\gamma}(\mathcal{R}_L(\delta)))^{ (-1)^{i+1}}
\isom (\mathcal{R}_L^{\infty}(\Gamma)\bold{e}_{\delta_{\lambda}}, 1)\otimes_{\mathcal{R}^{\infty}_L(\Gamma),f_{\delta_0}}L
\isom (L\bold{e}_{\delta}, 1),
\end{equation}
where the last isomorphism is induced by the isomorphism 
$$\mathcal{R}^{\infty}_L(\Gamma)\bold{e}_{\delta_{\lambda}}\otimes_{\mathcal{R}_L^{\infty}(\Gamma),f_{\delta_0}}L\isom L\bold{e}_{\delta}:(\eta\bold{e}_{\delta_{\lambda}})\otimes a\mapsto af_{\delta_0}(\eta)\bold{e}_{\delta}.$$

Therefore, to verify the condition (vi) when $\mathcal{R}_L(\delta)$ is de Rham, 
we need to relate the map $\overline{\theta}_{\zeta}(\mathcal{R}_L(\delta))$ with 
the Bloch-Kato's exponential map or the dual exponential map. 

To do so, we divide into the following three cases:
\begin{itemize}
\item[(1)]$\delta\not = x^{-k}, x^{k+1}|x|$ for any $k\in \mathbb{Z}_{\geqq 0}$ (which we call 
the generic case).
\item[(2)]$\delta=x^{-k}$ for $k\geqq 0$.
\item[(3)]$\delta=x^{k+1}|x|$ for $k\geqq 0$.

\end{itemize}
We will first verify the condition (vi) in the generic case via establishing a kind of 
explicit reciprocity laws (see Proposition \ref{4.14} and Proposition \ref{4.15}). 
Then, we will verify the conditions (iv) and (v) using the generic case 
by density argument. Finally, we will prove the condition (vi) in the case (2) via direct calculations, and 
reduce the case (3) to the case (2) using the duality condition (iv). 

In the remaining parts, we freely use the results of Colmez
and Chenevier concerning 
the calculations of cohomologies $\mathrm{H}^i_{\psi,\gamma}(\mathcal{R}_L(\delta)), 
\mathrm{H}^i_{\psi,\gamma}(\mathcal{R}^{\infty}_L(\delta))$ and $\mathrm{H}^i_{\psi,\gamma}
(\mathrm{LA}(\mathbb{Z}_p, \allowbreak L)(\delta))$; see Proposition 2.1 and 
Th\'eor\`eme 2.9 of \cite{Co08} and Lemme 2.9 and Corollaire 2.11 of \cite{Ch13}.

\subsubsection{Verification of the condition $(vi)$ in the generic case}

In this subsection, we assume that $\delta$ is generic. 
Then, we have 
$$\mathrm{H}^i_{\psi,\gamma}(Lt^k\bold{e}_{\delta})=\mathrm{H}^i_{\psi,\gamma}(Ly^k\bold{e}_{\delta})=\mathrm{H}^i_{\psi,\gamma}(\mathrm{LA}(\mathbb{Z}_p, L)(\delta))=0$$ for any $k\in \mathbb{Z}_{\geqq 0}$ and $i\in \{0,1,2\}$, and 
$$\mathrm{H}^i_{\psi,\gamma}(\mathcal{R}_L(\delta))=\mathrm{H}^{i}_{\psi,\gamma}(\mathcal{R}^{\infty}_L(\delta))=0$$ for $i=0,2$ ,and 
$$\mathrm{dim}_L\mathrm{H}^1_{\psi,\gamma}(\mathcal{R}_L(\delta))=\mathrm{dim}_L\mathrm{H}^1_{\psi,\gamma}(\mathcal{R}^{\infty}_L(\delta))=1.$$

Then, $\iota_{1,\delta}:=\iota_1\otimes_{\mathcal{R}^{\infty}_L(\Gamma),f_{\delta_0}}\mathrm{id}_L$ 
(see (\ref{35f})) is the isomorphism 
\begin{equation}
(\mathrm{H}^1_{\psi,\gamma}(\mathcal{R}_L(\delta)),1)^{-1}
\isom (\mathrm{H}^1_{\gamma}(\mathcal{R}_L^{\infty}(\delta)^{\psi=1}),1)^{-1}
\end{equation}
in $\mathcal{P}_L$ induced by the isomorphism 
$$\mathrm{H}^1_{\gamma}(\mathcal{R}_L^{\infty}(\delta)^{\psi=1})\isom
 \mathrm{H}^1_{\psi,\gamma}(\mathcal{R}_L(\delta)):[x]\mapsto [x,0].$$

Then, the base change by $f_{\delta_0}$ of the isomorphism 
$$\mathrm{Det}_{\mathcal{R}_L^{\infty}(\Gamma)}(C^{\bullet}_{\psi}(\mathcal{R}_L^{\infty}(\delta_{\lambda})))^{-1}\isom \mathrm{Det}_{\mathcal{R}_L^{\infty}(\Gamma)}(\mathcal{R}_L^{\infty}(\delta_{\lambda})^{\psi=0}[0])\isom (\mathcal{R}_L^{\infty}(\Gamma)\bold{e}_{\delta_{\lambda}}, 1)$$ 
which is induced by  (\ref{39a}) , (\ref{40a}), (\ref{40f}) and  (\ref{41f}) becomes the following isomorphisms
\begin{equation}\label{45f}
(\mathrm{H}^1_{\gamma}(\mathcal{R}_L^{\infty}(\delta)^{\psi=1}),1)\xrightarrow{[x]\mapsto [(1-\varphi)x]}
(\mathrm{H}^1_{\gamma}(\mathcal{R}_L^{\infty}(\delta)^{\psi=0}),1),
\isom (L\bold{e}_{\delta},1)
\end{equation}
 where the last isomorphism is explicitly defined as follows.
For an explicit definition of this isomorphism, it is useful to use the Amice transform.
 Let $D(\mathbb{Z}_p,L):=\mathrm{Hom}^{\mathrm{cont}}_L(\mathrm{LA}(\mathbb{Z}_p,L), L)$ be the algebra of $L$-valued distributions on $\mathbb{Z}_p$, where the multiplication is defined by the convolution. By 
the theorem of Amice, we have an isomorphism of topological $L$-algebras
$$D(\mathbb{Z}_p, L)\isom \mathcal{R}^{\infty}_L:\mu\mapsto f_{\mu}(\pi):=\sum_{n\geqq 0}
\mu\left(\binom yn\right)\pi^n$$ 
( which depends on the choice of $\pi$, i.e. the choice of $\zeta$) where $\binom yn:=\frac{y(y-1)\cdots (y-n+1)}{n!}$. Then, the action of $(\varphi,\Gamma,\psi)$ on $\mathcal{R}^{\infty}_L$ induces the action on 
$D(\mathbb{Z}_p, L)$ by
$$\int_{\mathbb{Z}_p}f(y)\varphi(\mu)(y):=\int_{\mathbb{Z}_p}f(py)\mu(y), \,\,\,\int_{\mathbb{Z}_p}f(y)\psi(\mu)(y):=\int_{p\mathbb{Z}_p}f\left(\frac{y}{p}\right)\mu(y)$$
and
$$\int_{\mathbb{Z}_p}f(y)\sigma_a(\mu)(y):=\int_{\mathbb{Z}_p}f(ay)\mu(y),$$ where, for $a\in \mathbb{Z}_p^{\times}$, we define $\sigma_a\in \Gamma$ such that $\chi(\sigma_a)=a$.

Using this notion, it is easy to see that the second isomorphism in (\ref{45f}) is defined by 
$$\mathrm{H}^1_{\gamma}(\mathcal{R}^{\infty}_L(\delta)^{\psi=0})\isom L\bold{e}_{\delta}:
[f_{\mu}\bold{e}_{\delta}]\mapsto \frac{\delta(-1)}{|\Gamma_{\mathrm{tor}}|\mathrm{log}_0(\chi(\gamma))}\cdot\int_{\mathbb{Z}_p^{\times}}\delta^{-1}(y)\mu(y)\bold{e}_{\delta},$$
where we remark that we have an isomorphism 
$D(\mathbb{Z}_p^{\times}, L)\bold{e}_{\delta}\isom \mathcal{R}^{\infty}_L(\delta)^{\psi=0}
:\mu\bold{e}_{\delta}\mapsto f_{\mu}\bold{e}_{\delta}$, since one has 
$$f_{\delta_0}(\lambda)=\int_{\mathbb{Z}_p^{\times}}\delta_0^{-1}(y)\mu_{\gamma}(y)$$ 
for any $\lambda\in \mathcal{R}_L^{\infty}(\Gamma)$ and any continuous homomorphism 
$\delta_0:\mathbb{Z}_p^{\times}\rightarrow L^{\times}$, where we define $\mu_{\gamma}\in D(\mathbb{Z}_p^{\times}, L)$ by $f_{\mu_{\gamma}}(\pi)=\lambda\cdot (1+\pi)$.

For a $\Gamma$-module $N$, we define $\mathrm{H}^1(\Gamma,N):=N/N_0$, where $N_0$ is the submodule generated by the set $\{(\gamma-1)n| \gamma\in \Gamma, n\in N\}$. 
Then, we have the following canonical isomorphism 
$$\mathrm{H}^1(\Gamma,\mathcal{R}^{\infty}_L(\delta)^{\psi=1})
\isom \mathrm{H}^1_{\gamma}(\mathcal{R}_L^{\infty}(\delta)^{\psi=1}):[f\bold{e}_{\delta}]\mapsto[ |\Gamma_{\mathrm{tor}}|\mathrm{log}_0(\chi(\gamma))
p_{\Delta}(f\bold{e}_{\delta})]$$
(where ``canonical" means that this is independent of $\gamma$, i.e. is compatible with the isomorphisms $\iota_{\gamma,\gamma'}$ for any $\gamma'\in \Gamma$). Composing this with the isomorphism (\ref{45f}), we obtain an isomorphism 
\begin{equation}\label{46f}
(\mathrm{H}^1(\Gamma,\mathcal{R}_L(\delta)^{\psi=1}),1)
\isom (L\bold{e}_{\delta},1)
\end{equation}
in $\mathcal{P}_L$. 
Concerning the explicit description of this isomorphism, we obtain the following lemma.
\begin{lemma}\label{4.13}
The isomorphism $(\ref{46f})$ is induced by the following isomorphism
$$\iota_{\delta}:\mathrm{H}^1(\Gamma,\mathcal{R}^{\infty}_L(\delta)^{\psi=1})\isom 
L\bold{e}_{\delta}: [f_{\mu}\bold{e}_{\delta}]\mapsto \delta(-1)\cdot\int_{\mathbb{Z}_p^{\times}}\delta^{-1}(y)\mu(y).$$
\end{lemma}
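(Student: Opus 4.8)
\textbf{Proof plan for Lemma \ref{4.13}.}
The statement is a purely computational unwinding of the chain of isomorphisms
$(\ref{39a})$, $(\ref{40a})$, $(\ref{40f})$, $(\ref{41f})$ together with the canonical identification
$\mathrm{H}^1(\Gamma,\mathcal{R}^{\infty}_L(\delta)^{\psi=1})\isom \mathrm{H}^1_{\gamma}(\mathcal{R}^{\infty}_L(\delta)^{\psi=1})$
and the explicit second isomorphism in $(\ref{45f})$ expressed via the Amice transform. So the plan is: track a single explicit cocycle through each arrow and read off the formula. First I would fix a class in $\mathrm{H}^1(\Gamma,\mathcal{R}^{\infty}_L(\delta)^{\psi=1})$, represented by $f_{\mu}\bold{e}_{\delta}$ with $\mu\in D(\mathbb{Z}_p,L)$ and (WLOG, using genericity) $\mu$ supported on $\mathbb{Z}_p^{\times}$ so that $\psi(f_{\mu})=0$; note $f_\mu \bold e_\delta$ automatically lies in $\mathcal R^\infty_L(\delta)^{\psi=1}$ once we observe $\mathcal R^\infty_L(\delta)^{\psi=1}$ and $\mathcal R^\infty_L(\delta)^{\psi=0}$ decompose the module. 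Then I would push this class through the canonical isomorphism to $\mathrm{H}^1_{\gamma}$, getting the cocycle $|\Gamma_{\mathrm{tor}}|\log_0(\chi(\gamma))\, p_{\Delta}(f_\mu \bold e_\delta)$, and then through the first map of $(\ref{45f})$, $[x]\mapsto[(1-\varphi)x]$; since $\psi f_\mu = 0$ one has $(1-\varphi)f_\mu = f_\mu$ up to a $\psi=1$ correction that dies in $\mathrm H^1_\gamma(\mathcal R^\infty_L(\delta)^{\psi=0})$... more precisely I would verify $(1-\varphi)(f_\mu\bold e_\delta)$ and $f_\mu \bold e_\delta$ have the same image there. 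Finally I apply the last isomorphism of $(\ref{45f})$, namely $[f_\mu\bold e_\delta]\mapsto \frac{\delta(-1)}{|\Gamma_{\mathrm{tor}}|\log_0(\chi(\gamma))}\int_{\mathbb{Z}_p^\times}\delta^{-1}(y)\mu(y)\,\bold e_\delta$, and the two occurrences of $|\Gamma_{\mathrm{tor}}|\log_0(\chi(\gamma))$ cancel, leaving exactly $\delta(-1)\int_{\mathbb{Z}_p^\times}\delta^{-1}(y)\mu(y)$.

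A key intermediate point is to justify that every class in $\mathrm{H}^1(\Gamma,\mathcal{R}^{\infty}_L(\delta)^{\psi=1})$ indeed has a representative of the form $f_\mu\bold e_\delta$ with $\mu$ supported on $\mathbb Z_p^\times$: this follows because $\psi$ acts on $\mathcal R^\infty_L(\delta)$ and $\mathrm{H}^1_{\psi,\gamma}(\mathcal R^\infty_L(\delta))$ is one-dimensional in the generic case (by Chenevier's computations, Lemme 2.9 and Corollaire 2.11 of \cite{Ch13}), so $\mathrm{H}^1(\Gamma,\mathcal R^\infty_L(\delta)^{\psi=1})$ is also one-dimensional, and the element $f_{\mu_\gamma}\bold e_\delta$ with $f_{\mu_\gamma}(\pi)=\lambda\cdot(1+\pi)$ (for a suitable unit $\lambda$) gives a generator with $\mu_\gamma$ supported on $\mathbb Z_p^\times$; thus it suffices to check the formula on this generator, which reduces the whole thing to the identity $f_{\delta_0}(\lambda)=\int_{\mathbb Z_p^\times}\delta_0^{-1}(y)\mu_\gamma(y)$ recalled just above.

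The only genuine subtlety—and I expect this to be the main obstacle—is bookkeeping the twist by $\delta_0$ and the base change along $f_{\delta_0}:\mathcal R^\infty_L(\Gamma)\to L$ correctly, i.e. making sure that the identification $\mathcal R^\infty_L(\Gamma)\bold e_{\delta_\lambda}\otimes_{\mathcal R^\infty_L(\Gamma),f_{\delta_0}}L\isom L\bold e_\delta$, $(\eta\bold e_{\delta_\lambda})\otimes a\mapsto af_{\delta_0}(\eta)\bold e_\delta$, interacts with the isomorphism $D(\mathbb Z_p^\times,L)\bold e_\delta\isom \mathcal R^\infty_L(\delta)^{\psi=0}$ and with the distribution-pairing normalization so that no stray factor of $\delta(-1)$, $|\Gamma_{\mathrm{tor}}|$, or $\log_0(\chi(\gamma))$ survives or is dropped. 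Concretely I would carefully compare $\int_{\mathbb Z_p^\times}\delta^{-1}(y)\mu(y)$ with $\int_{\mathbb Z_p^\times}\delta_0^{-1}(y)\mu(y)$ and with the ``$\delta(-1)$'' coming from the $(1+\pi)^{-1}$ appearing in $(\ref{zeta})$, using the defining relations $\sigma_a(\mu)$, $\varphi(\mu)$, $\psi(\mu)$ for distributions. Apart from this normalization check, the argument is a routine diagram chase along arrows all of which have already been made explicit in \S4.1, so no new input is needed beyond the cohomology computations of Colmez and Chenevier already cited.
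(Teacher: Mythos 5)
There is a genuine gap at the step where you claim (``WLOG, using genericity'') that every class in $\mathrm{H}^1(\Gamma,\mathcal{R}^{\infty}_L(\delta)^{\psi=1})$ has a representative $f_{\mu}\bold{e}_{\delta}$ with $\mu$ supported on $\mathbb{Z}_p^{\times}$, i.e.\ with $\psi(f_{\mu})=0$. This is impossible for a nonzero element: under the normalized action used in the paper, $\psi(g\bold{e}_{\delta})=\delta(p)^{-1}\psi(g)\bold{e}_{\delta}$, so $f_{\mu}\bold{e}_{\delta}\in\mathcal{R}^{\infty}_L(\delta)^{\psi=1}$ is equivalent to $\psi(f_{\mu})=\delta(p)\,f_{\mu}$, which is incompatible with $\psi(f_{\mu})=0$ unless $f_\mu=0$. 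The parenthetical justification — that $\mathcal{R}^{\infty}_L(\delta)^{\psi=1}$ and $\mathcal{R}^{\infty}_L(\delta)^{\psi=0}$ ``decompose the module'' — is also wrong: the relevant decomposition is $\mathcal{R}^{\infty}_L(\delta)=\varphi(\mathcal{R}^{\infty}_L(\delta))\oplus\mathcal{R}^{\infty}_L(\delta)^{\psi=0}$, and $\varphi(\mathcal{R}^{\infty}_L(\delta))\neq\mathcal{R}^{\infty}_L(\delta)^{\psi=1}$. Your proposed generator $f_{\mu_{\gamma}}\bold{e}_{\delta}$ with $f_{\mu_{\gamma}}(\pi)=\lambda\cdot(1+\pi)$ lies in the $\psi=0$ summand, not in $\mathcal{R}^{\infty}_L(\delta)^{\psi=1}$; it is a generator of $\mathcal{R}^{\infty}_L(\delta)^{\psi=0}$ (via the Colmez/Amice correspondence (\ref{zeta})), not of the $\psi=1$ module whose $\mathrm{H}^1(\Gamma,-)$ you are trying to compute. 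The subsequent display ``$(1-\varphi)f_\mu=f_\mu$ up to a $\psi=1$ correction that dies in $\mathrm{H}^1_\gamma(\mathcal{R}^\infty_L(\delta)^{\psi=0})$'' inherits the same confusion: if $\psi(f_\mu)=0$ then $(1-\varphi)(f_\mu\bold e_\delta)$ does not land in the $\psi=0$ part at all.

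The correct route — which is the paper's proof — avoids changing the representative. Take an arbitrary $f_{\mu}\bold{e}_{\delta}\in\mathcal{R}^{\infty}_L(\delta)^{\psi=1}$, so $\psi(f_{\mu})=\delta(p)f_{\mu}$, and compute directly:
\[
(1-\varphi)(f_{\mu}\bold{e}_{\delta})=\bigl(f_{\mu}-\delta(p)\varphi(f_{\mu})\bigr)\bold{e}_{\delta}
=\bigl((1-\varphi\psi)f_{\mu}\bigr)\bold{e}_{\delta}.
\]
Then one applies the standard distribution identity $\int_{\mathbb{Z}_p}h(y)\,(1-\varphi\psi)\mu(y)=\int_{\mathbb{Z}_p^{\times}}h(y)\,\mu(y)$, which precisely identifies $(1-\varphi\psi)\mu$ with the restriction of $\mu$ to $\mathbb{Z}_p^{\times}$. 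Plugging this into the explicit formula for the last isomorphism of $(\ref{45f})$ (which carries the factor $\tfrac{\delta(-1)}{|\Gamma_{\mathrm{tor}}|\log_0(\chi(\gamma))}$), and using the factor $|\Gamma_{\mathrm{tor}}|\log_0(\chi(\gamma))$ coming from the canonical isomorphism $\mathrm{H}^1(\Gamma,-)\isom\mathrm{H}^1_\gamma(-)$, the two normalization factors cancel and the stated formula for $\iota_\delta$ drops out. So the cancellation of $|\Gamma_{\mathrm{tor}}|$, $\log_0(\chi(\gamma))$ and the appearance of $\delta(-1)$ are not a subtlety to be tracked separately — they are already packed into the explicit second isomorphism of $(\ref{45f})$ quoted just before the lemma, and the only new content is the one-line identity $(1-\varphi)(f_{\mu}\bold{e}_{\delta})=((1-\varphi\psi)f_{\mu})\bold{e}_{\delta}$. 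Your plan needs to be rewritten to use this computation rather than the impossible change of representative.
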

\begin{proof}
For $f_{\mu}\bold{e}_{\delta}\in \mathcal{R}_L^{\infty}(\delta)^{\psi=1}$, we have 
$(1-\varphi)(f_{\mu}\bold{e}_{\delta})=((1-\varphi\psi)f_{\mu})\cdot \bold{e}_{\delta}$. 
Then, the lemma follows from the formula
$$\int_{\mathbb{Z}_p}f(x)(1-\varphi\psi)\mu(x)=\int_{\mathbb{Z}_p^{\times}}f(x)\mu(x).$$
for $\mu\in D(\mathbb{Z}_p,L)$.

\end{proof}

Next, we furthermore assume that $\mathcal{R}_L(\delta)$ is de Rham. By the classification, it is equivalent to 
$\delta=\tilde{\delta}x^k$ for $k\in \mathbb{Z}$ and a locally constant homomorphism 
$\tilde{\delta}:\mathbb{Q}_p^{\times}\rightarrow L^{\times}$. In the generic  case, we have 
the following isomorphisms of one dimensional $L$-vector spaces.
\begin{itemize}
\item[(1)]$\mathrm{exp}^*_{\mathcal{R}_L(\delta)^*}:\mathrm{H}^1_{\psi,\gamma}(\mathcal{R}_L(\delta))\isom \bold{D}_{\mathrm{dR}}(\mathcal{R}_L(\delta))$ if $k\leqq 0$.
\item[(2)]$\mathrm{exp}_{\mathcal{R}_L(\delta)}:\bold{D}_{\mathrm{dR}}(\mathcal{R}_L(\delta))\isom 
\mathrm{H}^1_{\psi,\gamma}(\mathcal{R}_L(\delta))$ if $k\geqq 1$.

\end{itemize}

Let define $n(\delta)\in \mathbb{Z}_{\geqq 0}$ as the minimal integer such that 
$\tilde{\delta}|_{(1+p^n\mathbb{Z}_p)\cap \mathbb{Z}_p^{\times}}$ is trivial. 
Then, we have the following facts:
\begin{itemize}
\item[(1)]$n(\delta)=0$ if and only if $\mathcal{R}_L(\delta)$ is crystalline.
\item[(2)]$\varepsilon_L(W(\mathcal{R}_L(\delta)),\zeta)=1$ if $n(\delta)=0$.
\item[(3)]$\varepsilon_L(W(\mathcal{R}_L(\delta)),\zeta)=\tilde{\delta}(p)^{n(\delta)}\sum_{i\in 
(\mathbb{Z}/p^{n(\delta)}\mathbb{Z})^{\times}}\tilde{\delta}(i)^{-1}\zeta_{p^{n(\delta)}}^i$ if $n(\delta)\geqq 1$.

\item[(4)]$\varepsilon_L(W(\mathcal{R}_L(\delta)),\zeta)\cdot\varepsilon_L(W(\mathcal{R}_L(\delta)^*),\zeta)
=\tilde{\delta}(-1)$.

\end{itemize}

By definition of $\varepsilon_{L,\zeta}(\mathcal{R}_L(\delta))$ and $\varepsilon^{\mathrm{dR}}_{L,\zeta}(\mathcal{R}_L(\delta))$, and by Lemma \ref{4.13}, to verify the condition (vi), it suffices to show the following two propositions \ref{4.14} (for $k\leqq 0$) and \ref{4.15} (for $k\geqq 1$), which can be seen as a kind of explicit reciprocity laws.

\begin{prop}\label{4.14}
If $k\leqq 0$, 
then the following map 
$$ \mathrm{H}^1(\Gamma,\mathcal{R}^{\infty}_L(\delta)^{\psi=1})
\isom \mathrm{H}^1_{\psi,\gamma}(\mathcal{R}_L(\delta))
\xrightarrow{\mathrm{exp}^*_{\mathcal{R}_L(\delta)^*}} 
\bold{D}_{\mathrm{dR}}(\mathcal{R}_L(\delta))=\left(\frac{1}{t^k}L_{\infty}\bold{e}_{\delta}\right)^{\Gamma}$$ 
$($where the first isomorphism is defined by $[f\bold{e}_{\delta}]\mapsto 
[|\Gamma_{\mathrm{for}}|\mathrm{log}_0(\chi(\gamma))p_{\Delta}(f\bold{e}_{\delta}),0]$$)$ sends each element 
$[f_{\mu}\bold{e}_{\delta}]\in\mathrm{H}^1(\Gamma, \mathcal{R}^{\infty}_L(\delta)^{\psi=1})$ to

\begin{itemize}
\item[(1)]$\frac{(-1)^k}{(-k)!}\cdot\frac{\delta(-1)}{\varepsilon_L(W(\mathcal{R}_L(\delta)), \zeta)}\cdot \frac{1}{t^{k}}\cdot
\int_{\mathbb{Z}_p^{\times}}\delta^{-1}(y)\mu(y)\bold{e}_{\delta}$ if  $n(\delta)\not=0$,
\item[(2)]$\frac{(-1)^k}{(-k)!}\cdot\frac{\mathrm{det}_L(1-\varphi|\bold{D}_{\mathrm{cris}}(\mathcal{R}_L(\delta)^*))}{\mathrm{det}_L(1-\varphi|\bold{D}_{\mathrm{cris}}(\mathcal{R}_L(\delta)))}\cdot\frac{\delta(-1)}{t^{k}}\cdot\int_{\mathbb{Z}_p^{\times}}
\delta^{-1}(y)\mu(y)\bold{e}_{\delta}$ if $n(\delta)= 0.$ 
\end{itemize}

\end{prop}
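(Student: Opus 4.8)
The plan is to compute the dual exponential map $\mathrm{exp}^*_{\mathcal{R}_L(\delta)^*}$ directly on the explicit cocycle, by exploiting that, in the generic setting of §3.3, $\mathrm{exp}^*_{M^*}$ is literally the transpose of the Bloch--Kato exponential $\mathrm{exp}_{M^*}$. Write $M=\mathcal{R}_L(\delta)$ with $\delta=\tilde\delta x^k$, $k\le 0$. Since the Hodge--Tate weight of $M$ is $k\le 0$ we have $t_M=0$, so $\mathrm{exp}^*_{M^*}\colon \mathrm{H}^1_{\varphi,\gamma}(M)\xrightarrow{\sim}\bold{D}_{\mathrm{dR}}(M)=\bold{D}_{\mathrm{dR}}(M)^0$, while $M^*=\mathcal{R}_L(\delta^{-1}x|x|)$ has Hodge--Tate weight $1-k\ge 1$, so $t_{M^*}=\bold{D}_{\mathrm{dR}}(M^*)$ is one-dimensional and, genericity forcing $\bold{D}_{\mathrm{cris}}(M^*)^{\varphi=1}=0$, the exact sequence $(\ref{12e})$ gives that $\mathrm{exp}_{M^*}\colon \bold{D}_{\mathrm{dR}}(M^*)\xrightarrow{\sim}\mathrm{H}^1_{\varphi,\gamma}(M^*)$. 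By the very definition of $\mathrm{exp}^*_{M^*}$ and Proposition \ref{2.24}, the identity $[\mathrm{exp}^*_{M^*}(x),w]_{\mathrm{dR}}=\langle x,\mathrm{exp}_{M^*}(w)\rangle$ holds for all $w\in \bold{D}_{\mathrm{dR}}(M^*)$, so it suffices to evaluate the Tate pairing $\langle [f_\mu\bold{e}_\delta,0],\mathrm{exp}_{M^*}(z)\rangle$ against one convenient generator $z$ of the one-dimensional space $\bold{D}_{\mathrm{dR}}(M^*)$ and to divide by the de Rham pairing of $z$ with the target basis element $\tfrac1{t^k}\bold{e}_\delta$.

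Next I would make both sides explicit. For $\mathrm{exp}_{M^*}(z)$ I use the formula of Proposition \ref{2.23}(1): choosing $\tilde z=g\,\bold{e}_{\delta^{-1}x|x|}$ with $g\in \mathcal{R}_L^{(n)}[1/t]$ a $t$-adic primitive of $z$ (that is, $\iota_m(g)\bold{e}-z\in \bold{D}^+_{\mathrm{dif},m}(M^*)$ for all $m\ge n$), one gets $\mathrm{exp}_{M^*}(z)=[(\gamma-1)\tilde z,(\varphi-1)\tilde z]$. Inserting this into the explicit cup-product formula of Definition \ref{2.12}, composing with the evaluation $M\otimes M^*\to \mathcal{R}_L(1)$ and with the residue description of the trace isomorphism $\mathrm{H}^2_{\varphi,\gamma}(\mathcal{R}_L(1))\isom L$ (Proposition \ref{explicitdual} in the appendix), the pairing $\langle [f_\mu\bold{e}_\delta,0],\mathrm{exp}_{M^*}(z)\rangle$ reduces to a residue of the shape $\mathrm{Res}_0\!\big(f_\mu(\pi)\big((\varphi-1)g-\text{(a }\gamma\text{-correction)}\big)\tfrac{d\pi}{1+\pi}\big)$. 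Because $\psi(f_\mu\bold{e}_\delta)=f_\mu\bold{e}_\delta$, the $(\varphi-1)$-term collapses against $f_\mu$ exactly as in the proof of Lemma \ref{4.13} (via $\int_{\mathbb{Z}_p}h\,(1-\varphi\psi)\mu=\int_{\mathbb{Z}_p^{\times}}h\,\mu$), and unwinding the Colmez transform $(\ref{Col})$ and the Amice transform identifies the residue with a nonzero scalar multiple of $\int_{\mathbb{Z}_p^{\times}}\delta^{-1}(y)\mu(y)$.

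Finally I would identify the normalizing constants. The factor $\tfrac{(-1)^k}{(-k)!}=\Gamma^*(k)$ comes from the interaction of $t^{-k}$ with $\iota_n$: as $\iota_n(t)=t/p^n$ and the Hodge--Tate weight of $M$ is $k\le 0$, building the $t$-adic primitive $g$ requires $(-k)$ successive ``integrations'', producing the factorial $(-k)!$ and the sign $(-1)^k$ — the same factor occurring in $\Gamma_L(M)$, so the $\Gamma$-factor of $\varepsilon^{\mathrm{dR}}_{L,\zeta}$ is accounted for. The $\varepsilon$-constant (case $n(\delta)\ne0$) enters because the canonical $\Gamma$-fixed generator of $\bold{D}_{\mathrm{dR}}(M)=(\tfrac1{t^k}L_\infty\bold{e}_\delta)^{\Gamma}$ is the Gauss-sum element $\tfrac1{\varepsilon_L(W(M),\zeta)}\tfrac1{t^k}\bold{e}_\delta$, which is exactly $f_{M,\zeta}$ of Lemma \ref{3.4}; when $n(\delta)=0$ the module is crystalline, $\varepsilon_L(W(M),\zeta)=1$, and instead the ratio $\det_L(1-\varphi\mid\bold{D}_{\mathrm{cris}}(M^*))/\det_L(1-\varphi\mid\bold{D}_{\mathrm{cris}}(M))$ falls out of the $1-\varphi$ maps in the fundamental exact sequences $(\ref{exact})$ for $M$ and $M^*$, i.e.\ from the discrepancy, in the crystalline generic case where $\mathrm{H}^1_e=\mathrm{H}^1_f$, between pairing with $\mathrm{exp}_{M^*}(z)$ and with its ``$1-\varphi$-twisted'' representative. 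The sign $\delta(-1)$ is the usual one from $\gamma(\bold{e})\leftrightarrow\gamma^{-1}$ in the pairing, matching Lemma \ref{4.13}. The main obstacle is precisely this bookkeeping together with the residue computation: producing an explicit algebraic primitive $g$ of a ramified de Rham period and checking that its constant term under $\iota_n$ is the correct Gauss sum (this is where the cases $n(\delta)=0$ and $n(\delta)\ne0$ genuinely diverge), while tracking the powers of $p^n$ and the constants $|\Gamma_{\mathrm{tor}}|\mathrm{log}_0(\chi(\gamma))$ and $\delta(-1)$ through the cup-product and trace normalizations; for $k<0$ one may alternatively reduce to $k=0$ using the compatibility of the $\varepsilon$-isomorphism with the differential operator $\nabla$ (Proposition \ref{4.13.5}), which reproduces $\tfrac{(-1)^k}{(-k)!}$ inductively.
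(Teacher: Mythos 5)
Your overall strategy for $k=0$ — invoke the definition of $\mathrm{exp}^*_{M^*}$ as the transpose of $\mathrm{exp}_{M^*}$ under Tate duality, compute $\mathrm{exp}_{M^*}(z)$ by an explicit $t$-adic primitive via Proposition \ref{2.23}(1), pair against $[f_\mu\bold{e}_\delta,0]$ via the cup-product/residue description, and normalize by $[z,\tfrac{1}{t^k}\bold{e}_\delta]_{\mathrm{dR}}$ — is genuinely different from the paper's. The paper never computes the cup product or a primitive of a de Rham period at all for this step: it invokes Proposition 2.16 of \cite{Na14a}, which expresses $\mathrm{exp}^*_{M^*}$ directly, up to the scalar $\mathrm{log}\,\chi(\gamma)$, as the canonical map $\mathrm{H}^1_{\psi,\gamma}(\mathcal{R}_L(\delta))\to\mathrm{H}^1_\gamma(\bold{D}_{\mathrm{dif}}(\mathcal{R}_L(\delta)))$. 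With that input the computation collapses to evaluating the normalized trace $\frac{1}{[\mathbb{Q}_p(\zeta_{p^{n_0}}):\mathbb{Q}_p]}\sum_i\sigma_i(\iota_{n_0}(f_\mu\bold{e}_\delta)|_{t=0})$, i.e.\ a Gauss sum in $f_\mu(\zeta_{p^{n_0}}-1)$ — an elementary identity in which the $\psi$-eigenvalue of $f_\mu$ feeds directly into the Euler factor when $n(\delta)=0$. That is the whole proof; the $\varepsilon$-constant and the Euler factor pop out of a completely explicit character sum. Your duality route, by contrast, requires producing a primitive $\tilde z\in M^{*(n)}[1/t]$ of a ramified de Rham period and then running a cup-product residue calculation; both steps are substantially harder.

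The point at which the proposal stops short of a proof is precisely what you flag as ``the main obstacle'': producing $\tilde z$ and showing its constant terms under the $\iota_m$ are the appropriate Gauss sums, then tracking these through the residue trace. This is not bookkeeping — it is the entire analytic content, and it is harder in your formulation than in the paper's. Note also that the remark that ``the $(\varphi-1)$-term collapses against $f_\mu$ exactly as in the proof of Lemma \ref{4.13}'' is not right as stated: the cup product in Definition \ref{2.12} produces a class $[x_1\otimes\gamma(y_2)-y_1\otimes\varphi(x_2)]$, so you get $\gamma$- and $\varphi$-twists acting on $M^*\otimes M$, not the clean identity $\int_{\mathbb{Z}_p}(1-\varphi\psi)\mu=\int_{\mathbb{Z}_p^\times}\mu$ that Lemma \ref{4.13} uses on the $\mathcal{R}^\infty$ side. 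One would have to carefully disentangle the $\delta(p)$-normalizations arising both from the twist $\varphi(\bold{e}_{\delta^{-1}x|x|})=\delta(p)^{-1}\bold{e}_{\delta^{-1}x|x|}$ in the construction of $\tilde z$ and from $\psi(f_\mu)=\delta(p)f_\mu$; it is this interplay, not any canonical pairing identity, that produces the factor $\det_L(1-\varphi|\bold{D}_{\mathrm{cris}}(M^*))/\det_L(1-\varphi|\bold{D}_{\mathrm{cris}}(M))$ in case $n(\delta)=0$. Your closing observation — reducing $k<0$ to $k=0$ via the differential operator and Proposition \ref{4.13.5}, Lemma \ref{4.14.5} — is exactly the paper's reduction and is sound; but the base case $k=0$ remains unproved in the proposal.
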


\begin{proof}

Here, we prove the proposition only when $k=0$, i.e. $\delta=\tilde{\delta}$ is locally constant. 
We will prove it for general $k\leqq 0$ after some preparations on the differential operator $\partial$ 
(the proof for general $k$ will be given after Remark \ref{4.19f}). 

Hence, we assume that $k=0$. For such $\delta$, we define a map
$$g_{\mathcal{R}_L(\delta)}:\bold{D}_{\mathrm{dR}}(\mathcal{R}_L(\delta))\rightarrow \mathrm{H}^1_{\gamma}(
\bold{D}_{\mathrm{dif}}(\mathcal{R}_L(\delta))): x\mapsto [\mathrm{log}(\chi(\gamma))x],$$
which is easily seen to be isomorphism.
By Proposition 2.16 of \cite{Na14a}, one has the following commutative diagram
\begin{equation}
\begin{CD}
\mathrm{H}^1_{\psi,\gamma}(\mathcal{R}_L(\delta))
@> \mathrm{exp}^*_{\mathcal{R}_L(\delta)^*} >> \bold{D}_{\mathrm{dR}}(\mathcal{R}_L(\delta)) \\
@ V \mathrm{id} VV @ V g_{\mathcal{R}_L(\delta)} VV \\
\mathrm{H}^1_{\psi,\gamma}(\mathcal{R}_L(\delta))
@> \mathrm{can} >> \mathrm{H}^1_{\gamma}(\bold{D}_{\mathrm{dif}}(\mathcal{R}(\delta)))
\end{CD}
\end{equation}

Set $n_0:=\mathrm{max}\{n(\delta), 1\}$ if $p\not=2$, and set 
$n_0:=\mathrm{max}\{n(\delta), 2\}$ if $p=2$. Then, the image of 
$[f_{\mu}\bold{e}_{\delta}]\in \mathrm{H}^1(\Gamma,\mathcal{R}^{\infty}_L(\delta)^{\psi=1})
\isom \mathrm{H}^1_{\psi,\gamma}(\mathcal{R}_L(\delta))$ by the canonical map 
$\mathrm{can}:\mathrm{H}^1_{\psi,\gamma}(\mathcal{R}_L(\delta))
\rightarrow \mathrm{H}^1_{\gamma}(\bold{D}_{\mathrm{dif}}(\mathcal{R}(\delta)))$ 
is equal to 
$$[|\Gamma_{\mathrm{tor}}|\mathrm{log}_0(\chi(\gamma))p_{\Delta}(\iota_{n_0}(f_{\mu}\bold{e}_{\delta})))]\in  \mathrm{H}^1_{\gamma}(\bold{D}_{\mathrm{dif}}(\mathcal{R}(\delta))).$$
Hence, it suffices to calculate $g_{\mathcal{R}_L(\delta)}^{-1}([|\Gamma_{\mathrm{tor}}|\mathrm{log}_0(\chi(\gamma))p_{\Delta}(\iota_{n_0}(f_{\mu}\bold{e}_{\delta}))])$.
By definition of $g_{\mathcal{R}_L(\delta)}$, it is easy to check that we have 
\begin{multline*}
g_{\mathcal{R}_L(\delta)}^{-1}[|\Gamma_{\mathrm{tor}}|\mathrm{log}_0(\chi(\gamma))p_{\Delta}(\iota_{n_0}(f_{\mu}\bold{e}_{\delta}))])\\
=\frac{|\Gamma_{\mathrm{tor}}|\mathrm{log}_0(\chi(\gamma))}
{\mathrm{log}(\chi(\gamma))}\frac{1}{[\mathbb{Q}_p(\zeta_{p^{n_0}}):\mathbb{Q}_p]}
\sum_{i\in (\mathbb{Z}/p^{n_0}\mathbb{Z})^{\times}}\sigma_i(\iota_{n_0}(f_{\mu}\bold{e}_{\delta})|_{t=0})=:(*).
\end{multline*}

Concerning the right hand side, when $n(\delta)\geqq 1$ if $p\not=2$, or
$n(\delta)\geqq 2$ if $p=2$, 
one has the following equalities, from which the equality (1) follows in this case,
 \[
\begin{array}{ll}
(*)
&=\frac{|\Gamma_{\mathrm{tor}}|\mathrm{log}_0(\chi(\gamma))}
{\mathrm{log}(\chi(\gamma))} \frac{1}{[\mathbb{Q}_p(\zeta_{p^{n(\delta)}}):\mathbb{Q}_p]}
\sum_{i\in (\mathbb{Z}/p^{n(\delta)}\mathbb{Z})^{\times}}
\sigma_i(\iota_{n(\delta)}(f_{\mu}\bold{e}_{\delta})|_{t=0})\\
&=\frac{|\Gamma_{\mathrm{tor}}|\mathrm{log}_0(\chi(\gamma))}
{\mathrm{log}(\chi(\gamma))}\frac{p}{(p-1)}\frac{1}{p^{n(\delta)}}
\sum_{i\in (\mathbb{Z}/p^{n(\delta)}\mathbb{Z})^{\times}}
\sigma_i(\frac{1}{\delta(p)^{n(\delta)}}\int_{\mathbb{Z}_p}
\zeta_{p^{n(\delta)}}^y\mu(y)\bold{e}_{\delta})\\
&=
\frac{1}{(p\delta(p))^{n(\delta)}}
\sum_{i\in (\mathbb{Z}/p^{n(\delta)}\mathbb{Z})^{\times}}
\delta(i)\int_{\mathbb{Z}_p}\zeta_{p^{n(\delta)}}^{iy}\mu(y)\bold{e}_{\delta} \\
&=
\frac{1}{(p\delta(p))^{n(\delta)}}
\sum_{i\in (\mathbb{Z}/p^{n(\delta)}\mathbb{Z})^{\times}}
\delta(i)(\sum_{j\in \mathbb{Z}/p^{n(\delta)}\mathbb{Z}}
\zeta^{ij}_{p^{n(\delta)}}\int_{j+p^{n(\delta)}\mathbb{Z}_p}
\mu(y))\bold{e}_{\delta} \\
&=
\frac{1}{(p\delta(p))^{n(\delta)}}
\sum_{j\in \mathbb{Z}/p^{n(\delta)}\mathbb{Z}}
(\sum_{i\in (\mathbb{Z}/p^{n(\delta)}\mathbb{Z})^{\times}}
\delta(i)\zeta^{ij}_{p^{n(\delta)}})
\int_{j+p^{n(\delta)}\mathbb{Z}_p}
\mu(y)\bold{e}_{\delta} \\
&=
\frac{1}{(p\delta(p))^{n(\delta)}}
\sum_{j\in (\mathbb{Z}/p^{n(\delta)}\mathbb{Z})^{\times}}
(\sum_{i\in (\mathbb{Z}/p^{n(\delta)}\mathbb{Z})^{\times}}
\delta(i)\zeta^{ij}_{p^{n(\delta)}})
\int_{j+p^{n(\delta)}\mathbb{Z}_p}
\mu(y)\bold{e}_{\delta} \\
&=
\frac{1}{(p\delta(p))^{n(\delta)}}
(\sum_{i\in (\mathbb{Z}/p^{n(\delta)}\mathbb{Z})^{\times}}
\delta(i)\zeta^{i}_{p^{n(\delta)}})
\sum_{j\in (\mathbb{Z}/p^{n(\delta)}\mathbb{Z})^{\times}}
\delta(j)^{-1}
\int_{j+p^{n(\delta)}\mathbb{Z}_p}
\mu(y)\bold{e}_{\delta} \\
&=\varepsilon_L(W(\mathcal{R}_L(\delta)^*),\zeta)
\int_{\mathbb{Z}_p^{\times}}\delta^{-1}(y)\mu(y)\bold{e}_{\delta} \\
&=
\frac{\delta(-1)}{\varepsilon_L(W(\mathcal{R}_L(\delta)),\zeta)}
\int_{\mathbb{Z}_p^{\times}}\delta^{-1}(y)\mu(y)\bold{e}_{\delta},

\end{array}
\]
where the second equality follows form $\iota_{n(\delta)}(f_{\mu})|_{t=0}=f_{\mu}(\zeta_{p^{n(\delta)}}-1)=\int_{\mathbb{Z}_p}\zeta_{p^{n(\delta)}}^y\mu(y)$, the third equality follows from 
$\frac{|\Gamma_{\mathrm{for}}|\mathrm{log}_0(\chi(\gamma))}{\mathrm{log}(\chi(\gamma))}\frac{p}{p-1}=1$ (for any $p$), and the sixth equality follows from the fact that $(\sum_{i\in (\mathbb{Z}/p^{n(\delta)}\mathbb{Z})^{\times}}
\delta(i)\zeta^{ij}_{p^{n(\delta)}})=0$ if $p|j$, and the seventh and eighth follow from the property (4) of 
$\varepsilon$-constants listed before this proposition.

When $n(\delta)=0$, one has $n_0=1$ if $p\not=2$ and $n_0=2$ if $p=2$, 
 then one has the following equalities
\[
\begin{array}{ll}
(*)
&= \frac{1}{p^{n_0}}
\sum_{i\in (\mathbb{Z}/p^{n_0}\mathbb{Z})^{\times}}
\sigma_i(\iota_{n_0}(f_{\mu}\bold{e}_{\delta})|_{t=0})\\
&=
 \frac{1}{p^{n_0}}
\sum_{i\in (\mathbb{Z}/p^{n_0}\mathbb{Z})^{\times}}
\sigma_i(\frac{1}{\delta(p)^{n_0}}\int_{\mathbb{Z}_p}\zeta_{p^{n_0}}^y\mu(y)\bold{e}_{\delta})\\
&=\frac{1}{(p\delta(p))^{n_0}}
\sum_{i\in (\mathbb{Z}/p^{n_0}\mathbb{Z})^{\times}}
\int_{\mathbb{Z}_p}\zeta_{p^{n_0}}^{iy}\mu(y)\bold{e}_{\delta} \\
&=
\frac{1}{(p\delta(p))^{n_0}}
\sum_{i\in (\mathbb{Z}/p^{n_0}\mathbb{Z})^{\times}}
(\sum_{j\in \mathbb{Z}/p^{n_0}\mathbb{Z}}\zeta_{p^{n_0}}^{ij}\int_{j+p^{n_0}\mathbb{Z}_p}\mu(y))\bold{e}_{\delta} \\
&=
\frac{1}{(p\delta(p))^{n_0}}
\sum_{j\in \mathbb{Z}/p^{n_0}\mathbb{Z}}
(\sum_{i\in (\mathbb{Z}/p^{n_0}\mathbb{Z})^{\times}}\zeta_{p^{n_0}}^{ij})
\int_{j+p^{n_0}\mathbb{Z}_p}\mu(y)\bold{e}_{\delta},
\end{array}
\]

where the first equality follows form $\frac{|\Gamma_{\mathrm{tor}}|\mathrm{log}_0(\chi(\gamma))}
{\mathrm{log}(\chi(\gamma))}\frac{1}{[\mathbb{Q}_p(\zeta_{p^{n_0}}):\mathbb{Q}_p]}=\frac{1}{p^{n_0}}$ for any $p$.

When $p\not=2$, the last term is equal to 
$$
\frac{1}{p\delta(p)}
\left((p-1)\int_{p\mathbb{Z}_p}\mu(y) -\int_{\mathbb{Z}_p^{\times}}
\mu(y)\right)\bold{e}_{\delta}$$
since we have 
$\sum_{i\in (\mathbb{Z}/p\mathbb{Z})^{\times}}\zeta_{p}^{ij}=p-1$ if $p|j$ and 
$\sum_{i\in (\mathbb{Z}/p\mathbb{Z})^{\times}}\zeta_{p}^{ij}=-1$ if $p\not|j$.

Since $f_{\mu}\bold{e}_{\delta}\in \mathcal{R}^{\infty}(\delta)^{\psi=1}$, we have 
$\psi(f_{\mu})=\delta(p)f_{\mu}$, hence we have 
$$\int_{p\mathbb{Z}_p}\mu(y)=\int_{\mathbb{Z}_p}\psi(\mu)(y)
=\delta(p)\int_{\mathbb{Z}_p}\mu(y)=\delta(p)\left(\int_{\mathbb{Z}_p^{\times}}\mu(y)
+\int_{p\mathbb{Z}_p}\mu(y)\right),$$ 
and we have $$\int_{p\mathbb{Z}_p}\mu(y)=\frac{\delta(p)}{1-\delta(p)}\int_{\mathbb{Z}_p^{\times}}\mu(y)$$
since we have $\delta(p)\not=1$ by the generic assumption on $\delta$.

Therefore, we have
\[
\begin{array}{ll}

\frac{1}{p\delta(p)}
\left((p-1)\int_{p\mathbb{Z}_p}\mu(y) -\int_{\mathbb{Z}_p^{\times}}
\mu(y)\right)\bold{e}_{\delta}
&=
\frac{1}{p\delta(p)}
\left((p-1)\frac{\delta(p)}{1-\delta(p)}-1\right)\int_{\mathbb{Z}_p^{\times}}\mu(y)\bold{e}_{\delta} \\
&=
\frac{1}{p\delta(p)}
\frac{p\delta(p)-1}{1-\delta(p)}\int_{\mathbb{Z}_p^{\times}}\mu(y)\bold{e}_{\delta}\\
&=
\frac{1-\frac{1}{p\delta(p)}}{1-\delta(p))}\int_{\mathbb{Z}_p^{\times}}
\mu(y)\bold{e}_{\delta},

\end{array}
\]
from which we obtain the equality (2) for $p\not=2$.

When $p=2$, then the last term is equal to 
$$\frac{1}{(p\delta(p))^2}\left(2\int_{4\mathbb{Z}_2}\mu(y)-2\int_{2+4\mathbb{Z}_2}\mu(y)\right)\bold{e}_{\delta}=\frac{1}{p\delta(p)^2}\left(\int_{4\mathbb{Z}_2}\mu(y)-\int_{2\mathbb{Z}_2}\mu(y)\right)\bold{e}_{\delta}$$ since we have 
$\sum_{i\in (\mathbb{Z}/4\mathbb{Z})^{\times}}\zeta_{4}^{ij}$ is 
equal to $2$ if $j\equiv 0$ (mod $4$), is equal to $0$ if $j\equiv 1, 3$ (mod $4$),  and is 
equal to $-2$ if $j\equiv 2$ (mod $4$).
Since we have $\psi(f_{\mu})=\delta(p)f_{\mu}$, we have 
$$\int_{4\mathbb{Z}_p}\mu(y)=\int_{2\mathbb{Z}_2}\psi(\mu)(y)
=\delta(p)\int_{2\mathbb{Z}_2}\mu(y)
=\delta(p)\frac{\delta(p)}{1-\delta(p)}\int_{\mathbb{Z}^{\times}_2}\mu(y)$$
where the last equality follows from the same argument for $p\not=2$.

Therefore, we have 
\[
\begin{array}{ll}
\frac{1}{p\delta(p)^2}\left(\int_{4\mathbb{Z}_2}\mu(y)-\int_{2\mathbb{Z}_2}\mu(y)\right)\bold{e}_{\delta}
&=\frac{1}{p\delta(p)^2}\left(\delta(p)\frac{\delta(p)}{1-\delta(p)}-\frac{\delta(p)}{1-\delta(p)}\right)\int_{\mathbb{Z}_p^{\times}}\mu(y)\bold{e}_{\delta}\\
&=\frac{1}{p\delta(p)^2}\frac{2\delta(p)^2-\delta(p)}{1-\delta(p)}\int_{\mathbb{Z}_2^{\times}}\mu(y)\bold{e}_{\delta}\\
&=\frac{1-\frac{1}{p\delta(p)}}{1-\delta(p)}\int_{\mathbb{Z}_2^{\times}}\mu(y)\bold{e}_{\delta},
\end{array}
\]
from which we obtain the equality (2) for $p=2$.

\end{proof}

To prove the above proposition for general $k\leqq 0$, we need to recall and prove
some facts on the differential operator $\partial $ defined in \S2.4 of \cite{Co08}, which 
will be used to reduce the verification of the condition (vi) for general $k$ to that for 
$k=0,1$ (even for the non generic case).

Let $A$ be a $\mathbb{Q}_p$-affinoid algebra.
We define an $A$-linear differential operator $\partial:\mathcal{R}_A\rightarrow \mathcal{R}_A: 
f(\pi)\mapsto (1+\pi)\frac{df(\pi)}{d\pi}$. Let $\delta:\mathbb{Q}_p^{\times}\rightarrow A^{\times}$ be a continuous homomorphism, then $\partial$ naturally induces an $A$-linear and $(\varphi,\Gamma)$-equivariant morphism
$$\partial:\mathcal{R}_A(\delta)\rightarrow \mathcal{R}_A(\delta x): 
f(\pi)\bold{e}_{\delta}\mapsto \partial(f(\pi))\bold{e}_{\delta x},$$
which sits in the following  exact sequence
\begin{equation}\label{48f}
0\rightarrow A(\delta)\xrightarrow{a\bold{e}_{\delta}\mapsto a\bold{e}_{\delta}} \mathcal{R}_A(\delta)\xrightarrow{\partial} 
\mathcal{R}_A(\delta x)\xrightarrow{f\bold{e}_{\delta x}\mapsto 
\mathrm{Res}_0\left(f\frac{d\pi}{1+\pi}\right)\bold{e}_{\delta |x|^{-1}}} A(\delta |x|^{-1})\rightarrow 0.
\end{equation}
By this exact sequence, when $A=L$ is a finite extension of $\mathbb{Q}_p$, we immediately 
obtain the following lemma.
\begin{lemma}\label{4.12.5}
$\partial:C^{\bullet}_{\varphi,\gamma}(\mathcal{R}_L(\delta))\rightarrow 
C^{\bullet}_{\varphi,\gamma}(\mathcal{R}_L(\delta x))$ 
is quasi-isomorphism except when 
$\delta=\bold{1}, |x|$.

\end{lemma}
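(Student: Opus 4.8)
\textbf{Proof proposal for Lemma \ref{4.12.5}.}

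The plan is to read off the statement directly from the four-term exact sequence (\ref{48f}). Applying the functor $C^{\bullet}_{\varphi,\gamma}(-)$ (which is exact on short exact sequences of $(\varphi,\Gamma)$-modules over $\mathcal{R}_L$, and more generally sends the four-term exact sequence to a commutative diagram with exact rows after breaking it into two short exact sequences via the image of $\partial$), I would first split (\ref{48f}) into
$$0\rightarrow L(\delta)\rightarrow \mathcal{R}_L(\delta)\xrightarrow{\partial} \mathrm{Im}(\partial)\rightarrow 0\quad\text{and}\quad 0\rightarrow \mathrm{Im}(\partial)\rightarrow \mathcal{R}_L(\delta x)\rightarrow L(\delta|x|^{-1})\rightarrow 0,$$
where $\mathrm{Im}(\partial)=\ker\big(\mathcal{R}_L(\delta x)\to L(\delta|x|^{-1})\big)$. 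The associated long exact sequences in $(\varphi,\Gamma)$-cohomology then show that the cone of $\partial:C^{\bullet}_{\varphi,\gamma}(\mathcal{R}_L(\delta))\to C^{\bullet}_{\varphi,\gamma}(\mathcal{R}_L(\delta x))$ is built out of $C^{\bullet}_{\varphi,\gamma}(L(\delta))$ and $C^{\bullet}_{\varphi,\gamma}(L(\delta|x|^{-1}))$; concretely, $\partial$ is a quasi-isomorphism if and only if both $C^{\bullet}_{\varphi,\gamma}(L(\delta))$ and $C^{\bullet}_{\varphi,\gamma}(L(\delta|x|^{-1}))$ are acyclic (one must check that the connecting maps line up so that no cancellation can occur when only one of the two is acyclic, which is immediate from the degree shift: the first sits in degrees $[0,2]$ contributing to the kernel/cokernel of $\partial$ on $\mathrm{H}^i$, the second sits one step further along).

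Next I would invoke the classification of rank one $(\varphi,\Gamma)$-modules (Theorem \ref{2.9}) together with the cohomology computations of Colmez and Chenevier recalled just before \S4.2.1: for a continuous homomorphism $\eta:\mathbb{Q}_p^{\times}\to L^{\times}$, the complex $C^{\bullet}_{\varphi,\gamma}(\mathcal{R}_L(\eta))$ (equivalently, via the quasi-isomorphism $\Psi$, the $\psi$-version) is acyclic precisely when $\eta\neq x^{-i}$ and $\eta\neq x^{i+1}|x|$ for all $i\geqq 0$ — in fact for rank one $\eta$ the Euler characteristic is $-1$, so acyclicity is equivalent to $\mathrm{H}^0_{\varphi,\gamma}(\mathcal{R}_L(\eta))=\mathrm{H}^2_{\varphi,\gamma}(\mathcal{R}_L(\eta))=0$, and $\mathrm{H}^0\neq 0$ forces $\eta=x^{-i}$ while (by Tate duality, Theorem \ref{2.16}(4)) $\mathrm{H}^2\neq 0$ forces $\eta^{-1}x=x^{-i}$, i.e. $\eta=x^{i+1}|x|$. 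Here I only need the two specific values $\eta=\delta$ and $\eta=\delta|x|^{-1}$. So $\partial$ fails to be a quasi-isomorphism exactly when $\delta\in\{x^{-i},\,x^{i+1}|x|:i\geqq 0\}$ or $\delta|x|^{-1}\in\{x^{-i},\,x^{i+1}|x|:i\geqq 0\}$, i.e. $\delta\in\{x^{-i},\,x^{i+1}|x|,\,x^{-i}|x|,\,x^{i+1}\}$.

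Finally I would reconcile this with the stated exceptional set $\{\bold{1},|x|\}$: the point is that for the \emph{source} $\mathcal{R}_L(\delta)$ one uses the $L(\delta)$-term and for the \emph{target} $\mathcal{R}_L(\delta x)$ one uses the $L(\delta|x|^{-1})$-term, but whenever $\delta$ is a strictly positive or strictly negative power-of-$x$ twist away from $\{\bold{1},|x|\}$, the corresponding non-acyclicity of $C^{\bullet}_{\varphi,\gamma}(\mathcal{R}_L(\delta))$ is matched by a non-acyclicity of $C^{\bullet}_{\varphi,\gamma}(\mathcal{R}_L(\delta x))$ of exactly the same total dimension in the shifted degree, so $\partial$ nonetheless induces isomorphisms on all cohomology — this is the familiar fact that $\partial=t\cdot(\text{something})$ up to units and is invertible on $\mathcal{R}_L(\delta)[1/t]$, hence is a quasi-isomorphism as soon as neither $\mathrm{H}^0$ nor $\mathrm{H}^2$ of either side is "extra". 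A cleaner way to organize this last step, which I would actually use: note $\partial$ becomes an isomorphism after inverting $t$, so $\mathrm{Cone}(\partial)$ is supported on $t$-torsion, and a direct inspection (using $\iota_n$ and $\bold{D}_{\mathrm{dif}}$) shows $\mathrm{Cone}(\partial)$ is quasi-isomorphic to $\big[L(\delta)\to L(\delta x)\big]$ with the map being $a\bold{e}_\delta\mapsto 0$ composed appropriately — one checks this cone is acyclic unless $\delta=\bold{1}$ (where $\mathrm{H}^0$ survives) or $\delta=|x|$ (where, dually, an $\mathrm{H}^1$ survives), giving exactly the two exceptions. The main obstacle is bookkeeping: making sure the connecting homomorphisms in the two long exact sequences are correctly identified so that the exceptional set collapses from the naive list of four characters down to $\{\bold{1},|x|\}$; this is where one must be careful rather than clever, and where the explicit cohomology computations of \cite{Co08}, \cite{Ch13} do the real work.
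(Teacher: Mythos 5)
Your overall strategy — split the four‑term sequence (\ref{48f}) into two short exact sequences, pass to long exact sequences in $\mathrm{H}^{\bullet}_{\varphi,\gamma}$, and read off when $\partial$ is a quasi‑isomorphism from the acyclicity of the kernel and cokernel pieces — is the right one, and it is also what the author intends by ``by this exact sequence, we immediately obtain the following lemma.'' But you then apply the wrong acyclicity criterion. The kernel $L(\delta)$ and cokernel $L(\delta|x|^{-1})$ are \emph{one‑dimensional $L$‑vector spaces} equipped with $(\varphi,\Gamma)$‑actions, not rank‑one $(\varphi,\Gamma)$‑modules over $\mathcal{R}_L$. The Colmez--Chenevier computation you invoke (``$C^{\bullet}_{\varphi,\gamma}(\mathcal{R}_L(\eta))$ is acyclic iff $\eta\notin\{x^{-i},x^{i+1}|x|\}$'') is about $\mathcal{R}_L(\eta)$ and has Euler characteristic $-1$; for the finite‑dimensional module $L(\eta)$ the relevant complex is just the Koszul complex of the two commuting scalars $\eta(p)-1$ and $\eta(\chi(\gamma))-1$ acting on $L(\eta)^{\Delta}$, which has Euler characteristic $0$ and is acyclic if and only if $\eta\neq\bold{1}$. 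Plugging in $\eta=\delta$ and $\eta=\delta|x|^{-1}$ gives the exceptional set $\{\bold{1},|x|\}$ directly, with no ``naive list of four characters'' and no need for the reconciliation step you then perform.

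That reconciliation step is where the gap really sits: it is not rigorous (it does not actually establish that the connecting maps cause the extra cohomology classes to cancel), and the ``cleaner'' fallback at the end compounds the problem by mis‑identifying the cokernel of $\partial:\mathcal{R}_L(\delta)\to\mathcal{R}_L(\delta x)$ as $L(\delta x)$ rather than $L(\delta|x|^{-1})$ — these differ in the $\Gamma$‑action, as (\ref{48f}) makes explicit. Once you use the correct criterion for $L(\eta)$, the whole ``one must check the connecting maps line up'' worry disappears for the direction you actually need ($\delta\notin\{\bold{1},|x|\}\Rightarrow$ quasi‑isomorphism): if $\delta\neq\bold{1}$ then $\mathrm{H}^{\bullet}_{\varphi,\gamma}(\mathcal{R}_L(\delta))\isom\mathrm{H}^{\bullet}_{\varphi,\gamma}(\mathrm{Im}(\partial))$, and if $\delta\neq|x|$ then $\mathrm{H}^{\bullet}_{\varphi,\gamma}(\mathrm{Im}(\partial))\isom\mathrm{H}^{\bullet}_{\varphi,\gamma}(\mathcal{R}_L(\delta x))$, and $\partial$ is the composite. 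For the converse one does need a short explicit check at $\delta=\bold{1}$ and $\delta=|x|$ (e.g.\ compare $\mathrm{H}^0_{\varphi,\gamma}(\mathcal{R}_L)=L$ with $\mathrm{H}^0_{\varphi,\gamma}(\mathcal{R}_L(x))=0$), but that is straightforward and is not where your argument ran into trouble.
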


For general case, the exact sequence (\ref{48f}) induces the following canonical isomorphism 
\begin{equation}\label{49f}
\mathrm{Det}_A(C^{\bullet}_{\varphi,\gamma}(A(\delta)))\boxtimes \Delta_{A,1}(\mathcal{R}_A(\delta))^{-1}\boxtimes \Delta_{A,1}(\mathcal{R}_A(\delta x))\boxtimes 
\mathrm{Det}_A(C^{\bullet}_{\varphi,\gamma}(A(\delta|x|^{-1})))^{-1}\isom \bold{1}_A.
\end{equation}

For $\delta'=\delta, \delta |x|^{-1}$, since $A(\delta')$ is free $A$-module, the complex 
$C^{\bullet}_{\varphi,\gamma}(A(\delta')):[A(\delta')_1^{\Delta}\xrightarrow{(\gamma-1)\oplus (\varphi-1)}A(\delta')_2^{\Delta}\oplus A(\delta')_3^{\Delta}\xrightarrow{(\varphi-1)\oplus (1-\gamma)} A(\delta')_4^{\Delta}]$ (where $A(\delta')_i=A(\delta')$ for $i=1,\cdots, 4$) induces the following 
canonical isomorphism 
\begin{multline*}
\mathrm{Det}_A(C^{\bullet}_{\varphi,\gamma}(A(\delta')))=(\mathrm{Det}_A(A(\delta')_1^{\Delta})\boxtimes 
\mathrm{Det}_A(A(\delta')_3^{\Delta})^{-1})\boxtimes (\mathrm{Det}_A(A(\delta')_4^{\Delta})
\boxtimes \mathrm{Det}_A(A(\delta')_2^{\Delta})^{-1})\\
\xrightarrow{i_{\mathrm{Det}_A(A(\delta')_1^{\Delta})}\boxtimes i_{\mathrm{Det}_A(A(\delta')_4^{\Delta})}} \bold{1}_A.
\end{multline*}

Applying this isomorphism,  the isomorphism (\ref{49f}) becomes the isomorphism $\Delta_{A,1}(\mathcal{R}_A(\delta))^{-1}\boxtimes \Delta_{A,1}(\mathcal{R}_A(\delta x))\isom \bold{1}_A$, 
and then, multiplying by $\Delta_{A,1}(\mathcal{R}_A(\delta))$ on the both sides, we obtain the following isomorphism which we also denote by $\partial$
$$\partial:\Delta_{A,1}(\mathcal{R}_A(\delta))\isom \Delta_{A,1}(\mathcal{R}_A(\delta))
\boxtimes (\Delta_{A,1}(\mathcal{R}_A(\delta))^{-1}\boxtimes \Delta_{A,1}(\mathcal{R}_A(\delta x)))
\xrightarrow{i_{\Delta_{A,1}(\mathcal{R}_A(\delta))^{-1}}}\Delta_{A,1}(\mathcal{R}_A(\delta x)).$$

Taking the product of this isomorphism with the isomorphism 
$$\Delta_{A,2}(\mathcal{R}_A(\delta))\isom \Delta_{A,2}(\mathcal{R}_A(\delta x)):a\bold{e}_{\delta}\mapsto 
-a\bold{e}_{\delta x},$$ 
we obtain the isomorphism 
$$\partial:\Delta_A(\mathcal{R}_A(\delta))\isom \Delta_A(\mathcal{R}_A(\delta x)).$$
By definition, it is clear that this isomorphism is compatible with any base change 
$A\rightarrow A'$.

Concerning this isomorphism, we prove the following proposition.
\begin{prop}\label{4.13.5}
$$\varepsilon_{A,\zeta}(\mathcal{R}_A(\delta x))=\partial\circ\varepsilon_{A,\zeta}(\mathcal{R}_A(\delta )).$$

\end{prop}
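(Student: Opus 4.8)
The idea is to reduce everything to the universal cyclotomic deformation and trace through the explicit construction of $\varepsilon_{\mathcal{R}^{\infty}_A(\Gamma),\zeta}(\bold{Dfm}(-))$ given in \S4.1. First I would fix $\delta = \delta_\lambda\delta_0$ and note that, by the base change property (i) of both $\varepsilon$-isomorphisms (already verified in \S4.1) and the evident compatibility of $\partial$ with base change $\mathcal{R}^{\infty}_A(\Gamma)\to A$ via $f_{\delta_0}$, it suffices to prove the statement at the level of the universal deformation, i.e. to show
\begin{equation*}
\varepsilon_{\mathcal{R}^{\infty}_A(\Gamma),\zeta}(\bold{Dfm}(\mathcal{R}_A(\delta_{\lambda x})))=\partial\circ\varepsilon_{\mathcal{R}^{\infty}_A(\Gamma),\zeta}(\bold{Dfm}(\mathcal{R}_A(\delta_\lambda)))
\end{equation*}
under the appropriate identifications. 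Here one must first promote the isomorphism $\partial$ to one on the deformation side: applying $\bold{Dfm}(-)$ to the exact sequence (\ref{48f}) gives an exact sequence of $(\varphi,\Gamma)$-modules over $\mathcal{R}_{\mathcal{R}^{\infty}_A(\Gamma)}$, and the same formal manipulation (trivializing the $C^{\bullet}_{\varphi,\gamma}$ of the two rank zero pieces $\bold{Dfm}(A(\delta_\lambda))$ and $\bold{Dfm}(A(\delta_\lambda|x|^{-1}))$ using their explicit Koszul-type presentations) produces an isomorphism $\partial:\Delta_{\mathcal{R}^{\infty}_A(\Gamma)}(\bold{Dfm}(\mathcal{R}_A(\delta_\lambda)))\isom \Delta_{\mathcal{R}^{\infty}_A(\Gamma)}(\bold{Dfm}(\mathcal{R}_A(\delta_{\lambda x})))$ whose base change along $f_{\delta_0}$ is the $\partial$ in the statement. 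Note the $(-1)$ in the definition of $\partial$ on the $\Delta_{-,2}$ part is exactly what makes this work: the sign $\bold{e}^{\otimes r}$ bookkeeping in (\ref{23.5e}) introduces no new sign since $r_M = 1$ in all cases.

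The main computation is then to compare the two composites built from $\theta_\zeta(-)$. Recall $\theta_\zeta(\mathcal{R}_A(\delta_\lambda))$ was built as a composite of the maps (\ref{35f}), (\ref{39a}), (\ref{40a}), (\ref{40f}), (\ref{41f}), ultimately coming from the Colmez-transform exact sequence $0\to \mathcal{R}^{\infty}_A\to\mathcal{R}_A\xrightarrow{\mathrm{Col}}\mathrm{LA}(\mathbb{Z}_p,A)\to 0$ twisted by $\delta_\lambda$, together with the resolution of $\mathcal{R}^{\infty}_A(\delta_\lambda)$ by the $D_N$'s and the $\widetilde{\psi}$-trick. The key observation is that the differential operator $\partial$ interacts cleanly with all of these: on $\mathrm{LA}(\mathbb{Z}_p,A)$ the operator corresponding to $\partial$ under $\mathrm{Col}$ is multiplication by $y$ (which shifts $y^k\mapsto y^{k+1}$, hence is compatible with Chenevier's truncation $\oplus_{k=0}^N Ay^k$), and on $\mathcal{R}^{\infty}_A$ the operator $\partial$ is multiplication-compatible with the truncations $D_N=\oplus_k At^k$ (since $\partial(t^k) = k t^{k-1}\cdot\partial(t)/\ldots$ — more precisely $\partial t = 1+\pi$ and one checks $\partial$ is ``upper triangular'' on the $t$-adic filtration with isomorphism behaviour modulo $D_N$). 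Thus I would organize the proof as a commuting ladder: the exact sequence (\ref{48f}), twisted to the $\delta_\lambda$-setting and with the Colmez transform applied, maps compatibly onto itself under $\partial$ except for the two rank zero terms $A(\delta_\lambda)$, $A(\delta_\lambda|x|^{-1})$, and on these terms the trivializations $h$ and $i$ used in the definitions of $\partial$ on $\Delta_{-,1}$ and those implicit in $\theta_\zeta$ agree. Running the ladder through $C^{\bullet}_\psi(-)$ and the isomorphism (\ref{zeta}) (which is compatible with $\partial$ up to the identification $\bold{e}_{\delta_\lambda}\leftrightarrow\bold{e}_{\delta_{\lambda x}}$, matching the sign in the definition of $\partial$ on $\Delta_{-,2}$), one gets $\theta_\zeta(\mathcal{R}_A(\delta_{\lambda x})) = \partial\circ\theta_\zeta(\mathcal{R}_A(\delta_\lambda))$ in the appropriate sense, and hence the equality of the two $\varepsilon$-isomorphisms after reintroducing the common $\mathrm{Det}(C^{\bullet}_\psi)$-factor.

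An alternative, perhaps cleaner, route which I would also keep in mind: establish the proposition first for the generic characters $\delta$ using the explicit reciprocity laws (Proposition \ref{4.14}, \ref{4.15}) together with the fact that $\partial$ interpolates the Bloch-Kato exponential maps across shifts $k\mapsto k+1$ (this is the standard compatibility of $\partial$ with $\mathrm{exp}$ and $\mathrm{exp}^*$, which explains why $\partial$ appears in the reduction from general Hodge-Tate weight to $k=0,1$), and then extend to all $\delta$ by a Zariski-density argument over the rigid space parametrising the $\delta_\lambda$'s (the generic locus being dense). Concretely: both $\varepsilon_{\mathcal{R}^{\infty}_A(\Gamma),\zeta}(\bold{Dfm}(\mathcal{R}_A(\delta_{\lambda x})))$ and $\partial\circ\varepsilon_{\mathcal{R}^{\infty}_A(\Gamma),\zeta}(\bold{Dfm}(\mathcal{R}_A(\delta_\lambda)))$ are sections of the same invertible $\mathcal{R}^{\infty}_A(\Gamma)$-module, and two such sections that agree after base change along a Zariski-dense set of points must agree (using that $\mathcal{R}^{\infty}_A(\Gamma)$, being a product of integral domains after the disjoint-disc decomposition, has the property that a function vanishing on a dense set vanishes). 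The main obstacle in either approach is purely bookkeeping: keeping track of all the sign conventions (the $(-1)^r$ in the commutativity constraint of $\mathcal{P}_R$, the $i_{(L,r)^{-1}} = (-1)^r i_{(L,r)}$ relation, the explicit sign in $\partial$ on $\Delta_{-,2}$, and the degree shifts in the cone constructions) so that the ladder genuinely commutes on the nose rather than up to a sign; the first approach requires care in threading $\partial$ through the $\widetilde{\psi}$-modification and the Chenevier truncations, while the second requires care in checking that $\partial$ is compatible with $\mathrm{exp}$ and $\mathrm{exp}^*$ at the level of the explicit formulae of Proposition \ref{2.23}.
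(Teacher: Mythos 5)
Your second route (density argument) is actually the one the paper follows, but there is a crucial difference that your version would hit a circularity wall. You propose to use the explicit reciprocity laws, Propositions~\ref{4.14} and~\ref{4.15}, on a dense set of generic de~Rham points together with the $\partial$-compatibility of the de~Rham $\varepsilon$-isomorphism (Lemma~\ref{4.14.5}) to get the identity there, and then to spread it out by Zariski density. But in the paper, Propositions~\ref{4.14} and~\ref{4.15} for general Hodge--Tate weight $k$ are themselves \emph{derived from} Proposition~\ref{4.13.5} (together with Lemma~\ref{4.14.5}): the paper proves them directly only for the base cases $k=0$ and $k=1$ and then bootstraps with $\partial$. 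Moreover, Lemma~\ref{4.14.5} requires $k\neq 0$, so you cannot even start the bootstrap from the $k=0$ base case to get $k=1$ without an independent argument. Your route, as written, is therefore circular.

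What the paper actually does is still a density argument, but it restricts to the full generic locus $S_0$ (not the generic de~Rham locus) and replaces the explicit reciprocity laws with Lemma~\ref{4.13}, which is a purely \emph{constructional} description of the $\varepsilon$-isomorphism at generic points: $\varepsilon_{L,\zeta}(\mathcal{R}_L(\delta))$ is identified with $\iota_\delta:[f_\mu\bold{e}_\delta]\mapsto \delta(-1)\int_{\mathbb{Z}_p^\times}\delta^{-1}(y)\mu(y)\bold{e}_\delta$. This involves no Bloch--Kato exponential at all, only the Amice/Colmez transform picture already built into the definition of $\theta_\zeta$. The required commutativity with $\partial$ then reduces to the one-line formula $\int_{\mathbb{Z}_p}f(y)\,\partial(\mu)(y)=\int_{\mathbb{Z}_p}yf(y)\,\mu(y)$, i.e.\ $\partial$ is multiplication by $y$ on distributions. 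This is both logically prior to the reciprocity laws and considerably shorter. Your first (direct ladder) route is not wrong in principle, but you understate its difficulty: threading $\partial$ through the $D_N$-truncations, the $\widetilde\psi$-modification, and the Chenevier truncations along the Colmez-transform sequence would require checking that the deformation-side trivializations of $C^\bullet_{\varphi,\gamma}$ of the rank-zero kernel and cokernel of $\partial$ match the implicit choices in $\theta_\zeta$ for arbitrary (in particular non-generic) $\delta_\lambda$, and this is genuinely more than bookkeeping; the paper avoids all of it by passing to generic points where the $\theta_\zeta$ construction collapses to $\iota_\delta$.
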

 \begin{proof}
 The proof of this proposition is a typical density argument, which will be used several times later.
 
 Define the unramified homomorphism 
$\delta_{Y}:\mathbb{Q}_p^{\times}\rightarrow \Gamma(\mathbb{G}_m^{\mathrm{an}},
\mathcal{O}_{\mathbb{G}_m^{\mathrm{an}}})^{\times}$ by $\delta_Y(p):=Y$ (where 
$Y$ is the parameter of $\mathbb{G}_m^{\mathrm{an}}$), then $\mathcal{R}_A(\delta)$ is 
obtained as a base change of the ``universal" rank one $(\varphi,\Gamma)$-module 
$\bold{Dfm}(\mathcal{R}_{\mathbb{G}_m^{\mathrm{an}}}(\delta_Y))$ over 
$\mathcal{R}_{X\times \mathbb{G}_m^{\mathrm{an}}}$ ($X$ is the rigid analytic space associated to 
$\mathbb{Z}_p[[\Gamma]]$). Since the isomorphism 
$\partial:\Delta_A(\mathcal{R}_A(\delta))\isom \Delta_A(\mathcal{R}_A(\delta x))$ is compatible 
with any base change, it suffices to show the proposition for $\bold{Dfm}(\mathcal{R}_{\mathbb{G}_m^{\mathrm{an}}}(\delta_Y))$. Since 
 $X\times \mathbb{G}_m^{\mathrm{an}}$ is reduced, it suffices to show it for 
the Zariski dense subset $S_0$ of $X\times \mathbb{G}_m^{\mathrm{an}}$ defined by 
 $$S_0:=\{(\delta_0,\lambda)\in X(L)\times \mathbb{G}_m^{\mathrm{an}}(L)| 
 L \text{ is a finite extension of } \mathbb{Q}_p, \delta:=\delta_{\lambda}\delta_0 
 \text{ is generic } \}.$$ 
 For any $(\delta_0,\lambda)$ in $S_0(L)$, $\varepsilon_{L,\zeta}(\mathcal{R}_L(\delta))$ corresponds to the isomorphism 
 $$\iota_{\delta}:\mathrm{H}^1(\Gamma, \mathcal{R}^{\infty}_L(\delta)^{\psi=1})\isom L\bold{e}_{\delta}
 :[f_{\mu}\bold{e}_{\delta}]\mapsto \delta(-1)\cdot
 \int_{\mathbb{Z}_p^{\times}}\delta^{-1}(y)\mu(y)\bold{e}_{\delta}$$ 
 by Lemma \ref{4.13} and by the arguments before this lemma. Then, the equality 
 $\varepsilon_{L,\zeta}(\mathcal{R}_L(\delta x))=\partial\circ\varepsilon_{L,\zeta}(\mathcal{R}_L(\delta ))$ 
 is equivalent to the commutativity of the following diagram
 \begin{equation*}
  \begin{CD}
\mathrm{H}^1(\Gamma,\mathcal{R}^{\infty}_L(\delta)^{\psi=1}) @> \iota_{\delta} >> L\bold{e}_{\delta}  \\
@V \partial VV @V \bold{e}_{\delta}\mapsto -\bold{e}_{\delta x} VV \\
\mathrm{H}^1(\Gamma, \mathcal{R}^{\infty}_L(\delta x)^{\psi=1}) @> \iota_{\delta x} >> L\bold{e}_{\delta x} .
\end{CD}
\end{equation*}

Finally, this commutativity follows from the formula
$$\int_{\mathbb{Z}_p}f(y)\partial(\mu)(y)=\int_{\mathbb{Z}_p}yf(y)\mu(y)$$
 for any $f(y)\in \mathrm{LA}(\mathbb{Z}_p, L)$, which finishes to prove 
 the proposition.
 \end{proof}
 
 We next prove the compatibility of $\partial$ with the de Rham $\varepsilon$-isomorphism
 $\varepsilon_{L,\zeta}^{\mathrm{dR}}(\mathcal{R}_L(\delta))$ for de Rham 
 rank one $(\varphi,\Gamma)$-modules $\mathcal{R}_L(\delta)$ 
 under a condition on the Hodge-Tate
 weight of $\mathcal{R}_L(\delta)$ as below.
 \begin{lemma}\label{4.14.5}
 Let $\mathcal{R}_L(\delta)$ be a de Rham $(\varphi,\Gamma)$-module $($here we don't assume 
 that $\delta$ is generic$)$. 
 If the Hodge-Tate weight of $\mathcal{R}_L(\delta)$ is not zero, i.e. we have 
 $\delta=\tilde{\delta} x^k$  such that $k\not=0$, then we have the equality 
 $$\varepsilon_{L,\zeta}^{\mathrm{dR}}(\mathcal{R}_L(\delta x))
 =\partial\circ\varepsilon_{L,\zeta}^{\mathrm{dR}}(\mathcal{R}_L(\delta)).$$

 \end{lemma}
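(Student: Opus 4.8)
The plan is to reduce the statement to a purely linear-algebraic compatibility between the determinant-level differential operator $\partial$ and the constituents $\theta_L(M)$, $\theta_{\mathrm{dR},L}(M,\zeta)$ and $\Gamma_L(M)$ that define $\varepsilon^{\mathrm{dR}}_{L,\zeta}(M)$. Writing $\delta=\tilde\delta x^k$ with $k\neq 0$ and $M=\mathcal{R}_L(\delta)$, $M'=\mathcal{R}_L(\delta x)=\mathcal{R}_L(\tilde\delta x^{k+1})$, the exact sequence (\ref{48f}) reads, since $k\neq 0$ forces $\delta\neq \mathbf{1},|x|$ (and also $\delta x\neq \mathbf{1},|x|$ unless $k=-1$, which is the one case needing separate care), as follows: by Lemma \ref{4.12.5} the map $\partial\colon C^\bullet_{\varphi,\gamma}(M)\to C^\bullet_{\varphi,\gamma}(M')$ is a quasi-isomorphism except possibly when $k=-1$. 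So first I would dispose of the generic shape of the argument assuming $\partial$ is a quasi-isomorphism, and then handle $k=-1$ using the full four-term sequence (\ref{48f}) with the auxiliary terms $L(\delta)$ and $L(\delta|x|^{-1})$, exactly as in the construction of the determinant-level $\partial$ preceding Proposition \ref{4.13.5}.

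The key steps, in order, are: (i) identify how $\partial$ acts on Hodge–Tate/de Rham data. On $\bold{D}_{\mathrm{dif},n}$ the operator $\partial$ is $(1+\pi)\frac{d}{d\pi}$, which after $\iota_n$ becomes essentially $\frac{d}{dt}$ up to the unit $\zeta_{p^n}\exp(t/p^n)$; concretely $\partial$ carries $\frac{1}{t^{k}}\bold{e}_\delta$ to $\frac{-k}{t^{k+1}}\bold{e}_{\delta x}$ (plus lower-order-in-$1/t$ terms that do not affect the leading term), and it shifts Hodge–Tate weight from $k$ to $k+1$, so $h_{M'}=h_M+1$ while the underlying Weil–Deligne representation $W(M)$ is unchanged up to the $|x|$-twist, hence $\varepsilon_L(W(M'),\zeta)=\varepsilon_L(W(M),\zeta)$ (property (4) of $\varepsilon$-constants plus unramifiedness of $|x|$). (ii) Compute the ratio of gamma factors: since $\partial$ shifts $\mathrm{gr}^{-r}\bold{D}_{\mathrm{dR}}$ from degree $-k$ to $-(k+1)$, one gets $\Gamma_L(M')/\Gamma_L(M)=\Gamma^*(k)/\Gamma^*(k+1)$. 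Using $\Gamma^*(k+1)=k\cdot\Gamma^*(k)$ for $k\geq 1$ and $\Gamma^*(k)=(-1)\cdot(-k)\cdot\Gamma^*(k+1)$ (check: for $k\leq -1$, $\Gamma^*(k)=(-1)^k/(-k)!$ and $\Gamma^*(k+1)=(-1)^{k+1}/(-k-1)!$, so $\Gamma^*(k)/\Gamma^*(k+1)=-1/(-k)$), this precisely matches the normalizing factor $-k$ (resp. $-1/k$) appearing in $\partial$ on $f_{M,\zeta}$ via the $1/t^{h_M}\mapsto 1/t^{h_M+1}$ shift and the sign $-1$ built into $\Delta_{A,2}(\mathcal{R}_A(\delta))\isom\Delta_{A,2}(\mathcal{R}_A(\delta x))\colon a\bold{e}_\delta\mapsto -a\bold{e}_{\delta x}$. (iii) Check that $\theta_L(M)$ is compatible with $\partial$: because $\partial$ induces an isomorphism on cohomology (generic case) or fits in (\ref{48f}) with torsion-free auxiliary terms whose determinant contributions are trivialized canonically, the acyclic-trivialization machinery of \S3.1 together with the explicit identification of $\partial$ on $\bold{D}_{\mathrm{cris}}$, $t_M$, $\bold{D}_{\mathrm{dR}}^0$ gives the needed commutativity; here one uses that $\partial$ is an isomorphism $\bold{D}_{\mathrm{cris}}(M)[1/t]$-level compatibly with $\varphi$ and that on $\bold{D}_{\mathrm{dR}}$ it is multiplication by the unit times the weight-shift, so the exact sequence (\ref{exact2}) for $M$ maps isomorphically to that for $M'$.

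The main obstacle I expect is bookkeeping the constants and signs so that the factor $-k$ (or $-1/k$ for negative $k$) produced by $\partial$ on $f_{M,\zeta}$, the sign $-1$ in the map $\Delta_{L,2}(M)\to\Delta_{L,2}(M')$, and the ratio of gamma factors $\Gamma^*(k)/\Gamma^*(k+1)$ combine to exactly $1$; the identities $\Gamma^*(r)\Gamma^*(1-r)=(-1)^{r-1}$ for $r\geq1$ and $(-1)^r$ for $r\leq0$ (used already in (\ref{26.111})) are the right tool, but one must be careful whether $k\geq 1$ or $k\leq -1$, and in the latter range whether $k=-1$ (so $\delta x$ has Hodge–Tate weight $0$ and $\varepsilon^{\mathrm{dR}}$ involves the crystalline correction terms, exactly as in Proposition \ref{4.14}(2)). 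A clean way to organize this is to follow the proof of Lemma \ref{3.8}(1) verbatim: reduce the claim to the three separate compatibilities $\Gamma_L(-)$, $\theta_L(-)'$, $\theta_{\mathrm{dR},L}(-,\zeta)'$ with $\partial$ (using the $\varphi^{-1}$-variant to keep the crystalline correction uniform), prove each by the explicit formulas above, and note that since $\partial$ is defined to be compatible with base change one may even, if desired, deduce the case $k=-1$ from the generic case by a density argument over the family $\bold{Dfm}(\mathcal{R}_{\mathbb{G}_m^{\mathrm{an}}}(\delta_Y))$ as in Proposition \ref{4.13.5}, comparing both sides after specialization at generic de Rham points. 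I would carry out the three explicit compatibilities first, since they are the substance, and invoke the density reduction only if the $k=-1$ sign chase proves delicate.
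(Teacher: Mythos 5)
Your overall strategy matches the paper's: break $\varepsilon^{\mathrm{dR}}_{L,\zeta}(-)$ into its three constituents $\Gamma_L(-)$, $\theta_L(-)$, $\theta_{\mathrm{dR},L}(-,\zeta)$, identify $\partial$ on $\bold{D}_{\mathrm{dR}}$ as $\frac{a}{t^k}\bold{e}_\delta\mapsto -k\,\frac{a}{t^{k+1}}\bold{e}_{\delta x}$, note that the Weil--Deligne $\varepsilon$-constant is unchanged, and absorb the factor $-k$ together with the built-in sign on $\Delta_{L,2}$ into the identity $k\cdot\Gamma^*(k)=\Gamma^*(k+1)$ for $k\neq 0$. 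The one place you overthink it: the worry about $k=-1$ is unfounded. Lemma \ref{4.12.5} asserts $\partial:C^\bullet_{\varphi,\gamma}(\mathcal{R}_L(\delta))\to C^\bullet_{\varphi,\gamma}(\mathcal{R}_L(\delta x))$ is a quasi-isomorphism precisely when $\delta\neq\mathbf{1},|x|$ (the obstruction in the four-term sequence (\ref{48f}) sits in $A(\delta)$ and $A(\delta|x|^{-1})$, so the condition is on $\delta$, not on $\delta x$). For $\delta=\tilde\delta x^k$ with $\tilde\delta$ locally constant and $k\neq 0$, $\delta$ cannot be $\mathbf{1}$ or $|x|$ since both are finite-order on $\mathbb{Z}_p^\times$ while $\delta|_{\mathbb{Z}_p^\times}$ has an infinite-order factor $x^k$; hence $\partial$ is a quasi-isomorphism for all $k\neq 0$ including $k=-1$, and no separate treatment or density fallback is required. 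A minor divergence: you propose running the argument through the primed variants $\theta_L(-)'$, $\theta_{\mathrm{dR},L}(-,\zeta)'$ (the $\varphi^{-1}$-normalization used in Lemma \ref{3.8}); the paper's proof works directly with the unprimed versions, checking commutativity of the two diagrams built from (\ref{exact2}) via Proposition \ref{2.23}. Either normalization works; the primed version is not needed here since the crystalline correction does not enter the single-step $\partial$-shift.
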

 
 \begin{proof}
Since one has $\bold{D}_{\mathrm{dR}}(\mathcal{R}_L(\delta))
 =\left(L_{\infty}\frac{1}{t^k}\bold{e}_{\delta}\right)^{\Gamma}$ and $\partial(g(t))=\frac{dg(t)}{dt}$ for $g(t)\in L_{\infty}((t))$, 
 the differential operator $\partial$ naturally induces an isomorphism
 $$\partial:\bold{D}_{\mathrm{dR}}(\mathcal{R}_L(\delta))\rightarrow \bold{D}_{\mathrm{dR}}(\mathcal{R}_L(\delta x)): \frac{a}{t^k}\bold{e}_{\delta}\mapsto (-k)\frac{a}{t^{k+1}}\bold{e}_{\delta x}$$
 under the condition $k\not=0$. 
Hence, by definition of 
 of $\varepsilon_{L,\zeta}^{\mathrm{dR}}(M)$ using the isomorphisms $\theta_{L}(M)$ and $\theta_{\mathrm{dR},L}(M,\zeta)$ and the constant $\Gamma_L(M)$ 
 in \S 3.2, it suffices to show the following two equalities:
 \begin{itemize}
 \item[(1)]$\theta_L(\mathcal{R}_L(\delta x))=\partial\circ\theta_L(\mathcal{R}_L(\delta))$,
 \item[(2)]$\Gamma_L(\mathcal{R}_L(\delta))\cdot\partial\circ\theta_{\mathrm{dR},L}(\mathcal{R}_L(\delta),\zeta)=\Gamma_L(\mathcal{R}_L(\delta x))\cdot\theta_{\mathrm{dR},L}(\mathcal{R}_L(\delta x),\zeta)\circ \partial.$
 \end{itemize}
  We first prove the equality (2). Since one has $\Gamma_L(\mathcal{R}_L(\delta))=\Gamma^*(k)$ and 
  $\Gamma_L(\mathcal{R}_L(\delta x))=\Gamma^*(k+1)$, it suffices to show that the following diagram is commutative:
\begin{equation*}
\begin{CD}
\mathcal{L}_{L}(\mathcal{R}_L(\delta))=L\bold{e}_{\delta} @> \Gamma^*(k)\cdot f_{\mathcal{R}_L(\delta),\zeta}>>
\bold{D}_{\mathrm{dR}}(\mathcal{R}_L(\delta))\\
@VV \bold{e}_{\delta}\mapsto -\bold{e}_{\delta x} V @VV \partial V\\
\mathcal{L}_{L}(\mathcal{R}_L(\delta x))=L\bold{e}_{\delta x} @> \Gamma^*(k+1)\cdot f_{\mathcal{R}_L(\delta x),\zeta} >>
\bold{D}_{\mathrm{dR}}(\mathcal{R}_L(\delta x)),
 \end{CD}
 \end{equation*}
where the map $f_{\mathcal{R}_L(\delta'),\zeta}$ (for $\delta'=\delta, \delta x$) is defined in Lemma \ref{3.4}. 
This commutativity is obvious by definition of $f_{\mathcal{R}_L(\delta_0),\zeta}$ since one has 
$\varepsilon_L(W(\mathcal{R}_L(\delta)),\zeta)=\varepsilon_L(W(\mathcal{R}_L(\delta x)),\zeta)$ (since one has a natural 
isomorphism $\bold{D}_{\mathrm{pst}}(\mathcal{R}_L(\delta))\isom \bold{D}_{\mathrm{pst}}(\mathcal{R}_L(\delta x)):\frac{a}{t^k}\bold{e}_{\delta}\mapsto \frac{a}{t^{k+1}}\bold{e}_{\delta x}$) and 
$k\cdot \Gamma^*(k)=\Gamma^*(k+1)$ for $k\not=0$. 
We next show the equality (1). Under the assumption that $k\not=0$,  it is easy to see that 
$\partial$ induces the isomorphisms
$$\bold{D}_{\mathrm{dR}}(\mathcal{R}_L(\delta))\isom \bold{D}_{\mathrm{dR}}(\mathcal{R}_L(\delta x)), \,\,
\bold{D}_{\mathrm{dR}}(\mathcal{R}_L(\delta))^0\isom \bold{D}_{\mathrm{dR}}(\mathcal{R}_L(\delta x))^0$$ and 
$$\bold{D}_{\mathrm{cris}}(\mathcal{R}_L(\delta))\isom \bold{D}_{\mathrm{cris}}(\mathcal{R}_L(\delta x)), \,\,\,\mathrm{H}^i_{\varphi,\gamma}(\mathcal{R}_L(\delta))\isom \mathrm{H}^i_{\varphi,\gamma}(\mathcal{R}_L(\delta x))$$ for any $i=0,1,2$  by Lemma \ref{4.12.5}. Hence, by definition of $\theta'_L(\mathcal{R}_L(\delta))$, it suffices to show 
that the following two diagrams are commutative for $M=\mathcal{R}_L(\delta)$:
\begin{equation}\label{10}
\begin{CD}
\mathrm{H}^0_{\varphi,\gamma}(M)@>>>\bold{D}_{\mathrm{cris}}(M)@>>> \bold{D}_{\mathrm{cris}}(M)\oplus t_M@>>> \mathrm{H}^1_{\varphi,\gamma}(M) \\
@VV \partial V @VV\partial V @VV\partial V@VV \partial V\\
\mathrm{H}^0_{\varphi,\gamma}(M(x))@>>>\bold{D}_{\mathrm{cris}}(M(x))@>>> \bold{D}_{\mathrm{cris}}(M(x))\oplus t_{M(x)}@>>> \mathrm{H}^1_{\varphi,\gamma}(M(x))
\end{CD}
\end{equation}
 and 
\begin{equation}\label{11}
\begin{CD}
\mathrm{H}^1_{\varphi,\gamma}(M)@>>>\bold{D}_{\mathrm{cris}}(M^*)^{\vee}\oplus 
\bold{D}_{\mathrm{dR}}(M)^0@>>> \bold{D}_{\mathrm{cris}}(M)^{\vee}@>>> \mathrm{H}^2_{\varphi,\gamma}(M)\\
@VV \partial V@VV(-\partial ^{\vee})\oplus \partial V @VV -\partial ^{\vee}V @VV \partial V\\
\mathrm{H}^1_{\varphi,\gamma}(M(x))@>>>\bold{D}_{\mathrm{cris}}(M(x)^*)^{\vee}\oplus 
\bold{D}_{\mathrm{dR}}(M(x))^0@>>> \bold{D}_{\mathrm{cris}}(M(x)^*)^{\vee}@>>> \mathrm{H}^2_{\varphi,\gamma}(M(x)),
\end{CD}
\end{equation}
where $\partial ^{\vee}$ is the dual of $\partial:\bold{D}_{\mathrm{cris}}(M(x)^*)=
\bold{D}_{\mathrm{cris}}(\mathcal{R}_L(\delta^{-1}|x|))\isom \bold{D}_{\mathrm{cris}}(\mathcal{R}_L(\delta^{-1}x|x|))=\bold{D}_{\mathrm{cris}}(M^*)$.
For the commutativity of the diagram (\ref{10}), the only non trivial part is the commutativity of 
the diagram
\begin{equation*}
\begin{CD}
\bold{D}_{\mathrm{cris}}(M)\oplus t_M@> \mathrm{exp}_{M,f}\oplus \mathrm{exp}_M >> \mathrm{H}^1_{\varphi,\gamma}(M) \\
@V \partial VV @ V \partial VV \\
\bold{D}_{\mathrm{cris}}(M(x))\oplus t_{M(x)}@> \mathrm{exp}_{M(x),f}\oplus \mathrm{exp}_{M(x)} >> \mathrm{H}^1_{\varphi,\gamma}(M(x)),
\end{CD}
\end{equation*}
but this commutativity easily follows from Proposition \ref{2.23}. 
Using the commutativity of (\ref{10}) for $M=\mathcal{R}_L(\delta^{-1}|x|)$, to prove the commutativity of (\ref{11}), it suffices to show the commutativities of the following diagrams:
\begin{equation}\label{12}
\begin{CD}
\bold{D}_{\mathrm{dR}}(M) @>>> \bold{D}_{\mathrm{dR}}(M^*)^{\vee}\\
@V \partial VV @ V -\partial^{\vee} VV \\
\bold{D}_{\mathrm{dR}}(M(x)) @>>> \bold{D}_{\mathrm{dR}}(M(x)^*)^{\vee}
\end{CD}
\end{equation}
and 
\begin{equation}\label{13}
\begin{CD}
\mathrm{H}^i_{\varphi,\gamma}(M) @>>> \mathrm{H}^{2-i}_{\varphi,\gamma}(M^*)^{\vee}\\
@V \partial VV @ V -\partial^{\vee} VV \\
\mathrm{H}^i_{\varphi,\gamma}(M(x)) @>>> \mathrm{H}^{2-i}_{\varphi,\gamma}(M(x)^*)^{\vee},
\end{CD}
\end{equation}
where the horizontal arrows are isomorphisms obtained by (Tate) duality. 
Since the commutativity of (\ref{12}) is easy to check, here we only prove the commutativity of 
(\ref{13}). Moreover, we only prove it for $i=2$ since other cases are proved in the same way. For $i=2$, it suffices to show the equality 
$$[\partial(f)g\bold{e}_1]=-[f\partial(g)\bold{e}_1]\in \mathrm{H}^2_{\varphi,\gamma}(\mathcal{R}_L(1))$$
for any $[f\bold{e}_{\delta}]\in \mathrm{H}^2_{\varphi,\gamma}(\mathcal{R}_L(\delta))$ and 
$g\bold{e}_{\delta^{-1}|x|}\in \mathrm{H}^0_{\varphi,\gamma}(\mathcal{R}_L(\delta^{-1}|x|))$. 
Since we have $\partial(fg)=\partial(f)g+f\partial(g)$, the equality follows from the fact that we have 
$[\partial(h)\bold{e}_1]=0$ in $\mathrm{H}^{2}_{\varphi,\gamma}(\mathcal{R}_L(1))$ for any $h\in \mathcal{R}_L$.

  \end{proof}
  
  \begin{rem}\label{4.19f}
  Proposition \ref{4.13.5} and Lemma  \ref{4.14.5} and the following proof of Proposition \ref{4.14} 
  should be generalized to a more general setting. Let $M$ be a de Rham $(\varphi,\Gamma)$-module 
  over $\mathcal{R}_L$ of any rank. In $\S$3 of \cite{Na14a}, we developed the theory of Perrin-Riou's 
  big exponential map for de Rham $(\varphi,\Gamma)$-modules, which is a $\mathcal{R}^{\infty}_L(\Gamma)$-linear 
  map $\mathrm{H}^1_{\psi,\gamma}(\bold{Dfm}(M))\rightarrow \mathrm{H}^1_{\psi,\gamma}(\bold{Dfm}(\bold{N}_{\mathrm{rig}}(M)))$ 
  where $\bold{N}_{\mathrm{rig}}(M)\subseteq M[1/t]$ is a de Rham $(\varphi,\Gamma)$-module equipped with a natural action of 
  the differential operator $\partial_M$ defined by Berger. This big exponential map is defined using the operator $\partial_M$. 
  Our generalization of Perrin-Riou's $\delta(V)$-theorem (Theorem 3.21 \cite{Na14a}) states that this map gives an isomorphism 
  $$\mathrm{Exp}_M:\Delta_{\mathcal{R}^{\infty}_L(\Gamma)}(\bold{Dfm}(M))\isom \Delta_{\mathcal{R}^{\infty}_L(\Gamma)}(\bold{Dfm}(\bold{N}_{\mathrm{rig}}(M))).$$
  Therefore, as a generalization of Proposition \ref{4.13.5}, it seems to be natural to conjecture that the conjectural 
  $\varepsilon$-isomorphisms should satisfy 
  $$\varepsilon_{\mathcal{R}^{\infty}_L(\Gamma), \zeta}(\bold{Dfm}(\bold{N}_{\mathrm{rig}}(M)))=
  \mathrm{Exp}_M\circ
  \varepsilon_{\mathcal{R}^{\infty}_L(\Gamma),\zeta}(\bold{Dfm}(M)), $$ 
  which we want to study in future works.
  
  \end{rem}
  Using these results, we prove Proposition \ref{4.14} for general $k\leqq 0$ as follows.
  
  \begin{proof}(of Proposition \ref{4.14} for general $k\leqq 0$)
  
  Let $\delta=\tilde{\delta}x^{k}$ be a generic homomorphism such that $k\leqq 0$. By the arguments before Proposition \ref{4.14}, it suffices to show the 
  equality $\varepsilon_{L,\zeta}(\mathcal{R}_L(\delta))=\varepsilon^{\mathrm{dR}}_{L,\zeta}(\mathcal{R}_L(\delta))$. This equality follows from the equality
  $\varepsilon_{L,\zeta}(\mathcal{R}_L(\tilde{\delta}))=\varepsilon^{\mathrm{dR}}_{L,\zeta}(\mathcal{R}_L(\tilde{\delta}))$ proved in Proposition \ref{4.14} for $k=0$, since we have 
  $$\varepsilon_{L,\zeta}(\mathcal{R}_L(\delta))=\partial^k\circ\varepsilon_{L,\zeta}(\mathcal{R}_L(\tilde{\delta}))\,\,\text{ and }\,\,
  \varepsilon^{\mathrm{dR}}_{L,\zeta}(\mathcal{R}_L(\delta))=\partial^k\circ\varepsilon^{\mathrm{dR}}_{L,\zeta}(\mathcal{R}_L(\tilde{\delta}))$$ by Proposition \ref{4.13.5} and Lemma \ref{4.14.5}.

  \end{proof}

We next consider the case where $k\geqq 1$. To verify the condition (vi), it suffices to show the following proposition.
\begin{prop}\label{4.15}
If $k\geqq 1$, then the following map 
$$\mathrm{H}^1(\Gamma, \mathcal{R}^{\infty}_L(\delta)^{\psi=1})
\isom \mathrm{H}^1_{\psi,\gamma}(\mathcal{R}_L(\delta))
\xrightarrow{\mathrm{exp}^{-1}_{\mathcal{R}_L(\delta)}}
\bold{D}_{\mathrm{dR}}(\mathcal{R}_L(\delta))$$ sends each element 
$[f_{\mu}\bold{e}_{\delta}]\in \mathrm{H}^1(\Gamma,\mathcal{R}^{\infty}_L(\delta)^{\psi=1})$ to

\begin{itemize}
\item[(1)]$(k-1)!\cdot\frac{\delta(-1)}{\varepsilon_L(W(\mathcal{R}_L(\delta)), \zeta)}\cdot
\frac{1}{t^{k}}\cdot
\int_{\mathbb{Z}_p^{\times}}\delta^{-1}(y)\mu(y)\bold{e}_{\delta}$
when $n(\delta)\not= 0$,
\item[(2)]$(k-1)!\cdot\frac{\mathrm{det}_L(1-\varphi|\bold{D}_{\mathrm{cris}}(\mathcal{R}_L(\delta)^*))}{\mathrm{det}_L(1-\varphi|\bold{D}_{\mathrm{cris}}(\mathcal{R}_L(\delta)))}\cdot\frac{\delta(-1)}{t^{k}}\cdot\int_{\mathbb{Z}_p^{\times}}
\delta^{-1}(y)\mu(y)\bold{e}_{\delta}$
when $n(\delta)= 0$.
\end{itemize}

\end{prop}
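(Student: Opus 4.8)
The plan is to follow the same pattern as the proof of Proposition \ref{4.14} for general $k\leqq 0$: first settle the base case $k=1$ by a direct computation, then propagate to all $k\geqq 2$ by the differential operator $\partial$. By Lemma \ref{4.13} and the explicit descriptions of $\varepsilon_{L,\zeta}(\mathcal{R}_L(\delta))$ and $\varepsilon^{\mathrm{dR}}_{L,\zeta}(\mathcal{R}_L(\delta))$ assembled earlier in this subsection, the asserted formula for $\mathrm{exp}^{-1}_{\mathcal{R}_L(\delta)}$ is equivalent to the de Rham equality $\varepsilon_{L,\zeta}(\mathcal{R}_L(\delta))=\varepsilon^{\mathrm{dR}}_{L,\zeta}(\mathcal{R}_L(\delta))$, so it suffices to prove this equality. (We assume throughout that $\mathrm{exp}_{\mathcal{R}_L(\delta)}$ is an isomorphism, thereby excluding the degenerate generic $\delta$; this is harmless, since only Zariski density of the admissible $\delta$ is used in the later verification of conditions (iv), (v).) Granting the case $k=1$, the case $k\geqq 2$ follows by applying $\partial$ repeatedly to $\mathcal{R}_L(\tilde\delta x)$: Proposition \ref{4.13.5} gives $\varepsilon_{L,\zeta}\circ\partial=\partial\circ\varepsilon_{L,\zeta}$ unconditionally, and Lemma \ref{4.14.5} gives the same for $\varepsilon^{\mathrm{dR}}_{L,\zeta}$ because every intermediate Hodge--Tate weight is $\geqq 1$, hence nonzero.

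The heart of the matter is the base case $\delta=\tilde\delta x$. Set $M=\mathcal{R}_L(\delta)$; then $M^{*}=\mathcal{R}_L(\delta^{*})$ with $\delta^{*}=\tilde\delta^{-1}|x|$ of Hodge--Tate weight $0$, so the already-proven case $k=0$ of Proposition \ref{4.14}, applied to $\mathcal{R}_L(\delta^{*})$, computes the dual-exponential map $\mathrm{exp}^{*}:\mathrm{H}^1_{\psi,\gamma}(M^{*})\to\bold{D}_{\mathrm{dR}}(M^{*})$ attached to $M$ explicitly, with the relevant constant $\frac{\delta^{*}(-1)}{\varepsilon_L(W(M^{*}),\zeta)}$ (resp. its crystalline variant when $n(\delta)=0$, noting $n(\delta^{*})=n(\delta)$). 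I would then use Bloch--Kato duality (Proposition \ref{2.24}) together with the definition of $\mathrm{exp}^{*}$ in \S3.3, which yield the adjunction
$$\langle x,\mathrm{exp}_M(v)\rangle=[\mathrm{exp}^{*}(x),v]_{\mathrm{dR}}\qquad(x\in\mathrm{H}^1_{\psi,\gamma}(M^{*}),\ v\in\bold{D}_{\mathrm{dR}}(M)),$$
with $\langle-,-\rangle$ the Tate pairing of Definition \ref{dual} and $[-,-]_{\mathrm{dR}}$ the de Rham pairing $(\ref{13e})$, both of which are perfect pairings of one-dimensional spaces in the generic case. Feeding in $x$ the class of $f_{\mu^{*}}\bold{e}_{\delta^{*}}\in\mathcal{R}^\infty_L(\delta^{*})^{\psi=1}$ and $v=\frac{a}{t}\bold{e}_\delta$, the right side becomes $a\cdot\frac{\delta^{*}(-1)}{\varepsilon_L(W(M^{*}),\zeta)}\cdot\int_{\mathbb{Z}_p^\times}(\delta^{*})^{-1}(y)\mu^{*}(y)$ by Proposition \ref{4.14}, while the left side is the Tate pairing of two Iwasawa-cohomology classes, which I would evaluate through the cup-product formula of Definition \ref{2.12} and the residue description $\mathrm{H}^2_{\varphi,\gamma}(\mathcal{R}_L(1))\isom L$ of the appendix (Proposition \ref{explicitdual}); on $\mathcal{R}^\infty_L(\delta^{*})^{\psi=1}\times\mathcal{R}^\infty_L(\delta)^{\psi=1}$ this reduces to a convolution of measures integrated over $\mathbb{Z}_p^\times$, giving $\langle x,\mathrm{exp}_M(v)\rangle$ as an explicit constant times $\bigl(\int_{\mathbb{Z}_p^\times}(\delta^{*})^{-1}\mu^{*}\bigr)\bigl(\int_{\mathbb{Z}_p^\times}\delta^{-1}\mu\bigr)$. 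Solving for $a$, then rewriting $\varepsilon_L(W(M^{*}),\zeta)$ in terms of $\varepsilon_L(W(M),\zeta)$ via the functional-equation property (3) of $\varepsilon$-constants (using property (2) to pass between $\zeta$ and $\zeta^{-1}$) and $\delta(-1)\delta^{*}(-1)=-1$, produces exactly the formula of the statement, with $(k-1)!=0!=1$. An alternative, entirely $(\varphi,\Gamma)$-cohomological route for $k=1$ mirrors the $k=0$ argument of Proposition \ref{4.14}: using $\mathrm{exp}_M(v)=[(\gamma-1)\tilde v,(\varphi-1)\tilde v]$ from Proposition \ref{2.23}(1), lift the class $[\,|\Gamma_{\mathrm{tor}}|\mathrm{log}_0(\chi(\gamma))p_\Delta(f_\mu\bold{e}_\delta),0\,]$ to a suitable $\tilde v\in M^{(n)}[1/t]^\Delta$ and read $v$ off as the principal part of $\iota_m(\tilde v)$, the relevant $\frac{1}{t}$-coefficient of $f_\mu(\zeta_{p^m}\mathrm{exp}(t/p^m)-1)\bold{e}_\delta$ being precisely the Gauss sum packaged by the $\varepsilon$-constant.

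The main obstacle is the base case, and within it the explicit evaluation of the Tate/Iwasawa duality pairing in measure-theoretic terms, together with the bookkeeping of signs, factorials, $\varepsilon$-constants and --- in the crystalline case $n(\delta)=0$ --- the Euler factors $\det_L(1-\varphi\,|\,\bold{D}_{\mathrm{cris}})$, whose comparison with the $\varepsilon$-constant is exactly the point where the two clauses of the statement part company (just as in Proposition \ref{4.14}). Everything downstream --- the induction on $k$ via $\partial$ and the conclusion that this verifies condition (vi) in the generic case --- is then formal, given Propositions \ref{4.13.5} and \ref{4.14.5}.
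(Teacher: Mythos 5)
Your reduction to $k=1$ via Propositions \ref{4.13.5} and \ref{4.14.5} is exactly what the paper does. For the base case, your primary route (Bloch--Kato adjunction plus an explicit evaluation of the Tate pairing on Iwasawa classes) is genuinely different from the paper's, which instead proves the commutativity of a single square --- diagram (15) in the text --- relating $\mathrm{exp}^*_{\mathcal{R}_L(\tilde\delta^{-1}x|x|)}$ (known from the $k=0$ case) to $\mathrm{exp}_{\mathcal{R}_L(\tilde\delta x)}$ through $\partial$, using the operator $\nabla_0=\frac{\log[\gamma]}{\log\chi(\gamma)}$ and the explicit cocycle formula of Proposition \ref{2.23}(1). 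That approach never touches the Tate pairing.

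The gap in your Route 1 is concentrated in the sentence claiming the Tate pairing \emph{``reduces to a convolution ... giving $\langle x,\exp_M(v)\rangle$ as an explicit constant times $\bigl(\int_{\mathbb{Z}_p^\times}(\delta^*)^{-1}\mu^*\bigr)\bigl(\int_{\mathbb{Z}_p^\times}\delta^{-1}\mu\bigr)$''.} The factorization through the two integrals is automatic (both $\mathrm{H}^1$'s are one-dimensional and the $\int$-maps are isomorphisms), so the entire content is in the ``explicit constant''. To compute it you must pass a class $[g,0]_\psi$ with $g\in\mathcal{R}^\infty_L(\delta')^{\psi=1}$ to its $\varphi$-cohomology representative $[g, b]$ with $(\gamma-1)b=(\varphi-1)g$ and $\psi(b)=0$; and that $b$ is \emph{not} in $\mathcal{R}^\infty_L(\delta')$, because $\gamma-1$ is not surjective on $\mathcal{R}^\infty_L(\delta')^{\psi=0}$ (it dies at the point of $X$ corresponding to $\delta_0^{-1}$), only on $\mathcal{R}_L(\delta')^{\psi=0}$. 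The residue $\mathrm{Res}_0$ of the resulting cup-product cocycle therefore depends on the singular part of $b$ at the cyclotomic points, and extracting it is the same Gauss-sum computation you are trying to avoid --- i.e. it is no easier than the paper's $\nabla_0$-based proof of diagram (15), and is itself a form of the explicit reciprocity law. Without carrying it out, Route 1 is not a proof. (By contrast, the de Rham side of your adjunction is unproblematic, and the bookkeeping via properties (2)--(4) of $\varepsilon$-constants and $\delta(-1)\delta^*(-1)=-1$ is a correct end-game once the constant is in hand.) Your Route 2 is essentially the paper's argument in outline, but you leave the construction of the lift $\tilde v$ unspecified, and it is precisely the choice $\tilde v=\frac{\nabla_0}{\gamma-1}\bigl(p_\Delta(\frac{f}{t}\bold{e}_\delta)\bigr)$ and the verification that its $\iota_m$-images converge to the correct element of $\bold{D}_{\mathrm{dR}}$ that constitutes the heart of the argument.
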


\begin{proof}
In the same way as the proof of Proposition \ref{4.14}, it suffices to show the proposition for 
$k=1$ (i.e. $\delta=\tilde{\delta} x$) using Proposition \ref{4.13.5} and Lemma \ref{4.14.5}.

Hence, we assume $k=1$. Then, in the similar way as the proof of Proposition \ref{4.14} (for $k=0$), we have the following commutative diagrams
\begin{equation}
\begin{CD}
\mathrm{H}^1_{\psi,\gamma}(\mathcal{R}_L(\tilde{\delta})) @<<< \mathrm{H}^1(\Gamma, 
\mathcal{R}^{\infty}_L(\tilde{\delta} )^{\psi=1}) @> \iota_{\tilde{\delta}} >> L\bold{e}_{\tilde{\delta}}\\
@VV \partial V @VV \partial V @VV \bold{e}_{\tilde{\delta}}\mapsto -\bold{e}_{\delta} V\\
\mathrm{H}^1_{\psi,\gamma}(\mathcal{R}_L(\delta)) @<<< \mathrm{H}^1(\Gamma, 
\mathcal{R}^{\infty}_L(\delta)^{\psi=1}) @> \iota_{\delta} >> L\bold{e}_{\delta}
\end{CD}
\end{equation}
such that all the arrows are isomorphisms by Lemma \ref{4.12.5}. Hence, reducing to the case of $k=0$, 
it suffices to show that 
the following diagram is commutative,
\begin{equation}\label{15}
\begin{CD}
\mathrm{H}^1(\Gamma, 
\mathcal{R}^{\infty}_L(\tilde{\delta})^{\psi=1})@>>>\mathrm{H}^1_{\psi,\gamma}(\mathcal{R}_L(\tilde{\delta}))@> \mathrm{exp}^{*}_{\mathcal{R}_L(\tilde{\delta}^{-1}x|x|)} >>
\bold{D}_{\mathrm{dR}}(\mathcal{R}_L(\tilde{\delta}))=(L_{\infty}\bold{e}_{\tilde{\delta}})^{\Gamma} \\
@V\partial VV@V \partial VV @ VV a\bold{e}_{\tilde{\delta}}\mapsto \frac{a}{t}\bold{e}_{\delta}V \\
\mathrm{H}^1(\Gamma, 
\mathcal{R}^{\infty}_L(\delta)^{\psi=1})@>>>\mathrm{H}^1_{\psi,\gamma}(\mathcal{R}_L(\delta))@< \mathrm{exp}_{\mathcal{R}_L(\delta)} << \bold{D}_{\mathrm{dR}}(\mathcal{R}_L(\delta))=\left(L_{\infty}\frac{1}{t}\bold{e}_{\delta}\right)^{\Gamma}.
\end{CD}
\end{equation}

The following proof of this commutativity is very similar to that of Theorem 3.10  of \cite{Na14a}.
Take $[f\bold{e}_{\tilde{\delta}}]\in \mathrm{H}^1(\Gamma, \mathcal{R}_L^{\infty}(\tilde{\delta})^{\psi=1})
$. If we denote by
$$\alpha\bold{e}_{\tilde{\delta}}:=\mathrm{exp}^*_{\mathcal{R}_L(\tilde{\delta}^{-1}x|x|)}([|\Gamma_{\mathrm{tor}}|
\mathrm{log}_0(\chi(\gamma))p_{\Delta}(f\bold{e}_{\tilde{\delta}}),0])\in \bold{D}_{\mathrm{dR}}(\mathcal{R}_L(\tilde{\delta})) \subseteq \bold{D}_{\mathrm{dif}}(\mathcal{R}_L(\tilde{\delta})),$$
then it suffices to show the equality
$$\mathrm{exp}_{\mathcal{R}_L(\delta)}\left(\frac{\alpha}{t}\bold{e}_{\delta}\right) 
=|\Gamma_{\mathrm{tor}}|\mathrm{log}_0(\chi(\gamma))
[p_{\Delta}(\partial(f)\bold{e}_{\delta} ),0].$$

We prove this equality as follows. First, we have an equality 
$$\frac{|\Gamma_{\mathrm{tor}}|\mathrm{log}_0(\chi(\gamma))}{\mathrm{log}(\chi(\gamma))}
[\iota_n(p_{\Delta}(f\bold{e}_{\tilde{\delta}}))]=[\alpha\bold{e}_{\tilde{\delta}}] \in \mathrm{H}^1_{\psi,\gamma}(\bold{D}^+_{\mathrm{dif}}(\mathcal{R}_L(\tilde{\delta})))$$ 
for a sufficiently large $n\geqq 1$ by the explicit definition of $\mathrm{exp}^*_{\mathcal{R}_L(\tilde{\delta}^{-1}x|x|)}$ (Proposition 2.16 of \cite{Na14a}).
This equality means that there exists  $y_n\in \bold{D}^+_{\mathrm{dif},n}(\mathcal{R}_L(\tilde{\delta}))^{\Delta}$ such that 
$$\frac{|\Gamma_{\mathrm{tor}}|\mathrm{log}_0(\chi(\gamma))}{\mathrm{log}(\chi(\gamma))}
\iota_n(p_{\Delta}(f\bold{e}_{\tilde{\delta}}))-\alpha\bold{e}_{\tilde{\delta}}=
(\gamma-1)y_n.$$
If we set $\nabla_0:=\frac{\mathrm{log}([\gamma])}{\mathrm{log}(\chi(\gamma))}\in \mathcal{R}_L^{\infty}(\Gamma)$ and define
$$\frac{\nabla_0}{\gamma-1}:=
\frac{1}{\mathrm{log}(\chi(\gamma))}\sum_{m\geqq 1}^{\infty}\frac{(-1)^{m-1}([\gamma]-1)^{m-1}}{m}\in \mathcal{R}^{\infty}_L(\Gamma),$$
then we obtain the following equality
\begin{multline}\label{16}
\iota_n\left(\frac{|\Gamma_{\mathrm{tor}}|\mathrm{log}_0(\chi(\gamma))}{\mathrm{log}(\chi(\gamma))}\frac{\nabla_0}{\gamma-1}(p_{\Delta}(f\bold{e}_{\tilde{\delta}}))\right)\\
=\frac{1}{\mathrm{log}(\chi(\gamma))}\alpha\bold{e}_{\tilde{\delta}} +\nabla_0(y_n)\in \frac{1}{\mathrm{log}(\chi(\gamma))}\alpha\bold{e}_{\tilde{\delta}}+t\bold{D}^+_{\mathrm{dif},n}(\mathcal{R}_L(\tilde{\delta})).
\end{multline}
Since we have $f\bold{e}_{\tilde{\delta}}\in \mathcal{R}_L(\tilde{\delta})^{\psi=1}$, we have 
$$(1-\varphi)(p_{\Delta}(f\bold{e}_{\tilde{\delta}}))\in \mathcal{R}_L(\tilde{\delta})^{\Delta,\psi=0}.$$ Hence, there exists $\beta\in \mathcal{R}_L(\tilde{\delta})^{\Delta,\psi=0}$ such that 
$$(1-\varphi)(p_{\Delta}(f\bold{e}_{\tilde{\delta}}))=(\gamma-1)\beta$$ by (for example) Theorem 3.1.1 of \cite{KPX14}. For any $m\geqq n+1$, then we obtain
\begin{multline*}
\iota_{m}\left(\frac{\nabla_0}{\gamma-1}(p_{\Delta}(f\bold{e}_{\tilde{\delta}}))\right)-\iota_{m-1}\left(\frac{\nabla_0}{\gamma-1}(p_{\Delta}(f\bold{e}_{\tilde{\delta}}))\right)\\
=\iota_m\left((1-\varphi)\left(\frac{\nabla_0}{\gamma-1}(p_{\Delta}(f\bold{e}_{\tilde{\delta}}))\right)\right)
=\iota_m\left(\frac{\nabla_0}{\gamma-1}((1-\varphi)(p_{\Delta}(f\bold{e}_{\tilde{\delta}})))\right)\\
=\iota_m\left(\frac{\nabla_0}{\gamma-1}((\gamma-1)\beta)\right)
=\iota_m(\nabla_0(\beta))\in t\bold{D}^+_{\mathrm{dif},m}(\mathcal{R}_L(\tilde{\delta}))
\end{multline*}
since we have $\nabla_0(\mathcal{R}_L(\tilde{\delta}))\subseteq t\mathcal{R}_L(\tilde{\delta})$.
In particular, we obtain
\begin{equation}\label{17}
\iota_m\left(\frac{\nabla_0}{\gamma-1}(p_{\Delta}(f\bold{e}_{\tilde{\delta}}))\right)- \iota_n\left(\frac{\nabla_0}{\gamma-1}(p_{\Delta}(f\bold{e}_{\tilde{\delta}}))\right)\in 
t\bold{D}^+_{\mathrm{dif},m}(\mathcal{R}_L(\tilde{\delta}))
\end{equation}
for any $m\geqq n+1$ by induction.

Since the map $\mathcal{R}_L(\tilde{\delta})\isom \frac{1}{t}\mathcal{R}_L(\delta)
:g\bold{e}_{\tilde{ \delta}}\mapsto \frac{g}{t}\bold{e}_{\delta}$ is an isomorphism of $(\varphi,\Gamma)$-modules, 
the facts (\ref{16}), (\ref{17}) and the explicit definition of the exponential map (Proposition \ref{2.23} (1)) induce 
the following equality
\[
\begin{array}{ll}
\mathrm{exp}_{\mathcal{R}_L(\delta)}\left(\frac{\alpha}{t}\bold{e}_{\delta}\right) 
&=|\Gamma_{\mathrm{tor}}|\mathrm{log}_0(\chi(\gamma))
\left[(\gamma-1)\frac{\nabla_0}{\gamma-1}\left(p_{\Delta}\left(\frac{f}{t}\bold{e}_{\delta} \right)\right),(\psi-1)\frac{\nabla_0}{\gamma-1}\left(p_{\Delta}\left(\frac{f}{t}\bold{e}_{\delta} \right)\right)\right]\\
&=|\Gamma_{\mathrm{tor}}|\mathrm{log}_0(\chi(\gamma))
\left[\nabla_0\left(p_{\Delta}\left(\frac{f}{t}\bold{e}_{\delta} \right)\right),0\right]\\
&=|\Gamma_{\mathrm{tor}}|\mathrm{log}_0(\chi(\gamma))
[p_{\Delta}(\partial(f)\bold{e}_{\delta} ),0],
\end{array}
\]
where the last equality follows from the equality 
$\nabla_0\left(\frac{f}{t}\bold{e}_{\delta}\right)=\partial(f)\bold{e}_{\delta}$ since
we have $\nabla_0\left(\frac{1}{t}\bold{e}_{\delta}\right)=0$ by the assumption $k=1$, from which 
the commutativity of the diagram (\ref{15}) follows.

\end{proof}

As a corollary of Proposition \ref{4.14}, \ref{4.15}, we verify the conditions (iv), (v) by the density argument as follows.

\begin{corollary}\label{4.17}
Let $M$ be a rank one $(\varphi,\Gamma)$-module 
over $\mathcal{R}_A$. Then the isomorphism $\varepsilon_{A,\zeta}(M):\bold{1}_A\isom 
\Delta_A(M)$ which is 
defined in \S4.1 satisfies the conditions (iv), and (v) of Conjecture \ref{3.9}.

\end{corollary}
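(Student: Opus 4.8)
\textbf{Proof proposal for Corollary \ref{4.17}.}

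The plan is to reduce each of the two conditions to a statement about a single ``universal'' family and then to verify that statement on a Zariski dense locus, namely the set of generic de Rham points, where Propositions \ref{4.14} and \ref{4.15} give completely explicit formulas. More precisely, by Lemma \ref{4.3}, Lemma \ref{4.6} and Lemma \ref{4.7}, both $\varepsilon_{A,\zeta}(M)$ and the maps appearing in conditions (iv), (v) are insensitive to twisting $M$ by an invertible $A$-module, so by Theorem \ref{2.9} we may assume $M = \mathcal{R}_A(\delta)$ for a continuous $\delta:\mathbb{Q}_p^\times \to A^\times$. Setting $\lambda = \delta(p)$ and $\delta_0 = \delta|_{\mathbb{Z}_p^\times}$, the $(\varphi,\Gamma)$-module $\mathcal{R}_A(\delta)$ is obtained by base change from the universal object $\bold{Dfm}(\mathcal{R}_{\mathbb{G}_m^{\mathrm{an}}}(\delta_Y))$ over $\mathcal{R}_{X\times \mathbb{G}_m^{\mathrm{an}}}$ introduced in the proof of Proposition \ref{4.13.5}, via a continuous $\mathbb{Q}_p$-algebra map $\mathcal{R}^\infty(\Gamma)\hat\otimes_{\mathbb{Q}_p}\Gamma(\mathbb{G}_m^{\mathrm{an}},\mathcal{O})\to A$. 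Since $\varepsilon_{A,\zeta}(\mathcal{R}_A(\delta))$ and all of the canonical comparison isomorphisms ($\Delta_B(N)\otimes_B B'\isom\Delta_{B'}(N\otimes_B B')$, the duality isomorphism of Lemma \ref{3.8}(2)'s analogue at the level of fundamental lines, and the isomorphism of Corollary \ref{3.2}) are compatible with arbitrary base change by construction, it suffices to prove conditions (iv) and (v) for $\bold{Dfm}(\mathcal{R}_{\mathbb{G}_m^{\mathrm{an}}}(\delta_Y))$ itself.

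For condition (iv), the two sides of the desired commutative square are isomorphisms $\bold{1}_{B}\isom \bold{1}_{B}$ over the reduced ring $B=\mathcal{R}^\infty(\Gamma)\hat\otimes_{\mathbb{Q}_p}\Gamma(\mathbb{G}_m^{\mathrm{an}},\mathcal{O})$, i.e.\ elements of $B^\times$ (once we fix a basis of $\bold{1}_B$), or rather, after accounting for the twist by $(B(r_M),0) = (B(1),0)$ and the chosen basis $\bold{e}_1$, the claim is an equality of two specific global units. As $B$ is reduced and $X\times\mathbb{G}_m^{\mathrm{an}}$ contains the Zariski dense subset $S_0$ of points $(\delta_0,\lambda)$ with $\delta := \delta_\lambda\delta_0$ generic and defined over a finite extension $L/\mathbb{Q}_p$, it is enough to check the identity after specialising at each such point. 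At a generic point, $\mathcal{R}_L(\delta)$ has cohomology concentrated in degree one, $\varepsilon_{L,\zeta}(\mathcal{R}_L(\delta))$ is described by the explicit isomorphism $\iota_\delta$ of Lemma \ref{4.13}, and Propositions \ref{4.14} and \ref{4.15} identify $\iota_\delta$ (up to the $\Gamma$-factor and the local $\varepsilon$-constant) with the dual exponential or exponential map; thus at such a point $\varepsilon_{L,\zeta}(\mathcal{R}_L(\delta)) = \varepsilon^{\mathrm{dR}}_{L,\zeta}(\mathcal{R}_L(\delta))$. Since $\varepsilon^{\mathrm{dR}}_{L,\zeta}$ satisfies the duality condition by Lemma \ref{3.8}(2), and since $\zeta\mapsto\zeta^{-1}$ is accounted for by the generic points being closed under the relevant symmetry $\delta\mapsto\delta^{-1}x|x|$ (which sends generic to generic), condition (iv) holds on $S_0$, hence everywhere.

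For condition (v), one argues in the same spirit: given $f:\Lambda\to A$ and $T$ a rank one $\Lambda$-representation, both $\varepsilon_{\Lambda,\zeta}(T)\otimes\mathrm{id}_A$ and $\varepsilon_{A,\zeta}(\bold{D}_{\mathrm{rig}}(T\otimes_\Lambda A))$ are trivialisations of $\Delta_A(\bold{D}_{\mathrm{rig}}(T\otimes_\Lambda A))$, so their ratio is a unit, and it suffices to check they agree after further base change along a Zariski dense family of de Rham points of the rigid space attached to $\Lambda$ — for which one invokes Proposition \ref{2.27} to identify our de Rham $\varepsilon$-isomorphism with the $B$-pair version and hence, using the generic-point computations of Propositions \ref{4.14}, \ref{4.15} together with Proposition \ref{4.13.5} and Lemma \ref{4.14.5} to pass from the generic locus to all de Rham twists, with $\varepsilon^{\mathrm{dR}}_{L,\zeta}$, which by construction (via the Coleman-map / $p$-adic Fourier transform description recalled in \S4.1, and the known compatibility of $\varepsilon_{\Lambda,\zeta}(\Lambda(\tilde\delta))$ with Kato's) matches the specialisation of $\varepsilon_{\Lambda,\zeta}(T)$. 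The main obstacle is the bookkeeping at the generic de Rham points: one must track the $\Gamma$-factor $\Gamma_L(\mathcal{R}_L(\delta))=\Gamma^*(k)$, the $\varepsilon$-constant $\varepsilon_L(W(\mathcal{R}_L(\delta)),\zeta)$ (including the crystalline correction term $\det_L(1-\varphi|\bold{D}_{\mathrm{cris}})$ in the unramified case), and the sign conventions in the cup product and in the isomorphism $\Delta_{A,2}(\mathcal{R}_A(\delta))\isom\Delta_{A,2}(\mathcal{R}_A(\delta x))$ through the chain of identifications, so that the explicit formulas of Propositions \ref{4.14} and \ref{4.15} line up on the nose with the definition of $\varepsilon^{\mathrm{dR}}_{L,\zeta}$ from \S3.3; once this is done for $k=0,1$ the general $k$ follows from Proposition \ref{4.13.5} and Lemma \ref{4.14.5}, and the density argument closes the proof.
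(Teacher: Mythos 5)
Your proposal follows the same broad strategy as the paper: reduce, via Lemmas \ref{4.3}--\ref{4.7} and Theorem \ref{2.9}, to $M=\mathcal{R}_A(\delta)$, then pull back from the universal family $\bold{Dfm}(\mathcal{R}_{\mathbb{G}_m^{\mathrm{an}}}(\delta_Y))$ over the reduced ring $\Gamma(X\times\mathbb{G}_m^{\mathrm{an}},\mathcal{O})$ and check the desired identity at a Zariski dense set of points where Propositions \ref{4.14}, \ref{4.15} give $\varepsilon_{L,\zeta}(\mathcal{R}_L(\delta))=\varepsilon^{\mathrm{dR}}_{L,\zeta}(\mathcal{R}_L(\delta))$. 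But there are two issues, one minor and one substantive.

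The minor one concerns (iv): you specialise along the set $S_0$ of \emph{generic} points, and then invoke $\varepsilon^{\mathrm{dR}}_{L,\zeta}(\mathcal{R}_L(\delta))$ together with Lemma \ref{3.8}(2) at those points. But $\varepsilon^{\mathrm{dR}}_{L,\zeta}$ is only defined at \emph{de Rham} points, and a generic $\delta$ need not be de Rham (de Rham rank-one means $\delta=\tilde\delta x^k$ with $\tilde\delta$ locally constant). You need the subset $S_1$ of points that are simultaneously generic and de Rham (which is the set the paper actually uses, and is still Zariski dense). As written, the specialisation at a typical point of $S_0$ simply does not make sense.

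The substantive one concerns (v). You assert that it ``suffices to check they agree after further base change along a Zariski dense family of de Rham points of the rigid space attached to $\Lambda$.'' But for an arbitrary $\Lambda$ and $T$ this rigid space need not contain a single de Rham point, let alone a Zariski dense set of them (take $\Lambda=\mathbb{Z}_p$ and $T$ a non-de Rham rank one representation: the rigid space is one point, and it is not de Rham). The missing step, which is the real content of the paper's argument, is the reduction to a \emph{universal} rank one family: after factoring through $A_\infty$ and localising to $T=\Lambda(\tilde\delta)$, one finds $k\geqq 1$ with $\delta(p)^k\equiv 1\pmod{\mathfrak m_\Lambda}$, constructs $\Lambda_k=\varprojlim_n\mathbb{Z}_p[Y]/(p,(Y^k-1))^n$ and the universal representation $T_k^{\mathrm{univ}}$ over $\mathbb{Z}_p[[\Gamma]]\hat\otimes_{\mathbb{Z}_p}\Lambda_k$, whose rigid space $Z_k\subset X\times\mathbb{G}_m^{\mathrm{an}}$ \emph{does} have $Z_k\cap S_1$ Zariski dense. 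Only after this reduction can the density argument be closed. Without it, the claimed density has no justification, and the proof of (v) is incomplete.
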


\begin{proof}
We first verify the conditions (iv). 
By definition of $\varepsilon_{A,\zeta}(\mathcal{R}_A(\delta)\otimes_A\mathcal{L})$, it suffices to verify these conditions for $(\varphi,\Gamma)$-modules of the form 
$M=\mathcal{R}_A(\delta)$ (i.e. $\mathcal{L}=A$) since the general case immediately follows from this case by Lemma \ref{4.6} .
 Then, in the same way as the proof of 
Proposition \ref{4.13.5}, it suffices to verify these conditions for any $\delta=\delta_{\lambda}\delta_0:\mathbb{Q}_p^{\times}\rightarrow L^{\times}$ such that the point $(\delta_0,\lambda)\in X\times \mathbb{G}_m^{\mathrm{an}}$ is contained in the Zariski dense subset $S_1$ of $X\times\mathbb{G}_m^{\mathrm{an}}$ defined by
$$S_1:=\{(\delta_0,\lambda)\in X(L)\times\mathbb{G}_m^{\mathrm{an}}(L)|[L:\mathbb{Q}_p]<\infty, \delta\text{ is generic }, \mathcal{R}_L(\delta) \text{ is de Rham }\}.$$
For such $\delta$, the conditions (iv) follow from Lemma \ref{3.8} since we have $\varepsilon_{L,\zeta}(\mathcal{R}_L(\delta))=\varepsilon_{L,\zeta}^{\mathrm{dR}}(\mathcal{R}_L(\delta))$ by Proposition \ref{4.14} and Proposition \ref{4.15}.

We next verify the condition (v). 
Let $(\Lambda, T)$ be as in Conjecture \ref{3.9} (v). We recall that we defined a canonical isomorphism 
$$\Delta_{\Lambda}(T)\otimes_{\Lambda}A_{\infty}\isom \Delta_{A_{\infty}}(M_{\infty})$$
(see Example \ref{3.3} for definition and notation). Since any continuous map $\Lambda\rightarrow A$ factors through 
$\Lambda\rightarrow A_{\infty}\rightarrow A$, it suffices to show the equality 
\begin{equation}\label{18}
\varepsilon_{\Lambda,\zeta}(T)\otimes\mathrm{id}_{A_{\infty}}=\varepsilon_{A_{\infty},\zeta}(M_{\infty})(:=\varprojlim_n
\varepsilon_{A_n,\zeta}(M_n)).
\end{equation}

Since the condition (v) is local for $\mathrm{Spf}(\Lambda)$, it suffices to verify (v) for $\Lambda$-representations 
of the form $\Lambda(\tilde{\delta})$ for some $\tilde{\delta}:G^{\mathrm{ab}}_{\mathbb{Q}_p}\rightarrow \Lambda^{\times}$.
Let decompose $\delta=\tilde{\delta}\circ \mathrm{rec}_{\mathbb{Q}_p}$ into $\delta=\delta_{\lambda}\delta_0$.
Since $\Lambda/\mathfrak{m}_{\Lambda}$ is a finite ring, there exists 
$k\geqq 1$ such that $\lambda^k\equiv 1$ (mod $\mathfrak{m}_{\Lambda}$).
Then, we can define a continuous $\mathbb{Z}_p$-algebra homomorphism
$\Lambda_k:=\varprojlim_n \mathbb{Z}_p[Y]/(p, (Y^k-1))^n\rightarrow \Lambda:Y\mapsto \lambda$. 
Hence, the $\Lambda$-representation $\Lambda(\tilde{\delta})$ is obtained by a base change of 
the ``universal" $\mathbb{Z}_p[[\Gamma]]\hat{\otimes}_{\mathbb{Z}_p}\Lambda_k$-representation $T_k^{\mathrm{univ}}$ which 
corresponds to the homomorphism $\delta_k^{\mathrm{univ}}:\mathbb{Q}_p^{\times}\rightarrow( \mathbb{Z}_p[[\Gamma]]\hat{\otimes}_{\mathbb{Z}_p}\Lambda_k)^{\times}:p\mapsto 1\hat{\otimes}Y, a\mapsto [\sigma_a^{-1}]\hat{\otimes}1$ 
for $a\in \mathbb{Z}_p^{\times}$. Hence, it suffices to verify the equality (\ref{18}) for this universal one. In this case, since the 
associated rigid space is an admissible open of $X\times \mathbb{G}_m^{\mathrm{an}}$ defined by 
$$Z_k:=\{(\delta_0,\lambda)\in X\times \mathbb{G}_m^{\mathrm{an}}| |\lambda^k-1|<1\},$$
and the associated $(\varphi,\Gamma)$-module is isomorphic to the restriction of the 
universal one $\bold{Dfm}(\mathcal{R}_{\mathbb{G}_m^{\mathrm{an}}}(\delta_Y))$ defined in the proof of 
Proposition \ref{4.13.5}, it suffices to show the equality 
$$\varepsilon_{\mathbb{Z}_p[[\Gamma]]\hat{\otimes}_{\mathbb{Z}_p}\Lambda_k,\zeta}(T^{\mathrm{univ}}_k)\otimes \mathrm{id}_{\Gamma(Z_k,\mathcal{O}_{Z_k})}
=\varepsilon_{\Gamma(Z_k,\mathcal{O}_{Z_k}),\zeta}(\bold{Dfm}(\mathcal{R}_{\mathbb{G}_m^{\mathrm{an}}}(\delta_Y))|_{Z_k}).$$
Since both sides satisfy the condition $(vi)$ for any point $(\delta_0,\lambda)\in Z_k\cap S_1$ by 
the Kato's theorem (\cite{Ka93b}) and by Proposition \ref{4.14}, \ref{4.15}, and since the set $Z_k\cap S_1$ is Zariski dense 
in $Z_k$, the equality (18) follows by the density argument.

\end{proof}

\subsubsection{Verification of the condition (vi):the exceptional case}
Finally, we verify the condition (vi) in the exceptional case, i.e. 
 $\delta=x^{-k}$ or $\delta=x^{k+1}|x|$ for $k\in \mathbb{Z}_{\geqq 0}$. 
 
 We first reduce all the exceptional cases to the case $\delta=x|x|$. 
 \begin{lemma}\label{4.18}
 We assume that the equality 
 $$\varepsilon_{L,\zeta}(\mathcal{R}_L(x|x|))=\varepsilon_{L,\zeta}^{\mathrm{dR}}(\mathcal{R}_L(x|x|))$$ 
 holds. Then the other equalities
  $$\varepsilon_{L,\zeta}(\mathcal{R}_L(\delta))=\varepsilon_{L,\zeta}^{\mathrm{dR}}(\mathcal{R}_L(\delta))$$ 
 also hold for all $\delta=x^{k+1}|x|, x^{-k}$ for $k\geqq 0$.

 \end{lemma}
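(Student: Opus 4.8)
The proof is a purely formal two--step reduction, and the key observation is that the two families of exceptional characters are Tate duals of one another: since $\mathcal{R}_L(\delta)^{*}=\mathcal{R}_L(\delta^{-1}x|x|)$ by Notation \ref{2.10}, one has $\mathcal{R}_L(x^{-k})^{*}=\mathcal{R}_L(x^{k+1}|x|)$ for every $k\geqq 0$. Accordingly the plan is: \textbf{(a)} use the compatibility of the $\varepsilon$-isomorphism with the differential operator $\partial$ to propagate the assumed equality at $\delta=x|x|$ to all $\delta=x^{k+1}|x|$, $k\geqq 0$; and \textbf{(b)} use the Tate duality conditions (iii), (iv) of Conjecture \ref{3.9} together with Lemma \ref{3.8} to transport this from $\mathcal{R}_L(x^{k+1}|x|)$ to its Tate dual $\mathcal{R}_L(x^{-k})$.

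For step (a): for $k\geqq 0$ let $\partial^{k}$ denote the composite of the isomorphisms $\partial\colon\Delta_L(\mathcal{R}_L(x^{j}|x|))\isom\Delta_L(\mathcal{R}_L(x^{j+1}|x|))$ constructed before Proposition \ref{4.13.5}, for $j=1,\dots,k$. For each such $j\geqq 1$ the module $\mathcal{R}_L(x^{j}|x|)=\mathcal{R}_L(|x|\cdot x^{j})$ is de Rham with Hodge--Tate weight $j\neq 0$, so Lemma \ref{4.14.5} applies and gives $\varepsilon^{\mathrm{dR}}_{L,\zeta}(\mathcal{R}_L(x^{j+1}|x|))=\partial\circ\varepsilon^{\mathrm{dR}}_{L,\zeta}(\mathcal{R}_L(x^{j}|x|))$, while Proposition \ref{4.13.5} gives $\varepsilon_{L,\zeta}(\mathcal{R}_L(x^{j+1}|x|))=\partial\circ\varepsilon_{L,\zeta}(\mathcal{R}_L(x^{j}|x|))$. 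Composing for $j=1,\dots,k$ yields $\varepsilon_{L,\zeta}(\mathcal{R}_L(x^{k+1}|x|))=\partial^{k}\circ\varepsilon_{L,\zeta}(\mathcal{R}_L(x|x|))$ and, likewise, $\varepsilon^{\mathrm{dR}}_{L,\zeta}(\mathcal{R}_L(x^{k+1}|x|))=\partial^{k}\circ\varepsilon^{\mathrm{dR}}_{L,\zeta}(\mathcal{R}_L(x|x|))$; hence the hypothesis at $x|x|$ forces $\varepsilon_{L,\zeta}(\mathcal{R}_L(x^{k+1}|x|))=\varepsilon^{\mathrm{dR}}_{L,\zeta}(\mathcal{R}_L(x^{k+1}|x|))$ for all $k\geqq 0$ (the case $k=0$ being the hypothesis itself).

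For step (b): fix $k\geqq 0$ and put $M:=\mathcal{R}_L(x^{-k})$, so $M^{*}=\mathcal{R}_L(x^{k+1}|x|)$; both $M$ and $M^{*}$ are de Rham, and by step (a) we have $\varepsilon_{L,\zeta}(M^{*})=\varepsilon^{\mathrm{dR}}_{L,\zeta}(M^{*})$. Condition (iv) of Conjecture \ref{3.9}, which holds for our $\varepsilon_{L,\zeta}(-)$ by Corollary \ref{4.17}, and Lemma \ref{3.8}(2) assert \emph{the same} commutative square --- with the same canonical identifications $\Delta_L(M)\isom\Delta_L(M^{*})^{\vee}\boxtimes(L(r_M),0)$ and $[\bold{e}_{r_M}\mapsto 1]$ --- relating the left vertical arrow $\varepsilon_{L,\zeta^{-1}}(M)$ (resp. $\varepsilon^{\mathrm{dR}}_{L,\zeta^{-1}}(M)$) to the right vertical arrow $\varepsilon_{L,\zeta}(M^{*})^{\vee}$ (resp. $\varepsilon^{\mathrm{dR}}_{L,\zeta}(M^{*})^{\vee}$). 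Since the right vertical arrows coincide, so do the left ones: $\varepsilon_{L,\zeta^{-1}}(M)=\varepsilon^{\mathrm{dR}}_{L,\zeta^{-1}}(M)$. Finally, condition (iii) --- valid for $\varepsilon_{L,\zeta}(-)$ by construction and for $\varepsilon^{\mathrm{dR}}_{L,\zeta}(-)$ by Remark \ref{3.5} --- shows that both sides are multiplied by the same scalar $\delta_{\mathrm{det}_{\mathcal{R}_L}M}(-1)$ under $\zeta^{-1}\mapsto\zeta=(\zeta^{-1})^{-1}$, whence $\varepsilon_{L,\zeta}(M)=\varepsilon^{\mathrm{dR}}_{L,\zeta}(M)$. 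This covers all $\delta=x^{-k}$, $k\geqq 0$ (the case $k=0$ being $M=\mathcal{R}_L$, $M^{*}=\mathcal{R}_L(x|x|)$), and completes the proof of Lemma \ref{4.18}.

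There is no serious obstacle in this argument; it is a formal bookkeeping reduction. The only point requiring a moment's care is that the nonvanishing--of--Hodge--Tate--weight hypothesis of Lemma \ref{4.14.5} holds at each intermediate step of (a) --- which it does, because $\partial$ is only ever applied to $\mathcal{R}_L(x^{j}|x|)$ with $j\geqq 1$. The genuine content is the remaining base case $\varepsilon_{L,\zeta}(\mathcal{R}_L(x|x|))=\varepsilon^{\mathrm{dR}}_{L,\zeta}(\mathcal{R}_L(x|x|))$ (equivalently, the case $\mathcal{R}_L(1)$), which is exactly what is to be settled by the explicit computations of the exceptional case, using the appendix's description of $\mathrm{H}^{i}_{\varphi,\gamma}(\mathcal{R}(1))$.
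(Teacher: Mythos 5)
Your proposal is correct and relies on exactly the same ingredients as the paper's proof --- Proposition~\ref{4.13.5} and Lemma~\ref{4.14.5} for the $\partial$-compatibility, and Lemma~\ref{3.8}~(2) together with Corollary~\ref{4.17} for the duality compatibility --- but you arrange them in the opposite order. The paper dualizes \emph{once} ($x|x|\leadsto x^0$) and then runs a $\partial$-chain separately along each of the two families, using $\partial\colon\mathcal{R}_L(x^{-j})\to\mathcal{R}_L(x^{-j+1})$ for $j=k,\dots,1$ to descend from $x^0$ to $x^{-k}$ (the intermediate Hodge--Tate weights $-j$ are nonzero, so Lemma~\ref{4.14.5} applies there just as it does on your side). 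You instead push $\partial$ along the single chain $x|x|\leadsto x^{k+1}|x|$ and then dualize once \emph{per} $k$ via $\mathcal{R}_L(x^{-k})^*=\mathcal{R}_L(x^{k+1}|x|)$. Both are valid; yours is marginally longer in the number of duality applications but more symmetric in spirit, and your explicit appeal to condition (iii) (or Remark~\ref{3.5}) to pass from $\zeta^{-1}$ back to $\zeta$ after the duality square is a point the paper leaves implicit but is exactly right. Nothing is missing, and your closing observation that the whole content of the exceptional case collapses to the single equality at $\mathcal{R}_L(x|x|)\cong\mathcal{R}_L(1)$ is the same conclusion the paper draws.
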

 \begin{proof}
 The equality for $\delta=x^0$ follows from that for $\delta=x|x|$ by the compatibility of 
 $\varepsilon_{L,\zeta}^{\mathrm{dR}}(-)$ and $\varepsilon_{L,\zeta}(-)$  with the Tate duality, 
 which are proved in Lemma \ref{3.8} and Proposition \ref{4.17}. 
 Then, the equality for $\delta=x^{k+1}|x|$ (respectively, $\delta=x^{-k}$) follows from that for $\delta=x|x|$ (respectively, 
 $\delta=x^0$) by the compatibility of 
 $\varepsilon_{L,\zeta}^{\mathrm{dR}}(-)$ and $\varepsilon_{L,\zeta}(-)$  with $\partial$, which are proved in 
 Lemma \ref{4.14.5} and Proposition \ref{4.13.5}

 \end{proof}
 
 Finally, it remains to show the equality 
 $$\varepsilon_{L,\zeta}(\mathcal{R}_L(1))=\varepsilon_{L.\zeta}^{\mathrm{dR}}(\mathcal{R}_L(1))$$ 
 (we identify $\mathcal{R}_L(x|x|)=\mathcal{R}_L(1):f\bold{e}_{x|x|}\mapsto f\bold{e}_1$).
 Since $\mathcal{R}_L(1)$ is \'etale, this equality immediately follows from the Kato's result
 since we have $\varepsilon_{L,\zeta}(\mathcal{R}_L(1))=\varepsilon_{\mathcal{O}_L,\zeta}(\mathcal{O}_L(1))\otimes\mathrm{id}_L$ under the canonical isomorphism $\Delta_{L}(\mathcal{R}_L(1))\isom \Delta_{\mathcal{O}_L}(\mathcal{O}_L(1))\otimes_{\mathcal{O}_L}L$ by Corollary \ref{4.17}. 
 However, here we give another proof of this equality only using the framework of $(\varphi,\Gamma)$-modules.

In the remaining part of this section, we prove this equality by explicit calculations.
First, it is easy to see that the inclusion 
$$C^{\bullet}_{\psi,\gamma}(L\cdot 1_{\mathbb{Z}_p}\bold{e}_1)\hookrightarrow 
C^{\bullet}_{\psi,\gamma}(\mathrm{LA}(\mathbb{Z}_p,L)(1))$$
induced by the natural inclusion 
$L\cdot 1_{\mathbb{Z}_p}\bold{e}_1\hookrightarrow \mathrm{LA}(\mathbb{Z}_p,L)(1)$ 
(here,  $1_{\mathbb{Z}_p}$ is the constant function on $\mathbb{Z}_p$ with the constant 
value $1$) 
is  quasi-isomorphism. This quasi-isomorphism and the quasi-isomorphism 
$$C^{\bullet}_{\gamma}(\mathcal{R}_L^{\infty}(1)^{\psi=1})\isom C^{\bullet}_{\psi,\gamma}(\mathcal{R}_L^{\infty}(1)),$$ 
and the 
long exact sequence associated to 
the short exact sequence 
$$0\rightarrow \mathcal{R}^{\infty}_L(1)\rightarrow \mathcal{R}_L(1)
\rightarrow \mathrm{LA}(\mathbb{Z}_p, L)(1)\rightarrow 0$$ 
induce the following isomorphisms
$$\alpha_0:\mathrm{H}^0_{\psi,\gamma}(L\cdot 1_{\mathbb{Z}_p}\bold{e}_1)\isom 
\mathrm{H}^1(\Gamma, \mathcal{R}^{\infty}_L(1)^{\psi=1}),$$
\begin{multline*}
\alpha_1:\mathrm{H}^1_{\psi,\gamma}(\mathcal{R}_L(1))\isom 
\mathrm{H}^1_{\psi,\gamma}(L\cdot 1_{\mathbb{Z}_p}\bold{e}_1):\\
[f_1\bold{e}_1,f_2\bold{e}_2]
\mapsto \left(\mathrm{Res}_0\left(f_1\frac{d\pi}{1+\pi}\right)\cdot 1_{\mathbb{Z}_p}\bold{e}_1,\mathrm{Res}_0\left(f_2\frac{d\pi}{1+\pi}\right)\cdot 1_{\mathbb{Z}_p}\bold{e}_1\right),
\end{multline*}
$$\alpha_2:\mathrm{H}^2_{\psi,\gamma}(\mathcal{R}_L(1))
\isom \mathrm{H}^2_{\psi,\gamma}(L\cdot 1_{\mathbb{Z}_p}\bold{e}_1):[f\bold{e}_1]\mapsto 
\mathrm{Res}_0\left(f\frac{d\pi}{1+\pi}\right)\cdot 1_{\mathbb{Z}_p}\bold{e}_1.$$

Therefore, the isomorphism 
$\bar{\theta}_{\zeta}(\mathcal{R}_L(1)): \boxtimes_{i=1}^2\mathrm{Det}_L(\mathrm{H}^i_{\psi,\gamma}(\mathcal{R}_L(1)))^{(-1)^{i+1}}\isom (L(1), 1)$ defined in (\ref{isom}) is the composition of the isomorphisms $\beta_0$, $\beta_1$ and 
$\iota_{x|x|}$.
\begin{multline*}
\boxtimes_{i=1}^2\mathrm{Det}_L(\mathrm{H}^i_{\psi,\gamma}(\mathcal{R}_L(1)))^{(-1)^{i+1}}
\xrightarrow{\beta_0} \boxtimes_{i=0}^2\mathrm{Det}_L(\mathrm{H}^i_{\psi,\gamma}(L\cdot 1_{\mathbb{Z}_p}\bold{e}_1))^{(-1)^{i+1}}\boxtimes
(\mathrm{H}^1(\Gamma,\mathcal{R}^{\infty}_L(1)^{\psi=1}),1)\\
\xrightarrow{\beta_1} (\mathrm{H}^1(\Gamma,\mathcal{R}^{\infty}_L(1)^{\psi=1}),1)
\xrightarrow{\iota_{x|x|}}(L(1),1),
\end{multline*}
where $\beta_0$ is induced by $\alpha_i$ ($i=0,1,2$), and $\beta_1$ is induced by the canonical isomorphism
$$\beta_1:\boxtimes_{i=0}^2\mathrm{Det}_L(\mathrm{H}^i_{\psi,\gamma}(L\cdot 1_{\mathbb{Z}_p}\bold{e}_1))^{(-1)^{i-1}}\isom \bold{1}_L$$ which is the base change by $f_{x|x|}:\mathcal{R}_L^{\infty}(\Gamma)\rightarrow L:[\gamma]\mapsto \chi(\gamma)^{-1}$ of the isomorphism (\ref{27e}) 
for $M=\mathcal{R}_L$.

By definition, the isomorphism $\beta_1$ is explicitly describe as in the following lemma, which easily follows from the definition (hence, we omit the proof).

\begin{lemma}\label{4.19}
If we denote $\tilde{f}_0:=1_{\mathbb{Z}_p}\bold{e}_1$ 
$($resp. $\tilde{f}_{1,1}:=(1_{\mathbb{Z}_p}\bold{e}_1,0)$ and 
$\tilde{f}_{1,2}:=(0,1_{\mathbb{Z}_p}\bold{e}_1)$, resp.  $\tilde{f}_2:=1_{\mathbb{Z}_p}\bold{e}_1$$)$ for 
the basis of  $\mathrm{H}^0_{\psi,\gamma}(L\cdot 1_{\mathbb{Z}_p}\bold{e}_1)$ 
$($resp.  $\mathrm{H}^1_{\psi,\gamma}(L\cdot 1_{\mathbb{Z}_p}\bold{e}_1)$, resp.
$ \mathrm{H}^2_{\psi,\gamma}(L\cdot 1_{\mathbb{Z}_p}\bold{e}_1)$$)$, then 
the canonical trivialization 
$$\beta_1:(\mathrm{H}^0_{\psi,\gamma}(L\cdot 1_{\mathbb{Z}_p}\bold{e}_1),1)^{-1}
\boxtimes (\mathrm{det}_L\mathrm{H}^1_{\psi,\gamma}(L\cdot 1_{\mathbb{Z}_p}\bold{e}_1), 2)
\boxtimes (\mathrm{H}^2_{\psi,\gamma}(L\cdot 1_{\mathbb{Z}_p}\bold{e}_1,1)^{-1}\isom \bold{1}_L$$ satisfies the equality
 $$\beta_1(\tilde{f}^{\vee}_0\otimes (\tilde{f}_{1,1}\wedge \tilde{f}_{1,2})\otimes \tilde{f}^{\vee}_2)=1.$$
 \end{lemma}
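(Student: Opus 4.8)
The plan is to unwind the definition of $\beta_1$ completely, using that $\psi$ and $\gamma$ act \emph{trivially} on $L\cdot 1_{\mathbb{Z}_p}\bold{e}_1$, so that every determinant isomorphism that enters is the tautological one.

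First I would record the shape of the relevant complexes. Since $\psi(1_{\mathbb{Z}_p})(y)=1_{\mathbb{Z}_p}(py)=1$ and, after the $x|x|$-twist, $\gamma(1_{\mathbb{Z}_p}\bold{e}_1)(y)=1_{\mathbb{Z}_p}(y/\chi(\gamma))\bold{e}_1=1_{\mathbb{Z}_p}\bold{e}_1$ (and $\Delta$ acts trivially for the same reason, since $1_{\mathbb{Z}_p}$ is constant), both differentials of
$$C^{\bullet}_{\psi,\gamma}(L\cdot 1_{\mathbb{Z}_p}\bold{e}_1)=[\,L\cdot 1_{\mathbb{Z}_p}\bold{e}_1\xrightarrow{\ 0\ }(L\cdot 1_{\mathbb{Z}_p}\bold{e}_1)^{\oplus 2}\xrightarrow{\ 0\ }L\cdot 1_{\mathbb{Z}_p}\bold{e}_1\,]$$
vanish. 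Hence $\mathrm{H}^0=L\tilde{f}_0$, $\mathrm{H}^1=L\tilde{f}_{1,1}\oplus L\tilde{f}_{1,2}$, $\mathrm{H}^2=L\tilde{f}_2$, and because the differentials are zero the determinant formalism's comparison $\mathrm{Det}_L(C^{\bullet}_{\psi,\gamma}(L\cdot 1_{\mathbb{Z}_p}\bold{e}_1))\isom\boxtimes_i\mathrm{Det}_L(\mathrm{H}^i)^{(-1)^i}$ is literally the identity, identifying the generator $(1_{\mathbb{Z}_p}\bold{e}_1)\otimes(1_{\mathbb{Z}_p}\bold{e}_1\wedge 1_{\mathbb{Z}_p}\bold{e}_1)^{\vee}\otimes(1_{\mathbb{Z}_p}\bold{e}_1)$ built from the chosen bases of the three terms with $\tilde{f}_0\otimes(\tilde{f}_{1,1}\wedge\tilde{f}_{1,2})^{\vee}\otimes\tilde{f}_2$; dually, $\tilde{f}_0^{\vee}\otimes(\tilde{f}_{1,1}\wedge\tilde{f}_{1,2})\otimes\tilde{f}_2^{\vee}$ corresponds to the dual of this tautological generator.

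Next I would trace $\beta_1$ through its definition: it is the base change along $f_{x|x|}\colon\mathcal{R}^{\infty}_L(\Gamma)\to L$ of the isomorphism $g^N=\boxtimes_{k=0}^N g_k$ of $(\ref{27e})$ for $M=\mathcal{R}_L$, where $\delta_{\lambda}$ is trivial. Base change carries $C^{\bullet}_{\psi}(Ly^k\bold{e}_{\delta_1})$ to $C^{\bullet}_{\psi,\gamma}(Ly^k\bold{e}_1)$; for $k\geq 1$ the operator $\psi-1$ acts on $Ly^k\bold{e}_{\delta_1}$ as the unit $p^k-1$, so $C^{\bullet}_{\psi}(Ly^k\bold{e}_{\delta_1})$ is acyclic already before base change. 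By the remark accompanying the definition of the $g_k$ (for an acyclic $C^{\bullet}_{\psi}(Ay^k\bold{e}_{\delta_{\lambda}})$, $g_k$ agrees with the canonical trivialization $h$) together with compatibility of determinants with base change, each $k\geq 1$ factor of $g^N$ base-changes to the canonical trivialization of an acyclic complex and therefore contributes trivially to $\beta_1$. Thus $\beta_1$ is identified with the base change of the single factor $g_0$, which is assembled from the exact sequence $0\to P^0_1[-1]\to C^{\bullet}_{\psi}(L\cdot 1_{\mathbb{Z}_p}\bold{e}_{\delta_1})\to P^0_2[-2]\to 0$ (legitimate here since $\psi-1=0$ on this module, with $P^0_1=P^0_2=L\cdot 1_{\mathbb{Z}_p}\bold{e}_{\delta_1}$) and the evaluation $i_{\mathrm{Det}_L(\cdot)}$. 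After $\otimes^{\bold{L}}_{f_{x|x|}}L$ this becomes the determinant isomorphism of $C^{\bullet}_{\psi,\gamma}(L\cdot 1_{\mathbb{Z}_p}\bold{e}_1)$ built from the projective resolution $[\mathcal{R}^{\infty}_L(\Gamma)p_{\delta_0}\xrightarrow{d_{1,\gamma}}\mathcal{R}^{\infty}_L(\Gamma)p_{\delta_0}]$ of $L$ and the evaluation pairing; every map occurring is an identity or the evaluation of $1_{\mathbb{Z}_p}\bold{e}_1$ against its dual, so the base-changed $g_0$ sends $(1_{\mathbb{Z}_p}\bold{e}_1)\otimes(1_{\mathbb{Z}_p}\bold{e}_1\wedge 1_{\mathbb{Z}_p}\bold{e}_1)^{\vee}\otimes(1_{\mathbb{Z}_p}\bold{e}_1)$ to $1$. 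Combining with the first step yields $\beta_1(\tilde{f}_0^{\vee}\otimes(\tilde{f}_{1,1}\wedge\tilde{f}_{1,2})\otimes\tilde{f}_2^{\vee})=1$.

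The main obstacle is purely the bookkeeping of signs, shifts and commutativity constraints in the last step: one must check that the passage from the $2$-term complex $C^{\bullet}_{\psi}$ to the $3$-term complex $C^{\bullet}_{\psi,\gamma}$ under $f_{x|x|}$, and the precise way the acyclic $k\geq 1$ pieces drop out of $\boxtimes_{k}g_k$, introduce no stray unit. Since all differentials of the $k=0$ complex are zero and the acyclic pieces cancel in pairs, no nontrivial scalar can intervene, and the value is forced to equal $1$; the verification is routine but requires care with the conventions of the determinant formalism recalled in $\S 3.1$.
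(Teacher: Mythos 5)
Your proof is correct and is exactly the unwinding of the definitions that the paper has in mind (the paper omits the proof, saying the lemma ``easily follows from the definition''): reduce to the $k=0$ factor of $g^N$ by discarding the acyclic $k\geqq 1$ pieces via the remark that $g_k$ equals the canonical trivialization $h$ there, and then observe that the base change of $g_0$ is tautological because $\psi-1$ and $\gamma-1$ both vanish on $L\cdot 1_{\mathbb{Z}_p}\bold{e}_1$. The one assertion that deserves to be made explicit rather than just asserted is why ``no nontrivial scalar can intervene'': signs are precisely the scalars that determinant formalisms do produce, and the reason none appears here is that every graded determinant occurring in the $k=0$ factor --- in particular $\mathrm{Det}_{\mathcal{R}^{\infty}_L(\Gamma)}(P^0_i)$, whose Euler characteristic is $0$ --- has degree $0$, so the commutativity constraint $(-1)^{rs}$ and the rule $i_{(L,r)^{-1}}=(-1)^r i_{(L,r)}$ contribute only $+1$, while the ordering of the two degree-one copies of $L\cdot 1_{\mathbb{Z}_p}\bold{e}_1$ dictated by the exact sequence $0\rightarrow P^0_1[-1]\rightarrow C^{\bullet}_{\psi}\rightarrow P^0_2[-2]\rightarrow 0$ is the one matching $\tilde{f}_{1,1}\wedge\tilde{f}_{1,2}$ (not its negative).
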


\begin{lemma}\label{4.20}
The isomorphism 
$$\mathrm{H}^0_{\psi,\gamma}(L\cdot 1_{\mathbb{Z}_p}\bold{e}_1)
\xrightarrow{\alpha_0}\mathrm{H}^1(\Gamma, \mathcal{R}_L^{\infty}(1)^{\psi=1})
\xrightarrow{\iota_{x|x|}}L\bold{e}_1$$ 
sends the element $\tilde{f}_0$ to $-\bold{e}_1\in L(1)$.

\end{lemma}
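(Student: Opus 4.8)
The plan is to trace through the chain of explicit identifications defining $\iota_{x|x|}\circ\alpha_0$ and evaluate it on the distinguished basis element $\tilde f_0 = 1_{\mathbb Z_p}\bold e_1$. First I would make explicit what $\alpha_0$ does: it is the connecting homomorphism in the long exact sequence attached to
$$0\rightarrow \mathcal{R}^{\infty}_L(1)\rightarrow \mathcal{R}_L(1)\xrightarrow{\mathrm{Col}} \mathrm{LA}(\mathbb{Z}_p, L)(1)\rightarrow 0,$$
composed with the quasi-isomorphism $C^{\bullet}_{\psi,\gamma}(L\cdot 1_{\mathbb Z_p}\bold e_1)\hookrightarrow C^{\bullet}_{\psi,\gamma}(\mathrm{LA}(\mathbb Z_p,L)(1))$ and with $C^{\bullet}_{\gamma}(\mathcal R^\infty_L(1)^{\psi=1})\isom C^{\bullet}_{\psi,\gamma}(\mathcal R^\infty_L(1))$. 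Concretely, to compute $\alpha_0(\tilde f_0)$ I would lift the cocycle $1_{\mathbb Z_p}\bold e_1 \in \mathrm H^0_{\psi,\gamma}(L\cdot 1_{\mathbb Z_p}\bold e_1)$ (which lives in degree $0$, i.e. is $\psi$- and $\gamma$-fixed) to an element $\widetilde F\bold e_1\in \mathcal R_L(1)$ with $\mathrm{Col}(\widetilde F\bold e_1) = 1_{\mathbb Z_p}\bold e_1$, apply $\psi-1$ and $\gamma-1$, and read off the resulting element of $\mathcal R^\infty_L(1)^{\psi=1}$ as a class in $\mathrm H^1(\Gamma,\mathcal R^\infty_L(1)^{\psi=1})$.

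The key computational input is to exhibit a convenient such lift. Using the Amice/Colmez dictionary, the constant function $1_{\mathbb Z_p}$ corresponds to the Dirac distribution at $0$, whose Amice transform is $1\in\mathcal R^\infty_L$; one checks (as in Colmez's computations, and in the appendix of this paper describing $\mathrm{Res}_0$) that $\mathrm{Col}\bigl(\tfrac{1}{\pi}\bold e_1\bigr) = 1_{\mathbb Z_p}\bold e_1$, since $\mathrm{Res}_0\bigl((1+\pi)^y\tfrac{1}{\pi}\tfrac{d\pi}{1+\pi}\bigr) = \mathrm{Res}_0\bigl(\tfrac{(1+\pi)^{y-1}}{\pi}d\pi\bigr) = 1$ for all $y\in\mathbb Z_p$. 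So I would take $\widetilde F\bold e_1 = \tfrac{1}{\pi}\bold e_1$. Then I compute $(\psi-1)(\tfrac1\pi\bold e_1)$ and $(\gamma-1)(\tfrac1\pi\bold e_1)$ using $\varphi(\pi) = (1+\pi)^p-1$, $\gamma(\pi) = (1+\pi)^{\chi(\gamma)}-1$, together with the twist by $\delta = x|x|$ on $\bold e_1$. The standard identity $\psi(\tfrac1\pi) = \tfrac1\pi$ (so $\tfrac1\pi\in\mathcal R_L^{\psi=1}$, recalling $\psi\circ\varphi = \mathrm{id}$ and $\psi((1+\pi)^i/\pi) = 0$ for $i\not\equiv 0$, via the classical computation $\tfrac1\pi = \sum_{i}\tfrac{(1+\pi)^i-1}{\text{stuff}}$... — more simply, $\psi(1/\pi)=1/\pi$ is Colmez's $\psi(1/\pi)=1/\pi$) shows the $\psi-1$ component is essentially trivial up to the $\mathcal R^\infty_L(1)$ part, so the class $\alpha_0(\tilde f_0)$ is represented by $\tfrac1\pi\bold e_1$ itself viewed inside $\mathcal R^\infty_L(1)^{\psi=1}$ — no, more carefully: $\tfrac1\pi\notin\mathcal R^\infty_L$, so one subtracts a suitable element of $\mathcal R_L(1)$ to land in $\mathcal R^\infty_L(1)$; the natural choice uses $\log(1+\pi)^{-1}$ or the fact that $\tfrac1\pi - \tfrac1t\in\mathcal R^\infty_L$ up to constants. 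I expect the cleanest route is: the image of $\tilde f_0$ under $\alpha_0$ is the class of $\bigl(\tfrac1\pi - \tfrac1t\bigr)\bold e_1$ (which lies in $\mathcal R^\infty_L$ and is $\psi$-fixed), and then I evaluate $\iota_{x|x|}$ on it.

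Finally, applying $\iota_{x|x|}$ — which by Lemma \ref{4.13} is $[f_\mu\bold e_{x|x|}]\mapsto (x|x|)(-1)\cdot\int_{\mathbb Z_p^\times}(x|x|)^{-1}(y)\mu(y)\,\bold e_1 = \int_{\mathbb Z_p^\times}y^{-1}\mu(y)\,\bold e_1$ (note $(x|x|)(-1) = (-1)\cdot|-1| = -1$, wait: $x|x|$ evaluated at $-1\in\mathbb Z_p^\times$ gives $(-1)\cdot 1 = -1$, and $(x|x|)^{-1}(y) = y^{-1}$ on $\mathbb Z_p^\times$) — to the distribution $\mu$ with $f_\mu(\pi) = \tfrac1\pi - \tfrac1t$ (i.e. $\mu = \mu_{1/\pi} - \mu_{1/t}$). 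Here $\mu_{1/\pi}$ is related to the measure $y\mapsto y^{-1}$-ish and $\mu_{1/t}$ to $\zeta$-values; the relevant computation is the classical one that $\tfrac1t = \tfrac1\pi + \tfrac12 + \dots$ and that $\int_{\mathbb Z_p^\times}$ picks out a specific term. I expect the main obstacle to be bookkeeping: correctly normalizing the Amice transform and the Colmez transform with respect to the fixed $\zeta$, correctly handling the $\Delta$-fixed-part projectors and the $\tfrac{1}{|\Gamma_{\mathrm{tor}}|\log_0(\chi(\gamma))}$ normalization factors that appear in $\alpha_0$ and in the definition of $\iota_{x|x|}$, and making sure the sign from $(x|x|)(-1) = -1$ and the sign from the connecting map combine to give exactly $-\bold e_1$ rather than $+\bold e_1$ or $-\tfrac1p\bold e_1$. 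In outline: (i) identify the lift $\widetilde F\bold e_1 = \tfrac1\pi\bold e_1$; (ii) compute $(\psi-1)$ and $(\gamma-1)$ of it and extract the $\mathrm H^1(\Gamma,-)$-class; (iii) feed that class into $\iota_{x|x|}$ from Lemma \ref{4.13} and evaluate the resulting integral, the non-trivial input being the residue identity $\mathrm{Res}_0\bigl(\tfrac1\pi\cdot\tfrac{d\pi}{1+\pi}\bigr) = 1$ and the value $\int_{\mathbb Z_p^\times}y^{-1}\mu(y)$; (iv) collect signs and normalizations to conclude the value is $-\bold e_1$.
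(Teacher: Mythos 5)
Your outline starts on the right foot — the connecting homomorphism $\alpha_0$ is computed by lifting $1_{\mathbb{Z}_p}\bold{e}_1$ along $\mathrm{Col}$ and applying the differential, and your lift $\frac{1}{\pi}\bold{e}_1$ is a valid choice (it differs from the paper's $\frac{1+\pi}{\pi}\bold{e}_1$ by the constant $\bold{e}_1\in\mathcal{R}^{\infty}_L(1)$, so it yields the same class, and $\psi(\frac{1}{\pi})=\frac{1}{\pi}$ is indeed the relevant $\psi$-invariance). But the implementation after that point breaks down.

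First, having observed $(\psi-1)(\frac{1}{\pi}\bold{e}_1)=0$, the next and only step is to compute $(\gamma-1)(\frac{1}{\pi}\bold{e}_1)$, which lands in $\mathcal{R}^{\infty}_L(1)^{\psi=1}$; the class is $\frac{1}{|\Gamma_{\mathrm{tor}}|\log_0(\chi(\gamma))}\bigl[(\gamma-1)(\frac{1}{\pi}\bold{e}_1)\bigr]$ (the normalization factor coming from the quasi-isomorphism $\mathrm{H}^1(\Gamma,-)\isom\mathrm{H}^1_{\psi,\gamma}(-)$), not the thing you write. Your proposed representative $(\frac{1}{\pi}-\frac{1}{t})\bold{e}_1$ is not an element of $\mathcal{R}^{\infty}_L(1)$: since $t=\pi\prod_{n\geqq 1}\frac{Q_n(\pi)}{p}$ has zeros at every $\zeta_{p^n}-1$, the function $\frac{1}{t}$ (hence also $\frac{1}{\pi}-\frac{1}{t}$) has poles throughout the open unit disc, so it does not even lie in $\mathcal{R}_L$. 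It is also not $\psi$-fixed ($\psi(\frac{1}{t})=\frac{p}{t}$, not $\frac{1}{t}$). So step (ii) of your plan is based on a wrong identification of the boundary class.

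Second, you propose to apply the explicit formula of Lemma \ref{4.13} directly to $\iota_{x|x|}$. That lemma is proved \emph{only for generic} $\delta$, and $\delta=x|x|$ is precisely one of the excluded exceptional characters ($\delta=x^{k+1}|x|$ with $k=0$). The indispensable device in the paper's proof is to rewrite $(\gamma-1)(\frac{1+\pi}{\pi}\bold{e}_1)=\partial\bigl(\log(\gamma(\pi)/\pi)\bigr)\bold{e}_1$ with $\log(\gamma(\pi)/\pi)\bold{e}_{|x|}\in\mathcal{R}^{\infty}_L(|x|)^{\psi=1}$, transfer via the commutative square relating $\iota_{|x|}$ and $\iota_{x|x|}$ through $\partial$ (which contributes the sign $\bold{e}_{|x|}\mapsto -\bold{e}_1$), and only then invoke Lemma \ref{4.13} for the \emph{generic} character $|x|$ to get $\int_{\mathbb{Z}_p^{\times}}\mu_{\gamma}(y)=\frac{p-1}{p}\log(\chi(\gamma))=|\Gamma_{\mathrm{tor}}|\log_0(\chi(\gamma))$, which cancels the normalization factor and produces exactly $-\bold{e}_1$. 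This transfer step is absent from your plan, and without it there is no legitimate way to evaluate $\iota_{x|x|}$. (There is also an internal sign slip where you record $(x|x|)(-1)=-1$ and then drop the minus from the displayed formula.) Finally, your steps (iii)--(iv) are only declared as ``bookkeeping,'' with no calculation carried out; the residue identity $\mathrm{Res}_0(\frac{1}{\pi}\frac{d\pi}{1+\pi})=1$ you flag is used to get the lift, not to evaluate the integral at the end.
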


\begin{proof}
Since we have $\mathrm{Col}\left(\frac{1+\pi}{\pi}\right)=1_{\mathbb{Z}_p}$ and $\psi\left(\frac{1+\pi}{\pi}\bold{e}_1\right)=
\frac{1+\pi}{\pi}\bold{e}_1$,
we have 
$$\alpha_0(\tilde{f}_0)=\left[\frac{1}{|\Gamma_{\mathrm{tor}}|\mathrm{log}_0(\chi(\gamma))}(\gamma-1)\left(\frac{1+\pi}{\pi}\bold{e}_1\right)\right]$$
by 
definition of the boundary map.

Since we have 
$$(\gamma-1)\left(\frac{1+\pi}{\pi}\bold{e}_1\right)
=\partial\left(\mathrm{log}\left(\frac{\gamma(\pi)}{\pi}\right)\right)\bold{e}_1\text{ and }
\mathrm{log}\left(\frac{\gamma(\pi)}{\pi}\right)\bold{e}_{|x|}\in \mathcal{R}^{\infty}_L(|x|)^{\psi=1},$$  and 
 have the commutative diagram
\begin{equation}
\begin{CD}
\mathrm{H}^1(\Gamma, \mathcal{R}^{\infty}_L(|x|)^{\psi=1})@> \iota_{|x|} >> L\bold{e}_{|x|} \\
@VV \partial V @ VV \bold{e}_{|x|}\mapsto -\bold{e}_1 V \\
\mathrm{H}^1(\Gamma, \mathcal{R}^{\infty}_L(1)^{\psi=1})@> \iota_{x|x|} >> L\bold{e}_1,
\end{CD}
\end{equation}
 we obtain an equality 
\begin{multline*}
\iota_{x|x|}(\alpha_0(\tilde{f}_0))=\frac{1}{|\Gamma_{\mathrm{tor}}|\mathrm{log}_0(\chi(\gamma))}\iota_{x|x|}\left(\left[\partial\left(\mathrm{log}\left(\frac{\gamma(\pi)}{\pi}\right)\right)\bold{e}_{1}\right]\right)\\
=-\frac{1}{|\Gamma_{\mathrm{tor}}|\mathrm{log}_0(\chi(\gamma))}\int_{\mathbb{Z}_p^{\times}}\mu_{\gamma}(y)\bold{e}_1
\end{multline*}
by Lemma \ref{4.13}, where we define $\mu_{\gamma}\in \mathcal{D}(\mathbb{Z}_p, L)$ such that 
$f_{\mu_{\gamma}}(\pi)=\mathrm{log}\left(\frac{\gamma(\pi)}{\pi}\right)$.

We calculate $\int_{\mathbb{Z}_p^{\times}}\mu_{\gamma}(y)$ as follows. 
Since we have $\psi(\mu_{\gamma})=\frac{1}{p}\mu_{\gamma}$, 
we obtain 

$$\int_{p\mathbb{Z}_p}\mu_{\gamma}(y)=\int_{\mathbb{Z}_p}\psi(\mu_{\gamma})(y)\\
=\frac{1}{p}\int_{\mathbb{Z}_p}\mu_{\gamma}(y).$$
Hence, we obtain  
 \[
\begin{array}{ll}
\int_{\mathbb{Z}_p^{\times}}\mu_{\gamma}(y)&=\int_{\mathbb{Z}_p}\mu_{\gamma}(y)-
\int_{p\mathbb{Z}_p}\mu_{\gamma}(y) =\int_{\mathbb{Z}_p}\mu_{\gamma}(y)-\frac{1}{p}\int_{\mathbb{Z}_p}\mu_{\gamma}(y)\\
&=\frac{p-1}{p}\int_{\mathbb{Z}_p}\mu_{\gamma}(y)
=\frac{p-1}{p}\mathrm{log}\left(\frac{\gamma(\pi)}{\pi}\right)|_{\pi=0}=\frac{p-1}{p}\mathrm{log}(\chi(\gamma)).
\end{array}
\]
Hence, we obtain 
$$\iota_{x|x|}(\alpha_0(\tilde{f}_0))=-\frac{\mathrm{log}(\chi(\gamma))}{
|\Gamma_{\mathrm{for}}|\mathrm{log}_0(\chi(\gamma))}\frac{p-1}{p}\bold{e}_1=-\bold{e}_1$$ 
(for any prime $p$), which proves the lemma.

\end{proof}

In the appendix, we define canonical basis $\{f_{1,1},f_{1,2}\}$ of $\mathrm{H}^1_{\psi,\gamma}(\mathcal{R}_L(1))$,
$f_2\in \mathrm{H}^2_{\psi,\gamma}(\mathcal{R}_L(1))$, 
$e_0\in \mathrm{H}^0_{\psi,\gamma}(\mathcal{R}_L)$ and $\{e_{1,1},e_{1,2}\}$ of $\mathrm{H}^1_{\psi,\gamma}(\mathcal{R}_L)$; see the appendix for the definition.

\begin{corollary}\label{4.21}
The isomorphism 
$$\bar{\theta}_{\zeta}(\mathcal{R}_L(1)): (\mathrm{det}_L\mathrm{H}^1_{\psi,\gamma}(
\mathcal{R}_L(1)),2)\boxtimes(\mathrm{H}^2_{\psi,\gamma}(
\mathcal{R}_L(1)),1)^{-1}\isom (L\bold{e}_1,1)$$ 
sends 
the element $(f_{1,1}\wedge f_{1,2})\otimes f_2^{\vee}$ to $-\frac{p-1}{p}\bold{e}_1$.

\end{corollary}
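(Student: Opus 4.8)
The plan is to unwind the definition of $\bar{\theta}_{\zeta}(\mathcal{R}_L(1))$ given just before the statement, namely $\bar{\theta}_{\zeta}(\mathcal{R}_L(1)) = \iota_{x|x|} \circ \beta_1 \circ \beta_0$, and to evaluate it on the explicit basis element $(f_{1,1}\wedge f_{1,2})\otimes f_2^{\vee}$ coming from the appendix. First I would recall the three isomorphisms $\alpha_0$, $\alpha_1$, $\alpha_2$ induced by the short exact sequence $0\to\mathcal{R}^{\infty}_L(1)\to\mathcal{R}_L(1)\to\mathrm{LA}(\mathbb{Z}_p,L)(1)\to 0$ and the quasi-isomorphism $C^{\bullet}_{\gamma}(\mathcal{R}^{\infty}_L(1)^{\psi=1})\isom C^{\bullet}_{\psi,\gamma}(\mathcal{R}^{\infty}_L(1))$: these identify $\mathrm{H}^1_{\psi,\gamma}(\mathcal{R}_L(1))$ and $\mathrm{H}^2_{\psi,\gamma}(\mathcal{R}_L(1))$ with $\mathrm{H}^1_{\psi,\gamma}(L\cdot 1_{\mathbb{Z}_p}\bold{e}_1)$ and $\mathrm{H}^2_{\psi,\gamma}(L\cdot 1_{\mathbb{Z}_p}\bold{e}_1)$ respectively, while producing the extra factor $\mathrm{H}^1(\Gamma,\mathcal{R}^{\infty}_L(1)^{\psi=1})$ together with the connecting class $\alpha_0$. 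So the work is: (a) compute the images $\alpha_1(f_{1,1})$, $\alpha_1(f_{1,2})$ inside $\mathrm{H}^1_{\psi,\gamma}(L\cdot 1_{\mathbb{Z}_p}\bold{e}_1)$ in terms of the basis $\tilde f_{1,1}, \tilde f_{1,2}$, and $\alpha_2(f_2)$ in terms of $\tilde f_2$; (b) invoke Lemma \ref{4.19} to see $\beta_1$ sends $\tilde f_0^{\vee}\otimes(\tilde f_{1,1}\wedge\tilde f_{1,2})\otimes\tilde f_2^{\vee}\mapsto 1$; (c) invoke Lemma \ref{4.20} which says $\iota_{x|x|}\circ\alpha_0(\tilde f_0) = -\bold{e}_1$.

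The second step, the explicit computation of $\alpha_1$ and $\alpha_2$ on the appendix basis, is where the concrete work lies. By the formulas displayed just above the statement, $\alpha_1[f_1\bold{e}_1, f_2\bold{e}_2] = (\mathrm{Res}_0(f_1\,d\pi/(1+\pi))\cdot 1_{\mathbb{Z}_p}\bold{e}_1,\ \mathrm{Res}_0(f_2\,d\pi/(1+\pi))\cdot 1_{\mathbb{Z}_p}\bold{e}_1)$ and $\alpha_2[f\bold{e}_1] = \mathrm{Res}_0(f\,d\pi/(1+\pi))\cdot 1_{\mathbb{Z}_p}\bold{e}_1$. So I need the residues of the cocycles representing $f_{1,1}, f_{1,2}, f_2$, which are pinned down in the appendix (that is exactly why those bases are defined there, via the residue pairing $\mathrm{H}^2_{\varphi,\gamma}(\mathcal{R}_L(1))\isom L$ of Proposition \ref{explicitdual}). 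Assuming the appendix normalizations are such that the residue of $f_2$ is $1$ and the residues arising from $f_{1,1}\wedge f_{1,2}$ reproduce $\tilde f_{1,1}\wedge\tilde f_{1,2}$ up to the correct scalar, steps (a)–(c) combine to give $\bar{\theta}_{\zeta}(\mathcal{R}_L(1))\big((f_{1,1}\wedge f_{1,2})\otimes f_2^{\vee}\big) = (-\bold{e}_1)^{\pm 1}\cdot(\text{residue scalars})$. The factor $-\tfrac{p-1}{p}$ must then emerge: the sign $-$ from Lemma \ref{4.20}, and the $\tfrac{p-1}{p}$ precisely from the computation $\int_{\mathbb{Z}_p^{\times}}\mu_{\gamma}(y) = \tfrac{p-1}{p}\mathrm{log}(\chi(\gamma))$ that appears inside the proof of Lemma \ref{4.20} — indeed inspecting that proof, $\iota_{x|x|}(\alpha_0(\tilde f_0))$ already collapsed the $\tfrac{p-1}{p}$ against $\tfrac{\mathrm{log}(\chi(\gamma))}{|\Gamma_{\mathrm{tor}}|\mathrm{log}_0(\chi(\gamma))}$; but the appendix basis $\{f_{1,1}, f_{1,2}\}$ is normalized differently from what $\alpha_1$ produces from $\{\tilde f_{1,1},\tilde f_{1,2}\}$, and that mismatch is exactly the surviving $\tfrac{p-1}{p}$.

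Concretely, then, I would carry out: (1) write down the appendix cocycle representatives of $f_{1,1}, f_{1,2}, f_2$ and apply $\mathrm{Res}_0(-\,d\pi/(1+\pi))$ to each; (2) express $\alpha_1(f_{1,1})\wedge\alpha_1(f_{1,2})$ as a scalar multiple of $\tilde f_{1,1}\wedge\tilde f_{1,2}$ inside $\mathrm{det}_L\mathrm{H}^1_{\psi,\gamma}(L\cdot 1_{\mathbb{Z}_p}\bold{e}_1)$, and similarly $\alpha_2(f_2)$ as a scalar multiple of $\tilde f_2$; (3) apply $\beta_0$: under the determinant bookkeeping this moves those scalars out front and also contributes the class $\alpha_0(\tilde f_0)\in\mathrm{H}^1(\Gamma,\mathcal{R}^{\infty}_L(1)^{\psi=1})$; (4) apply $\beta_1$ using Lemma \ref{4.19} to kill the $L\cdot 1_{\mathbb{Z}_p}\bold{e}_1$-part with coefficient $1$; (5) apply $\iota_{x|x|}$ to $\alpha_0(\tilde f_0)$ using Lemma \ref{4.20}, obtaining $-\bold{e}_1$; (6) multiply the accumulated scalars to conclude the image is $-\tfrac{p-1}{p}\bold{e}_1$. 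The main obstacle is step (2): keeping the normalizations of the appendix bases $\{f_{1,1},f_{1,2},f_2\}$ consistent with the residue maps and matching them against the naive bases $\{\tilde f_{1,1},\tilde f_{1,2},\tilde f_2\}$, so that the $\tfrac{p-1}{p}$ appears with the right sign and is not accidentally double-counted against the $\tfrac{p-1}{p}$ already absorbed in Lemma \ref{4.20}; this is a bookkeeping point that I would check carefully against Proposition \ref{explicitdual} and the explicit computations of $\mathrm{H}^i_{\varphi,\gamma}(\mathcal{R}(1))$ in the appendix.
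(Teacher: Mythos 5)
Your plan is correct and is essentially the paper's proof: decompose $\bar\theta_\zeta(\mathcal{R}_L(1))$ as $\iota_{x|x|}\circ\beta_1\circ\beta_0$, compute $\alpha_1(f_{1,1})$, $\alpha_1(f_{1,2})$, $\alpha_2(f_2)$ in terms of $\tilde f_{1,1},\tilde f_{1,2},\tilde f_2$ using the residue normalizations from the appendix (those give the factors $\tfrac{p-1}{p}\log\chi(\gamma)$, $\tfrac{p-1}{p}$, $\tfrac{p-1}{p}\log\chi(\gamma)$ respectively, so the surviving scalar after cancellation in the determinant line is $\tfrac{p-1}{p}$), and then apply Lemma~\ref{4.19} for $\beta_1$ and Lemma~\ref{4.20} for $\iota_{x|x|}\circ\alpha_0$ to obtain the sign $-$. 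The only slips are cosmetic: the residue of $f_2$ is $\tfrac{p-1}{p}\log\chi(\gamma)$, not $1$, and the $\tfrac{p-1}{p}$ in Lemma~\ref{4.20}'s proof cancels internally there, so the surviving $\tfrac{p-1}{p}$ comes entirely from the $\alpha_i$ normalizations, exactly as you say at the end.
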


\begin{proof}
By definition, we have 
$$\alpha_{1}(f_{1,1})=\frac{p-1}{p}\mathrm{log}(\chi(\gamma))\tilde{f}_{1,1}, \,\,\,
\alpha_{1}(f_{1,2})=\frac{p-1}{p}\tilde{f}_{1,2}\text{ and }\alpha_2(f_2)=\frac{p-1}{p}\mathrm{log}(\chi(\gamma))\tilde{f}_2.$$
Then, the corollary follows from the previous lemmas.

\end{proof}

Finally, since one has $\Gamma_{L}(\mathcal{R}_L(1))=1$ and 
$\theta_{\mathrm{dR},L}(\mathcal{R}_L(1),\zeta)$ corresponds to 
the isomorphism 
$$\mathcal{L}_{L}(\mathcal{R}_L(1))=L\bold{e}_1\isom \bold{D}_{\mathrm{dR}}(\mathcal{R}_L(1))=\frac{1}{t}L\bold{e}_1:a\bold{e}_1\mapsto \frac{a}{t}\bold{e}_1,$$
 it suffices to show the following lemma.

\begin{lemma}\label{4.22}
The isomorphism 
$$\theta_L(\mathcal{R}_L(1)):(\mathrm{det}_L\mathrm{H}^1_{\psi,\gamma}(\mathcal{R}_L(1)),2)\boxtimes(
\mathrm{H}^2_{\psi,\gamma}(\mathcal{R}_L(1)),1)^{-1}
\isom (\bold{D}_{\mathrm{dR}}(\mathcal{R}_L(1),1)=\left(L\frac{1}{t}\bold{e}_1,1\right)$$ 
sends the element 
$(f_{1,1}\wedge f_{1,2})\otimes f_2^{\vee}$ to $-\frac{p-1}{p t}\bold{e}_1$.

\end{lemma}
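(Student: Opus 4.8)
The plan is to compute the isomorphism $\theta_L(\mathcal{R}_L(1))$ explicitly on the basis element $(f_{1,1}\wedge f_{1,2})\otimes f_2^{\vee}$ by unwinding its definition from \S3.3 in the special case $M=\mathcal{R}_L(1)$. Recall that $\theta_L(M)$ was built out of the exact sequence \eqref{exact2}, which for $M=\mathcal{R}_L(1)$ simplifies drastically: since $\mathcal{R}_L(1)$ is de Rham with Hodge--Tate weight $-1$ we have $\mathrm{H}^0_{\varphi,\gamma}(\mathcal{R}_L(1))=0$, $\bold{D}_{\mathrm{cris}}(\mathcal{R}_L(1))=\frac1t L\bold e_1$ and $t_M=\bold{D}_{\mathrm{dR}}(\mathcal{R}_L(1))=\frac1t L\bold e_1$, while the dual side contributes $\bold{D}_{\mathrm{cris}}(\mathcal{R}_L(1)^*)^{\vee}$, $\bold{D}_{\mathrm{dR}}(\mathcal{R}_L(1))^0=0$, and $\mathrm{H}^2_{\varphi,\gamma}(\mathcal{R}_L(1))\cong L$. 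So the task reduces to: (i) identify the explicit exact sequence \eqref{exact2} for $\mathcal{R}_L(1)$ with all maps written down in terms of the residue/period bases; (ii) compute $\mathrm{exp}_{f,\mathcal{R}_L(1)}$ and $\mathrm{exp}_{\mathcal{R}_L(1)}$ on $\bold{D}_{\mathrm{cris}}$ and $t_M$ via the explicit formulae of Proposition \ref{2.23}; and (iii) track the determinant trivialization \eqref{acyclic} applied to the resulting acyclic complex, which produces a scalar that is exactly the claimed $-\frac{p-1}{pt}$ when expressed against $f_{1,1}\wedge f_{1,2}$ and $f_2^{\vee}$.

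First I would use the appendix's explicit description of $\mathrm{H}^i_{\psi,\gamma}(\mathcal{R}(1))$ (cited as Proposition \ref{explicitdual} and the surrounding material) to pin down the cocycles representing $f_{1,1},f_{1,2},f_2$, and similarly a generator of $\bold{D}_{\mathrm{cris}}(\mathcal{R}_L(1))=\frac1t L\bold e_1$ and of $\bold{D}_{\mathrm{dR}}(\mathcal{R}_L(1))=\frac1t L\bold e_1$. The key computational inputs are: the Bloch--Kato fundamental exact sequence for $\mathcal{R}_L(1)$ which, after Proposition \ref{2.21} and the comparison with the $B$-pair picture (Proposition \ref{2.27}), is identified with the classical one for $\mathbb{Q}_p(1)$; the value $\mathrm{det}_L(1-\varphi\mid \bold{D}_{\mathrm{cris}}(\mathcal{R}_L(1)))=1-p^{-1}$ coming from $\varphi(\frac1t\bold e_1)=\frac1{pt}\bold e_1$; and the value $\mathrm{det}_L(1-\varphi\mid\bold{D}_{\mathrm{cris}}(\mathcal{R}_L(1)^*))=\mathrm{det}_L(1-\varphi\mid\bold{D}_{\mathrm{cris}}(\mathcal{R}_L))=1-p$ coming from $\varphi$ acting as $1$ on $L\cdot 1$ — wait, rather on $\bold{D}_{\mathrm{cris}}(\mathcal{R}_L)$ the crystalline Frobenius is the identity, but the relevant operator in \eqref{exact} is $1-\varphi$ acting on a one-dimensional space where $\varphi$ is a scalar, and one must be careful: for $\mathcal{R}_L(1)^*\cong\mathcal{R}_L$ the crystalline $\varphi$ is $1$, so $1-\varphi=0$ and the naive determinant vanishes; this is precisely why the exact sequence \eqref{exact2} has extra terms and why the contribution is really $\mathrm{H}^0$ and $\mathrm{H}^2$, not a determinant of $1-\varphi$. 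So the correct bookkeeping is that \eqref{exact2} degenerates into short exact sequences relating $\mathrm{H}^1_f$, $\bold{D}_{\mathrm{cris}}$, $t_M$, and the quotient $\mathrm{H}^1/\mathrm{H}^1_f$, $\mathrm{H}^2$; I would assemble these, apply the trivialization $h_{P^{\bullet}}$ of \eqref{acyclic} factor by factor, and collect the scalar. The factor $\frac{p-1}{p}$ should emerge from the crystalline Euler factor at $p$ for $\mathbb{Q}_p(1)$ (equivalently $\mathrm{H}^0(\mathbb{Q}_p,\mathbb{Q}_p(1))=0$ but $\mathrm{H}^0$ of the ``$(1-\varphi)$-shifted" complex contributes $(1-p^{-1})^{-1}$ on one side and $(1-p)$-type terms cancel), and the factor $\frac1t$ and the sign $-1$ from the de Rham duality pairing $[-,-]_{\mathrm{dR}}$ normalization in \eqref{13e} together with the chosen generators.

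The main obstacle I anticipate is sign and normalization bookkeeping: \eqref{exact2} is a splice of \eqref{exact} for $M$ with the dual of \eqref{exact} for $M^*$ via $\mathrm{exp}^*_{M^*}$, and the de Rham duality $\bold{D}_{\mathrm{dR}}(M)^0\cong t_{M^*}^{\vee}$, Tate duality $\langle-,-\rangle$, the trace normalization $\frac1{[K:\mathbb{Q}_p]}$ in \eqref{13e} (here trivial since $K=\mathbb{Q}_p$), and the commutativity constraint $(-1)^{rs}$ in $\mathcal{P}_L$ all feed into the final scalar; getting the $-\frac{p-1}{p}$ rather than $+\frac{p-1}{p}$ or $\frac{p}{p-1}$ requires careful tracking. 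I would organize this by first doing the "crystalline/Frobenius" part, which gives the factor $\frac{p-1}{p}$ (this is the one place a non-unit Euler factor appears, coming from $\det(1-\varphi\mid\bold D_{\mathrm{cris}}(\mathcal R_L(1)))=1-1/p$ divided into the relevant term), then the "Hodge filtration / de Rham" part, which contributes the $\frac1t$ and is essentially the identification $\bold D_{\mathrm{cris}}=\bold D_{\mathrm{dR}}=\frac1t L\bold e_1$, and finally chasing the sign through the duality identifications. Comparing Corollary \ref{4.21} with this lemma then finishes the verification $\varepsilon_{L,\zeta}(\mathcal{R}_L(1))=\varepsilon_{L,\zeta}^{\mathrm{dR}}(\mathcal{R}_L(1))$ since both $\bar\theta_\zeta(\mathcal{R}_L(1))$ and $\theta_L(\mathcal{R}_L(1))\circ(\text{division by }t)^{-1}$ send $(f_{1,1}\wedge f_{1,2})\otimes f_2^{\vee}$ to $-\frac{p-1}{p}\bold e_1$, and $\Gamma_L(\mathcal{R}_L(1))=1$.
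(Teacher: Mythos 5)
Your plan---unwind $\theta_L(\mathcal{R}_L(1))$ through the exact sequence \eqref{exact2}, compute the Bloch--Kato maps via the explicit formulae of Proposition \ref{2.23}, and track the determinant trivialization---is exactly the route the paper takes; the specific equalities it feeds into the diagram chase are $\mathrm{exp}_{\mathcal{R}_L(1)}\left(\frac{1}{t}\bold{e}_1\right)=f_{1,2}$ (from the proof of Lemma \ref{5.1}), $\mathrm{exp}_{f,\mathcal{R}_L}(1)=e_{1,2}$ (Proposition \ref{2.23}(2)), and $\langle f_{1,1},e_{1,2}\rangle=1$, $\langle f_2,e_0\rangle=1$ from Lemma \ref{5.4}. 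Two heuristics in your sketch, however, would steer you wrong during the bookkeeping you rightly identify as the delicate part. First, for $M=\mathcal{R}_L(1)$ one has $\bold{D}_{\mathrm{dR}}(M)^0=0$ and $t_{M^*}=0$, so the de Rham pairing $[-,-]_{\mathrm{dR}}$ of \eqref{13e} contributes nothing to \eqref{exact2}; the sign $-1$ comes from Tate duality, via $\mathrm{exp}^{\vee}_{f,\mathcal{R}_L}(f_{1,1})=-d_0^{\vee}$, which is forced by $\langle f_{1,1},e_{1,2}\rangle=1$ together with the Notation-section identification $\mathrm{H}^1_{\varphi,\gamma}(M)\isom\mathrm{H}^1_{\varphi,\gamma}(M^*)^{\vee}$, $x\mapsto[y\mapsto\langle y,x\rangle]$, and graded commutativity of cup product. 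Second, the factor $1-p^{-1}=\frac{p-1}{p}$ appears directly as the eigenvalue of $1-\varphi$ on $\bold{D}_{\mathrm{cris}}(\mathcal{R}_L(1))$ in the arrow $\bold{D}_{\mathrm{cris}}(M)_1\rightarrow\bold{D}_{\mathrm{cris}}(M)_2\oplus t_M$, not as the reciprocal $(1-p^{-1})^{-1}$, and there is no ``$(1-p)$-type'' cancellation. With those two normalizations corrected, the chase you describe lands on $-\frac{p-1}{pt}\bold{e}_1$ as claimed.
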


\begin{proof}
By definition, the above isomorphism  is the one which is 
naturally induced by the exact sequence
\begin{multline*}
0\rightarrow \bold{D}_{\mathrm{cris}}(\mathcal{R}_L(1)) 
\xrightarrow{(1-\varphi)\oplus \mathrm{can}} \bold{D}_{\mathrm{cris}}(\mathcal{R}_L(1)) 
\oplus \bold{D}_{\mathrm{dR}}(\mathcal{R}_L(1)) \\
\xrightarrow{\mathrm{exp}_{f,\mathcal{R}_L(1)}\oplus \mathrm{exp}_{\mathcal{R}_L(1)}}\mathrm{H}^1_{\psi,\gamma}
(\mathcal{R}_L(1))_{f}\rightarrow 0 
\end{multline*}
and the isomorphisms
$$\mathrm{exp}_{f,\mathcal{R}_L}^{\vee}:\mathrm{H}^1_{\psi,\gamma}(\mathcal{R}_L(1))/
\mathrm{H}^1_{\psi,\gamma}(\mathcal{R}_L(1))_{f}\isom 
\bold{D}_{\mathrm{cris}}(\mathcal{R}_L)^{\vee}$$
and 
$$\bold{D}_{\mathrm{cris}}(\mathcal{R}_L)^{\vee}\isom \mathrm{H}^2_{\psi,\gamma}(\mathcal{R}_L(1))$$ 
which is the dual of the natural isomorphism $\mathrm{H}^0_{\psi,\gamma}(\mathcal{R}_L)\isom \bold{D}_{\mathrm{cris}}(\mathcal{R}_L)$.

We have $\mathrm{exp}_{\mathcal{R}_L(1)}(\frac{1}{t}\bold{e}_1)=f_{1,2}$ 
by the proof of Lemma \ref{5.1}. Since we have 
$$\mathrm{exp}_{f,\mathcal{R}_L}(1)=e_{1,2}$$ 
for $d_0:=1\in L=\bold{D}_{\mathrm{cris}}(\mathcal{R}_L)$ by the explicit definition of $\mathrm{exp}_f$ (Proposition 
\ref{2.23} (2)), and since we have 
$\langle f_{1,1}, e_{1,2}\rangle=1$ by Lemma \ref{5.4}, we obtain that
$$\mathrm{exp}^{\vee}_{f,\mathcal{R}_L}(f_{1,1})=-d_0^{\vee}\in \bold{D}_{\mathrm{cris}}(\mathcal{R}_L)^{\vee}$$
(we should be careful with the sign). Since the natural isomorphism 
$\mathrm{H}^0_{\psi,\gamma}(\mathcal{R}_L)\isom \bold{D}_{\mathrm{cris}}(\mathcal{R}_L)$ 
sends $e_0$ to $d_0\in L=\bold{D}_{\mathrm{cris}}(\mathcal{R}_L)$, we obtain
$$\bold{D}_{\mathrm{cris}}(\mathcal{R}_L)^{\vee}\rightarrow \mathrm{H}^2_{\psi,\gamma}(\mathcal{R}_L(1)):
d_0^{\vee}\mapsto f_2$$
 by Lemma \ref{5.4}. Using these calculations, the 
lemma follows from diagram chase. 
\end{proof}



\section{Appendix: Explicit calculations of $\mathrm{H}^i_{\varphi,\gamma}(\mathcal{R}_L)$ and 
$\mathrm{H}^i_{\varphi,\gamma}(\mathcal{R}_L(1))$}
In this appendix, we compare 
$\mathrm{H}^i(\mathbb{Q}_p, L(k))$ with $\mathrm{H}^i_{\varphi,\gamma}(\mathcal{R}_{L}(k))$ explicitly for $k=0,1$, and define canonical 
basis of $\mathrm{H}^i_{\varphi,\gamma}(\mathcal{R}_{L}(k))$, which are used to compare
$\varepsilon_{L,\zeta}(\mathcal{R}_L(1))$ with $\varepsilon_{L,\zeta}^{\mathrm{dR}}(\mathcal{R}_L(1))$ in Corollary \ref{4.21} and Lemma 
\ref{4.22}. All the results in this appendix seems to be known (see for example \cite{Ben00}), but here we give another proof of these results in the framework of $(\varphi,\Gamma)$-modules over the Robba ring. Of course, we may assume that $L=\mathbb{Q}_p$ by base change.

We first consider $\mathrm{H}^i_{\varphi,\gamma}(\mathcal{R}_{\mathbb{Q}_p})$.
If we identify by 
$$\mathrm{H}^1(\mathbb{Q}_p,\mathbb{Q}_p)=\mathrm{Hom}_{\mathrm{cont}}(\mathrm{G}_{\mathbb{Q}_p}^{\mathrm{ab}},\mathbb{Q}_p)\isom \mathrm{Hom}_{\mathrm{cont}}(\mathbb{Q}_p^{\times},\mathbb{Q}_p):\tau\mapsto \tau\circ\mathrm{rec}_{\mathbb{Q}_p},$$
then this  has a basis $\{[\mathrm{ord}_p], [\mathrm{log}]\}$ defined by 
$$\mathrm{ord}_p:\mathbb{Q}_p^{\times}\rightarrow \mathbb{Q}_p:p\mapsto 1, a 
\mapsto 0\, \text{ for } a\in \mathbb{Z}_p^{\times},$$
$$\mathrm{log}:\mathbb{Q}_p^{\times}\rightarrow \mathbb{Q}_p:p\mapsto 0, 
a\mapsto \mathrm{log}(a)\, \text{ for } a\in \mathbb{Z}_p^{\times}.$$


We define a basis $e_{0}$ of $\mathrm{H}^0_{\varphi,\gamma}(\mathcal{R}_{\mathbb{Q}_p})$ and 
$\{e_{1,1}, e_{1,2}\}$ of $\mathrm{H}^1_{\varphi,\gamma}(\mathcal{R}_{\mathbb{Q}_p})$ by 
$$e_0=1\in \mathcal{R}_{\mathbb{Q}_p},\,\,e_{1,1}:=[\mathrm{log}(\chi(\gamma)),0], \,\,e_{1,2}:=[0,1],$$
which is independent of the choice of $\gamma$, i,.e. is compatible with the comparison isomorphism $\iota_{\gamma,\gamma'}$. 
We can easily check that the canonical isomorphism $\mathrm{H}^1(\mathbb{Q}_p,\mathbb{Q}_p)\isom 
\mathrm{H}^1_{\varphi,\gamma}(\mathcal{R}_{\mathbb{Q}_p})$ 
sends $[\mathrm{log}]$ to $e_{1,1}$ and $[\mathrm{ord}_p]$ to $e_{1,2}$.



We next consider $\mathrm{H}^1_{\varphi,\gamma}(\mathcal{R}_{\mathbb{Q}_p}(1))$.
Let denote by $$\kappa:\mathbb{Q}_p^{\times}\rightarrow \mathrm{H}^1(\mathbb{Q}_p,\mathbb{Q}_p(1))$$ for 
the Kummer map. 
Composing this with the canonical isomorphism 
$\mathrm{H}^1(\mathbb{Q}_p,\mathbb{Q}_p(1))\isom \mathrm{H}^1_{\varphi,\gamma}(\mathcal{R}_{\mathbb{Q}_p}(1))$, we obtain a homomorphism 
$$\kappa_0:\mathbb{Q}_p^{\times}\rightarrow \mathrm{H}^1_{\varphi,\gamma}(\mathcal{R}_{\mathbb{Q}_p}(1)).$$
We define a homomorphism 
\begin{multline*}
\mathrm{H}^1_{\varphi,\gamma}(\mathcal{R}_{\mathbb{Q}_p}(1))\rightarrow \mathbb{Q}_p\oplus \mathbb{Q}_p:\\
[f_1\bold{e}_1,f_2\bold{e}_1]\mapsto \left(\frac{p}{p-1}\cdot\frac{1}{\mathrm{log}(\chi(\gamma))}\cdot \mathrm{Res}_0\left(f_1\frac{d\pi}{1+\pi}\right), -\frac{p}{p-1}\cdot\mathrm{Res}_0\left(f_2\frac{d\pi}{1+\pi}\right)\right),
\end{multline*}
(we note that $\frac{p-1}{p}\cdot\mathrm{log}(\chi(\gamma))=|\Gamma_{\mathrm{tor}}|\cdot\mathrm{log}_0(\chi(\gamma))$),
which is also independent of the choice of $\gamma$, and is  isomorphism. 
Using this isomorphism, we define a basis $\{f_{1,1}, f_{1,2}\}$ of $\mathrm{H}^1_{\varphi,\gamma}(\mathcal{R}_{\mathbb{Q}_p}(1))$ such that $f_{1,1}$ (respectively $f_{1,2}$) corresponds 
to $(1,0)\in L\oplus L$ (respectively $(0,1)$) by this isomorphism. We want to explicitly describe the map 
$\kappa_0$ using this basis. For this, we first prove the following lemma.

\begin{lemma}\label{5.1}
For each $a\in \mathbb{Z}_p^{\times}$, we have 
$\kappa_0(a)=\mathrm{log}(a)\cdot f_{1,2}$.
\end{lemma}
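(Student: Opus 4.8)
The plan is to compute the Kummer class $\kappa_0(a)$ for $a \in \mathbb{Z}_p^\times$ explicitly in the $(\varphi,\gamma)$-cohomology of $\mathcal{R}_{\mathbb{Q}_p}(1)$, using the well-known description of the Kummer map on $(\varphi,\Gamma)$-modules in terms of the element $\log(1+\pi) = t$ and the identification $\mathcal{R}_{\mathbb{Q}_p}(1) = \mathcal{R}_{\mathbb{Q}_p}(x|x|)\,\bold{e}_1$. First I would recall that the class $\kappa(a) \in \mathrm{H}^1(\mathbb{Q}_p, \mathbb{Q}_p(1))$ is represented, under the comparison with $(\varphi,\gamma)$-cohomology, by a cocycle built from a solution of the equations $(\varphi - 1)x = ?$, $(\gamma-1)x = ?$; concretely, for a unit $a \in \mathbb{Z}_p^\times$ the relevant element is $\frac{\log(a)}{t}\cdot(1+\pi)$-type expression, or more precisely one checks that $\kappa_0(a) = [\,(\gamma - 1)y, (\varphi-1)y\,]$ for a suitable $y \in \mathcal{R}_{\mathbb{Q}_p}(1)$ tied to $\log(a)$. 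Since $a$ is a unit, $\kappa(a)$ lies in the part of $\mathrm{H}^1$ detected by the "$f_2$-component" (the residue of the second coordinate), reflecting the fact that the Kummer class of a unit is unramified-dual in the appropriate sense.

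The key steps, in order, would be: (1) Unwind the definition of $f_{1,2}$: it is the class $[f_1\bold{e}_1, f_2\bold{e}_1]$ normalized so that $-\frac{p}{p-1}\mathrm{Res}_0(f_2 \frac{d\pi}{1+\pi}) = 1$ and $\mathrm{Res}_0(f_1\frac{d\pi}{1+\pi}) = 0$. (2) Produce an explicit one-cocycle representing $\kappa_0(a)$: using that $a \in \mathbb{Z}_p^\times$ acts on $\mathbb{Z}_p(1)$ through $\Gamma$ via $\chi$, and that the Kummer class corresponds under Berger's dictionary to the image of $a$ under the boundary map coming from the fundamental exact sequence $0 \to \mathbb{Q}_p(1) \to \mathcal{R}_{\mathbb{Q}_p}[1/t]^{\varphi=1,\ldots} \to \cdots$, one finds a representative involving $\frac{\log(a)}{t}\bold{e}_1$ or, after clearing denominators, an element whose $(\gamma-1)$ and $(\varphi-1)$ images are explicit elements of $\mathcal{R}_{\mathbb{Q}_p}$. (3) Compute the residues $\mathrm{Res}_0$ of the two components of this cocycle against $\frac{d\pi}{1+\pi}$, using $\varphi(t) = pt$, $\gamma(t) = \chi(\gamma)t$, and the residue formula $\mathrm{Res}_0(\frac{d\pi}{1+\pi}\cdot g)$ applied to logarithmic-type elements; in particular the computation of $\int_{\mathbb{Z}_p^\times} \mu_\gamma$ already carried out in the proof of Lemma \ref{4.20} (giving the factor $\frac{p-1}{p}\log(\chi(\gamma))$) is the template. (4) Read off that the first residue vanishes and the second yields exactly $\log(a)$, hence $\kappa_0(a) = \log(a)\cdot f_{1,2}$.

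I expect the main obstacle to be step (2): pinning down the precise normalization and sign of the explicit cocycle representing $\kappa_0(a)$ in the $(\varphi,\gamma)$-complex, since this requires tracking the Kummer map through the comparison isomorphism $\mathrm{H}^1(\mathbb{Q}_p,\mathbb{Q}_p(1)) \isom \mathrm{H}^1_{\varphi,\gamma}(\mathcal{R}_{\mathbb{Q}_p}(1))$ and through the identification $\mathcal{R}_{\mathbb{Q}_p}(1) = \mathcal{R}_{\mathbb{Q}_p}(x|x|)\bold{e}_1$ fixed in Notation \ref{2.10}. The cleanest route is probably to use the compatibility of $\kappa$ with the $(\varphi,\Gamma)$-module of $\mathbb{Q}_p(1)$ inside $\mathcal{R}_{\mathbb{Q}_p}[1/t]$, where the Kummer class of $a \in \mathbb{Z}_p^\times$ is represented by the cocycle sending $g \in G_{\mathbb{Q}_p}$ to $\big(\tfrac{g(\tilde a)}{\tilde a} - 1\big)$-type data with $\tilde a$ a compatible system of $p$-power roots; translating this into the $(\varphi,\gamma)$-Koszul complex gives a representative of the form $[\,c\log(\chi(\gamma))\,g_1\bold{e}_1,\, c\, g_2\bold{e}_1\,]$ whose residues are then routine to evaluate. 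Once the normalization is fixed, steps (1), (3), (4) are short residue calculations of the same flavor as those in Section 4.2.2, so the lemma follows.
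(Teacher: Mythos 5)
Your plan identifies the right target (show that the cocycle representing $\kappa_0(a)$ has first residue $0$ and second residue normalising to $\log(a)$), but it leaves the crucial step — your step~(2), producing an explicit representative of $\kappa_0(a)$ in the $(\varphi,\gamma)$-complex — unresolved, and this is precisely where the difficulty lies. Tracing the Kummer cocycle $g\mapsto g(\tilde a)/\tilde a$ directly through the comparison isomorphism $\mathrm{H}^1(\mathbb{Q}_p,\mathbb{Q}_p(1))\isom\mathrm{H}^1_{\varphi,\gamma}(\mathcal{R}_{\mathbb{Q}_p}(1))$ is delicate (the comparison is constructed via several intermediate identifications, and the sign/normalization bookkeeping you flag is genuinely hard to do from scratch), and your plan does not say how to carry it out; you only posit that the result has the abstract form $[c\log(\chi(\gamma))g_1\bold{e}_1,\,c\,g_2\bold{e}_1]$ without determining $c,g_1,g_2$.

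The paper sidesteps this entirely by a different reduction: it invokes the classical identity $\kappa(a)=\mathrm{exp}_{\mathbb{Q}_p(1)}\left(\frac{\log a}{t}\bold{e}_1\right)$ for $a\in\mathbb{Z}_p^{\times}$, then uses Proposition~\ref{2.27} (compatibility of the Bloch--Kato exponential with the $(\varphi,\Gamma)$-module exponential) to reduce the lemma to showing $\mathrm{exp}_{\mathcal{R}_{\mathbb{Q}_p}(1)}(\frac{1}{t}\bold{e}_1)=f_{1,2}$, and then applies the explicit formula for $\mathrm{exp}_M$ from Proposition~\ref{2.23}(1). This replaces the problem of tracing a Galois cocycle through the comparison with the problem of choosing $\tilde x\in M^{(n)}[1/t]^\Delta$ with $\iota_m(\tilde x)-\tfrac1t\bold{e}_1\in\bold{D}^+_{\mathrm{dif},m}$, which the paper solves concretely by taking $f\in(\mathcal{R}^\infty_{\mathbb{Q}_p})^\Delta$ with $f(\zeta_{p^n}-1)=1/p^n$ (Lazard interpolation) and setting $\tilde x=\tfrac{f}{t}\bold{e}_1$. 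The residue computations that follow are then straightforward, using that $\tfrac{\varphi(f)}{p}-f$ is divisible by $\prod_{n\geq1}\tfrac{Q_n(\pi)}{p}$, which cancels $t/\pi$.

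So your step~(2) is a gap, and the missing idea is the reduction via the exponential map. Also note that your reference to the $\mu_\gamma$ computation of Lemma~\ref{4.20} as a template is somewhat off: that computation uses $\psi(\mu_\gamma)=\tfrac1p\mu_\gamma$ and measure-theoretic manipulations of $\int_{\mathbb{Z}_p^\times}$, whereas the residue computation in the paper's proof of this lemma is a different argument built on the Lazard factorization of $t$. The two are of the same flavor but not interchangeable, and conflating them would not close the gap in your step~(2).
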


\begin{proof}
By the classical explicit calculation of exponential map, 
we have 
$$\kappa(a)=\mathrm{exp}_{\mathbb{Q}_p(1)}\left(\frac{\mathrm{log}(a)}{t}\bold{e}_1\right).$$ 
Since we have the following commutative diagram 
\begin{equation*}
\begin{CD}
\bold{D}_{\mathrm{dR}}(\mathbb{Q}_p(1))@> \mathrm{exp}_{\mathbb{Q}_p(1)}>> \mathrm{H}^1(\mathbb{Q}_p, \mathbb{Q}_p(1)) \\
@ V \isom VV @ V\isom VV\\
\bold{D}_{\mathrm{dR}}(\mathcal{R}_{\mathbb{Q}_p}(1))
@ > \mathrm{exp}_{\mathcal{R}_{\mathbb{Q}_p}(1)} >> \mathrm{H}^1_{\varphi,\gamma}(\mathcal{R}_{\mathbb{Q}_p}(1))
\end{CD}
\end{equation*}
by Proposition \ref{2.27},
it suffices to show that 
$$\mathrm{exp}_{\mathcal{R}_{\mathbb{Q}_p}(1)}\left(\frac{1}{t}\bold{e}_1\right)=f_{1,2}.$$
We show this equality as follows.
We first take some $f\in (\mathcal{R}^{\infty}_{\mathbb{Q}_p})^{\Delta}$ such that 
$f(\zeta_{p^n}-1)=\frac{1}{p^n}$ for any $n\geqq 0$, which is possible since 
we have an isomorphism $\mathcal{R}^{\infty}_{\mathbb{Q}_p}/t\isom \prod_{n\geqq 0}
\mathbb{Q}_p(\zeta_{p^n}):\overline{f}\mapsto (f(\zeta_{p^n}-1))_{n\geqq 0}$ by Lazard's theorem \cite{La62}.
Then, the element $\frac{f}{t}\bold{e}_1\in \left(\frac{1}{t}\mathcal{R}_{\mathbb{Q}_p}(1)\right)^{\Delta}$ satisfies 
$$\iota_n\left(\frac{f}{t}\bold{e}_1\right)-\frac{1}{t}\bold{e}_1 \in \bold{D}^+_{\mathrm{dif},n}(\mathcal{R}_{\mathbb{Q}_p}(1))$$ for any $n\geqq 1$, since we have 
$$\iota_n\left(\frac{f}{t}\bold{e}_1\right)\equiv p^n\cdot \frac{f(\zeta_{p^n}-1)}{t}\bold{e}_1=
\frac{1}{t}\bold{e}_1\, (\text{ mod } \bold{D}^+_{\mathrm{dif},n}(\mathcal{R}_{\mathbb{Q}_p}(1))).$$ By  the explicit definition of $\mathrm{exp}_{\mathcal{R}_{\mathbb{Q}_p}(1)}$ (Proposition \ref{2.23} (1)), then we 
have 
$$\mathrm{exp}_{\mathcal{R}_{\mathbb{Q}_p}(1)}\left(\frac{1}{t}\bold{e}_1\right)=\left[(\gamma-1)\left(\frac{f}{t}\bold{e}_1\right), (\varphi-1)\left(\frac{f}{t}\bold{e}_1\right)\right]\in\mathrm{H}^1_{\varphi,\gamma}(\mathcal{R}_{\mathbb{Q}_p}(1)).$$
Hence, it suffices to show that 
$$\mathrm{Res}_0\left(\frac{\gamma(f)-f}{t}\cdot\frac{d\pi}{1+\pi}\right)=0$$
and
$$\mathrm{Res}_0\left(\left(\frac{\varphi(f)}{p}-f\right)\cdot\frac{1}{t}\cdot\frac{d\pi}{1+\pi}\right)=-\frac{p-1}{p}.$$
Here, we only calculate $\mathrm{Res}_0\left(\left(\frac{\varphi(f)}{p}-f\right)\cdot\frac{1}{t}\cdot\frac{d\pi}{1+\pi}\right)$ (the 
calculation of $\mathrm{Res}_0\left(\frac{\gamma(f)-f}{t}\cdot\frac{d\pi}{1+\pi}\right)$ is similar). 
By definition of $f$, we have 
 $$\frac{\varphi(f)(\zeta_{p^n}-1)}{p}-f(\zeta_{p^n}-1)=\frac{f(\zeta_{p^{n-1}}-1)}{p}-f(\zeta_{p^n}-1)=\frac{1}{p}\cdot\frac{1}{p^{n-1}}-\frac{1}{p^n}=0$$
 for each $n\geqq 1$.
 Hence, we have $$\left(\frac{\varphi(f)}{p}-f\right)\in \left(\prod^{\infty}_{n\geqq 1}\frac{Q_n(\pi)}{p}\right)\mathcal{R}^{\infty}_{\mathbb{Q}_p}$$ by the theorem of Lazard \cite{La62}, where we define $Q_n(\pi):=\varphi^{n-1}\left(\frac{\varphi(\pi)}{\pi}\right)$ for each $n\geqq 1$. Since we have 
 $t=\pi\prod_{n\geqq 1}\left(\frac{Q_n(\pi)}{p}\right)$, we obtain the equality 
  \begin{multline*}
\mathrm{Res}_0\left(\left(\frac{\varphi(f)}{p}-f\right)\cdot\frac{1}{t}\cdot\frac{d\pi}{1+\pi}\right)=\left(\left(\frac{\varphi(f)}{p}-f\right)\cdot\frac{1}{\prod_{n\geqq 1}^{\infty} \frac{Q_n(\pi)}{p}}\cdot\frac{1}{1+\pi}\right)|_{\pi=0}\\
=\left(\frac{\varphi(f)}{p}-f\right)|_{\pi=0}=\frac{f(0)}{p}-f(0)=-\frac{p-1}{p},
\end{multline*}

where the second equality follows from the fact that $\frac{Q_n(0)}{p}=1$ for $n\geqq 1$, which 
proves the lemma.

\end{proof}

Before  calculating $\kappa_0(p)\in \mathrm{H}^1_{\varphi,\gamma}(\mathcal{R}_{\mathbb{Q}_p}(1))$, we explicitly describe the Tate's trace map 
 in terms of $(\varphi,\Gamma)$-modules.
We note that we normalize the Tate's trace map 
$$\mathrm{H}^2(\mathbb{Q}_p, \mathbb{Q}_p(1))\isom \mathbb{Q}_p$$
such that the cup product pairing
$$\langle , \rangle:\mathrm{H}^1(\mathbb{Q}_p, \mathbb{Q}_p(1))\times \mathrm{H}^1(\mathbb{Q}_p,\mathbb{Q}_p)
\xrightarrow{\cup}\mathrm{H}^2(\mathbb{Q}_p, \mathbb{Q}_p(1))\isom \mathbb{Q}_p$$ 
satisfies that 
$$\langle\kappa(a), [\tau]\rangle=\tau(a)$$ 
for $a\in \mathbb{Q}_p^{\times}$ and $[\tau]\in \mathrm{Hom}(\mathbb{Q}_p^{\times}, 
\mathbb{Q}_p)=\mathrm{H}^1(\mathbb{Q}_p, \mathbb{Q}_p)$ (remark that this normalization coincides with the one used in \S2.4 of \cite{Na14a} and the $(-1)$-times of Kato's one in II. \S 1.4 of \cite{Ka93a}).

\begin{prop}\label{explicitdual}
The map $\iota_{\gamma}:\mathrm{H}^2_{\varphi,\gamma}(\mathcal{R}_{\mathbb{Q}_p}(1))
\isom \mathrm{H}^2(\mathbb{Q}_p, \mathbb{Q}_p(1))\isom \mathbb{Q}_p$ 
which is  the composition of the canonical isomorphism 
$\mathrm{H}^2_{\varphi,\gamma}(\mathcal{R}_{\mathbb{Q}_p}(1))\isom 
\mathrm{H}^2(\mathbb{Q}_p,\mathbb{Q}_p(1))$ with the Tate's trace map is explicitly defined 
by 
$$\iota_{\gamma}([f\bold{e}_1])=\frac{p}{p-1}\cdot \frac{1}{\mathrm{log}(\chi(\gamma))}\mathrm{Res}_0\left(f\frac{d\pi}{1+\pi}\right).$$

\end{prop}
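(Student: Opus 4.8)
\textbf{Proof proposal for Proposition \ref{explicitdual}.}

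The plan is to identify the cohomology class $[f\bold{e}_1] \in \mathrm{H}^2_{\varphi,\gamma}(\mathcal{R}_{\mathbb{Q}_p}(1))$ by pairing it against the known generators of $\mathrm{H}^0_{\varphi,\gamma}(\mathcal{R}_{\mathbb{Q}_p})$ and $\mathrm{H}^1_{\varphi,\gamma}(\mathcal{R}_{\mathbb{Q}_p})$ via the cup product, since both the source $\mathrm{H}^2_{\varphi,\gamma}(\mathcal{R}_{\mathbb{Q}_p}(1))$ and these groups are one-dimensional (resp. two-dimensional) and the cup product pairing $\mathrm{H}^i_{\varphi,\gamma}(\mathcal{R}_{\mathbb{Q}_p}(1)) \times \mathrm{H}^{2-i}_{\varphi,\gamma}(\mathcal{R}_{\mathbb{Q}_p})\to \mathrm{H}^2_{\varphi,\gamma}(\mathcal{R}_{\mathbb{Q}_p}(1))$ is perfect by Tate duality (Theorem \ref{2.16}(4)). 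Concretely, I would first establish that the map $[f\bold{e}_1]\mapsto \mathrm{Res}_0\left(f\frac{d\pi}{1+\pi}\right)$ is a well-defined $L$-linear functional on $\mathrm{H}^2_{\varphi,\gamma}(\mathcal{R}_{\mathbb{Q}_p}(1))$: this amounts to checking that $\mathrm{Res}_0\left((\gamma-1)(g)\frac{d\pi}{1+\pi}\right) = 0$ and $\mathrm{Res}_0\left((\varphi-1)(g)\frac{d\pi}{1+\pi}\right) = 0$ for all $g\in\mathcal{R}_{\mathbb{Q}_p}$, i.e. that the residue pairing kills coboundaries. The $\gamma$-invariance of $\mathrm{Res}_0\left(-\,\frac{d\pi}{1+\pi}\right)$ and the identity $\mathrm{Res}_0\left(\varphi(g)\frac{d\pi}{1+\pi}\right) = \mathrm{Res}_0\left(g\frac{d\pi}{1+\pi}\right)$ (using $\varphi$ acting as $\pi\mapsto (1+\pi)^p-1$, so $\frac{d\varphi(\pi)}{1+\varphi(\pi)} = p\frac{d\pi}{1+\pi}$, combined with the trace-$p$ property of $\varphi$ on residues) give these vanishings; this is the standard computation underlying the residue pairing on the Robba ring.

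Next I would pin down the normalization constant. Having a well-defined nonzero functional, it must be a scalar multiple of the map $\iota_\gamma$ (composition of the comparison isomorphism with Tate's trace), since $\mathrm{H}^2_{\varphi,\gamma}(\mathcal{R}_{\mathbb{Q}_p}(1))$ is one-dimensional. To compute the scalar, I would evaluate both sides on a single explicit class obtained as a cup product of known generators. Using $e_{1,2} = [0,1]\in \mathrm{H}^1_{\varphi,\gamma}(\mathcal{R}_{\mathbb{Q}_p})$, which corresponds to $[\mathrm{ord}_p]$ under the comparison isomorphism, and $f_{1,1}\in \mathrm{H}^1_{\varphi,\gamma}(\mathcal{R}_{\mathbb{Q}_p}(1))$, I would first compute the cup product $f_{1,1}\cup e_{1,2}$ explicitly via the cocycle formula $[x_1,y_1]\cup [x_2,y_2] = [x_1\otimes\gamma(y_2) - y_1\otimes\varphi(x_2)]$ of Definition \ref{2.12}, then apply the residue functional. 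On the Galois side, the pairing $\langle \kappa_0^{-1}(f_{1,1}), [\mathrm{ord}_p]\rangle$ is determined by the normalization $\langle\kappa(a),[\tau]\rangle = \tau(a)$; combined with Lemma \ref{5.1} (which identifies $\kappa_0$ on $\mathbb{Z}_p^\times$ and, after computing $\kappa_0(p)$ — which I would also need, paralleling the computation sketched just before Proposition \ref{explicitdual} in the surrounding text), this fixes the value of $\iota_\gamma$ on the same class. Comparing the two numbers yields the constant $\frac{p}{p-1}\cdot\frac{1}{\log(\chi(\gamma))}$; the factor $\frac{p}{p-1}$ arises exactly as in Lemma \ref{5.1} from the relation $\int_{p\mathbb{Z}_p} = \frac1p\int_{\mathbb{Z}_p}$ for the relevant distributions, and $\frac{1}{\log(\chi(\gamma))}$ from the normalization of $e_{1,1}$ and the $\iota_{\gamma,\gamma'}$-compatibility.

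The main obstacle I anticipate is the bookkeeping of signs and of the precise normalization of Tate's trace map: the excerpt flags (in the paragraph before the proposition) that the chosen normalization of $\langle,\rangle$ is $(-1)$ times Kato's, and that there is a sign subtlety in the cup product convention (Remark \ref{2.12.111}). Getting the constant to come out with the correct sign requires carefully tracking the identification $\mathrm{H}^2_{\varphi,\gamma}(\mathcal{R}_{\mathbb{Q}_p}(1))\isom\mathrm{H}^2(\mathbb{Q}_p,\mathbb{Q}_p(1))$ through the comparison isomorphism of \cite{Po13a}, the residue computation of the cup-product cocycle, and the Kummer-map normalization simultaneously. A secondary technical point is verifying the residue identity $\mathrm{Res}_0\left(\varphi(g)\frac{d\pi}{1+\pi}\right)=\mathrm{Res}_0\left(g\frac{d\pi}{1+\pi}\right)$ cleanly on the Robba ring (as opposed to on $\mathcal{R}^{\infty}$), which I would handle by reducing to a dense subspace or by a direct Laurent-expansion argument since both sides are continuous in $g$. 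Once the functional is shown well-defined and a single test class is evaluated on both sides, the rest is routine.
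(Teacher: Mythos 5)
Your overall strategy matches the paper's: show the residue functional is a well-defined isomorphism, then pin down the scalar $\alpha$ with $\iota_\gamma = \alpha\cdot\bigl([f\bold{e}_1]\mapsto \mathrm{Res}_0(f\frac{d\pi}{1+\pi})\bigr)$ by evaluating both sides on an explicit cup product. The well-definedness check (residue kills coboundaries via $\gamma$-invariance and the trace property of $\varphi$) is indeed the standard argument, which the paper just asserts. But there is a genuine circularity in your choice of test class.

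You propose to calibrate using $f_{1,1}\cup e_{1,2}$ and the Galois-side value $\langle\kappa_0^{-1}(f_{1,1}),[\mathrm{ord}_p]\rangle$. To compute the latter you need to know $\kappa_0^{-1}(f_{1,1})$, i.e.\ you need the identity $\kappa_0(p)=f_{1,1}$. But in the paper that identity is Lemma \ref{5.3}, whose proof \emph{uses} Proposition \ref{explicitdual} to evaluate $\iota_\gamma(f_{1,1}\cup e_{1,1})$ and $\iota_\gamma(f_{1,1}\cup e_{1,2})$. There is no independent exponential-type computation of $\kappa_0(p)$ (Lemma \ref{5.1} only handles $a\in\mathbb{Z}_p^{\times}$, where one can use $\kappa(a)=\exp_{\mathbb{Q}_p(1)}(\frac{\log a}{t}\bold{e}_1)$; no such formula is available at $p$ since $p$ is not a unit), so you would be assuming the output of a later lemma that itself depends on what you are proving.

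The fix is to use a test class whose values are pinned down independently on both sides, and that is exactly what the paper does: take $\kappa_0(a)$ for $a\in\mathbb{Z}_p^\times$, which by (the proof of) Lemma \ref{5.1} is represented by $\mathrm{log}(a)\cdot\bigl[(\gamma-1)(\frac{f}{t}\bold{e}_1),(\varphi-1)(\frac{f}{t}\bold{e}_1)\bigr]$ with the explicit residue $\mathrm{Res}_0\bigl((\varphi-1)(\tfrac{f}{t})\frac{d\pi}{1+\pi}\bigr)=-\frac{p-1}{p}$ already computed there, and pair it with $e_{1,1}=[\mathrm{log}(\chi(\gamma)),0]$, which corresponds to $[\mathrm{log}]$. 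On the Galois side $\langle\kappa(a),[\mathrm{log}]\rangle=\mathrm{log}(a)$ by the chosen normalization, and the cup-product cocycle formula of Definition \ref{2.12} gives the $(\varphi,\gamma)$-module side directly, yielding $\alpha=\frac{p}{p-1}\cdot\frac{1}{\log(\chi(\gamma))}$. Swapping your pair $(f_{1,1},e_{1,2})$ for $(\kappa_0(a),e_{1,1})$ makes your argument both correct and essentially identical to the paper's.
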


\begin{proof}
Since the map 
$$\iota:\mathrm{H}^2_{\varphi,\gamma}(\mathcal{R}_{\mathbb{Q}_p}(1))\isom
\mathbb{Q}_p: [f\bold{e}_1]\mapsto \mathrm{Res}_0\left(f\frac{d\pi}{1+\pi}\right)$$
is a well-defined isomorphism, there exists unique $\alpha\in \mathbb{Q}_p^{\times}$ such that 
$\iota_{\gamma}=\alpha \cdot\iota$. We calculate $\alpha$ as follows. 

We recall that  the element $[\mathrm{log}(\chi(\gamma)),0]\in \mathrm{H}^1_{\varphi,\gamma}(\mathcal{R}_{\mathbb{Q}_p})$ is the image of 
$[\mathrm{log}]\in \mathrm{H}^1(\mathbb{Q}_p, \mathbb{Q}_p)$ by the comparison isomorphism. 
By the proof of Lemma \ref{5.1}, for each $a\in \mathbb{Z}_p^{\times}$, 
we have 
$$\kappa_0(a)=\mathrm{log}(a)[(\gamma-1)(\frac{f}{t}\bold{e}_1), (\varphi-1)(\frac{f}{t}\bold{e}_1)]
\in \mathrm{H}^1_{\varphi,\gamma}(\mathcal{R}_{\mathbb{Q}_p}(1)),$$ where $f\in \mathcal{R}^{\infty}_{\mathbb{Q}_p}$ is an element defined in the proof of Lemma \ref{5.1}.
Since the cup products are compatible with the comparison isomorphism (see Remark \ref{2.12.111}), then 
we have 

\begin{equation}\label{20}
\iota_{\gamma}(\kappa_0(a)\cup [\mathrm{log}(\chi(\gamma)),0]) 
=\langle \kappa(a),[\mathrm{log}]\rangle=\mathrm{log}(a).
\end{equation}
By definition of the cup product, we have 
 
\begin{multline*}
\kappa_0(a)\cup [\mathrm{log}(\chi(\gamma)),0]=\mathrm{log}(a)\left[(\varphi-1)\left(\frac{f}{t}\bold{e}_1\right)
\otimes \varphi(\mathrm{log}(\chi(\gamma)))\right]\\
=
-\mathrm{log}(a)\mathrm{log}(\chi(\gamma))\left[(\varphi-1)\left(\frac{f}{t}\bold{e}_1\right)\right]
\in \mathrm{H}^2_{\varphi,\gamma}(\mathcal{R}_{\mathbb{Q}_p}(1)).
\end{multline*}
Since we have $\mathrm{Res}_0\left((\varphi-1)(\frac{f}{t})\cdot\frac{d\pi}{1+\pi}\right)=
-\frac{p-1}{p}$ by the proof of Lemma \ref{5.1}, 
we obtain 

\begin{multline*}
\iota_{\gamma}(\kappa_0(a)\cup [\mathrm{log}(\chi(\gamma)),0])=\alpha \cdot\iota(\kappa_0(a)\cup [\mathrm{log}(\chi(\gamma)),0]) \\
=-\alpha\cdot\mathrm{log}(\chi(\gamma))\cdot\mathrm{log}(a)\cdot\iota\left(\left[(\varphi-1)\left(\frac{f}{t}\bold{e}_1\right)\right]\right)
=\alpha\cdot\mathrm{log}(\chi(\gamma))\cdot\mathrm{log}(a)\cdot\frac{p-1}{p}.
\end{multline*}
Comparing this equality with the equality (\ref{20}),  we obtain 
$$\alpha=\frac{p}{p-1}\cdot\frac{1}{\mathrm{log}(\chi(\gamma))},$$
which prove the proposition.

\end{proof}

Finally, we prove the following lemma, which completes the calculation of the map 
$\kappa_0:\mathbb{Q}_p^{\times}\rightarrow \mathbb{Q}_p\oplus\mathbb{Q}_p$.

\begin{lemma}\label{5.3}
$$\kappa_0(p)=f_{1,1}.$$
\end{lemma}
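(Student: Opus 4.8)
The goal is to compute $\kappa_0(p) \in \mathrm{H}^1_{\varphi,\gamma}(\mathcal{R}_{\mathbb{Q}_p}(1))$ in terms of the basis $\{f_{1,1}, f_{1,2}\}$, and the claim is that it equals $f_{1,1}$. The first step is to recall that by the very definition of $f_{1,1}$ and $f_{1,2}$ via the explicit isomorphism $\mathrm{H}^1_{\varphi,\gamma}(\mathcal{R}_{\mathbb{Q}_p}(1)) \isom \mathbb{Q}_p \oplus \mathbb{Q}_p$, proving $\kappa_0(p) = f_{1,1}$ is equivalent to showing that the image of $\kappa_0(p)$ under this isomorphism is $(1,0)$. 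So I would split the task into two parts: (a) show the second coordinate of $\kappa_0(p)$ vanishes, and (b) show the first coordinate equals $1$.

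For part (b), the natural tool is the pairing with $\mathrm{H}^1_{\varphi,\gamma}(\mathcal{R}_{\mathbb{Q}_p})$ together with the explicit description of the Tate trace map in Proposition \ref{explicitdual}. Concretely, I would pair $\kappa_0(p)$ with the class $e_{1,1} = [\mathrm{log}(\chi(\gamma)), 0]$, which corresponds to $[\mathrm{log}] \in \mathrm{H}^1(\mathbb{Q}_p,\mathbb{Q}_p)$ under the comparison isomorphism. Since cup products are compatible with the comparison isomorphism (Remark \ref{2.12.111}) and $\mathrm{log}(p) = 0$, we get $\iota_{\gamma}(\kappa_0(p) \cup [\mathrm{log}(\chi(\gamma)),0]) = \langle \kappa(p), [\mathrm{log}] \rangle = \mathrm{log}(p) = 0$; and pairing instead with $e_{1,2} = [0,1]$ (corresponding to $[\mathrm{ord}_p]$) gives $\iota_{\gamma}(\kappa_0(p) \cup [0,1]) = \langle \kappa(p), [\mathrm{ord}_p]\rangle = \mathrm{ord}_p(p) = 1$. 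Now I compare these values against the analogous pairings for $f_{1,1}$ and $f_{1,2}$: using the explicit cup product formulae of Definition \ref{2.12}, the residue description of $\iota_\gamma$, and the normalization built into the definition of $f_{1,i}$, one computes $\iota_\gamma(f_{1,1} \cup e_{1,1})$, $\iota_\gamma(f_{1,1} \cup e_{1,2})$, etc. The comparison with the computation in the proof of Lemma \ref{5.1} (where $\mathrm{Res}_0((\varphi-1)(f/t)\cdot \tfrac{d\pi}{1+\pi}) = -\tfrac{p-1}{p}$ and the analogous $\gamma$-residue vanishes) is what pins down the coefficients. Since the pairing between $\mathrm{H}^1_{\varphi,\gamma}(\mathcal{R}_{\mathbb{Q}_p}(1))$ and $\mathrm{H}^1_{\varphi,\gamma}(\mathcal{R}_{\mathbb{Q}_p})$ is perfect (Theorem \ref{2.16}(4)), knowing $\langle \kappa_0(p), e_{1,j}\rangle$ for $j = 1,2$ determines $\kappa_0(p)$ uniquely, and matching these against $\langle f_{1,1}, e_{1,j}\rangle$ gives the result.

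Alternatively — and this may be cleaner — one can work directly with an explicit cocycle for $\kappa_0(p)$. The Kummer class $\kappa(p)$ at the level of $B$-pairs or $(\varphi,\Gamma)$-modules is realized by an extension of $\mathcal{R}_{\mathbb{Q}_p}$ by $\mathcal{R}_{\mathbb{Q}_p}(1)$; concretely $\kappa_0(p)$ is represented by a $1$-cocycle involving $\mathrm{log}([\tilde p])$ or equivalently the element $t$ itself, reflecting that $p = \chi$ evaluated appropriately is "wild" in the $\varphi$-direction rather than the $\Gamma$-direction. I would produce a specific $(x_1\bold{e}_1, x_2\bold{e}_1) \in \mathcal{R}_{\mathbb{Q}_p}(1)^\Delta \oplus \mathcal{R}_{\mathbb{Q}_p}(1)^\Delta$ with $(\varphi-1)x_1 = (\gamma-1)x_2$ representing $\kappa_0(p)$, then apply the residue maps $\frac{p}{p-1}\cdot\frac{1}{\mathrm{log}(\chi(\gamma))}\mathrm{Res}_0(x_1 \frac{d\pi}{1+\pi})$ and $-\frac{p}{p-1}\mathrm{Res}_0(x_2\frac{d\pi}{1+\pi})$ and check they equal $1$ and $0$ respectively.

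\textbf{Main obstacle.} The delicate point is getting the normalization and signs exactly right: the definition of $f_{1,1}, f_{1,2}$ carries the factors $\frac{p}{p-1}$, $\frac{1}{\mathrm{log}(\chi(\gamma))}$, and a sign, the Tate trace normalization in Proposition \ref{explicitdual} introduces $\frac{p}{p-1}\cdot\frac{1}{\mathrm{log}(\chi(\gamma))}$, and the cup product sign conventions (Remark \ref{2.12.111}, and the explicit formula $[x_1,y_1]\cup[x_2,y_2] = [x_1\otimes\gamma(y_2) - y_1\otimes\varphi(x_2)]$) all have to be tracked simultaneously. The cleanest route to avoid errors is probably to leverage the already-completed computation in Lemma \ref{5.1} and Proposition \ref{explicitdual} as black boxes: since $\kappa_0$ is a homomorphism and $\kappa_0(a) = \mathrm{log}(a) f_{1,2}$ for $a \in \mathbb{Z}_p^\times$ is known, only the "new" direction $\kappa_0(p)$ needs attention, and its second coordinate (the $f_{1,2}$-component) is forced to be a scalar that one determines by pairing with $e_{1,1}$, while its first coordinate is determined by pairing with $e_{1,2}$ — exactly the duality argument already used to prove Proposition \ref{explicitdual}, run in reverse.
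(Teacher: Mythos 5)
Your proposal is correct and follows essentially the same route as the paper: compute $\langle f_{1,1}, e_{1,j}\rangle$ for $j=1,2$ via the explicit cup-product formula, the residue description of $\iota_\gamma$ from Proposition \ref{explicitdual}, and the normalization in the definition of $f_{1,1}$; then compare against $\langle \kappa_0(p), e_{1,j}\rangle = \langle \kappa(p), [\log]\rangle=0$ and $\langle \kappa(p),[\mathrm{ord}_p]\rangle = 1$; and conclude by perfectness of the Tate pairing. The alternative cocycle-by-hand route you sketch is not what the paper does, and your instinct that the duality argument is cleaner matches the author's choice.
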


\begin{proof}
Take $f_{1,1}=[f_1\bold{e}_1,f_2\bold{e}_1]\in \mathrm{H}^1_{\varphi,\gamma}
(\mathcal{R}_{\mathbb{Q}_p}(1))$ a representative of $f_{1,1}$. 
By definition of the cup product,  we have

\begin{multline*}
\iota_{\gamma}(f_{1,1}\cup e_{1,1})=\iota_{\gamma}(f_{1,1}\cup [\mathrm{log}(\chi(\gamma)), 0])\\
=-\iota_{\gamma}([f_2\bold{e}_1\otimes \varphi(\mathrm{log}(\chi(\gamma))])
=-\frac{p}{p-1}\mathrm{Res}_0\left(f_2\frac{d\pi}{1+\pi}\right)=0,
\end{multline*}
and 
\begin{multline*}
\iota_{\gamma}(f_{1,1}\cup e_{1,2}) =\iota_{\gamma}(f_{1,1}\cup [0, 1])\\
=\iota_{\gamma}([f_1\bold{e}_1\otimes \gamma(1)])
=\frac{p}{p-1}\cdot\frac{1}{\mathrm{log}(\chi(\gamma))}\cdot\mathrm{Res}_0\left(f_1\frac{d\pi}{1+\pi}\right)=1
\end{multline*}
by Proposition \ref{explicitdual}. 
Since $\kappa(p)\in \mathrm{H}^1(\mathbb{Q}_p, \mathbb{Q}_p(1))$ 
satisfies the similar formulae 
$$\langle\kappa(p), [\mathrm{ord}_p]\rangle=1,\,\, \langle \kappa(p),[\mathrm{log}]\rangle=0,$$ 
we obtain the equality 
$$\kappa_0(p)=f_{1,1}. $$

\end{proof}

Using these lemmas, we obtain the following lemma.
We define the basis $f_2$ of $\mathrm{H}^2_{\varphi,\gamma}(
\mathcal{R}_L(1))$ by $f_2:=\iota_{\gamma}^{-1}(1)$.

\begin{lemma}\label{5.4}
The Tate's duality pairings
$$\langle , \rangle:\mathrm{H}^1_{\varphi,\gamma}(\mathcal{R}_L(1))\times 
\mathrm{H}^1_{\varphi,\gamma}(\mathcal{R}_L)\xrightarrow{\cup}
\mathrm{H}^2_{\varphi,\gamma}(\mathcal{R}_L(1))\xrightarrow{\iota_{\gamma}} L$$ 
and
$$\langle,\rangle:\mathrm{H}^2_{\varphi,\gamma}(\mathcal{R}_L(1))\times 
\mathrm{H}^0_{\varphi,\gamma}(\mathcal{R}_L)\xrightarrow{\cup}
\mathrm{H}^2_{\varphi,\gamma}(\mathcal{R}_L(1))\xrightarrow{\iota_{\gamma}} L$$ 
satisfy the following:

$$\langle f_{1,1}, e_{1,1}\rangle=0, \,\, \langle f_{1,1}, e_{1,2}\rangle=1 $$ 
and 
$$\langle f_{1,2},e_{1,1}\rangle=1, \,\, \langle f_{1,2}, e_{1,2}\rangle=0.$$

$$\langle f_2, e_0\rangle=1.$$

\end{lemma}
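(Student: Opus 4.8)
The plan is to reduce everything to two explicit residue computations, using the definitions of the canonical bases $\{e_0,e_{1,1},e_{1,2},f_{1,1},f_{1,2},f_2\}$ laid out just above, the explicit cup product formula of Definition \ref{2.12}, and the explicit description of $\iota_\gamma$ provided by Proposition \ref{explicitdual}. Concretely, I would first fix representatives: $e_{1,1}=[\mathrm{log}(\chi(\gamma)),0]$, $e_{1,2}=[0,1]$, $e_0=1$, and $f_{1,1}=[f_1\bold{e}_1,f_2\bold{e}_1]$, $f_{1,2}=[g_1\bold{e}_1,g_2\bold{e}_1]$ with the normalizations
\[
\tfrac{p}{p-1}\tfrac{1}{\mathrm{log}(\chi(\gamma))}\mathrm{Res}_0\!\left(f_1\tfrac{d\pi}{1+\pi}\right)=1,\quad
-\tfrac{p}{p-1}\mathrm{Res}_0\!\left(f_2\tfrac{d\pi}{1+\pi}\right)=0,
\]
\[
\tfrac{p}{p-1}\tfrac{1}{\mathrm{log}(\chi(\gamma))}\mathrm{Res}_0\!\left(g_1\tfrac{d\pi}{1+\pi}\right)=0,\quad
-\tfrac{p}{p-1}\mathrm{Res}_0\!\left(g_2\tfrac{d\pi}{1+\pi}\right)=1,
\]
which is exactly how $\{f_{1,1},f_{1,2}\}$ were defined. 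Then, for degree $1\times 1$, the formula $[x_1,y_1]\cup[x_2,y_2]=[x_1\otimes\gamma(y_2)-y_1\otimes\varphi(x_2)]$ gives $f_{1,i}\cup e_{1,1}=[\,f_{1,i}\text{-component }x_1\otimes\gamma(\mathrm{log}(\chi(\gamma)))-y_1\otimes\varphi(0)\,]=[\mathrm{log}(\chi(\gamma))\cdot(\text{first comp})\bold{e}_1]$ since $\gamma$ fixes the scalar $\mathrm{log}(\chi(\gamma))$, and similarly $f_{1,i}\cup e_{1,2}=[-(\text{second comp})\otimes\varphi(1)\bold{e}_1]=-[(\text{second comp})\bold{e}_1]$. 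Applying $\iota_\gamma$ via Proposition \ref{explicitdual} then reads off $\langle f_{1,1},e_{1,1}\rangle$, $\langle f_{1,1},e_{1,2}\rangle$, $\langle f_{1,2},e_{1,1}\rangle$, $\langle f_{1,2},e_{1,2}\rangle$ directly from the four normalization identities above. This is essentially bookkeeping: the sign conventions (that the paper's cup product is $(-1)$-times the one in \cite{Na14a}, cf.\ Remark \ref{2.12.111}) must be tracked carefully, but no new computation is needed.

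For the degree $2\times 0$ pairing, I would use $x\cup[y]:=[x\otimes y]$ from Definition \ref{2.12} with $i=0$, $j=2$: writing $f_2=\iota_\gamma^{-1}(1)$ and $e_0=1\in\mathcal{R}_L$, we get $f_2\cup e_0=[z\bold{e}_1\otimes 1]=[z\bold{e}_1]=f_2$ for any representative $z\bold{e}_1$ of $f_2$, hence $\langle f_2,e_0\rangle=\iota_\gamma(f_2)=1$ by the very definition of $f_2$. This step is immediate.

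The one genuine subtlety — and the step I expect to be the main obstacle — is making sure that the cup product pairing entering the Tate duality of Definition \ref{dual} and used throughout \S4 agrees on the nose, including signs and the identification $\mathrm{H}^2_{\varphi,\gamma}(\mathcal{R}_L(1))\isom L$, with the map $\iota_\gamma$ normalized in Proposition \ref{explicitdual} (which in turn was pinned down against the classical Kummer-theoretic normalization $\langle\kappa(a),[\tau]\rangle=\tau(a)$, itself $(-1)$-times Kato's in \cite{Ka93a}). I would resolve this by citing Remark \ref{2.12.111} together with the compatibility of cup products with the comparison isomorphism (used already in the proof of Proposition \ref{explicitdual}), so that all the pairings in sight are literally the same; then the three displayed sets of identities follow from the computations of the previous two paragraphs. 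Since everything is reduced to the four normalizations defining $f_{1,1},f_{1,2}$ and the definition $f_2=\iota_\gamma^{-1}(1)$, I would present the proof as: "compute $f_{1,i}\cup e_{1,j}$ and $f_2\cup e_0$ using Definition \ref{2.12}, apply $\iota_\gamma$ using Proposition \ref{explicitdual}, and read off the values from the defining normalizations of the $f$'s," which is short enough to include in full without any lengthy residue calculation.
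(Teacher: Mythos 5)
Your overall strategy is sound and in fact coincides with what the paper does under the hood: the four pairings $\langle f_{1,1},e_{1,j}\rangle$ are computed exactly this way in the proof of Lemma \ref{5.3} (which the paper's proof of Lemma \ref{5.4} simply cites), and the $\langle f_2,e_0\rangle$ step is immediate, as you say. But there is a concrete error in your cup-product bookkeeping that, as written, would give the wrong answers. You have swapped the two coordinates of $e_{1,1}$ and $e_{1,2}$ when substituting into the formula $[x_1,y_1]\cup[x_2,y_2]=[x_1\otimes\gamma(y_2)-y_1\otimes\varphi(x_2)]$. Since $e_{1,1}=[\log(\chi(\gamma)),0]$ has $x_2=\log(\chi(\gamma))$ and $y_2=0$, the cup product $f_{1,i}\cup e_{1,1}$ equals $[-y_1\otimes\varphi(\log(\chi(\gamma)))]=[-\log(\chi(\gamma))\cdot(\text{second component of }f_{1,i})\,\bold{e}_1]$, not $[\log(\chi(\gamma))\cdot(\text{first component})\,\bold{e}_1]$ as you wrote. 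Likewise $e_{1,2}=[0,1]$ has $x_2=0$, $y_2=1$, so $f_{1,i}\cup e_{1,2}=[x_1\otimes\gamma(1)]=[(\text{first component of }f_{1,i})\,\bold{e}_1]$, not $-(\text{second component})$. With your substitution one would obtain $\langle f_{1,1},e_{1,1}\rangle=\log(\chi(\gamma))$ and $\langle f_{1,1},e_{1,2}\rangle=0$, contradicting the lemma. After correcting the substitution, the four normalizations defining $f_{1,1}$, $f_{1,2}$ read off exactly the claimed values, so the proof goes through.

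Two smaller points: the caution about Remark \ref{2.12.111} is a red herring here — that remark compares the present paper's cup product to the one in \cite{Na14a}, but all computations in Lemmas \ref{5.1}--\ref{5.4} and Proposition \ref{explicitdual} are carried out internally with this paper's conventions, so there is no sign to reconcile. Also note that the paper handles $f_{1,2}$ slightly differently (via $f_{1,2}=\tfrac{1}{\log(a)}\kappa_0(a)$ from Lemma \ref{5.1} and the classical Kummer identities), which avoids redoing the cup product calculation; your uniform residue computation is equally valid once the swap is fixed, and is arguably cleaner since it makes the symmetry between $f_{1,1}$ and $f_{1,2}$ explicit.
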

\begin{proof}
That we have $\langle f_{1,1},e_{1,1}\rangle=0$ and $\langle f_{1,1},e_{1,2}\rangle=1$ 
is proved in Lemma \ref{5.3}. We prove the formula for $f_{1,2}$. 
By Lemma \ref{5.1}, we have an equality  $f_{1,2}=\frac{1}{\mathrm{log}(a)}
\kappa_0(a)$ for any non-torsion $a\in \mathbb{Z}_p^{\times}$.  
Hence, we obtain 
$$\langle f_{1,2}, e_{1,1}\rangle=\frac{1}{\mathrm{log}(a)}\langle \kappa(a), [\mathrm{log}]\rangle=1$$ 
and 
$$\langle f_{1,2}, e_{1,2}\rangle=\frac{1}{\mathrm{log}(a)}\langle \kappa(a), [\mathrm{ord}_p]\rangle=0$$
by the compatibility of the cup products.
Finally, that $\langle f_2,e_0\rangle=1$ is trivial by definition.

\end{proof}
\subsection*{Acknowledgement}
The author thanks Seidai Yasuda for introducing me to the Kato's global and local 
$\varepsilon$-conjectures. He also thanks Iku Nakamura for constantly encouraging him. 
This work is supported in part by the Grant-in-aid 
(NO. S-23224001) for Scientific Research, JSPS.


\begin{thebibliography}{99}
\bibitem[BelCh09]{BelCh09}
 J.Bella\"iche, G.Chenevier, Families of Galois representations and Selmer groups, 
 Ast\'erisque $\bold{324}$, Soc. Math. France (2009).
\bibitem[Ben00]{Ben00}
D.Benois,  On Iwasawa theory of crystalline representations. Duke Math. J. $\bold{104}$ (2000) 211-267.
\bibitem[BB08]{BB08}
 D.Benois, L.Berger, Th\'eorie d'Iwasawa des repr\'esentations cristallines. II, Comment. Math. Helv. $\bold{83}$ (2008), no. 3, 603-677.

\bibitem[Ber02]{Ber02}
L.Berger, Repr\'esentations $p$-adiques et \'equations diff\'erentielles, Invent. Math. $\bold{148}$ (2002), 219-284.
\bibitem[Ber08a]{Ber08a}
L.Berger, Construction de ($\varphi,\Gamma$)-modules: repr\'esentations $p$-adiques et $B$-paires,  Algebra and Number Theory $\bold{2}$ (2008), no. 1, 91-120.
\bibitem[Ber08b]{Ber08b}
L. Berger, \'Equations diff\'erentielles $p$-adiques et ($\varphi$,N)-modules filtres, Asterisque $\bold{319}$ (2008), 13-38, Repr\'esentations $p$-adiques de groupes $p$-adiques. I. Repr\'esentations galoisiennes et ($\varphi$,N)-modules.
\bibitem[Ber09]{Ber09}
L.Berger, Presque $\mathbb{C}_p$-repr\'esentations et $(\varphi,\Gamma)$-modules, J. Inst. Math. Jussieu $\bold{8}$ (2009), no. 4, 653-668.
\bibitem[BK90]{BK90}
S. Bloch, K.Kato, L-functions and Tamagawa numbers of motives. The Grothendieck Festschrift, Vol. I, 333-400, Progr. Math. $\bold{86}$, Birkh\"auser Boston, Boston, MA 1990.
\bibitem[Ch13]{Ch13}
G.Chenevier, Sur la densit\'e des representations cristallines de $\mathrm{Gal}(\overline{\mathbb{Q}}_p/\mathbb{Q}_p)$, Math. Annalen $\bold{335}$, 1469-1525 (2013).
\bibitem[Co08]{Co08}
P.Colmez, Repr\'esentations triangulines de dimension $2$, Ast\'erisque $\bold{319}$ (2008), 213-258.
\bibitem[Co10]{Co10}
P.Colmez, Repr\'esentations de $\mathrm{GL}_2(\mathbb{Q}_p)$ et $(\varphi,\Gamma)$-modules, Ast\'erisque $\bold{330}$ (2010), 281-509.
\bibitem[Dee01]{Dee01}
J.Dee, $(\varphi,\Gamma)$-modules for families of Galois representations, Journal of Algebra $\bold{235}$
(2001), 636-664.
\bibitem[De73]{De73}
P.Deligne, Les constantes des \'eqations fonctionelles des fonctions $L$, Modular functions of one variable, II (Proc. Internat. Summer School, Univ. Antwerp, Antwerp, 1972), pp. 501-597. Lecture Notes in Math., Vol. $\bold{349}$, Springer, Berlin, 1973.
\bibitem[Em]{Em}
M. Emerton, Local-global compatibility in the $p$-adic Langlands programme for $\mathrm{GL}_{2/\mathbb{Q}}$,preprint.
\bibitem[FP94]{FP94}
J.-M. Fontaine, B.Perrin-Riou, Autour des conjectures de Bloch et Kato; Cohomologie galoisienne et valeurs de fonctions $L$. Motives (Seattle, WA, 1991), 599-706, Proc. Sympos. Pure Math., $\bold{55}$, Part 1, Amer. Math. Soc., Providence, RI, 1994.
\bibitem[FK06]{FK06}
T.Fukaya, K.Kato, A formulation of conjectures on $p$-adic zeta functions in non commutative Iwasawa theory, Proceedings of the St. Petersburg Mathematical Society. Vol. XII (Providence, RI), Amer. Math. Soc. Transl. Ser. 2, vol. $\bold{219}$, Amer. Math. Soc., 2006, pp. 1-85. 





\bibitem[Ka93a]{Ka93a}
K. Kato, Lectures on the approach to Iwasawa theory for Hasse-Weil $L$-functions via $B_{dR}$. Arithmetic algebraic geometry, Lecture Notes in Mathematics $\bold{1553}$, Springer-
Verlag, Berlin, 1993, 50-63.
\bibitem[Ka93b]{Ka93b}
K.Kato, Lectures on the approach to Iwasawa theory for Hasse-Weil $L$-functions via $B_{dR}$. 
Part II. Local main conjecture, unpublished preprint.
\bibitem[KL10]{KL10}
K.Kedlaya, R.Liu, On families of $(\varphi,\Gamma)$-modules, 
Algebra and Number Theory $\bold{4}$ (2010), no7, 943-967.
\bibitem[KPX14]{KPX14}
K.Kedlaya, J.Pottharst, L.Xiao, Cohomology of arithmetic families of $(\varphi,\Gamma)$-modules, 
J. Amer. Math. Soc. $\bold{27}$ (2014), no. 4, 1043-1115.
\bibitem[Ki03]{Ki03}
M.Kisin, Overconvergent modular forms and the Fontaine-Mazur 
conjecture, Invent. Math. $\bold{153}$ (2003), 373-454.
\bibitem[Kis10]{Kis10}
M. Kisin, Deformations of $G_{\mathbb{Q}_p}$ and $GL_2(\mathbb{Q}_p)$ representations, Ast\'erisque $\bold{330}$, 511-528 (2010).
\bibitem[KM76]{KM76}
F. Knudsen, D. Mumford, The projectivity of the moduli space of stable curves I, Math. Scand. $\bold{39}$ (1976), no. 1, 19-55.
\bibitem[La62]{La62}
M. Lazard, Les z\'eros des fonctions analytiques d'une variable sur un corps valu\'e complet. Inst. Hautes \'Etudes Sci. Publ. Math. No. $\bold{14}$ (1962),  47-75.
\bibitem[LVZ14]{LVZ14}
D.Loeffler, O.Venjakob, S.L.Zerbes, Local $\varepsilon$-isomorphism, Kyoto J Math.

\bibitem[Na14a]{Na14a}
K.Nakamura, Iwasawa theory of de Rham $(\varphi,\Gamma)$-modules over the 
Robba ring, J. Inst. Math. Jussieu $\bold{13}$ (2014), Issue 01, no.5, 65-118.

\bibitem[Na14b]{Na14b}
K.Nakamura, Zariski density of crystalline representations for any $p$-adic field, J.Math.Sci.Univ. Tokyo $\bold{21}$ (2014), 79-127.
\bibitem[Na]{Na}
K.Nakamura, Local $\varepsilon$-isomorphisms for rank two $p$-adic representations of 
$\mathrm{Gal}(\overline{\mathbb{Q}}_p/\mathbb{Q}_p)$ and a functional equation of Kato's 
Euler system, in preparation.
\bibitem[Po13a]{Po13a}
J. Pottharst, Analytic families of finite-slope Selmer groups, Algebra and Number Theory $\bold{7}$ (2013), no.7, 1571-1612.
\bibitem[Po13b]{Po13b}
J. Pottharst, Cyclotomic Iwasawa theory of motives, preprint.
\bibitem[ST03]{ST03}
P. Schneider, J. Teitelbaum, Algebras of $p$-adic distributions and admissible representations, Invent. Math. $\bold{153}$ (2003), no.1, 145-196.
\bibitem[Ve13]{Ve13}
O.Venjakob, On Kato's local $\varepsilon$-isomorphism Conjecture for rank one Iwasawa modules, Algebra and Number Theory $\bold{7}$ (2013), no.10, 2369-2416.






\end{thebibliography}
\end{document}